\newcommand{\R}{\mathbb R}
\newcommand{\C}{\mathbb C}
\newcommand{\Z}{\mathbb Z}
\renewcommand{\H}{\mathbb H}
\newcommand{\mc}{\mathcal}
\newcommand{\mf}{\mathfrak}
\newcommand{\bb}{\mathbb}
\newcommand{\Hom}{\operatorname{Hom}}
\newcommand{\End}{\operatorname{End}}
\newcommand{\id}{\operatorname{id}}
\newcommand{\inj}{\operatorname{inj}}
\newcommand{\SL}{\operatorname{SL}}
\newcommand{\On}{\operatorname{O}}
\newcommand{\SO}{\operatorname{SO}}
\newcommand{\SU}{\operatorname{SU}}
\newcommand{\Conf}{\operatorname{Conf}}
\newcommand{\su}{\mathfrak{su}}
\newcommand{\Diff}{\operatorname{Diff}}
\newcommand{\vol}{\operatorname{vol}}
\newcommand{\tr}{\operatorname{tr}}
\newcommand{\im}{\operatorname{im}}
\newcommand{\hyp}{\operatorname{hyp}}
\newcommand{\sph}{\operatorname{sph}}
\newcommand{\Real}{\operatorname{Re}}
\newcommand{\Imag}{\operatorname{Im}}
\newcommand{\rk}{\operatorname{rk}}
\newcommand\dS{\operatorname{dS}}
\newcommand\dom{\operatorname{dom}}
\newcommand\fid{\operatorname{fid}}
\newcommand\model{\operatorname{mod}}
\newcommand\appr{\operatorname{app}}
\newcommand\err{\operatorname{err}}
\newcommand\lcl{\operatorname{loc}}
\newcommand\sech{\operatorname{sech}}
\newcommand\csch{\operatorname{csch}}
\renewcommand{\Re}{\operatorname{Re}}
\newcommand\wt\widetilde
\newcommand\halfopen[2]{\ensuremath{[#1,#2)}}
\newcommand{\bs}{\backslash}
\newcommand{\ol}{\overline}
\newtheoremstyle{break}% name
  {}%         Space above, empty = `usual value'
  {}%         Space below
  {\itshape}% Body font
  {}%         Indent amount (empty = no indent, \parindent = para indent)
  {\bfseries}% Thm head font
  {.}%        Punctuation after thm head
  {\newline}% Space after thm head: \newline = linebreak
  {}%         Thm head spec
  \theoremstyle{break}
\newtheorem{thm}{Theorem}[section]
\newtheorem{prop}[thm]{Proposition}
\newtheorem{cor}[thm]{Corollary}
\newtheorem{lemma}[thm]{Lemma}
\newtheorem{defn}[thm]{Definition}
\newtheorem{rem}[thm]{Remark}
\newtheorem{exa}[thm]{Example}
\title{Transgressive harmonic maps and $\mathrm{SU}(1,1)$ self-duality solutions}
\author{Sebastian Heller}
\author{Lothar Schiemanowski}
\author{Hartmut Weiss}
\address{S. Heller \\BIMSA\\China
 }
 \email{sheller@bimsa.cn}
\address{L. Schiemanowski \\Universit\"at Kiel\\Germany
 }
 \email{schiemanowski@math.uni-kiel.de}
\address{H. Weiss \\Universit\"at Kiel\\Germany
 }
 \email{weiss@math.uni-kiel.de}
\begin{document}

\maketitle

% ----------------
% | Introduction |
% ----------------

\begin{abstract}
We establish a duality between harmonic maps from Riemann surfaces to hyperbolic 3-space and harmonic maps from Riemann surfaces to de Sitter 3-space, best viewed as a generalized Gau\ss{} map. On the gauge-theoretic side, it matches $\SU(2)$ and $\SU(1,1)$ solutions of Hitchin's self-duality equations via a signature flip along an eigenline of the Higgs field. Reversing this operation typically produces singular solutions, which occur where the eigenline becomes lightlike. Motivated by explicit model examples and this singular behavior, we extend this duality to a class of \emph{transgressive} harmonic maps 
$f\colon M\to \mathbb S^3$, which are harmonic to the hemispheres equipped with the hyperbolic metric, intersect the equator orthogonally, and have vanishing Hopf differential along the intersection. We construct large families by gluing and analyze their regularity; as an application, we obtain %$\tau$-real negative 
real holomorphic sections of the Deligne--Hitchin moduli space of arbitrarily large energy that are not twistor lines.
\end{abstract}

\setcounter{tocdepth}{2}
\tableofcontents

\section{Introduction}
The study of harmonic maps into symmetric spaces and their associated gauge-theoretic equations has long been a fertile ground for interactions between differential geometry, integrable systems, and mathematical physics. A paradigmatic example is the correspondence between (equivariant) harmonic maps from Riemann surfaces into hyperbolic 3-space $\mathbb H^3 = \SL(2,\C)/\SU(2)$ and solutions to the $\SU(2)$ self-duality equations on a Hermitian vector bundle of rank $2$, that is, solutions of the equations
\[
F_A + [\Phi \wedge \Phi^*] = 0, \qquad \ol{\partial}_A \Phi = 0,
\]
where $A$ is a unitary connection inducing the trivial connection on the determinant bundle, and $\Phi \in \Omega^{1,0}(\End_0 E)$ is the Higgs field.  Less well known is that there is also a correspondence between harmonic maps from Riemann surfaces into de Sitter 3-space $\dS_3 = \SL(2,\C)/\SU(1,1)$ (a Lorentzian symmetric space of constant positive curvature) and solutions to the $\SU(1,1)$ self-duality equations, which take the same form as above, but now the bundle metric is a Hermitian metric of signature $(1,1)$ and the connection is unitary with respect to this indefinite metric.

In this paper, we establish a geometric duality that transforms harmonic maps into $\mathbb H^3$ into harmonic maps into $\dS_3$, and, under some natural extra assumptions, vice versa. Crucially, this transformation extends to the associated gauge fields: $\mathrm{SU}(2)$ self-duality solutions are mapped to $\mathrm{SU}(1,1)$ self-duality solutions, while the converse in general produces {\em singular} solutions of Hitchin's self-duality equations. These singular solutions have a geometric counterpart, {\em transgressive} harmonic maps, i.e., harmonic maps into the union of two hyperbolic spaces $\H^3_\pm$, which extend transversally and conformally through the boundary $\mathbb S^2_\infty$ at infinity. They also have a complex analytic counterpart, certain real holomorphic sections of the Deligne--Hitchin moduli space for the natural real twistor structure covering $\lambda\mapsto-1/\bar\lambda$. We first analyze these correspondences and dualities, and then develop a gluing construction that produces large families of examples.
This provides a new geometric realization of the relation between definite and indefinite real forms in the self-duality equations, and opens up a path to extend the non-abelian Hodge correspondence to a singular setting.
%Studying these correspondences and dualities in detail is the first part of the paper. In the second part of the paper, we describe a gluing construction, which yields many examples for which all these dualities and correspondences can be realized explicitly.

The duality between harmonic maps into $\H^3_\pm$ and harmonic maps into $\dS_3$ is best understood as a generalization of the Gau\ss\@ map construction for hypersurfaces. It is a consequence of the Ruh--Vilms theorem that the Gau\ss\@ map of a conformal, harmonic immersion into $\R^3$ is a harmonic map into $\mathbb S^2$. Considering $\H^3_\pm$ as the two sheets of the 2-sheeted hyperboloid in Minkowski space $\R^{1,3}$, we define for a conformal immersion $f : M \to \H^3_\pm$ the hyperbolic Gau\ss\@ map to be the unique oriented normal of the immersed surface in the tangent space of the hyperbolic space, seen as a subspace of $\R^{1,3}$. If $f$ is harmonic, then so is its Gau\ss\@ map, see for example \cite{BaBo, DIK}.

We introduce a generalization of the Gau\ss\@ map in the non-conformal case. This map is not normal to the surface and therefore is dubbed \textit{oblique hyperbolic Gau\ss\@ map}, see Definition \ref{defn:hyperbolic-oblique-gauss-map-1}. The tangential part of this map depends on the choice of a square root of the hyperbolic Hopf differential, which, in general, is
only well-defined on the Hitchin covering -- the natural double cover on which $\Phi$ diagonalizes.

If $f$ is harmonic, its hyperbolic oblique Gau\ss\@ map is also harmonic. This is proved in Proposition \ref{prop:hyperbolic_gauss_map_harmonic}.

Conversely, starting from an immersion $N : M \to \dS_3$, there is an analogous definition of a map $f : M \to \H^3$, Definition \ref{defn:dual_map}. This definition depends on the choice of a square root of the Hopf differential of $N$. Proposition \ref{prop:dual_map_harmonic} shows that harmonicity of $N$ implies that $f$ is harmonic. The construction is involutive in the sense that the dual map of the hyperbolic Gau\ss\@ map is the original map, provided that the correct choice of the square roots of the Hopf differentials is made (Proposition \ref{prop:dual_involution}).

Under the correspondence between harmonic maps into $\H^3$ and solutions of the $\SU(2)$ self-duality equations and harmonic maps into $\dS_3$ and solutions of the $\SU(1,1)$ self-duality equations, the above construction admits a purely gauge-theoretic construction. The choice of a square root of the Hopf differential is then equivalent to the choice of an eigenline of the Higgs field. The analog of the hyperbolic Gau\ss\@ map construction is described in Theorem \ref{thm:SU2SU11} and the analog of the dual map construction is described in Theorem \ref{thm:SU11SU2}. On the level of the Hermitian metric, it corresponds to flipping the sign in the orthogonal complement of a chosen eigensubbundle of the Higgs field -- therefore turning a definite Hermitian metric into an indefinite Hermitian metric and vice versa. This construction is more general in the sense that the maps corresponding to the solutions of the self-duality equations need not be immersions. Moreover, an interesting phenomenon can be observed, which is not directly evident in the geometric setting: 
 the dual solution associated with an $\SU(2)$-solution is globally defined, whereas the dual solution associated with an $\SU(1,1)$-solution is only defined where the eigenline is not lightlike.
It turns out that this behavior can be understood on the level of the harmonic maps -- at least under suitable extra assumptions.

This becomes evident in the behavior of the \textit{model solutions}, explicit solutions of the $\SU(2)$ self-duality equations, for which all these correspondences and dualities can be explicitly computed. These solutions form a very concrete illustration of all of the above ideas, and we will briefly review their behavior. They are also one of the three building blocks in our gluing construction. These solutions are defined on the sliced complex plane $\{z = x+iy \in \C : x \neq 0\}$ and on quotients by suitable 1-dimensional lattices thereof. The underlying Higgs bundle is given by $\left( \underline{\C}^2, \bar{\partial}, \frac 1 2 t \begin{psmallmatrix} 0 & 1 \\ 1 & 0 \end{psmallmatrix} \right)$, where $\bar{\partial}$ is the standard holomorphic structure and $t > 0$. 
Then
\[
h_t^{\model} = \begin{pmatrix} \frac t {\tanh(t x)} & 0 \\ 0 & \frac{\tanh(tx)} t \end{pmatrix}
\]
solves the Hitchin self-duality equations $F^{h_t} + [\Phi \wedge \Phi^{*_{h_t}} ] = 0$. This metric develops a singularity at $x=0$, is positive definite on the component $x>0$, and negative definite on the component $x<0$. The solution of the $\SU(2)$ self-duality equation is given in unitary form in formula (\ref{eqn:model}). It turns out that the associated $\SU(1,1)$-solution is smooth through $x=0$; see Example \ref{exa:dual_SU11}. Moreover, the locus $x=0$ is exactly where the eigenline of the Higgs field becomes lightlike.

The associated harmonic map $f_{t,\hyp}$ into hyperbolic 3-space and its dual map $N_t$ are given in Example \ref{exa:modelharmonicmap}. It is instructive to consider the geometric behavior of the harmonic map $f_{t,\hyp}$. The image of the map $f_{t,\hyp}$, restricted to $x>0$, is a totally geodesic copy of $\H^2$ in $\H^3_+$, whereas for $x<0$ it is a totally geodesic copy of $\H^2$ in $\H^3_-$. The parametrization is such that the region close to $x = 0$ is mapped to the region close to the boundary at infinity of $\H^3_\pm$. The effect of the parameter $t$ is to dilate the domain.

On the other hand, the dual map $N_t$ is well-defined and smooth through $x=0$. This is expected, as the associated $\SU(1,1)$-solution is smooth. Geometrically, the locus $x=0$ can be characterized as the set where $N_t$ fails to be an immersive map.

The hemisphere model of hyperbolic 3-space is a convenient setting to ``unify'' the two copies $\H^3_\pm$: the upper hemisphere is identified with $\H^3_+$, the lower hemisphere is identified with $\H^3_-$, and the equatorial 2-sphere is identified with the boundary at infinity of both copies of hyperbolic space. Under this identification, the map $f_{t,\hyp}$ extends to a smooth map $f_{t,\sph}$ into $\bb{S}^3$ across $x=0$; see Example \ref{exa:modelharmonicmap_sphere}.

It turns out that there is a natural class of maps into $\bb{S}^3$, which reproduces this behavior. These maps will be called  \textit{transgressive} 
harmonic maps, and they are defined in Definition \ref{defn:transgressive}. Briefly, these are maps $f : M \to \bb{S}^3$ which are harmonic into hyperbolic 3-space when restricted to $f^{-1}(\H^3_\pm)$, that intersect the equatorial 2-sphere $\bb{S}^2_{\operatorname{eq}}$ orthogonally, and for which the associated Hopf differential vanishes along the intersection. The associated $\SU(2)$ self-duality solution is also defined only on $f^{-1}(\H^3_\pm)$ and develops singularities at $f^{-1}(\bb{S}^2_{\operatorname{eq}})$.

The crucial point is that the duality extends from harmonic maps into $\H^3_\pm$ to transgressive harmonic maps into $\bb{S}^3$. That is, given a transgressive harmonic map $f : M \to \bb{S}^3$, the oblique hyperbolic Gau\ss\@ map of $f|_{f^{-1}(\H^3_\pm)}$ extends to a smooth harmonic map $N : M \to \dS_3$ and its rank drops precisely at $f^{-1}(\bb{S}^2_{\operatorname{eq}})$. This is proved in Theorem \ref{thm:transobl}.

We prove a partial converse in Theorem \ref{thm:transgressive_map_from_de_sitter_map}: given a harmonic map into de Sitter 3-space, which is immersive away from a 1-dimensional submanifold and such that the rank drops in a controlled way, the dual map on the complement of this 1-dimensional submanifold smoothly extends to a transgressive harmonic map.

The moduli space of irreducible solutions to the $\SU(2)$ self-duality equations carries a natural hyperk\"ahler structure \cite{Hi}.  It admits a twistorial description, where the nonlinear equations of the gauge theory are encoded into complex-analytic data on a twistor space, subject to specific reality conditions. For the Hitchin moduli space, this twistor space (as first identified by Deligne \cite{Simpson}) is constructed by gluing the moduli space of $\lambda$-connections to the moduli space of $\lambda$-connections for the complex-conjugate Riemann surface (see Section \ref{sec:lambdacon}). Within this framework, the standard $\SU(2)$-solutions correspond to one component of the space of real holomorphic sections. The transgressive harmonic maps central to this paper are realized as real holomorphic sections lying in different components. Consequently, the geometric operation of constructing the dual map acquires a profound interpretation: it corresponds to selecting a different real lift from the moduli space into the space of 
$\lambda$-connections, see Section \ref{sec:dhlifts}.

Aside from the model solutions, we construct many new examples by means of a gluing procedure, inspired by the construction of large energy solutions of the $\SU(2)$ self-duality equations in \cite{MSWW}. The building blocks of these solutions are the model solutions, limiting configurations, and fiducial solutions. The limiting configurations are decoupled solutions of the self-duality equations on the complement of the zeros of the quadratic differential associated to the Higgs field. The model solutions decouple as $t\to\infty$. The fiducial solutions are a family of radially symmetric solutions of the self-duality equations, which also decouple as the parameter of this family goes to infinity. Consequently, we may construct an approximate solution of the self-duality equations by gluing the model solution and the fiducial solution at appropriate places. For details of this construction, see Section \ref{sec:approxsol}. These approximate solutions may be deformed to solutions, which in turn \textit{almost} give rise to transgressive maps.
\begin{thm}[Theorem \ref{thm:existence_hoelder_transgressive_maps}, Corollary \ref{cor:hoelder_dual_maps}]
  Let $M$ be a compact Riemann surface of genus at least $2$ and let $(E, \overline{\partial}_E, \varphi)$ be a Higgs bundle with $\deg E = 0$. Suppose that $q = \det \varphi$ has simple zeros and contains at least one Strebel cylinder (see Section \ref{sec:41}). Let $M^\vee$ denote the complement of the core loops in the Strebel cylinders and assume that it has two components.
  
  For all sufficiently large $t > 0$, there exist solutions $(A_t, t \Phi_t)$ of the $\SU(2)$ self-duality equations in the complex gauge orbit of $(E, \overline{\partial}_E, \varphi)$ with the following properties:
  \begin{enumerate}[(i)]
  \item near the central loops, the solution $(A_t, \Phi_t)$ is exponentially close to the model solution $(A_t^{\model}, \Phi_t^{\model})$,
  \item near the zeros of $q$, the solution $(A_t, \Phi_t)$ is exponentially close to the fiducial solution $(A_t^{\fid}, \Phi_t^{\fid})$,
  \item in the interior of $M^\vee - \{q=0\}$, the solution $(A_t, \Phi_t)$ is exponentially close to a limiting configuration $(A_\infty, \Phi_\infty)$.
  \end{enumerate}
  These solutions induce harmonic, equivariant maps $f_{t,\hyp}$ into $\H^3_\pm$, defined on the lift of $M^\vee$ to the universal cover of $M$. The maps  $f_{t,\hyp}$ extend to H\"older continuous equivariant maps $f_t : \wt{M} \to \bb{S}^3$. The hyperbolic oblique Gau\ss\@ maps of $f_{t,\hyp}$ extend to equivariant, H\"older continuous maps $N_t : \wt{M} \to \dS_3$, which are harmonic on the lift of $M^\vee$ to the universal cover.
\end{thm}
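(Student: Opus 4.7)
The proof is a gluing-plus-extension argument, so my plan would be to carry out the construction in three stages: build the approximate solution, deform it to a true self-duality solution with exponential error, and then track what this does on the level of the associated equivariant harmonic maps into $\mathbb{S}^3$ and $\dS_3$.

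First, using the hypothesis that $q = \det\varphi$ has simple zeros and the Strebel structure, I would decompose the surface $M$ into three types of regions: neighborhoods of the core loops of the Strebel cylinders (where the model solution $(A_t^{\model},\Phi_t^{\model})$ is the natural local model), small disks around each zero of $q$ (where the fiducial solution $(A_t^{\fid},\Phi_t^{\fid})$ applies), and an intermediate annular region where one is exponentially close to the limiting configuration $(A_\infty,\Phi_\infty)$. In each transition region one glues via suitable complex gauge transformations chosen so that all three ingredients, when seen as Hermitian metrics on $E$, interpolate smoothly at scales determined by $t$. The output is an approximate solution $(A_t^{\appr},t\Phi_t^{\appr})$ in the complex gauge orbit of $(E,\bar\partial_E,\varphi)$ whose error $F_{A_t^{\appr}} + t^2[\Phi_t^{\appr}\wedge (\Phi_t^{\appr})^*]$ decays at an exponential rate $O(e^{-ct})$ in weighted norms adapted to the Strebel geometry.

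Second, I would deform $(A_t^{\appr},t\Phi_t^{\appr})$ to a genuine solution via the contraction/implicit-function argument of MSWW. The key input is uniform invertibility of the linearized self-duality operator modulo gauge in a $t$-weighted Sobolev setting, which one gets by localizing to each model region and using the explicit spectral information of the model, fiducial, and limiting linearizations. This yields solutions $(A_t,t\Phi_t)$ satisfying properties (i)--(iii), with exponential closeness to the local models on their respective regions. The main technical obstacle here is the uniform analysis near the core loops: the model metric $h_t^{\model}$ blows up as $x\to 0$, so the weighted norms must be chosen so that the linearization remains Fredholm with bounded inverse, and so that the gluing error at the boundary of the Strebel cylinder neighborhoods is swallowed by the contraction radius. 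This is where the bulk of the work lives.

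Third, I would extract the equivariant harmonic maps. On $M^\vee$, lifted to the universal cover, the solution $(A_t,t\Phi_t)$ is $\SU(2)$-gauge equivalent to a flat $\SL(2,\C)$-structure whose monodromy acts on $\H^3 = \SL(2,\C)/\SU(2)$; the developing map of the associated equivariant section is the harmonic map $f_{t,\hyp}$ into $\H^3_\pm$. To extend $f_{t,\hyp}$ across the lift of a Strebel core loop, I would use property (i): in a tubular neighborhood the solution is $O(e^{-ct})$ close to $(A_t^{\model},\Phi_t^{\model})$, whose associated harmonic map $f_{t,\sph}^{\model}$ was already shown in Example \ref{exa:modelharmonicmap_sphere} to extend smoothly through the equator $\mathbb{S}^2_{\operatorname{eq}}$. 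Comparing the two developing maps, one deduces that $f_{t,\hyp}$ extends continuously across $f^{-1}(\mathbb{S}^2_{\operatorname{eq}})$ in the hemisphere model, and Hölder regularity comes from the explicit rate $\tanh(tx)\sim tx$ of approach to the equator in the model, combined with the exponential closeness estimate. Near the zeros of $q$ the fiducial comparison of (ii) and the smoothness of the fiducial harmonic map provide regularity through those points by the same mechanism.

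Finally, for the dual side, I would apply Theorem \ref{thm:transobl} (or its proof) locally: on $M^\vee$ the oblique hyperbolic Gauss map of $f_{t,\hyp}$ is a harmonic map $N_t$ into $\dS_3$, well-defined after passing to the Hitchin double cover to choose a square root of the Hopf differential --- but the hypothesis that $M^\vee$ has two components lets one fix a global eigenline of $\Phi_t$ on the lift and thus a global $N_t$. To extend $N_t$ across the singular locus, I would again compare with the model: Example \ref{exa:dual_SU11} shows the model Gauss map $N_t^{\model}$ is smooth through $x=0$, and the exponential closeness of property (i) together with continuity of the oblique Gauss map construction (which involves $\Phi$ and the gauge data, all controlled to exponential order) gives Hölder continuous extension of $N_t$ through the lift of the core loops. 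Equivariance of both $f_t$ and $N_t$ is automatic from the equivariance of the Hermitian metric and Higgs field under the action of $\pi_1(M)$.
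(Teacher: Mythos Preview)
Your three-stage outline is correct in spirit, but two of the mechanisms you invoke do not work as stated.

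First, near the core loops the linearized operator $L_t$ is not a standard weighted-Sobolev problem but a uniformly degenerate ($0$-elliptic) operator, because the model metric blows up like $x^{-1}$; the paper's analysis rests on the $0$-calculus of Mazzeo--Melrose. The indicial roots of $L_t$ at $x=0$ are $-1$ and $2$, so Fredholmness holds on $\rho^\delta$-weighted H\"older spaces only for $\delta\in(-1,2)$, and a bootstrap (Theorem~\ref{thm:best_gluing_regularity}) pushes the correction $\gamma_t$ into $\rho^\delta C^{2,\alpha}_t$ for every $\delta<2$. The H\"older exponent of $f_t$ across the equator is \emph{not} produced by exponential closeness in $t$ or by $\tanh(tx)\sim tx$; it is exactly $\delta-1$, arising because the parallel frame $F_t^{\model}$ and the singular desingularizing gauge each carry a factor $x^{-1/2}$, so that after writing $F_t^*F_t=\tfrac{1}{x}(U_t+V_t)$ and applying $\Xi_{\SU(2)}^{-1}$ one finds $f_t-\Xi_{\SU(2)}^{-1}(A^*f_{t,\hyp}^{\model}A)\in\rho^{\delta-1}C^{2,\alpha}_t$. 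This $\rho$-power bookkeeping through the singular frame is the real content of Theorem~\ref{thm:existence_hoelder_transgressive_maps}; ``exponential closeness'' alone, measured in a norm that already carries a $\rho^\delta$ weight, tells you nothing about behavior as $x\to 0$.

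Second, you cannot invoke Theorem~\ref{thm:transobl} to extend $N_t$: that theorem requires $f$ to be transgressive, hence at least $C^1$ through the equator, which is precisely what has \emph{not} been shown (and is only obtained in the paper under the additional symmetry hypothesis of Theorem~\ref{thm:existence_smooth_transgressive_maps}). The paper instead defines $N_t$ directly by the gauge-theoretic formula $N_t=F_t^*R_tF_t$, with $R_t$ the reflection in the eigenline of $\Phi_t$ (formula~(\ref{eq:gaugeobliqueN})), and repeats the same $\rho$-power tracking to obtain $N_t-A^*N_t^{\model}A\in\rho^{\delta-1}C^{2,\alpha}_t$ in Corollary~\ref{cor:hoelder_dual_maps}.
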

Analytically, our gluing construction combines techniques from gauge theory and geometric analysis -- in particular, the 0-calculus  and semiclassical estimates -- to control singular solutions of the self-duality equations.
The proof follows the standard template of a gluing construction, i.e., after defining the approximate solution the main work lies in perturbing this approximate solution to an actual solution using the Banach fixed point theorem. The central difficulty here is to establish uniform (in $t$) control of the linearization of the $\SU(2)$ self-duality equations at the approximate solutions. There are some major technical differences to the gluing constructions for globally smooth solutions of the self-duality equations carried out in \cite{MSWW} and \cite{FMSW}. Analytically, the singularity of the model solution implies that the linearization is most naturally interpreted as a uniformly degenerate or $0$-operator. Accordingly, we use the methods of the $0$-calculus to understand the mapping properties of the linearization. In particular, the $0$-calculus gives Fredholm properties in a certain indicial range, and this indicial range determines the boundary regularity of our solution, that is, it is responsible for the H\"older regularity of the maps $f_t$ and $N_t$. See Theorems \ref{thm:banach_space_iso} and \ref{thm:best_gluing_regularity} for details. The uniform control is much more involved. The basic idea, again standard, is to prove a certain weighted $C^0$ estimate, which is uniform in $t$, by a proof by contradiction (Theorem \ref{thm:C0_estimate}). This involves studying sequences which might violate a uniform estimate. These sequences are blown up around the points where the estimate is violated most egregiously. Each such blow-up sequence yields a non-trivial solution of an elliptic partial differential equation on some model space. The proof then breaks down into the application of a number of vanishing theorems, which show that each such solution must in fact be trivial. A substantial difficulty in extracting such solutions is that the linearization has a divergent part as $t\to \infty$. To be more precise, it has the form $L_t = \Delta_{A_t^{\appr}} - i t^2 \ast M_{\Phi_t^{\appr}}$. Here $M_{\Phi_t^{\appr}}$ acts on a rank 3 vector bundle and as $t\to \infty$ it approaches $16 \cdot \id_{K^\perp} \oplus 0 \cdot \id_K$, where $K$ is a rank 1 subbundle. This requires us to treat the kernel of $M_{\Phi_t^{\appr}}$ and its complement differently. It turns out that on $K^\perp$ techniques of semiclassical analysis can be brought to bear, in the spirit of \cite{BGIM}. In \cite{MSWW} this was avoided by considering an auxiliary operator $L_t^0 = \Delta_{A_t^{\appr}} - i \ast M_{\Phi_t^{\appr}}$. This approach was not viable in our setting, due to the $0$-singularity of our operator at the core loop. The cost of considering the full operator $L_t$ is substantially more involved vanishing theorems. We note that the proofs of the vanishing theorems associated to the zeros of the quadratic differential involve a fairly detailed study of the Bessel-type equations appearing in the linearization of the fiducial solution. These results (in particular  Propositions \ref{prop:no_fiducial_kernel}, \ref{prop:no_limiting_fiducial_kernel}) may be of independent interest to researchers using the fiducial solutions.

It should also be noted that the definition of a transgressive map requires the map to be at least differentiable through the equatorial 2-sphere. The H\"older regularity of the solutions in the theorem above is therefore insufficient to consider the maps $f_t$ as transgressive maps. The regularity of the solutions in the above theorem is below the threshold for elliptic regularity theory, as we discuss in more detail below. It is an interesting question what conditions are sufficient to guarantee smoothness of the solutions. We have not been able to answer this question in full detail, but the theorem below shows that under a certain symmetry assumption
-- mimicking the reflection symmetry of the model solution --
the solutions in the above theorem are in fact smooth.

\begin{thm}[Theorem \ref{thm:existence_smooth_transgressive_maps}]
  Suppose that $M$ is a compact Riemann surface of genus at least $2$ and let $(E,\overline{\partial}_E, \varphi)$ be a Higgs bundle with $\deg E = 0$. Suppose that $q = \det \varphi$ is simple and contains at least one Strebel cylinder. Suppose moreover that there is an antiholomorphic involution $\sigma : M \to M$ and an antilinear automorphism $\hat{\sigma} : E \to E$, such that $(E, \overline{\partial}_E, \varphi)$ is invariant under $\hat{\sigma}$. Suppose that the fixed point set of $\sigma$ consists of core loops of Strebel cylinders of $q$.

  In this case, the maps $f_t$ constructed in the previous theorem are smooth through the core loops, and therefore give rise to equivariant transgressive harmonic maps. Likewise, the maps $N_t$ are smooth through the core loops. 
\end{thm}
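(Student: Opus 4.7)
The plan is to propagate the symmetry $(\sigma,\hat\sigma)$ from the Higgs bundle down to the glued solution, and then use it to cancel the indicial mode responsible for the H\"older-only regularity of the previous theorem. First I would check that every building block of the approximate solution can be arranged to be $\hat\sigma$-equivariant: the model solutions are symmetric under $x\mapsto -x$ combined with the interchange of their two eigenlines; the fiducial solutions are radially symmetric; and the limiting configuration is uniquely determined by the $\hat\sigma$-invariant Higgs bundle data, hence $\hat\sigma$-invariant. Choosing the gluing cut-offs so as to commute with $\sigma$, the approximate pair $(A_t^{\appr},\Phi_t^{\appr})$ is $\hat\sigma$-invariant.

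Since the contraction mapping used to produce the exact solution in the previous theorem commutes with $\hat\sigma$ (the $\SU(2)$ self-duality equations and all norms used in the gluing are $\hat\sigma$-natural), its unique fixed point, and thereby the exact solution $(A_t,\Phi_t)$, is $\hat\sigma$-invariant. By Theorems \ref{thm:SU2SU11} and \ref{thm:SU11SU2}, this invariance descends to reflection symmetries $f_t\circ\wt\sigma = R\circ f_t$ and $N_t\circ\wt\sigma = R'\circ N_t$, where $\wt\sigma$ lifts $\sigma$ to the universal cover and $R$ (resp.\ $R'$) is the orientation-reversing reflection of $\bb{S}^3$ across $\bb{S}^2_{\operatorname{eq}}$ (resp.\ of $\dS_3$ across the corresponding totally geodesic $\dS_2$). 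In local conformal coordinates $z=x+iy$ around a fixed core loop $\gamma=\{x=0\}$ with $\sigma(z)=-\ol z$, differentiating this identity along $\gamma$ immediately gives orthogonal crossing: $f_{t*}(\partial_x)$ is normal to $\bb{S}^2_{\operatorname{eq}}$ and $f_{t*}(\partial_y)$ is tangent to it. The vanishing of the Hopf differential along $\gamma$ is then a consequence of the exponential closeness of $(A_t,\Phi_t)$ to $(A_t^{\model},\Phi_t^{\model})$ combined with its explicit validity for the latter.

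To upgrade H\"older continuity to $\CI$, I would restrict the gluing's fixed-point iteration to the subspace of $\hat\sigma$-symmetric sections. The leading non-smooth mode produced by the $0$-calculus at $\gamma$ is governed by a specific indicial root of the linearization, and a direct inspection of the indicial operator shows that the associated formal solution has the opposite parity under $\hat\sigma$. On the symmetric subspace this mode is excluded, and the next indicial root falls in the smooth range. A bootstrap in the $0$-calculus, along the lines of Theorem \ref{thm:best_gluing_regularity} but restricted to symmetric sections, then yields $\CI$-regularity of $(A_t,\Phi_t)$ at $\gamma$, up to an $\hat\sigma$-equivariant gauge transformation. Since $f_t$ depends algebraically and differentiably on this data, it is smooth through $\gamma$; together with the orthogonal-crossing and vanishing-Hopf-differential properties established above, this gives all three conditions of Definition \ref{defn:transgressive}. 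Smoothness of $N_t$ then follows from Theorem \ref{thm:transobl}.

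The main obstacle is the parity analysis of the indicial roots at the core loops: one has to pin down exactly which mode of the $0$-operator linearization carries the H\"older boundary behavior and verify that $\hat\sigma$ acts on it by $-1$ while acting trivially on the smoother part of the indicial spectrum. This is the symmetric refinement of Theorem \ref{thm:best_gluing_regularity}, and it is what ultimately converts the H\"older-continuous output of the gluing into a genuine smooth transgressive harmonic map.
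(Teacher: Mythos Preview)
Your first step --- propagating the $\hat\sigma$-symmetry through the approximate solution and the contraction mapping to obtain a $\hat\sigma$-invariant exact solution --- matches the paper. The divergence is in how you upgrade H\"older to $C^\infty$, and there your key claim does not hold.

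You assert that the leading non-smooth indicial mode of the linearization $L_t$ has the \emph{opposite} parity under $\hat\sigma$ and is therefore excluded on the symmetric subspace. But the indicial roots of $L_t$ at a core loop are $-1$ and $2$ (computed just before Theorem~\ref{thm:banach_space_iso}), and the formal solution at the upper root $s=2$ is $x^2\,u_0(y)$ on the diagonal. Under $\hat r\colon (x+iy,v)\mapsto(-x+iy,\bar v)$ this mode is sent to itself: it is \emph{even}, not odd. So the symmetry does not kill it, and your proposed bootstrap in the $0$-calculus stalls at exactly the same place as Theorem~\ref{thm:best_gluing_regularity}. Moreover, even if $\gamma_t$ were polyhomogeneous with integer exponents on each side of the core loop, the passage to $f_t$ goes through the singular gauge $g$ with $x^{-1/2}$ entries (proof of Theorem~\ref{thm:existence_hoelder_transgressive_maps}), so you would still need a separate argument to show that the two one-sided expansions glue smoothly across $x=0$.

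The paper takes a different and shorter route: it works with the de~Sitter map $N_t$ rather than with $\gamma_t$ or $f_t$. The symmetry makes $N_t$ even in $x$ (since $N_t^{\model}$ is even), and $N_t$ already satisfies the harmonic map equation on $\{x\neq 0\}$. Lemma~\ref{lemma:smoothness_N} then shows that evenness together with the bounds from Corollary~\ref{cor:hoelder_dual_maps} (namely $|N_t - N_t^{\model}| \le C|x|^{\delta-1}$ with $\delta-1>3/4$) forces the boundary term in the weak formulation to vanish, so $N_t$ is a weak solution across $x=0$; a standard $W^{2,p}\to C^{1,\alpha}\to C^{k,\alpha}$ bootstrap gives smoothness of $N_t$. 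Smoothness of $f_t$ is then recovered from $N_t$ via Theorem~\ref{thm:transgressive_map_from_de_sitter_map} (the dual-map direction), not via Theorem~\ref{thm:transobl}, after checking that the rank of $dN_t$ drops transversally along the core loop just as for $N_t^{\model}$. The point is that the harmonic map equation for $N_t$ is a clean elliptic PDE on $M$ with no boundary, so once the weak formulation holds there is nothing delicate left.
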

Remark \ref{rem:many-examples} shows that Riemann surfaces and Higgs bundles satisfying the conditions of the theorem can be found in any genus.
The proof of this theorem is based on the observation that under the symmetry conditions, $N_t$ is even. It turns out that this property, together with the regularity properties already shown in the previous theorem, suffices to show that $N_t$ is a weak solution of the harmonic map equation. A standard bootstrapping argument using elliptic regularity then shows that $N_t$ is in fact smooth. Theorem \ref{thm:transgressive_map_from_de_sitter_map} allows us to recover $f_t$ from $N_t$, and therefore $f_t$ is also smooth.
Finally, we reinterpret the previous existence theorem in the framework of Deligne--Hitchin moduli spaces: we obtain real holomorphic 
sections thereof of arbitrarily large energy. Here, the energy is a well-defined functional on the space of real holomorphic sections \cite{BeHR},
similar to the renormalized area for minimal surfaces in hyperbolic 3-space as in \cite{AlMa}.
\begin{thm}
For every $g>1$, there exists a Riemann surface $M$ of genus $g$ such that its $\mathrm{SL}(2,\C)$ Deligne--Hitchin moduli space
admits  %stable
 $\tau$-real negative sections $s$ of arbitrarily large energy, which are not twistor lines.
\end{thm}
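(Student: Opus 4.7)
The proof is essentially an assembly of the previous results of the paper: the smooth transgressive harmonic maps produced by Theorem~\ref{thm:existence_smooth_transgressive_maps} will give rise, via the framework of Section~\ref{sec:dhlifts}, to real holomorphic sections of $\MDH$ with the claimed properties.

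First, I would fix a genus $g>1$ and invoke Remark~\ref{rem:many-examples} to produce a Riemann surface $M$ of genus $g$ together with an antiholomorphic involution $\sigma$, and a Higgs bundle $(E,\overline{\partial}_E,\varphi)$ of degree zero such that $q=\det\varphi$ is simple, contains at least one Strebel cylinder, and is invariant under a compatible antilinear lift $\hat\sigma$ whose fixed set on $M$ consists of core loops of Strebel cylinders of $q$. Theorem~\ref{thm:existence_smooth_transgressive_maps} then yields, for all sufficiently large $t$, a smooth equivariant transgressive harmonic map $f_t\colon\wt M\to\bb S^3$ together with its smooth dual map $N_t\colon\wt M\to\dS_3$.

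Second, I would pass to the Deligne--Hitchin side as in Section~\ref{sec:dhlifts}. The transgressive map $f_t$ assembles, via its associated family of $\lambda$-connections on $M$ and on its complex conjugate $\overline M$, into a real holomorphic section $s_t$ of the $\SL(2,\C)$ Deligne--Hitchin moduli space for the real structure $\tau$ covering $\lambda\mapsto -1/\bar\lambda$. The interpretation highlighted in the introduction -- that the dual construction corresponds to selecting a \emph{different} real lift from the moduli space into the space of $\lambda$-connections -- implies that $s_t$ lies in the component of the $\tau$-real locus consisting of \emph{negative} sections, disjoint from the component containing the standard $\SU(2)$ twistor lines.

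Third, I would establish the energy estimate. By the existence theorem, $(A_t,\Phi_t)$ is exponentially close to the model solution $(A_t^{\model},\Phi_t^{\model})$ on a neighborhood of each core loop, and the Hitchin energy of $(A_t^{\model},\Phi_t^{\model})$ grows without bound in $t$, as is visible from the explicit form $\Phi_t=\tfrac{1}{2}t\begin{psmallmatrix}0&1\\1&0\end{psmallmatrix}$ and the metric $h_t^{\model}$. Since the energy functional of \cite{BeHR} on real holomorphic sections reduces, for our transgressive class, to the (renormalized) harmonic map energy of the associated equivariant map on $M^\vee$, the energy $\mathcal{E}(s_t)$ diverges as $t\to\infty$. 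This both produces the claimed arbitrarily large energy and, together with the component argument, certifies that the $s_t$ are not twistor lines.

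The main obstacle will be the bookkeeping for the energy: one must verify that the \cite{BeHR} renormalization -- designed to be finite on transgressive-type sections -- faithfully captures the linear-in-$t$ growth contributed by the model regions and is not cancelled by the fiducial pieces or by the limiting configuration in between. Once this comparison is in place, the remaining ingredients -- existence, the $\tau$-reality, the negativity tag, and the disjointness from the twistor component -- follow directly from Theorem~\ref{thm:existence_smooth_transgressive_maps} and the twistorial framework of Section~\ref{sec:dhlifts}.
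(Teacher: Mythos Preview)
Your outline has the right skeleton but there are genuine gaps in the key steps. First, the construction of the section $s_t$ is not as direct as you suggest. The $\SU(2)$ solution associated to $f_t$ is \emph{singular} along the core loops, so its associated family does not define a lift on all of $M$; the paper instead passes to the Hitchin covering $\hat M$ (needed because $q$ has simple zeros, so $\omega$ only exists there), uses the globally smooth $\SU(1,1)$ solution coming from $N_t$, and twists back via a carefully chosen \emph{non-orthogonal} complementary line bundle $\widetilde L$ to obtain a family $\hat D^\lambda$ whose Higgs field at $\lambda=0$ is stable. One then has to arrange $\widetilde L$ invariantly under the Hitchin involution so that the family descends to $M$. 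None of this is addressed in your proposal.

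Second, your argument that $s_t$ is not a twistor line is circular: twistor lines \emph{are} $\tau$-negative sections (Theorem~\ref{thm:posnegsecLift}), so negativity alone does not separate $s_t$ from them. The correct reason is that a twistor line would yield a smooth $\SU(2)$ harmonic metric on $M$, whereas $f_t$ is genuinely singular along the nonempty fixed locus of $\sigma$. Third, your energy argument misidentifies the mechanism. The energy $\mathcal E(s_t)$ is not the Hitchin energy of $(A_t,\Phi_t)$ on $M^\vee$; rather, Proposition~\ref{prop:energies} gives $\mathcal E(s_t)=-E(N_t)-2\deg(L)$, and the paper shows $E(N_t)\to-\infty$ by analyzing the energy density of the oblique Gau\ss\@ map region by region (it is $-2it^2\,dz\wedge d\bar z$ on the $U_j$, bounded on the $W_\ell$, and tends to $-\infty$ on the $V_k$). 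Your ``linear-in-$t$ growth from the model regions'' is the wrong computation for the renormalized functional.
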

Some connections to other fields have already been identified. We now turn to a more detailed exploration of these links. Transgressive harmonic maps can be seen as a pair of harmonic maps into hyperbolic space, whose boundary data at infinity matches. The domain metric near the boundary is that of a hyperbolic funnel. Harmonic maps between hyperbolic spaces with specified boundary data have been studied in detail and much is known. From an analytical point of view, we would like to single out the articles \cite{LiTam0}, \cite{LiTam}. In these articles existence and uniqueness of harmonic maps between $\H^m$ and $\H^n$ with specified boundary data is investigated. Uniqueness is shown under the assumption that the map extends $C^1$ to the boundary and that the boundary map has nowhere vanishing energy density. Existence is shown under $C^{1,\alpha}$ regularity of the boundary data. They also construct a family of harmonic diffeomorphisms of $\H^2$, which are $C^{1/2}$ up to the boundary, and whose boundary maps are the identity. While there exist results for other regularity notions for boundary data, such as quasisymmetric, we are not aware of any work weakening the regularity requirements to $C^\alpha$, $\alpha > \frac 1 2$, for either the existence or uniqueness results of Li and Tam. We note that for the solutions constructed in Theorem \ref{thm:existence_hoelder_transgressive_maps} our methods yield H\"older regularity with a H\"older exponent arbitrarily close to $1$, but not $C^1$. It is a very interesting question under which conditions these maps extend to smooth maps.

The image of a transgressive harmonic conformal immersion consists of minimal surfaces in the hyperbolic hemispheres, intersecting the equatorial $\mathbb S^2$ orthogonally. This can again be seen as a boundary data problem at infinity, and in the case of a single copy of hyperbolic space this is a broad field of study known as the asymptotic Plateau problem, initiated in \cite{Anderson}. Again, much more is known, but these results are not directly relevant to our problem. In this conformal case, transgressive harmonic maps have been constructed before, without this terminology, \cite{BaBo,BHS,HH}. The present work unifies these geometric and analytic viewpoints, establishing a framework that may extend to higher-rank groups and other real forms.

As a word of warning, there is a notion of $U(p,q)$ Higgs bundles \cite{BGPG}, which is \textit{not} directly related to our notion of $\SU(1,1)$-self-duality equations.

\subsubsection*{Acknowledgements}
The authors would like to thank Jan Swoboda for many fruitful discussions during the initial stage of the project. HW would like to thank Oscar García-Prada for helpful conversations about Hitchin's equation for Hermitian metrics of indefinite signature. All authors were supported by the Deutsche Forschungsgemeinschaft within the priority program {\em Geometry at Infinity}. SH was supported by the Beijing Natural Science Foundation  IS23003.

% -----------------
% | Preliminaries |
% -----------------

\section{Preliminaries}
\subsection{Hyperbolic geometry and the conformal geometry of $\mathbb S^3$}\label{sect:geom_prelims}
The conformal geometry of $\mathbb S^3$ (and more generally of $\mathbb S^n$) is particularly well understood by means of the projectivization of the light cone in Minkowski space $\R^{1,4}$ ($\R^{1,n+1}$ respectively). This construction is classical, going back at least to Sophus Lie.

Let $\R^{1,4} = \R^5$ be equipped with the Minkowski inner product
\[
\langle \cdot \, , \cdot \rangle = -dx_0^2 + dx_1^2 + \ldots + dx_4^2.
\]
The {\em light cone} of $\R^{1,4}$ is defined to be
\[
\mc{L} = \left\{x \in \R^{1,4} : \langle x, x\rangle = 0 \right\}.
\]
As will be explained below, its projectivization $\bb{P}\mc{L}$ turns out to be diffeomorphic to $\mathbb S^3$. 
Denote by $\pi : \mc{L} \to \bb{P}\mc{L}$ the canonical projection. The projectivization -- and therefore $\mathbb S^3$ -- carries a natural conformal structure. To see this, observe that any local section $\sigma :  U \subset \bb{P}\mc{L} \to \mc{L}$ induces a Riemannian metric $\sigma^* \langle \cdot \,, \cdot \rangle$ on $U$. A simple calculation shows that two different lifts induce conformally equivalent metrics on $U$. In this way, a conformal structure is induced on $\bb{P}\mc{L}$.

The conformal group of $\mathbb S^3$ is then given by $\Conf(\mathbb S^3) = \On(1,4)/ \{\pm \id\}$, whereas the group of orientation-preserving conformal transformations is given by $\Conf^+(\mathbb S^3) = \left(\On(1,4) / \{\pm \id\}\right)_0 \cong \SO^+(1,4)$, where $\SO^+(1,n+1) = \left\{A \in \On(1,n+1) : \det A = 1, \; A_{00} > 0\right\}$.

Throughout this text, unless otherwise noted, by $\mathbb S^3$ we mean the slice of the light cone by the spacelike affine hyperplane $\{x_0 = 1\}$, i.e.\@
\[
\mathbb S^3 = \left\{ x \in \mc{L} : x_0 = 1 \right\}.
\]
The metric induced on $\mathbb S^3$ by the Minkowski metric is the standard round metric, and the restriction of the projection $\mathbb S^3 \to \bb{P}\mc{L}$ is a conformal diffeomorphism. Its inverse is a (global) section, denoted by
\[
\sigma_{\mathbb S^3} : \bb{P}\mc{L} \to \mathbb S^3, \qquad \left[x_0 : x_1 : \ldots : x_4 \right] \mapsto \left(1, \frac{x_1}{x_0}, \ldots, \frac{x_4}{x_0} \right).
\]
On the other hand, the slice of the light cone by the affine hyperplane $\{x_4 = 1\}$ is a Lorentzian two-sheeted hyperboloid, and the metric induced on the two sheets is the standard hyperbolic metric. For this reason, we denote
\[
\H^3_\pm = \H^3_+ \cup \H^3_- = \left\{ x \in \mc{L} : x_4 = 1 \right\},
\]
where $\H^3_+ = \left\{ x \in \mc{L} : x_4 = 1, x_0 > 0\right\}$ and $\H^3_- = \left\{ x \in \mc{L} : x_4 = 1, x_0 < 0 \right\}$. The projection $\H^3_\pm \to \bb{P}\mc{L}$ is a conformal diffeomorphism onto its image, and the image can be characterized as $\left\{ [x_0 : \ldots : x_4] : x_4 \neq 0 \right\}$.

The inverse defines a local section of $\mc{L}$ given by
\[
\sigma_{\H^3_\pm} : \left\{ [x_0 : \ldots : x_4 ] : x_4 \neq 0\right\} \to \H^3_\pm, \qquad \left[x_0 : \ldots : x_4 \right] \mapsto \left( \frac{x_0}{x_4}, \ldots, \frac {x_3}{x_4}, 1 \right).
\]
Since $\H^3_\pm$ carries the hyperbolic metric, its image in $\bb{P}\mc{L}$ inherits a hyperbolic metric. In this way, we can decompose $\bb{P}\mc{L}$ into two copies of hyperbolic space and a complement, which is given by $\left\{ [x_0 : \ldots : x_4] \in \bb{P}\mc{L} : x_4 = 0 \right\}$. Transporting this via $\sigma_{\mathbb S^3}$ to $\mathbb S^3$, we find that $\H^3_-$ corresponds to the ``lower'' hemisphere $\{x \in \mathbb S^3 : x_4 < 0 \}$, while $\H^3_+$ corresponds to the ``upper'' hemisphere $\{ x \in \mathbb S^3 : x_4 > 0 \}$.
The complement corresponds to the equatorial 2-sphere
\[
\mathbb S^2_{\operatorname{eq}} = \left\{ x \in \mathbb S^3 : x_4 = 0 \right\}.
\]
Hence, $\mathbb S^2_{\operatorname{eq}}$ can be considered to be the joint boundary at infinity of the two copies of hyperbolic 3-space.

In the sequel, the subgroup of the conformal group that fixes the equatorial 2-sphere will play an important role. Explicitly, it is the subgroup $\SO^+(1,3) \subset \SO^+(1,4)$.

It will also be useful to have an explicit map connecting the hyperbolic and spherical models. Such a map is given by
\[
\Xi = \sigma_{\H^3_\pm} \circ \pi : \mathbb S^3 \bs \mathbb S^2_{\operatorname{eq}} \to \H^3_\pm.
\]
We then have
\[
\Xi(x_0, \ldots, x_4) = \left( \frac {x_0}{x_4}, \ldots, \frac{x_3}{x_4}, 1 \right)
\]
and
\[
\Xi^{-1} (x_0, \ldots, x_4) = \left( 1, \frac{x_1}{x_0}, \ldots, \frac{x_3}{x_0}, \frac 1 {x_0} \right).
\]

To connect this geometric setup to the gauge-theoretic interpretation of harmonic maps, it is convenient to take a slightly different perspective on these spaces. The symmetric space $\SL(2,\C)/\SU(2)$ is isometric to hyperbolic 3-space. The Cartan involution is given by $A \mapsto (A^*)^{-1}$. The Cartan embedding $\SL(2,\C)/\SU(2) \hookrightarrow \SL(2,C)$ can therefore be given by $[A] \mapsto A A^*$. The image of this embedding is $\left\{ A \in \SL(2,\C) : A = A^*, A > 0 \right\}$ and provides the {\em matrix model} of hyperbolic 3-space. This model is very closely related to the light cone model. To see this, let us first observe that there is a natural isomorphism between the space of $2 \times 2$ Hermitian matrices
\[
\mc{H} = \left\{A \in \mf{gl}(2, \C) : A^* = A \right\}
\]
equipped with the quadratic form $-\det$, and $\R^{1,3}$ with the Minkowski metric. This identification is given by the isometry
\[
\R^{1,3} \to \mc{H}, \qquad \left(x_0, x_1, x_2, x_3\right) \mapsto
\begin{pmatrix}
  x_0 + x_1 & x_2 + i x_3 \\
  x_2 - i x_3 & x_0 - x_1
\end{pmatrix}.
\]
The subspace $\mc{H}_0 = \{ A \in \mc{H} : \tr A = 0\}$ of $\mc{H}$ corresponds to the standard Euclidean subspace $\R^3 \subset \R^{1,3}$.

We identify $\R^{1,4}$ with the direct sum $\mc{H} \oplus \R$ endowed with the quadratic form
\[
Q(A,r) = -\det(A) + r^2.
\]
The light cone is then given by
\[
\mc{L} = \left\{ (A,r) \in \mc{H} \oplus \R : \det(A) = r^2 \right\}.
\]

Note that under the above isometry, $\tr A$ corresponds to $2 x_0$. Therefore, $\mathbb S^3$ corresponds to
\[
\mathbb S^3 = \left\{ (A,r) \in \mc{L} : \tr A = 2 \right\}.
\]
The projection $\mc{L} \bs \{0\} \to \bb{P} \mc{L} \to \mathbb S^3$ is then given by $(A,r) \mapsto \frac 2 {\tr A} (A,r)$.

On the other hand, the hyperboloids can then be identified with
\[
\H^3_\pm = \{ (A,r) \in \mc{L} : r = 1 \} = \{ (A,1) \in \mc{H} \oplus \R : \det(A) = 1 \}.
\]
The map $\left\{ (A,r) \in \mc{L} : r \neq 0 \right\} \to \H^3_\pm$ is given by $(A,r) \mapsto \left( \frac 1 r A , 1 \right)$.

The equatorial 2-sphere is given by
\[
\mathbb S^2_{\operatorname{eq}} = \left\{ (A,r) \in \mathbb S^3 : r = 0\right\}.
\]
The subgroup $\SO^+(1,3) \subset \SO^+(1,4)$ fixing the equatorial 2-sphere is realized by $\SO^+(\mc{H}) \subset \SO^+(\mc{H} \oplus \R)$, and the action
\[
\SL(2,\C) \mapsto \SO^+(\mc{H}), \qquad A \mapsto \left( X \mapsto A X A^* \right)
\]
realizes the double cover $\SL(2,\C) \to \SO^+(\mc{H}) = \SO^+(1,3)$, providing a natural action of $\SL(2,\C)$ on the conformal 3-sphere.

For certain calculations, it will be useful to identify the 3-sphere with $\SU(2)$. Such an identification is furnished by
\[
\Upsilon : \mathbb S^3 \to \SU(2), \qquad \Upsilon(A,r) = r \id + i \mathring{A},
\]
where $\mathring{A}$ denotes the trace-free part of $A$, i.e.\@ $\mathring{A} = A - \frac 1 2 \tr A \id$.

Under this map, the equatorial 2-sphere $\mathbb S^2_{\operatorname{eq}}$ is mapped to $\{ A \in \SU(2) : \tr A = 0\}$.

With respect to these identifications, the map $\Xi$, defined on $\{B \in \SU(2) : \tr B \neq 0 \}$, becomes
\[
\Xi_{\SU(2)} \left( B \right) = \left( \frac 2 {\tr B} \left( \id - \mathring{B} \right), 1 \right)
\]
and
\[
\Xi_{\SU(2)}^{-1}(A,1) = \frac 2 {\tr A} \left( \id + i \mathring{A} \right).
\]
Note that very often we will consider $\H^3_\pm$ as a subset of $\mc{H}$ by forgetting the last coordinate.

\subsection{Gauge-theoretic background}
\label{sec:gauge_theory}

Let $M$ be a compact Riemann surface, and $E$ a complex vector bundle over $M$ of degree zero. We fix a trivialization of the determinant line $\det E = \Lambda^2E$. For concreteness, we can take $E = \underline{\C}^2$, the trivial complex vector bundle of rank $2$. %There are two equivalent ways to formulate Hitchin's self-duality equations.
We recall two equivalent formulations of Hitchin's self-duality equations, one in terms of connections and one in terms of Hermitian metrics.

1. Fix a Hermitian metric $h_0$ on $E$, inducing the trivial flat metric on $\det E$, and consider pairs $(\nabla,\Phi)$ where $\nabla$ is a unitary connection inducing the trivial flat connection on $\det E$, and $\Phi \in \Omega^{1,0}(M,\End_0(E))$ is a Higgs field. Let $\bar\partial^\nabla$ denote the $(0,1)$-part of $\nabla$. In a local unitary gauge, $\nabla = d + A$ for $A \in \Omega^1(U,\su(2))$ and $\bar\partial^\nabla =\bar\partial + A^{0,1}$. More globally, if $A$ is a connection $1$-form on the principal $\SU(2)$-bundle underlying $E$, we will write $\nabla = d_A$ and refer to $A$ as an $\SU(2)$-connection on $E$, as is customary in gauge theory. With this in place, the $\SU(2)$ self-duality equations read
\begin{equation}\label{eq:sd}
\bar\partial^\nabla \Phi = 0 \quad \text{and} \quad F^\nabla + [\Phi,\Phi^*]= 0 ,
\end{equation}
where the adjoint is taken with respect to $h_0$. The pair $(\bar\partial^\nabla, \Phi)$ defines an $\SL(2,\C)$-Higgs bundle on $E$. The equation is invariant under the unitary gauge group $\mathcal{G}$, i.e.\@ the group of unitary bundle automorphisms fixing $\det E$. The unitary gauge group acts by
\[
\nabla \mapsto \nabla \cdot g =g^{-1} \circ \nabla \circ g\quad \text{and} \quad \Phi \cdot g = \Phi \mapsto g^{-1}\Phi g
\]
on configurations $(\nabla, \Phi)$. The complex gauge group $\mc{G}^\C$, i.e.\@ the group of all complex-linear bundle automorphisms fixing $\det E$, acts by
\[
\nabla \mapsto \nabla \ast g = g^{-1} \circ \bar\partial^\nabla \circ g +  g^* \circ \partial^\nabla \circ (g^*)^{-1} \quad \text{and} \quad \Phi \ast g =\Phi \mapsto g^{-1}\Phi g,
\]
where $\partial^\nabla$ is the $(1,0)$-part of $\nabla$ (such that $\nabla = \bar\partial^\nabla + \partial^\nabla$). The action of $\mc{G}^\C$ does not preserve the equation, but may rather be used to transform a stable configuration $(\nabla,\Phi)$ -- that is, one such that $(\bar\partial^\nabla, \Phi)$ is stable as a Higgs bundle -- into a solution of the self-duality equation. This is one of the fundamental theorems of Higgs bundle theory, proven by Hitchin in his seminal paper \cite{HiSD}. 

2. 
Conversely, starting from the holomorphic data of a Higgs bundle, one can recover the same equations as a nonlinear condition for the harmonic metric. More precisely, 
if $(\bar\partial_E, \varphi)$ is an $\SL(2,\C)$-Higgs bundle on $E$, i.e.\@ $\bar\partial_E$ is a holomorphic structure on $E$ inducing $\det E \cong \mc{O}$ and $\bar\partial_E \varphi =0$, then we can recast Hitchin's equation as an equation for a Hermitian metric $h$ on $E$, now considered as variable. More precisely, let $\nabla^h$ denote the Chern connection of $h$ relative to $\bar\partial_E$ and $F^h$ its curvature. In a local holomorphic gauge, $\bar\partial_E = \bar\partial$ and $\nabla^h = d + h^{-1}(\partial h)$. Hitchin's equation then takes the shape
\begin{equation}\label{eq:harmmetric}
F^h + [\varphi, \varphi^{*_h}] = 0,
\end{equation}
where the adjoint is now taken with respect to the metric $h$. Again, if $(\bar\partial_E, \varphi)$ is stable, a solution exists and is called the \textit{harmonic metric}, and the triple $(\bar\partial_E, \varphi,h)$ a \textit{harmonic bundle} on $E$. If $h_0$ is the fixed metric from above, then $h( \cdot \,, \cdot) = h_0(H \, \cdot, \cdot)$ for $H$ $h_0$-Hermitian and positive definite, such that $g = H^{-1/2}$ is a complex gauge transformation satisfying $h_0 = h\cdot g$; thus, the pair
\[
\nabla = \nabla^h \ast g \quad \text{and} \quad \Phi = \varphi \ast g
\]
satisfies the self-duality equations in the form \eqref{eq:sd}.

When dealing with large-energy solutions of Hitchin's self-duality, the concept of a \emph{limiting configuration} \cite{MSWW} (or \emph{decoupled solution} of Hitchin's equation \cite{Moc}) has turned out to be useful. More precisely, let $(E,\bar\partial_E, t \varphi)$ be a ray of stable $\SL(2,\C)$-Higgs bundles, $t \in \R_+$, where, in addition, we assume that the holomorphic quadratic differential $q = \det \varphi$ has simple zeros only. We wish to describe the asymptotics of the family of harmonic metrics $h_t$ as $t \to \infty$. 

Equivalently, when fixing a background metric $h_0$ on $E$, we have $\bar\partial_E=\bar\partial_A$ for the Chern connection $A$ of $h_0$, and there will be a family of complex gauge transformations $g_t$ such that
\[
A_t = A \ast g_t \quad \text{and} \quad t\Phi_t = t \varphi \ast g_t
\]
solve the self-duality equations \eqref{eq:sd} for all $t \in \R_+$, i.e.\@ the configuration $(A_t,\Phi_t)$ solves the $t$-rescaled equation 
\begin{equation}\label{eq:t-Hitchin}
\bar\partial_A \Phi_t = 0 \quad \text{and} \quad F_{A_t} + t^2 [\Phi_t, \Phi_t^*]=0,
\end{equation}
for all $t \in \R_+$. Again, we wish to describe the asymptotics of the family of configurations $(A_t,\Phi_t)$ as $t \to \infty$. It turns out that, on the complement of the zero divisor $Z = q^{-1}(0)$, the family $(A_t,\Phi_t)$ converges to a configuration $(A_\infty, \Phi_\infty)$ defined on $M \setminus Z$ such that $\det \Phi_\infty = q$ and the limiting equation
 \[
\bar\partial_A \Phi_\infty = 0 \quad \text{and} \quad F_{A_\infty} = [\Phi_\infty, \Phi_\infty^*]=0 
\] 
is satisfied on $M \setminus Z$. Such a configuration $(A_\infty, \Phi_\infty)$ will be called \emph{a limiting configuration} for the Higgs bundle $(E,\bar\partial_E, \varphi)$. Note that any Higgs bundle $(E,\bar\partial_E, \varphi)$ such that $q = \det \varphi$ has simple zeros only is automatically stable. It is shown in \cite{MSWW} and \cite{Moc} that any such Higgs bundle admits a limiting configuration, which is unique up to gauge.

Since the point of view taken in the former is more relevant for this present work, we briefly describe how large-energy solutions of Hitchin's equation are obtained by desingularizing limiting configurations in \cite{MSWW}. Fix a Hermitian metric $h$ on $E$. Near a zero of $q$ (simple by assumption), the limiting configuration assumes a very specific shape in local coordinates, which admits a desingularization by a rotationally symmetric family of solutions of the $t$-rescaled Hitchin equation on the unit disk (called the fiducial solution, see \eqref{eqn:fid} for the precise shape). Gluing the limiting configuration to the fiducial solution using a partition of unity yields a family of approximate solutions, which may be deformed to true solutions of \eqref{eq:t-Hitchin} for sufficiently large $t$.

\subsection{Harmonic maps from surfaces to $ \dS_3$}
In this section, we explore harmonic maps from Riemann surfaces into de Sitter $3$-space from a gauge-theoretic perspective. This approach is analogous to the case of harmonic maps into hyperbolic 3-space, as discussed in  \cite{Donaldson} and \cite{OSWW}.

Recall that de Sitter $3$-space is usually defined as the one-sheeted hyperboloid
\[
\dS^3=\{ x \in \R^{1,3} : \langle x, x \rangle = 1 \,\}\subset\R^{1,3}
\]
equipped with the Lorentzian metric induced by the Minkowski inner product on $\R^{1,3}$. It has constant positive sectional curvature and has the structure of a Lorentzian symmetric space, as we will see below.

\subsubsection{The matrix model of $ \dS_3$}
We consider the {\em matrix model} of 
de Sitter 3-space
\[ \dS_3=\{g\in\mathrm{SL}(2,\C)\mid g^\dagger =g\},\]
where for $A=\begin{psmallmatrix} a&b\\c&d\end{psmallmatrix}\in\mathfrak{gl}(2,\C)$
\[A^\dagger=\begin{psmallmatrix} 1&0\\0&-1\end{psmallmatrix} A^*\begin{psmallmatrix}1&0\\0&-1\end{psmallmatrix}=\begin{psmallmatrix} \bar a&-\bar c\\-\bar b&\bar d\end{psmallmatrix}\]
is the adjoint of $A$ with respect to the standard indefinite Hermitian symmetric inner product $( \cdot \, , \cdot)$ on $\C^2$, defined by
\[((x_1,x_2),(y_1,y_2)):= x_1\bar y_1-x_2\bar y_2.\]
\begin{rem}\label{rem:twomamosdS3}
Using the identification $g\mapsto h=g\begin{psmallmatrix}1&0\\0&-1\end{psmallmatrix}$, we also use the model
\[\dS_3 = \{h\in\mathrm{GL}(2,\C)\mid \,\det(h)=-1, \, h^*=h\}=\{h\in\mathcal H\cong\R^{1,3}\mid -\det(h)=1\}\]
of Hermitian symmetric matrices of determinant $-1$ for de Sitter 3-space. This, in particular, recovers the description as the one-sheeted hyperboloid in Minkowski space.
\end{rem}

The space $ \dS_3$ is naturally equipped with a Lorentzian metric as follows: for $h\in \dS_3$
\begin{equation}\label{eq:herTmod}X,Y\in T_h \dS_3=\{X\in\mathfrak{gl}(2,\C)\mid X^\dagger=X,\, \tr(h^{-1}X)=0\}\end{equation}
we define
\begin{equation}\label{def:Gmet}G_h(X,Y):=-\tfrac{1}{2}\tr(h^{-1}X h^{-1}Y).\end{equation}

This Lorentzian metric agrees with the induced constant-curvature metric on the one-sheeted hyperboloid model of $\dS_3$ via the identification in Remark~\ref{rem:twomamosdS3}.
A direct computation shows that $\mathrm{SL}(2,\C)$ acts by isometries on $ \dS_3$ via
$$g \cdot A=gAg^\dagger,$$
for $g\in\SL(2,\C)$ and $A\in\dS_3$. Then, the stabilizer of the identity matrix in $\SL(2,\C)$ is $\SU(1,1)$.
This gives another realization of de Sitter 3-space
\[ \dS_3\cong\mathrm{SL}(2,\C)/\mathrm{SU}(1,1),\]
which is useful when studying harmonic maps from the gauge-theoretic point of view.

\begin{prop}\label{lem:dgds3}
There exists a complex rank 2 vector bundle $V\to  \dS_3$ with an indefinite metric $h$ and trivial connections $\nabla_L$ and $\nabla_R$ such that
\begin{itemize}
\item $\nabla=\tfrac{1}{2}(\nabla_L+\nabla_R)$ is metric on $(V,h);$ 
\item $i\;\mathfrak{su}_{1,1}(V)=T \dS_3$ as metric bundles, where the metric on $i\,\mathfrak{su}_{1,1}(V)$ is $-\tfrac{1}{2}\tr();$ 
\item the induced connection on $\mathfrak{su}_{1,1}(V)=T \dS_3$ by $\nabla$ is the Levi-Civita connection.
\end{itemize} 
Furthermore, $V\to  \dS_3$ admits a left $\mathrm{SL}(2,\C)$-action $L$ covering the action on $ \dS_3$ such that $\nabla_L$ and $\nabla_R$ are invariant. 
For $h=\text{id}\colon  \dS_3\to  \dS_3\subset\mathrm{SL}(2,\C)$ it holds
\[\nabla_R=\nabla_L \cdot h.\]
\end{prop}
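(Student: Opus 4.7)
Take $V = \dS_3 \times \C^2$ as the trivial rank-2 bundle, equipped with the fiberwise pairing $h_A(v,w) := v^*(JA)w$, where $J := \begin{psmallmatrix}1&0\\0&-1\end{psmallmatrix}$. The condition $A^\dagger = A$ reads $A^* = JAJ$, so $JA$ is Hermitian and $\det(JA) = -1$, giving $h$ signature $(1,1)$. Set $\nabla_L := d$ and $\nabla_R := \nabla_L \cdot h$, where $h$ now denotes the identity inclusion $\dS_3 \hookrightarrow \SL(2,\C)$ acting as a complex gauge transformation; explicitly $\nabla_R = d + A^{-1} dA$. Both connections are flat, and the asserted relation $\nabla_R = \nabla_L \cdot h$ is built into the definition.

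The identities $(A^{-1})^* = JA^{-1}J$ and $(dA)^* = J\,dA\,J$ combine to give $(A^{-1}dA)^*(JA) = J\,dA$, and substituting this into the metric-compatibility computation for $\nabla = \tfrac12(\nabla_L + \nabla_R) = d + \tfrac12 A^{-1}dA$ exactly reproduces $d(v^*JAw)$, establishing the first bullet. The map $T_A\dS_3 \to i\mathfrak{su}_{1,1}(V)|_A$, $X \mapsto A^{-1}X$, is a bijection: the conditions $X^\dagger = X$ and $\tr(A^{-1}X) = 0$ from (\ref{eq:herTmod}) translate into ``$A^{-1}X$ is $h$-Hermitian and traceless'', and the isometry with (\ref{def:Gmet}) is immediate, giving the second bullet. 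For the left $\SL(2,\C)$-action I would use $L_g(A,v) := (gAg^\dagger, (g^\dagger)^{-1}v)$: this is a left action lifting the isometric action on $\dS_3$; it preserves $h$ via the identity $((g^\dagger)^{-1})^* J g = J$; it carries constant sections to constant sections, giving invariance of $\nabla_L$; and it preserves the characterization ``$A\sigma(A) \equiv $ const'' of $\nabla_R$-flat sections, giving invariance of $\nabla_R$.

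The remaining and most delicate point is the Levi-Civita property in the third bullet. The induced connection $\widetilde\nabla$ on $T\dS_3 \cong i\mathfrak{su}_{1,1}(V)$ is metric by the first bullet and $\SL(2,\C)$-invariant by the previous step, so by invariance torsion-freeness need only be checked at $A = \id$. On $\End V$ the induced connection is $\widetilde\nabla_Z B = Z(B) + \tfrac12[A^{-1}dA(Z), B]$, and for $B = A^{-1}Y(A)$ evaluated at $A = \id$ one obtains
\[
\widetilde\nabla_Z Y\big|_{\id} = dY(Z|_{\id}) - Z|_{\id}\, Y(\id) + \tfrac12[Z|_{\id}, Y(\id)] = dY(Z|_{\id}) - \tfrac12\{Z|_{\id}, Y(\id)\}.
\]
The anticommutator $\{Z,Y\}$ is symmetric in $(Z,Y)$, so it cancels upon antisymmetrization, yielding $\widetilde\nabla_Z Y - \widetilde\nabla_Y Z|_{\id} = dY(Z|_{\id}) - dZ(Y|_{\id}) = [Z,Y]_{\mathrm{vf}}|_{\id}$ (the Lie bracket of vector fields); so the torsion vanishes at $\id$ and by invariance everywhere. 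The main obstacle is precisely this final algebraic step: the $-Z(\id)Y(\id)$ term from differentiating $A^{-1}$ must combine with the $\tfrac12[Z,Y]$ term from the connection form to produce the symmetric anticommutator, and the factor of $\tfrac12$ in the averaged connection is what makes the cancellation work --- a concrete avatar of the general fact that on the symmetric space $\SL(2,\C)/\SU(1,1)$ the canonical connection of the Cartan decomposition $\mathfrak{sl}(2,\C) = \mathfrak{su}(1,1) \oplus i\mathfrak{su}(1,1)$ is torsion-free.
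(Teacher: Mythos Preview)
Your proof is correct and uses the same underlying construction as the paper (trivial bundle $V=\dS_3\times\C^2$, $\nabla_L=d$, $\nabla_R=d+A^{-1}dA$, the tangent bundle isomorphism $X\mapsto A^{-1}X$), but it diverges from the paper in two places.

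First, the left $\SL(2,\C)$-action: the paper takes $L_g(A,v)=(gAg^\dagger,\,gv)$, whereas you take $L_g(A,v)=(gAg^\dagger,\,(g^\dagger)^{-1}v)$. Both are genuine left actions preserving $\nabla_L$ and $\nabla_R$, so either suffices for the statement. Your choice has the pleasant bonus of preserving the fibre metric $h$, which the paper's does not; on the other hand, the paper's lift acts via the \emph{same} representation of $\SL(2,\C)$ that later appears in the monodromy of $\nabla^{-1}$ (equation~(\ref{eq:left})), which streamlines the proof of Theorem~\ref{thm:hards3asso}.

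Second, the torsion-freeness argument: the paper computes $\nabla_X B-\nabla_Y A$ and $h^{-1}[X,Y]$ directly at an arbitrary point $h\in\dS_3$ and compares. You instead invoke $\SL(2,\C)$-invariance of the induced connection to reduce to $A=\id$, where the computation collapses to the observation that $-ZY+\tfrac12[Z,Y]=-\tfrac12\{Z,Y\}$ is symmetric. This is cleaner algebra, but it requires one step you pass over quickly: that the isomorphism $T\dS_3\cong i\,\mathfrak{su}_{1,1}(V)$ intertwines the pushforward action on $T\dS_3$ with the induced action $B\mapsto(g^\dagger)^{-1}Bg^\dagger$ on $\End V$. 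This does hold (both send $A^{-1}X$ to $(g^\dagger)^{-1}A^{-1}Xg^\dagger$), so the reduction to $\id$ is legitimate. The paper's direct computation is more self-contained; yours makes the symmetric-space structure do the work.
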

\begin{proof}
Define $V=\C^2\to  \dS_3$ to be the trivial rank 2 bundle, with the indefinite metric 
\[\hat h_h=(h \, \cdot \, , \cdot )=( \cdot \, ,h \, \cdot)\]
on the fiber $V_h$,
for $(.\,,.)$ being the standard indefinite inner product on $\C^2$ as above. Define $\nabla_L=d$ to be the trivial
connection, and $\nabla_R=d \cdot h$, so that $\nabla= d+\tfrac{1}{2}h^{-1}dh.$
Then, a direct computation shows 
\begin{equation}
\begin{split}
d \hat h_h(s,t)&=d(hs,t)=(dh s,t)+(h ds,t)+(hs,dt)\\
&=\tfrac{1}{2}(\,(dh s,t)+(s,dh t)\,)+ \hat h_h(ds,t)+\hat h_h(s,dt)\\
&=\hat h_h(\nabla s,t)+\hat h_h(s,\nabla t)
\end{split}
\end{equation}
for all sections $s,t$ of $V$.
Furthermore, for $h\in  \dS_3$ we have
\[i\,\mathfrak{su}_{1,1}(V_h)=\{A\in  \mathfrak{sl}(2,\C) \mid (hA \, \cdot , \cdot)=( \cdot \,,hA \, \cdot )\}=\{A\in  \mathfrak{sl}(2,\C)\mid\,  h^{-1}A^\dagger h=A\}.\]
The isomorphism with $T_h \dS_3=\{X\in\mathfrak{gl}(2,\C)\mid X^\dagger=X,\, \tr(h^{-1}X)=0\}$
is given by 
\begin{equation}\label{eq:trvitan}A\mapsto X=hA.\end{equation}
That this isomorphism is an isometry follows directly from Definition \eqref{def:Gmet}.  
Following \cite{OSWW}, we call $i\,\mathfrak{su}_{1,1}(V_h)$ the {\em trace-free model} of the tangent bundle, and the incarnation in \eqref{eq:herTmod} the {\em Hermitian model} of the tangent bundle.

Next, we show that $\nabla$ is torsion-free on the tangent bundle $i\,\mathfrak{su}_{1,1}(V)=T \dS_3.$
Consider two vector fields given by 
$X,Y\colon U\subset  \dS_3\to\mathfrak{gl}(2,\C)$ with
$\tr(h^{-1}X_h)=\tr(h^{-1}Y_h)=0$ and $X^\dagger=X,\, Y^\dagger=Y.$
Likewise, in the trace-free model, the vector fields are given by
$A=h^{-1}X,B=h^{-1}Y\colon U \subset  \dS_3\to\mathfrak{sl}(2,\C)$ with $h^{-1}A^\dagger h=A$ and $h^{-1}B^\dagger h=B$
for all $h\in U.$  We compute
\begin{equation*}
\begin{split}
\nabla_XB-\nabla_YA&=X\cdot B- Y\cdot A+\tfrac{1}{2}[h^{-1}dh(X),B]_{\mathfrak{sl}(2,\C)}-\tfrac{1}{2}[h^{-1}dh(Y),A]_{\mathfrak{sl}(2,\C)}\\
&=X\cdot B- Y\cdot A+[A,B]_{\mathfrak{sl}(2,\C)}
\end{split}
\end{equation*}
and
\begin{equation*}
\begin{split}
h^{-1}[X,Y]&=h^{-1}(\,X\cdot (hB)-Y\cdot (hA)\,)=h^{-1}(\,XB+h X\cdot B-YA-h Y\cdot A\,)\\
&=AB+ X\cdot B- BA-Y\cdot A.
\end{split}
\end{equation*}
Thus, $\nabla_XA-\nabla_YB-h^{-1}[X,Y]=0$, which is equivalent to $\nabla$ being torsion-free, and hence the Levi-Civita connection.

Consider the left $\mathrm{SL}(2,\C)$-action $L$ on $V\to  \dS_3$  given by
\begin{equation}\label{eq:left}L_g(h,v):=(ghg^\dagger,gv)\end{equation} for $g\in\mathrm{SL}(2,\C).$
Clearly, $L_g^* \nabla_L=\nabla_L$ for all $g\in\mathrm{SL}(2,\C).$
The connection $\nabla_R$ is trivial by the parallel gauge $h^{-1}$:
\[\nabla_R \cdot h^{-1}=d-dh h^{-1}+h h^{-1}dh h^{-1}=d.\]
Let $\widetilde v$ be a parallel (i.e.\@ constant) section with respect to this trivialization of $\nabla_R.$ Then,
 with $v=h^{-1}\widetilde v$ the action $L_g$ becomes
\begin{equation}\label{eq:right}
\begin{split}
L_g(h, \widetilde v)&\cong L_g(h,h v)=(ghg^\dagger, gh v)\cong (ghg^\dagger, (ghg^\dagger)^{-1}gh  \widetilde v)= (ghg^\dagger, (g^\dagger)^{-1} \widetilde v).
\end{split}
\end{equation}
Clearly, the action preserves the trivial connection $\nabla_R.$
\end{proof}

\subsubsection{Equivariant harmonic maps into $ \dS_3$}
Let $N\colon M\to  \dS_3$ be a smooth map from a Riemann surface $M$ to de Sitter 3-space, and consider its differential
\[\phi:=N^{-1}dN\in\Omega^1(M,i\, \mathfrak{su}_{1,1}(V))\subset \Omega^1(M,\mathfrak{sl}(V))\]
where we denote the pull-back of $V$ to $M$ via $N$ again by $V$. 
It satisfies the integrability equation
\begin{equation}\label{eq:inteq1}
d^\nabla \phi=0
\end{equation}
where $\nabla=N^*\nabla$ is the pull-back to $M$ of the (Levi-Civita) connection on $ \dS_3$ as in Proposition \ref{lem:dgds3}.
The Dirichlet energy of $N$ is given by
\begin{equation}\label{eq:defEN}E(N):=-\tfrac{1}{2}\int_M G(dN\wedge*dN)= \tfrac{1}{4} \int_M \tr(\phi\wedge*\phi).\end{equation}
Then, 
$N$ is harmonic, i.e.\@ a critical point of $E$, if and only if it satisfies the Euler-Lagrange equation
\begin{equation}\label{eq:inteq2}
d^\nabla*\phi=0.
\end{equation}
Furthermore, harmonic maps into de Sitter 3-space can be characterized in terms of families of flat connections:
\begin{lemma}
Let $N$ be a harmonic map into $dS^3$ with $\nabla,\phi$  as above.  Decompose 
$\tfrac{1}{2}\phi=\Phi+\Psi$ with $\Phi\in\Omega^{1,0}(M, \mathfrak{sl}(V))$ and
$\Psi\in\Omega^{0,1}(M, \mathfrak{sl}(V))$. Then it holds
\begin{itemize}
\item $\bar\partial^\nabla\Phi=0$, and equivalently $\partial^\nabla\Psi=0$;
\item $\nabla\pm\phi$ are flat.
\end{itemize}
In fact, $\nabla^\lambda=\nabla+\lambda^{-1}\Phi+\lambda\Psi$ is flat for all $\lambda\in\C^*.$
\end{lemma}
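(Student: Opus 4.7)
The plan is to write down the curvature of the loop of connections $\nabla^\lambda$ directly and show that every coefficient in its Laurent expansion in $\lambda$ vanishes. The two structure equations on $\phi$ plus the flatness of $\nabla_L, \nabla_R$ from Proposition \ref{lem:dgds3} will handle the constant and mixed terms, while the $\lambda^{\pm 2}$ pieces vanish by type on a Riemann surface.

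First I would split the structure equations by bidegree. On a Riemann surface, the Hodge star acts as $-i$ on $(1,0)$-forms and $+i$ on $(0,1)$-forms, so $*\phi = -2i\Phi + 2i\Psi$. Moreover $d^\nabla \phi$ is automatically of type $(1,1)$ (since both $(2,0)$ and $(0,2)$ pieces live in trivial bundles by dimension). Hence the integrability equation \eqref{eq:inteq1} reads $\bar\partial^\nabla \Phi + \partial^\nabla \Psi = 0$, while the harmonicity equation \eqref{eq:inteq2}, after dividing by $2i$, reads $-\bar\partial^\nabla \Phi + \partial^\nabla \Psi = 0$. Adding and subtracting gives $\bar\partial^\nabla \Phi = 0$ and $\partial^\nabla \Psi = 0$, and combined with the $(2,0)$/$(0,2)$ vanishing this upgrades to $d^\nabla \Phi = 0 = d^\nabla \Psi$.

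Next I would apply the standard formula $F(\nabla + \alpha) = F(\nabla) + d^\nabla \alpha + \alpha \wedge \alpha$ to $\nabla^\lambda = \nabla + \lambda^{-1}\Phi + \lambda \Psi$, obtaining
\begin{equation*}
F(\nabla^\lambda) = F(\nabla) + \lambda^{-1} d^\nabla \Phi + \lambda\, d^\nabla \Psi + \lambda^{-2}\, \Phi\wedge\Phi + \Phi\wedge\Psi + \Psi\wedge\Phi + \lambda^2\, \Psi\wedge\Psi.
\end{equation*}
The $\lambda^{\pm 2}$ terms vanish by type (matrix-valued $(2,0)$- and $(0,2)$-forms on a 1-complex-dimensional manifold), and the $\lambda^{\pm 1}$ terms vanish by the previous step. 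To kill the $\lambda$-independent piece I would invoke Proposition \ref{lem:dgds3}: since $\nabla_L$ and $\nabla_R$ are trivial on $V \to \dS_3$ and $\nabla = \tfrac{1}{2}(\nabla_L + \nabla_R)$, pulling back along $N$ yields flat connections differing from $\nabla$ by $\pm\tfrac{1}{2}\phi$. Writing out $F(\nabla \pm \tfrac{1}{2}\phi) = 0$, using $d^\nabla \phi = 0$ and the type vanishing of $\Phi\wedge\Phi$ and $\Psi\wedge\Psi$, collapses to exactly $F(\nabla) + \Phi\wedge\Psi + \Psi\wedge\Phi = 0$. The constant term therefore vanishes, so $F(\nabla^\lambda) \equiv 0$ for all $\lambda \in \C^*$; specializing to $\lambda = \pm 1$ recovers (up to the normalization of $\phi$) the flatness of $\nabla \pm \phi$, and the first bullet is just the bidegree splitting already noted.

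There is no serious analytic obstacle here; the proof is a short algebraic computation once the bidegrees are handled. The main source of potential confusion is bookkeeping: keeping the factors of $\tfrac{1}{2}$ arising from $\phi = 2(\Phi+\Psi)$ aligned with the $\pm i$ from the complex Hodge star, and correctly identifying $N^*\nabla_{L/R}$ with $\nabla \pm \tfrac{1}{2}\phi$ so that the constant-in-$\lambda$ piece of the curvature is actually eliminated rather than merely rewritten.
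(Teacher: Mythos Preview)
Your proof is correct and follows essentially the same route as the paper's: split $d^\nabla\phi=0$ and $d^\nabla *\phi=0$ by bidegree to get $\bar\partial^\nabla\Phi=0=\partial^\nabla\Psi$, invoke Proposition~\ref{lem:dgds3} for the flatness of $\nabla\pm\tfrac{1}{2}\phi$, and then read off the Laurent coefficients of $F^{\nabla^\lambda}$ to see they all vanish. Your remark about the factor-of-two normalization between $\nabla\pm\phi$ and $\nabla^{\pm1}=\nabla\pm\tfrac{1}{2}\phi$ is well-placed; the paper's statement of the second bullet is slightly loose in exactly this way.
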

\begin{proof}
The first part directly follows from  $d^\nabla\phi=0=d^\nabla*\phi$ using
$\Phi=\tfrac{1}{4}(\phi-i*\phi)$ and $\Psi=\tfrac{1}{4}(\phi+i*\phi).$
The second part  follows from the construction of $\nabla$ in Proposition \ref{lem:dgds3}. For the last part, we expand the curvature
\[F^{\nabla^\lambda}=\lambda^{-2}[\Phi,\Phi]+\lambda^{-1}\bar\partial^\nabla\Phi+F^\nabla+[\Phi,\Psi]+\lambda\partial^\nabla\Psi+\lambda^2[\Psi,\Psi],\]
and observe that the curvature is constant in $\lambda$ using $[\Phi,\Phi]=0=[\Psi,\Psi]$ and the first part of the lemma. Thus, by the second part, the curvature vanishes for all $\lambda$.
\end{proof}

Let $\widetilde M\to M$ be the universal cover, and $\rho\colon\pi_1( M,p_0)\to\mathrm{SL}(2,\C)$ be a representation. 
The fundamental group  $\pi_1( M,p_0)$ acts on $\widetilde M\to M$ via deck transformations from the right.
A map $N\colon\widetilde M\to  \dS_3$ is called ($\rho$-)equivariant if it satisfies 
\[\gamma^*N=\rho(\gamma^{-1})N\rho(\gamma^{-1})^\dagger\]
for all
$\gamma\in \pi_1( M,p_0).$
 The energy density
of an equivariant map is well-defined on $ M$, and we have the notion of equivariant harmonic maps.

\begin{thm}\label{thm:hards3asso}
Let $N\colon\widetilde M\to  \dS_3$ be $\rho$-equivariant and harmonic. There exists a complex rank 2 vector bundle
$W\to M$ equipped with an indefinite Hermitian inner product  $h$, a unitary connection $\nabla$ with respect to $h$, a Higgs field $\Phi$, and its
$h$-adjoint anti-Higgs field $\Psi$
such that
\begin{enumerate}
\item $\nabla^\lambda:=\nabla+\lambda^{-1}\Phi+\lambda \Psi$
is flat for all $\lambda\in\C^*;$ 
\item $\nabla^{-1}$ and $\nabla^{1}$ have monodromy representation $\rho$ and $(\rho^{-1})^\dagger;$
\item $N$ is a gauge transformation between $\nabla^{-1}$ and $\nabla^{1}$ on $\widetilde M$.
\end{enumerate}
Conversely, a family of flat connections as in (1) defines a harmonic map $N\colon\widetilde M\to  \dS_3$
via (3), which is equivariant with respect to $\rho$ in (2).
\end{thm}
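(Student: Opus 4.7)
The plan is to construct the data $(W, h, \nabla, \Phi, \Psi)$ by pulling back the geometric structure on $\dS_3$ from Proposition \ref{lem:dgds3} via $N$ and descending it to $M$ using the equivariance; the flatness of $\nabla^\lambda$ is then exactly the content of the preceding lemma, and the two distinguished connections $\nabla^{\pm 1}$ will be identified with the pulled-back trivial connections $\nabla_L, \nabla_R$.

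First I would descend the geometric data. The $\rho$-equivariance $\gamma^* N = \rho(\gamma^{-1}) N \rho(\gamma^{-1})^\dagger$, combined with the $\SL(2,\C)$-action $L_g$ on $V$ covering the isometric action on $\dS_3$ from Proposition \ref{lem:dgds3}, endows $N^*V \to \widetilde M$ with a $\pi_1(M)$-equivariant structure; quotienting yields the bundle $W \to M$. Since the indefinite metric, the trivial connections $\nabla_L, \nabla_R$, and their unitary average $\nabla$ are all $\SL(2,\C)$-invariant, they all descend to $W$, providing the data $h$ and the unitary connection $\nabla$. Setting $\Phi$ and $\Psi$ to be the $(1,0)$ and $(0,1)$ parts of $\tfrac{1}{2} N^{-1} dN$, the preceding lemma directly gives the flatness of $\nabla^\lambda = \nabla + \lambda^{-1}\Phi + \lambda\Psi$, proving (1). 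The adjointness $\Psi = \Phi^{*_h}$ follows from $\phi = N^{-1} dN$ taking values in $i\,\mathfrak{su}_{1,1}(V)$, i.e.\ in the $h$-self-adjoint endomorphisms, so splitting by type produces a Higgs field and its $h$-adjoint anti-Higgs field.

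Next I would identify the fibers at $\lambda = \pm 1$. From $\nabla_R = \nabla_L \cdot h$ in Proposition \ref{lem:dgds3} one has $\nabla_R - \nabla_L = h^{-1} dh$, and pulling back by $N$ gives $N^*\nabla_R - N^*\nabla_L = N^{-1} dN = 2(\Phi + \Psi)$ together with $\nabla = \tfrac{1}{2}(N^* \nabla_L + N^* \nabla_R)$. Hence $\nabla^{-1} = N^* \nabla_L$ and $\nabla^{+1} = N^* \nabla_R$, and the identity $\nabla_R = \nabla_L \cdot h$ itself pulls back to $\nabla^{+1} = \nabla^{-1} \cdot N$ on $\widetilde M$, which is (3). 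For (2), in the standard trivialization of $V = \C^2 \times \dS_3$ the connection $\nabla_L$ is trivial and the induced fiber action of $\gamma$ is $v \mapsto \rho(\gamma^{-1}) v$, giving $\nabla^{-1}$ monodromy $\rho$; in the $h^{-1}$-trivialization parallelizing $\nabla_R$, the computation at the end of the proof of Proposition \ref{lem:dgds3} shows the fiber action becomes $\widetilde v \mapsto \rho(\gamma^{-1})^{-\dagger} \widetilde v$, yielding monodromy $(\rho^{-1})^\dagger$ for $\nabla^{+1}$.

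For the converse, I would pick global parallel frames $s_L$ of $\nabla^{-1}$ and $s_R$ of $\nabla^{+1}$ on $\widetilde M$ and define $N$ by $s_R = s_L \cdot N^{-1}$, so that $N$ is the gauge transformation from $\nabla^{-1}$ to $\nabla^{+1}$ demanded by (3). Unitarity of $\nabla$ for $h$ together with $\Psi = \Phi^{*_h}$ implies that $\nabla^{+1}$ is the $h$-adjoint of $\nabla^{-1}$, which, after normalizing the frames to have trivialized determinant, forces $N$ to be $h$-self-adjoint with $\det N = 1$, i.e.\ $N \in \dS_3$. Equivariance is read off from the monodromies of the two frames, and harmonicity of $N$ is the converse direction of the preceding lemma applied to the recovered decomposition $N^{-1} dN = 2(\Phi + \Psi)$. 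The main technical obstacle throughout will be the careful bookkeeping of the $\dagger$-adjoint in the indefinite setting: one must check that $\Psi = \Phi^{*_h}$ translates precisely to $N$ being Hermitian symmetric in the $\dagger$-sense rather than the $*$-sense, so that the image lands in $\dS_3$ and not in the wrong real form of $\SL(2,\C)$.
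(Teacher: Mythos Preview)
Your proposal is correct and follows essentially the same route as the paper: pull back $V$, $\hat h$, $\nabla_L$, $\nabla_R$ along $N$, identify $\nabla^{\pm 1}$ with $N^*\nabla_L$ and $N^*\nabla_R$, invoke the preceding lemma for flatness of $\nabla^\lambda$, descend to $W = N^*V/\pi_1(M)$ via the $L_{\rho(\gamma^{-1})}$-action, and read off the monodromies from \eqref{eq:left} and \eqref{eq:right}. You supply more detail on the converse than the paper's ``by reversing the arguments''; one small caution is that Proposition~\ref{lem:dgds3} only asserts $\SL(2,\C)$-invariance of $\nabla_L$ and $\nabla_R$, not of the metric $\hat h$, so the descent of $h$ to $W$ deserves a word more care (the paper itself is equally terse here).
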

\begin{proof}
Consider the equivariant map $N\colon\widetilde  M\to  \dS_3$, and take the pull-backs
$N^*V,N^*h,N^*\nabla$
together with the differential $\phi=2\Phi+2\Psi=N^{-1}dN.$ Then,
\[\nabla^\lambda=N^*\nabla+\lambda^{-1}\Phi+\lambda\Psi\]
is flat for all $\lambda$ as a direct consequence of the previous lemma. 
Moreover, by construction we have
$\nabla^{-1}=N^*\nabla_L$ and $\nabla^{1}=N^*\nabla_R$,
 as well as
$\nabla^{-1}.N=\nabla^1.$
The fundamental group acts on $N^{*}V$ $\to\widetilde M$ via
\[(p,v).\gamma=(\,p.\gamma,\rho(\gamma^{-1})v\,)\]
for $\gamma\in\pi_1( M,p_0)$. This action is compatible with $\nabla^{-1}=N^*\nabla_L$ and $\nabla^1=N^*\nabla_R,$
which implies that
$\nabla^{-1}$ and $\nabla^1$ are well-defined on the complex rank-2 Hermitian vector bundle
\[W:= N^*V/\pi_1( M,p_0)\to M\]
with its induced indefinite metric.
Then, also the interpolation
$\nabla^\lambda$ is well-defined on $W\to M.$
Finally, $\nabla^{-1}$ and $\nabla^1$ have monodromy $\rho$ and $(\rho^{-1})^\dagger$ by
\eqref{eq:left} and \eqref{eq:right}, respectively.

The converse direction follows by reversing the arguments.
\end{proof}
In particular, the data $(W,h,\nabla,\Phi,\Psi)$ provided by Theorem~\ref{thm:hards3asso} is an $\SU(1,1)$-self-duality solution, and the family $\nabla^\lambda$ is the associated family of flat $\SL(2,\C)$-connections. This is the $\SU(1,1)$-analogue of the standard non-abelian Hodge correspondence for $\SU(2)$ harmonic maps into $\H^3$.

\subsection{$\lambda$-connections and the Deligne--Hitchin moduli space}
\label{sec:lambdacon}
We recall the twistor description of the Hitchin moduli space in terms of the Deligne--Hitchin moduli space, and explain how real holomorphic sections correspond to solutions of the $\SU(2)$ or $\SU(1,1)$ self-duality equations (possibly with indefinite metric).
Additionally, we briefly review the relationship between families of flat connections,  sections of the Deligne--Hitchin moduli space, and harmonic maps into associated symmetric spaces. This material is based on \cite{Simpson}, \cite{BHR}, \cite{BeHR}.

Families of flat connections are closely linked to harmonic maps into symmetric spaces. For the purposes of this work, only the two (Riemannian and Lorentzian) symmetric spaces arising as quotients of $\SL(2,\C)$ by the compact form $\SU(2)$ and the split real form $\SU(1,1)$ will be of relevance. The relationship between flat connections and equivariant harmonic maps is perhaps most easily visible in the following set-up. Suppose that $M$ is a Riemann surface, and consider the trivial vector bundle $E = \underline{\C^2}$ over $M$ equipped with the standard Hermitian metric $h_0$.

Suppose that $(\nabla,\Phi,h_0)$ is a solution of the self-duality equation, i.e.,
\[\bar\partial^\nabla\Phi=0 \quad\text{and}\quad F^\nabla=-[\Phi,\Phi^*].\]
Note that these two equations automatically imply that $\nabla+\Phi+\Phi^*$ is flat, but even more is true:
for every $\lambda\in\C^*$, the connection
\begin{equation}\label{eq:associated-family}\nabla^\lambda=\nabla+\lambda^{-1}\Phi+\lambda\Phi^*\end{equation}
is flat. In fact, for $\lambda\in \mathbb S^1\subset\C^*$, $(\nabla,\lambda^{-1}\Phi,h_0)$ is another solution of the self-duality equations, with flat connection $\nabla^\lambda$. As the curvature of $\nabla^\lambda$ depends holomorphically on $\lambda$, flatness for all $\lambda\in\C^*$ follows.
Furthermore, the family of flat connections satisfies the following reality condition:
\[
\left(\nabla^{-\ol{\lambda}^{-1}} \right)^* = \nabla^\lambda \quad \text{for all } \lambda\in\C^*.
\]

On the other hand, if $\hat\nabla^\lambda=\hat\nabla+\lambda^{-1}\hat\Phi+\lambda \hat \Psi$ 
is a family of flat connections with $\hat\Phi\in\Omega^{1,0}(M,\mathfrak{sl}(2,\C))$, and
instead satisfies the reality condition
\begin{equation}\label{eq:sec-real-cond}
\left(\hat\nabla^{-\ol{\lambda}^{-1}} \right)^* = \hat\nabla^\lambda \cdot g,
\end{equation}
where $g = i \delta$ and $\delta = \begin{psmallmatrix} 1 & 0 \\ 0 & -1 \end{psmallmatrix}$, then $(\hat\nabla, \hat\Phi)$ solves the $\SU(1,1)$ self-duality equation with respect to the standard indefinite Hermitian metric $\hat h_0:=\langle \cdot \, , \delta \,\cdot \rangle$:
$\hat \nabla$ is unitary with respect to $\hat h_0,$ $\hat\Psi=\hat\Phi^\dagger$ is the adjoint of $\hat\Phi$ with respect to $\hat h_0$, and
\[\bar\partial^{\hat\nabla}\hat\Phi=0 \quad\text{and}\quad F^{\hat\nabla}=-[\hat\Phi,\hat\Phi^\dagger].\]

Similarly, as for the harmonic maps to $\mathbb H^3$, for a family of parallel frames $\hat F^\lambda : \wt{M} \to \SL(2,\C)$, $\lambda \in \mathbb S^1 \subset \C^*$, for $\hat\nabla^\lambda$, the maps $f_\lambda = \left(\hat F^{-\overline{\lambda}^{-1}} \right)^* \hat F^\lambda$ take values in $\{ A \in \SL(2,\C) : A = \delta^{-1} A^* \delta \}$, i.e.\@ the matrix model of de Sitter 3-space $\SL(2,\C)/\SU(1,1)$, and are harmonic. Note that $\delta^{-1} A^* \delta$ is the adjoint with respect to the indefinite metric $\langle \cdot \, , \delta \, \cdot \rangle$.

The Deligne--Hitchin moduli space is a complex analytic construction of the twistor space of the Hitchin moduli space of solutions to the self-duality equations. The construction is due to Deligne, and in its original form may be found in \cite{Simpson}. Our presentation is based on \cite{BHR}. It relies on the notion of $\lambda$-connections, which may be seen as an interpolation between Higgs pairs and flat connections.

A $\lambda$-connection on a complex vector bundle $E$ over a Riemann surface $M$ is a triple $(\ol{\partial}^E, D, \lambda)$ consisting of a complex number $\lambda$, a holomorphic structure $\ol{\partial}^E$, and a $C^\infty$ differential operator $D : \Omega^0(M,E) \to \Omega^{1,0}(M,E)$ satisfying the $\lambda$-product rule
\[
D (f s) = \lambda \partial f \otimes s + f Ds
\]
and the integrability condition $D \ol{\partial}^E + \ol{\partial}^E D = 0$. We will also assume throughout that $\ol{\partial}^E$ induces the standard holomorphic structure on $\det E = \underline{\C}$. Likewise, we assume that $D$ induces on $\det E$ the operator $\lambda \partial$.

For $\lambda = 0$, the operator $D$ is a zeroth-order differential operator and can therefore be identified with an endomorphism-valued $(1,0)$-form, and is therefore the same as a $\SL(2,\C)$-Higgs pair on $E$.
For a $\lambda$-connection with $\lambda \neq 0$, the operator $\overline{\partial}^E + \lambda^{-1} D$ defines a connection on $E$, and due to the integrability condition on $\overline{\partial}^E$ and $D$, this connection turns out to be a flat $\SL(2,\C)$-connection on $E$.

In the case of $E= \underline{\C^2}$, the group of complex gauge transformations $\mc{G}^{\C}$ is $C^\infty(M, \SL(2,\C))$. Elements of this group act on $\lambda$-connections via
\[
(\ol{\partial}^E, D, \lambda) \cdot g = \left( g^{-1} \circ \ol{\partial}^E \circ g, g^{-1} \circ D \circ g, \lambda \right).
\]
To get a well-behaved moduli space, we consider a (poly-)stability condition. A $\lambda$-connection $(\ol{\partial}^E, D, \lambda)$ is called {\em stable} if any $D$-invariant holomorphic line subbundle $F \subset (E, \ol{\partial}^E)$ satisfies $\deg F < 0$. It is {\em poly-stable} if $E$ splits as a direct sum of stable $\lambda$-connections whose underlying holomorphic vector bundles have degree $0$. Note that for $\lambda\neq0$, stability of a $\lambda$-connection is equivalent to irreducibility of the associated flat connection.

The moduli space of (poly-stable) $\lambda$-connections
\[
\mc{M}_{\operatorname{Hod}} = \mc{M}_{\operatorname{Hod}}(M) = \left\{ (\ol{\partial}^E, D, \lambda) \text{ poly-stable } \lambda\text{-connection} \right\} / \mc{G}^{\C}
\]
is called {\em Hodge moduli space}. This is a complex space equipped with a natural holomorphic fibration $\pi : \mc{M}_{\operatorname{Hod}} \to \C$, $\left[(\ol{\partial}^E, D, \lambda)\right] \mapsto \lambda$.

The Deligne--Hitchin moduli space extends this fibration from $\C$ to $\bb{CP}^1$ by gluing the Hodge moduli space over $M$ with the Hodge moduli space over the conjugate Riemann surface $\ol{M}$. More precisely, we define
\[
\mc{M}_{\operatorname{DH}} = \left(\mc{M}_{\operatorname{Hod}}(M) \sqcup \mc{M}_{\operatorname{Hod}}\left(\overline{M}\right) \right)/_{\sim},
\]
where the equivalence relation is given by
\[
\left(\ol{\partial}^E, D, \lambda\right)_M \sim \left(\lambda^{-1} D, \lambda^{-1} \ol{\partial}^E, \lambda^{-1} \right)_{\ol{M}}.
\]
By the non-abelian Hodge correspondence \cite{HiSD}, this space is biholomorphic to the twistor space of the Hitchin moduli space
\[
\mc{T}\left(\mc{M}_{\operatorname{Hit}}\right) = \mc{M}_{\operatorname{Hit}} \times \bb{CP}^1
\]
equipped with the complex structure
\[
I_{x,\lambda} = \left( \frac{1-|\lambda|^2}{1+|\lambda|^2} I + \frac{\lambda + \ol{\lambda}}{1+|\lambda|^2} J - \frac{i(\lambda - \ol{\lambda})}{1+|\lambda|^2} K \right) \oplus i,
\]
where $(I,J,K)$ denotes the hyperk\"ahler triple at $x \in \mc{M}_{\operatorname{Hit}}$ and $i$ denotes the standard complex structure on $\bb{CP}^1$. The fibre-preserving biholomorphism is given by
\[
\left([\nabla, \Phi], \lambda \right) \mapsto \left[ (\ol{\partial}^\nabla+  \lambda \Phi^*, \lambda \partial^\nabla + \Phi, \lambda) \right],
\]
i.e.\@ by the associated family of $\lambda$-connections \eqref{eq:associated-family}.
Any twistor space carries an antiholomorphic involution covering the antipodal map $\lambda \mapsto -\overline{\lambda}^{-1}$. On $\mc{T}\left(\mc{M}_{\operatorname{Hit}}\right)$, this map is simply given by
\[
\mc{T}\left(\mc{M}_{\operatorname{Hit}} \right) \to \mc{T}\left(\mc{M}_{\operatorname{Hit}} \right), \quad \left([\nabla, \Phi], \lambda\right) \mapsto \left([\nabla, \Phi], -\overline{\lambda}^{-1}\right).
\]
This involution $\tau : \mc{M}_{\operatorname{DH}} \to \mc{M}_{\operatorname{DH}}$ can also be written in terms of $\lambda$-connections as follows:
\[
\left[(\ol{\partial}^E, D, \lambda)_M \right] \mapsto \left[((\ol{\partial}^E)^*, -D^*, -\ol\lambda)_{\ol M }\right]= \left[ \left(\ol{\lambda}^{-1} D^*, -\ol{\lambda}^{-1} \left(\ol{\partial}^E\right)^*, - \ol{\lambda}^{-1} \right)_M\right].
\]

A section $s$ is called {\em stable}, if $s(\lambda)$ is stable for every $\lambda \in \bb{CP}^1$.
A section $s : \bb{CP}^1 \to \mc{M}_{\operatorname{DH}}$ is called {\em $\tau$-real} or {\em real}, if \[s\left(-\ol{\lambda}^{-1}\right) = \tau(s(\lambda))\quad \forall \lambda\in\C P^1.\]

Suppose that a real section $s$ lifts over $\C$ to a family of stable $\lambda$-connections $(\ol{\partial}^\lambda, D^\lambda, \lambda)$. In this case, we can associate a family of flat connections $\nabla^\lambda = \ol{\partial}^\lambda + \lambda^{-1} D^\lambda$, and the $\tau$-reality condition becomes
\begin{equation}\label{eq:realitycon}
\left(\nabla^{-\overline{\lambda}^{-1}}\right)^* = \nabla^\lambda \cdot g(\lambda)
\end{equation}
for some family of gauge transformations $g(\lambda)$. 
If the lift is given by a Laurent polynomial of degree $1$ in $\lambda$, and the section and the Higgs field at $\lambda=0$  are stable,
this is (gauge-)equivalent to precisely one of the two reality conditions we saw earlier, see Theorem \ref{thm:posnegsecLift} below.
In fact, by \cite{BHR}, $g(\lambda)=g$ is then constant, and the reality condition implies $(g^*)^{-1}g=\pm\mathrm{Id}.$ If $(g^*)^{-1}g=\mathrm{Id},$ the section is called {\em negative} (following different conventions in \cite{BHR}), and if $(g^*)^{-1}g=-\mathrm{Id},$ the section is called {\em positive}.
In principle, $\tau$-negative sections correspond to $\SU(2)$-harmonic bundles, and
$\tau$-positive sections correspond to $\SU(1,1)$-harmonic bundles equipped with an indefinite Hermitian metric; the former may have singularities and underlie our construction of transgressive harmonic maps.
\begin{rem}
A word of warning: even if a section is stable, there can be a lift which is not stable at $\lambda=0.$ The simplest example for this phenomenon is well-known:
Let $\nabla$ be the oper whose projective structure is the uniformization of the compact Riemann surface $M$. Then, $\lambda\mapsto (\bar\partial^\nabla, \lambda \partial^\nabla,\lambda)$ is a lift of the section associated to the corresponding solution of the self-duality equations, but it is
unstable at $\lambda=0.$ This lift is gauge-equivalent to the standard lift \eqref{eq:associated-family} by a family of gauge transformations which does not extend holomorphically to $\lambda=0$ as an isomorphism. Moreover, this constant lift 
is real with respect to the second reality condition \eqref{eq:sec-real-cond}. This means that, by choosing different lifts, we can switch between the $\mathrm{SU}(2)$ and $\mathrm{SU}(1,1)$ theories. 
We will generalize this observation in the next section.
\end{rem}

\begin{rem}
In the case of stable sections, the only possibility to switch between the $\mathrm{SU}(2)$ and $\mathrm{SU}(1,1)$ theories is by gauging with $\lambda$-dependent gauge transformations which become singular at $\lambda=0.$
In the case of sections which are not stable, i.e.\@ which admit reducible connections in their associated family $\nabla^\lambda$ for certain $\lambda_0\in\C^*$, $\lambda$-dependent gauge transformations which become singular at $\lambda_0$
can be used to switch types, see \cite[Lemma 4.3]{BHS} for examples in the parabolic setup over the punctured sphere.
\end{rem}

We will primarily be interested in irreducible, holomorphic sections of $\mc{M}_{\operatorname{DH}}$. In this case, \cite[Lemma 2.2]{BHR} ensures that there exists a holomorphic lift over $\C$ which is stable at $\lambda=0$. 
An irreducible section $s : \bb{CP}^1 \to \mc{M}_{\operatorname{DH}}$ will be called {\em admissible} if it has a lift of the form
\begin{equation}\label{eq:admissiblelift}
\lambda \mapsto \left( \ol{\partial}^E + \lambda \Psi, \lambda \partial^E + \Phi, \lambda \right)
\end{equation}
such that $(\ol{\partial}^E,\Phi)$ is a stable Higgs pair on $M$ and $(\partial^E,\Psi)$ is a stable Higgs pair on $\ol{M}$.
The associated family of flat connections is then of the form $\lambda^{-1} \Phi + \partial^E + \ol{\partial}^E + \lambda \Psi$, i.e.\@ its Laurent series is collapsed as in the example we discussed at the beginning of the section. By \cite{HH} and \cite{Hgraft}, there exist irreducible real sections which admit a lift of the form \eqref{eq:admissiblelift} but which are not admissible.

We have the following theorem, which improves \cite[Theorem 3.6 and Lemma 3.9]{BHR}.
\begin{thm}\label{thm:posnegsecLift}
Suppose $s : \bb{CP}^1 \to \mc{M}_{\operatorname{DH}}$ is a holomorphic, admissible  $\tau$-real section of the fibration $\mc{M}_{\operatorname{DH}} \to \bb{CP}^1$.
If $s$ is $\tau$-negative, it is a twistor line, i.e.\@ it is given by the solution of a $\mathrm{SU}(2)$ self-duality equation for some
Hermitian metric. If $s$ is $\tau$-positive, it is given by the solution of a $\mathrm{SU}(1,1)$ self-duality equation for some
indefinite Hermitian metric.
\end{thm}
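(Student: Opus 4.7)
The strategy is to build a Hermitian pairing $h$ on $E$ directly from the constant gauge transformation $g\in\SL(2,\C)$ provided by \cite{BHR}, and to determine its signature from $\det g = 1$ together with $(g^*)^{-1}g = \pm\mathrm{Id}$. Starting from the admissible lift \eqref{eq:admissiblelift}, the associated family of flat connections is
\[
\nabla^\lambda = \nabla + \lambda^{-1}\Phi + \lambda\Psi, \qquad \nabla := \bar\partial^E+\partial^E.
\]
Fix a background Hermitian metric $h_0$ trivialising $\det E$; all adjoints $*$ will be taken with respect to $h_0$, and I write $\nabla = d + A$ in a local $h_0$-unitary frame. By the cited result, $\tau$-reality gives $(\nabla^{-\bar\lambda^{-1}})^* = \nabla^\lambda\cdot g$ for a constant $g$ with $g^* = \epsilon g$, $\epsilon\in\{\pm 1\}$ distinguishing the two types. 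Matching the coefficients of $\lambda^{-1}$, $\lambda^{+1}$ and $\lambda^0$ yields
\[
\Psi^* = g^{-1}\Phi g,\qquad \Phi^* = g^{-1}\Psi g,\qquad dg = -gA^* - Ag.
\]

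Since $\det g = 1$, in the $\tau$-negative case $g$ is Hermitian with two real eigenvalues of the same sign; after replacing $g$ by $-g$ (which leaves $\nabla^\lambda\cdot g = g^{-1}\nabla^\lambda g$ unchanged and preserves $g^*=g$), $g$ becomes positive definite. In the $\tau$-positive case $\tilde g := ig$ is Hermitian with $\det\tilde g = -1$, so its two real eigenvalues multiply to $-1$, forcing signature $(1,1)$. Define the Hermitian form
\[
h(s,t) := h_0(\tilde G s,t),\qquad \tilde G := \begin{cases} g^{-1}, & \epsilon = +1,\\ -ig^{-1}, & \epsilon = -1,\end{cases}
\]
which is positive definite in the $\tau$-negative case and of signature $(1,1)$ in the $\tau$-positive case.

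Finally, the goal is to verify that $(\bar\partial^E,\Phi,h)$ is a harmonic bundle, upon which the theorem follows. A direct computation using $dg = -gA^* - Ag$ shows $d\tilde G = A^*\tilde G + \tilde G A$ in both cases, which is exactly the condition that $\nabla$ be $h$-unitary; together with $\nabla^{0,1} = \bar\partial^E$ this identifies $\nabla$ with the Chern connection of $(\bar\partial^E, h)$. Similarly, the general relation $\Phi^{*_h} = \tilde G^{-1}\Phi^*\tilde G$ between $h$- and $h_0$-adjoints, combined with the identity $\Psi = g\Phi^* g^{-1}$ from the first step, gives $\Psi = \Phi^{*_h}$. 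Hence $\nabla^\lambda = \nabla + \lambda^{-1}\Phi + \lambda\Phi^{*_h}$, and flatness of this family for every $\lambda$ expands into $\bar\partial^E\Phi = 0$ and $F^\nabla + [\Phi,\Phi^{*_h}] = 0$, i.e.~the Hitchin equations for $(\bar\partial^E,\Phi,h)$. In the $\tau$-negative case this is an $\SU(2)$ harmonic bundle, so $s$ is the twistor line attached to it, and in the $\tau$-positive case it is an $\SU(1,1)$ harmonic bundle with indefinite metric.

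The main obstacle is the correct interpretation of the $\lambda^0$ part of the reality condition: since $g$ is not $h_0$-unitary, the operation $\nabla\mapsto g^{-1}\nabla g$ is not a standard unitary gauge action, and translating the identity $\nabla^{*_{h_0}}=g^{-1}\nabla g$ into $h$-unitarity of $\nabla$ must proceed through the explicit computation $d\tilde G = A^*\tilde G + \tilde G A$. Once this bookkeeping is settled, the remaining steps are linear-algebraic.
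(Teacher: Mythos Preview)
Your proof is correct and follows essentially the same approach as the paper: extract from \cite{BHR} the constant gauge $g$ with $g^*=\pm g$, build the Hermitian form from it, read off the signature from $\det g=1$, and verify that $\nabla$ is unitary and $\Psi=\Phi^{*_h}$ so that flatness of $\nabla^\lambda$ yields the self-duality equations.

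A few minor differences worth noting. The paper defers the $\tau$-negative case entirely to \cite[Theorem~3.6]{BHR} and, in the $\tau$-positive case, asserts the unitarity of $\nabla$ and the hermiticity of $\Phi+\Psi$ and then invokes the converse direction of Theorem~\ref{thm:hards3asso}; you instead give a uniform, self-contained verification in both cases via the identity $d\tilde G = A^*\tilde G + \tilde G A$ for $\tilde G = \pm i^{k}g^{-1}$. This is cleaner and makes the linear algebra fully transparent. Note also that you use $\tilde G \propto g^{-1}$ whereas the paper uses $h=ig$; the inversion is what makes your unitarity computation $d(g^{-1})=A^*g^{-1}+g^{-1}A$ go through directly from $dg=-gA^*-Ag$, so your choice is the more natural one for this bookkeeping.
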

\begin{proof}
The first case is proven in \cite[Theorem 3.6]{BHR}. For the second case of $\tau$-positive holomorphic sections, we have to construct an
appropriate indefinite Hermitian metric. Let $\nabla^\lambda=\lambda^{-1}\Phi+\nabla+\lambda\Psi$ be the lift, and $g(\lambda)$ be a gauge
such that \eqref{eq:realitycon} holds. As in \cite{BHR}, $g$ must be $\lambda$-independent because $s$ is admissible. And because $s$ is positive, we have $(g^*)^{-1}g=-\mathrm{id},$ or $g=-g^*.$ Consider $ h=ig,$ which is then Hermitian symmetric and of signature $(1,1)$, as
its determinant is $-1$. This is a map into de Sitter 3-space $ \dS_3$, the latter space being diffeomorphic to the product of the 2-sphere with an open interval.
Therefore, $h$ has a topological degree, as discussed in Section 3 of \cite[BHR]{BHR}, where the case of degree $0$ is specifically addressed. 
We define the corresponding Hermitian inner product $\hat h$ of signature $(1,1)$ by multiplying the standard positive definite inner product on the eigenlines of $h$ with the respective eigenvalue of $h$. Then $\nabla$ is unitary with respect to $\hat h$, and $\Phi+\Psi$ is Hermitian symmetric with respect to $\hat h$.
Thus, by the converse direction of Theorem \ref{thm:hards3asso}, $\hat h$ is a harmonic indefinite Hermitian metric for which $\nabla^\lambda$ is the associated family. Consequently, we obtain a solution to the $\mathrm{SU}(1,1)$ self-duality equations.
\end{proof}

We will construct solutions of the $\SU(1,1)$ self-duality equations for which the Higgs pair is not stable. In particular, we are not in the exact situation described in the above theorem. We have not been able to drop the condition in Theorem \ref{thm:posnegsecLift} that the Higgs fields of the section are stable. However, admissible $\tau$-positive sections have been constructed in \cite[Theorem 3.4]{BHR}.

\section{Hitchin self-duality equations and transgressive harmonic maps}
\label{sect:geometric_dualities}

\subsection{Oblique Gau\ss\@ maps for harmonic maps into $\H^3$}
We will introduce in Definition \ref{defn:transgressive} the class of transgressive harmonic maps into the conformal 3-
sphere. Such maps are allowed to 'intersect infinity' (i.e. the equatorial sphere) in a controlled, orthogonal way. Their generalized 
Gau\ss\@ map then extends as a globally defined harmonic map into de Sitter 3-space, which may drop rank precisely along the 
boundary curve. This will be the geometric counterpart of certain $\SU(1,1)$-type solutions of the self-duality equations constructed 
later.

Minimal surfaces in hyperbolic $3$-space $\H^3$ which intersect the boundary at infinity orthogonally have 
been studied in \cite{BaBo, HH}. Their Gau\ss\@ map extends smoothly through their singularity set, and can be interpreted as the conformal Gau\ss\@ map (see \cite{Blaschke,BPP-Schwarzian}) of their extension to the conformal $3$-sphere. Existence of non-conformal harmonic maps into hyperbolic $3$-space
which intersect the boundary orthogonally follows from \cite{Hgraft}. 
This motivates a generalization of the Gau\ss\@ map to harmonic (not necessarily conformal) maps into $\H^3$.

In the following definition, we view $\H^3_\pm$ as $\{ x \in \R^{1,3} : \langle x, x \rangle = -1 \,,\, \pm x_0>0\}\subset\R^{1,3}$
and
$\dS_3=\{ x \in \R^{1,3} : \langle x, x \rangle = 1 \,\}\subset\R^{1,3}$, see Remark \ref{rem:twomamosdS3}.
\begin{defn}
  \label{defn:hyperbolic-oblique-gauss-map-1}
  Let $M$ be a Riemann surface and $f : M \to \H^3_\pm$ an immersion. By $Q \in \Gamma(K^2)$ we denote the {\em hyperbolic Hopf differential} $Q = \left(f^*\langle \cdot \, , \cdot \rangle\right)^{2,0} = \langle \partial f, \partial f \rangle$.
Assume that there exists
 % Let there also be an 
 $\omega \in \Omega^{1,0}(M)$ such that $\omega^2 = Q$.

  A map $N : M \to  \dS_3$ is the {\em oblique hyperbolic Gau\ss\@ map} of $f$ (with respect to $\omega$), if
  \begin{enumerate}
  \item $N(x) \in T_{f(x)} \H^3_\pm$ for every $x \in M$,
  \item $\langle df, N \rangle = \omega + \ol{\omega}$,
  \item $\langle N, N \rangle = 1$,
  \item $N$ is compatible with the orientation of  $M$, see \eqref{eq:defOrN} below.
  \end{enumerate}
\end{defn}
Note that in general a quadratic differential does not admit a square root, e.g.\ if $Q$ is holomorphic with simple zeros. But when $Q$ is holomorphic, we can always pass to a --- possibly branched --- covering where such a square root exists. This covering is called the {\em Hitchin curve} (or {\em spectral curve}). We will implicitly allow ourselves to pass to this branched cover when necessary.

The first condition on $N$ constrains $N(x)$ to lie in a $3$-dimensional Euclidean subspace of $\R^{1,3}$. The second condition further picks out an affine line in this subspace. The third equation is a quadratic equation on this affine line. The following proposition guarantees that this quadratic equation always has two distinct solutions, and exactly one of them represents the orientation of $M:$ if $z=x+iy$ is a local holomorphic coordinate on $M$, we impose the condition
\begin{equation}\label{eq:defOrN}\det(f,N,f_x,f_y)>0.\end{equation}

The next proposition shows that this generalised Gau\ss\@ map exists and is unique, once a square root $\omega$ of the Hopf differential is fixed.
\begin{prop}\label{pro:existN}
  For an immersion $f : M \to \H^3_\pm$ on a connected Riemann surface and a square root $\omega \in \Omega^{1,0}(M)$ of the hyperbolic Hopf differential, there exists a unique oblique hyperbolic Gau\ss\@ map $N$. If $f$ is smooth, then $N$ is smooth.
\end{prop}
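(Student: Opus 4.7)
The plan is to fix a point $x \in M$ and solve conditions (1)--(4) pointwise, then to argue smoothness from the smooth dependence of each step on the data. Since $\H^3_\pm \subset \R^{1,3}$ is a Riemannian submanifold, $T_{f(x)}\H^3_\pm$ is a positive-definite $3$-plane (the orthogonal complement of the timelike vector $f(x)$), and the immersive hypothesis makes $df_x(T_xM)$ a positive-definite $2$-plane inside it. I would decompose any $N$ satisfying (1) as $N = N^\top + \nu$ with $N^\top \in df_x(T_xM)$ and $\nu$ in the $1$-dimensional Euclidean orthogonal complement within $T_{f(x)}\H^3_\pm$.

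Condition (2) then becomes a linear equation for $N^\top$ with a unique solution: letting $g = f^*\langle\cdot,\cdot\rangle$ and letting $\tilde N^\top \in T_xM$ be the $g$-metric dual of the real $1$-form $\omega + \bar\omega$, one sets $N^\top = df_x(\tilde N^\top)$. Condition (3) then reduces to $|\nu|^2 = 1 - |\omega+\bar\omega|_g^2$, so the existence of a real $\nu$ hinges on showing that the right-hand side is strictly positive. This is the main obstacle. I would establish it by a direct computation in a local holomorphic coordinate $z$: writing $\omega = \phi\, dz$ with $\phi^2 = g_{zz}$ and inverting the matrix of $g$ in the basis $\partial_z, \partial_{\bar z}$ gives, after cancellation,
\[
|\omega+\bar\omega|_g^2 = \frac{2|g_{zz}|}{g_{z\bar z} + |g_{zz}|}.
\]
Converting from complex to real coordinates yields the identity $g_{z\bar z}^2 - |g_{zz}|^2 = \tfrac{1}{4}\det(g_{ij})$, which is strictly positive because $g$ is Riemannian; hence $|g_{zz}| < g_{z\bar z}$ and the expression above is strictly less than $1$, proving simultaneously the solvability of (3) and $|\nu| > 0$.

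With $|\nu| > 0$, the vector $\nu$ is determined up to sign, and the orientation condition (4), which amounts to the non-vanishing of $\det(f, N, f_x, f_y)$ in $\R^{1,3}$, selects exactly one of the two signs (flipping the sign of $\nu$ flips the sign of this determinant). This fixes $N$ uniquely at every point. Smoothness of $N$ follows because every ingredient --- the metric dual determining $N^\top$, the positive square root giving $|\nu|$, and the sign pinned down by a nowhere vanishing continuous determinant --- depends smoothly on $(f,\omega)$. Uniqueness globally on the connected surface $M$ is inherited from pointwise uniqueness.
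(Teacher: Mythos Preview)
Your proof is correct and follows the same overall structure as the paper's: decompose $N$ into a tangential part (uniquely determined by the metric dual of $\omega+\bar\omega$) and a normal part, reduce condition~(3) to the key inequality $|\omega+\bar\omega|_g^2<1$, and use orientation to select the sign of $\nu$. The only difference is in how the key inequality is established: the paper picks a $g$-orthonormal frame $e_1,e_2$ with $Je_1=be_2$ (so $0<b\le 1$) and computes $\eta^\sharp=\pm\sqrt{1-b^2}\,e_1$ directly, whereas you work in a holomorphic coordinate and arrive at the closed formula $|\omega+\bar\omega|_g^2 = 2|g_{zz}|/(g_{z\bar z}+|g_{zz}|)$, then invoke $g_{z\bar z}^2-|g_{zz}|^2=\tfrac14\det(g_{ij})>0$. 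Both computations are short and equivalent in spirit; your formula has the minor advantage of making the quantity $|\omega+\bar\omega|_g^2$ explicit in terms of the metric coefficients, while the paper's frame computation perhaps makes the geometric meaning (distortion of $J$ relative to $g$) more transparent.
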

\begin{proof}
Given $x \in M$, we split $T_{f(x)} \H^3_\pm$ into $\im d_xf$ and its orthogonal complement. Let $\nu$ be a unit vector in $\im d_xf^\perp$ representing
the orientation of $M,$ i.e., $\nu,f_x,f_y$ are positively oriented for oriented coordinates $(x,y)$ of $M.$

Let $n_0 \in \im d_xf$ be the dual vector of $\omega_x + \ol{\omega}_x$, where we identify $T_xM$ and $\im d_xf$. In particular, the metric on $T_xM$ is the pull-back metric of $\langle \cdot \, , \cdot \rangle$ on $\im d_xf$.

With the ansatz $N(x) = n_0 + t \nu$, the equation for $N(x)$ becomes $\langle n_0, n_0 \rangle + t^2 = 1$. Therefore, it suffices to show that $0 \leq \langle n_0 , n_0 \rangle < 1$ to see that for every $x \in M$ there are precisely two real solutions for $t$. The positive solution yields the oblique hyperbolic Gau\ss\@ map.
Note that since $n_0 \in T_{f(x)} \H^3_\pm$, the lower bound on $\langle n_0, n_0 \rangle$ follows from the fact that $\langle \cdot \, , \cdot \rangle$ is positive definite on $T_{f(x)} \H^3_\pm$.

To see that $\langle n_0, n_0 \rangle < 1$ it suffices to show that $\eta_x := \omega_x + \ol{\omega}_x \in T_x^* M$ has norm smaller than $1$.
Let $e_1, e_2$ be an oriented orthonormal basis, such that $J e_1 = b e_2$. Then, $0 < b \leq 1$. 
Observe that $\omega(e_1)^2 = Q(e_1, e_1)$ and
\begin{equation*}\begin{split}
Q(e_1, e_1) &= \frac 1 4 \langle df(e_1) - i df(Je_1), df(e_1) - i df(Je_1) \rangle \\&= \frac 1 4 \left( \langle df(e_1), df(e_1) \rangle - \langle df (Je_1), df(Je_1) \rangle\right) = \frac 1 4 (1 - b^2).
\end{split}\end{equation*}
Therefore, $\omega(e_1) = \pm \sqrt{1-b^2}$. Since $\omega$ is a $(1,0)$-form, $\omega(Je_1) = i \omega(e_1)$, $\omega(e_2)$ is imaginary and therefore $\eta (e_2) = 0$. This implies $\eta^\sharp = \pm \sqrt{1-b^2} e_1$, which clearly has norm less than one.

Smoothness of $N$ follows from the construction, as $\langle n_0, n_0 \rangle<1$ holds globally.
\end{proof}

\begin{rem}
\label{rem:Riemtarget}
The construction of the oblique Gau\ss\@ map can be generalized to an immersion of a Riemann surface $M$ into any $3$-dimensional Riemannian manifold. We will later make use of this when the target manifold is the round $3$-sphere $\mathbb{S}^3$. 
\end{rem}

\begin{prop}
\label{prop:hyperbolic_gauss_map_harmonic}
  Suppose $f : M \to \H^3_\pm$ is a harmonic immersion, $\omega \in \Omega^{1,0}(M)$ is a square root of the hyperbolic Hopf differential, and $N : M \to  \dS_3$ is the oblique hyperbolic Gau\ss\@ map.
  Then $N$ is harmonic.
\end{prop}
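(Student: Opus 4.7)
My plan is to verify the harmonic map equation for $N$ by a direct ambient-coordinate computation, working in a local holomorphic coordinate $z=x+iy$ on $M$ and viewing $f,N$ as maps into $\R^{1,3}$ via the embeddings $\H^3_\pm,\dS_3\subset\R^{1,3}$. The standard computation of the second fundamental forms of the quadrics $\{\langle x,x\rangle=\mp 1\}$ in $\R^{1,3}$ gives that $f$ is harmonic iff $\partial_z\partial_{\bar z}f=\mu f$ with $\mu=\langle\partial_zf,\partial_{\bar z}f\rangle$, and similarly $N$ is harmonic iff $V:=\partial_z\partial_{\bar z}N$ is pointwise parallel to $N$. Since $f$ is harmonic, the hyperbolic Hopf differential $Q$ is holomorphic; writing $\omega=w(z)\,dz$ thus gives $\partial_{\bar z}w=0$.

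The crucial move is to replace the natural frame $\{f,\partial_zf,\partial_{\bar z}f,N\}$ with the null frame $\{f,N,\zeta,\bar\zeta\}$ of $\C^{1,3}$, where
\[\zeta:=\partial_zf-wN,\qquad \bar\zeta:=\partial_{\bar z}f-\bar wN.\]
The defining conditions on $N$ immediately give $\langle\zeta,f\rangle=\langle\zeta,N\rangle=\langle\zeta,\zeta\rangle=0$ and $\langle\zeta,\bar\zeta\rangle=\mu-|w|^2>0$ (positive because $f$ is a Riemannian immersion). Since $N$ is thus orthogonal to $f,\zeta,\bar\zeta$, the harmonicity of $N$ reduces to the two scalar identities $\langle V,f\rangle=0$ and $\langle V,\zeta\rangle=0$, with $\langle V,\bar\zeta\rangle=0$ automatic by conjugation (as $V$ is real). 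The first follows by two integrations by parts, using $\partial_{\bar z}w=0$ and $\partial_z\partial_{\bar z}f=\mu f$. These same identities also force $\partial_{\bar z}N$ to take the simple form $\bar w f+p\zeta$ for some function $p$, with conjugate $\partial_zN=wf+\bar p\,\bar\zeta$: the $N$-component vanishes because $\partial_{\bar z}\langle N,N\rangle=0$, and the $\bar\zeta$-component vanishes because $\langle\partial_{\bar z}N,\partial_zf\rangle=\partial_{\bar z}w-\mu\langle N,f\rangle=0$.

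For the second identity, $\langle V,\zeta\rangle=-\langle\partial_{\bar z}N,\partial_z\zeta\rangle$ since $\langle\partial_{\bar z}N,\zeta\rangle=0$. Expanding $\partial_z\zeta=\partial_z^2f-w'N-w^2f-w\bar p\,\bar\zeta$ in the frame and pairing with $\bar w f+p\zeta$, the $\bar w f$ piece pairs to zero because $\langle f,\partial_z^2f\rangle=-w^2$ (from differentiating $\langle f,f\rangle=-1$) cancels the $-w^2f$ term, while the $p\zeta$ piece pairs to zero because $\langle\zeta,\partial_z^2f\rangle=w\bar p(\mu-|w|^2)$ — obtained from the identities $\langle\partial_zf,\partial_z^2f\rangle=ww'$ and $\langle N,\partial_z^2f\rangle=w'-\bar p(\mu-|w|^2)$, derived by differentiating $\langle\partial_zf,\partial_zf\rangle=w^2$ and $\langle N,\partial_zf\rangle=w$ — cancels the $-w\bar p\bar\zeta$ term. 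The main obstacle is precisely this choice of frame: in $\{f,\partial_zf,\partial_{\bar z}f,N\}$ the integration-by-parts identities express $\langle V,\partial_zf\rangle$ in terms of itself and yield only tautologies. It is the null structure of $\zeta$ together with the orthogonality of $N$ to $\zeta,\bar\zeta$ that makes $\partial_{\bar z}N$ rigid enough to force the cancellation; this rigidity is the geometric shadow of the eigenline/signature flip that will later underpin Theorem \ref{thm:SU2SU11} on the gauge-theoretic side.
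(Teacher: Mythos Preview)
Your proof is correct and takes a genuinely different route from the paper. The paper works in the frame $\{f,f_z,f_{\bar z},N\}$, which forces a case split: when $Q\equiv 0$ the frame is already orthogonal to $N$, but when $Q\not\equiv 0$ the authors pass to special coordinates with $\omega=dz$, solve a $4\times 4$ linear system for the frame coefficients of $N_z$ and $f_{zz}$, compute $(N_z)_{\bar z}$ and $(N_{\bar z})_z$ explicitly, and read off that the $f_z,f_{\bar z}$ components vanish by comparing the two expressions. The computation is longer and the two cases are handled separately, with smoothness at the zeros of $Q$ invoked at the end.

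Your null frame $\{f,N,\zeta,\bar\zeta\}$ with $\zeta=f_z-wN$ is the key simplification: because $\zeta,\bar\zeta$ are orthogonal to both $f$ and $N$, the decomposition of $N_{\bar z}$ becomes rigid (only an $f$-term and a $\zeta$-term survive), and the remaining verification $\langle N_{z\bar z},\zeta\rangle=0$ reduces to a short cancellation. This handles general holomorphic $w$ in one pass, with no case distinction and no special coordinate choice. What you lose is the explicit formula for $N_{z\bar z}$ in terms of the geometric data ($u=\langle N,f_{zz}\rangle$, $B=\langle f_z,f_{\bar z}\rangle$) that the paper's computation produces along the way; what you gain is a cleaner, coordinate-free argument whose structure (the null eigenline direction $\zeta$) indeed anticipates the gauge-theoretic twist in Theorem~\ref{thm:SU2SU11}.
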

\begin{proof}
We compute in local complex coordinates and show that $N_{z\bar z}$ is everywhere proportional to $N$, i.e. $N$ solves the harmonic map equation into $\dS_3$.

First, note that the second equation in Definition \ref{defn:hyperbolic-oblique-gauss-map-1} is equivalent to the equation
\[
\langle N, \partial f \rangle = \omega.
\]
We will use this complex form to prove the result.

It is well known that $\langle \partial f, \partial f\rangle $ is holomorphic if $f$ is harmonic, and there are two cases to consider. Either $Q = \langle \partial f, \partial f \rangle$ vanishes everywhere (and $f$ is conformal) or it has isolated zeros (and $f$ is non-conformal away from the zeros).

Let us first assume $Q$ vanishes identically. Choose holomorphic coordinates. Then, the equations in Definition \ref{defn:hyperbolic-oblique-gauss-map-1} become
\[
\langle N, f \rangle = 0, \quad \langle N, f_z \rangle = 0, \quad \langle N, N \rangle = 1.
\]
Here, in the second equation $\langle \cdot \, , \cdot \rangle$ is to be understood as the complexification of the Minkowski inner product on $\R^{1,3}$. Observe that this equation also has a conjugate version $\langle N, f_{\ol{z}} \rangle = 0$.
Since $f$ is an immersion, these conditions ensure that $f, f_z, f_{\bar{z}}, N$ form a basis of the complexification $\R^{1,3} \otimes \C$.
The harmonicity of $N$ in holomorphic coordinates is equivalent to $N_{z\bar{z}} = \mu N$ for some function $\mu$. To compute $N_{z\bar{z}}$, it is convenient to first compute $N_z$ and $f_{zz}$. To this end, let 
\[
N_z = a_1 f + a_2 f_z + a_3 f_{\bar{z}} + a_4 N\quad\text{ and }\quad f_{zz} = b_1 f + b_2 f_z + b_3 f_{\bar{z}} + b_4 N.
\]
Let $u = \langle N, f_{zz} \rangle$ and define $B=\langle f_z,f_{\bar z}\rangle$. Note that $B>0$ as $f$ is a conformal immersion.
Elementary calculations using the equations yield %the following identities
\[
-a_1 = \langle N_z, f \rangle = 0, \quad B a_3 = \langle N_z, f_z \rangle = -u, \quad B a_2 = \langle N_z, f_{\bar{z}} \rangle = 0, \quad a_4 = \langle N_z, N \rangle = 0,
\]
and similarly
\[
b_1 = \langle f_{zz}, f \rangle = 0, \quad B b_3 = \langle f_{zz}, f_z \rangle = 0, \quad B b_2 = \langle f_{zz}, f_{\bar{z}} \rangle = B_z, \quad b_4 = \langle f_{zz}, N \rangle = u.
\]
Therefore
\[
N_z = - \frac u B f_{\bar{z}}, \qquad f_{zz} = \frac {B_z} B f_z + u N.
\]
Note that $N_{\ol{z}} = \ol{N_z}$ and $f_{\ol{z} \ol{z}} = \ol{f_{zz}}$. Therefore
\begin{align*}
(N_z)_{\bar{z}} & = - \left(\frac u B\right)_{\bar{z}} f_{\bar{z}} - \frac u B f_{\bar{z} \bar{z}}  = - \left(\frac u B\right)_{\bar{z}} f_{\bar{z}} - \frac u B \frac{\overline{B_z}}{B} f_{\bar{z}} - \frac{|u|^2}{B} N.
\end{align*}
On the other hand,
\begin{align*}
(N_{\bar{z}})_z & = -\left( \frac {\overline{u}} B \right)_z f_z - \frac{\overline{u}}B f_{zz}  = -\left( \frac {\overline{u}} B \right)_z  f_z - \frac{\overline{u}}B \frac{B_z} B f_z - \frac{|u|^2}{B} N.
\end{align*}
Since $f, f_z, f_{\bar{z}}, N$ form a basis, the identity $(N_z)_{\bar{z}} = (N_{\bar{z}})_z$ implies that the coefficients of $f_z$ and $f_{\bar{z}}$ vanish. Therefore
\[
N_{z\bar{z}} = - \frac{|u|^2}B N
\]
and $N$ is harmonic.

Next, consider the case that $Q$ has only isolated zeros. Around any point where $Q$ does not vanish, there exist a holomorphic coordinate $z$, such that $Q = dz^2$. Moreover, we can arrange $\omega=dz$. In these coordinates, the defining equations above become
\[
\langle N, f \rangle = 0, \quad \langle N, f_z \rangle = 1, \quad \langle N, N \rangle = 1.
\]
We proceed as in the previous case. Here, we obtain the identities
\[
-a_1 = \langle N_z, f \rangle = -1, \qquad \qquad \quad a_2 + B a_3 + a_4 = \langle N_z, f_z \rangle = -u,
\]
\[
B a_2 + a_3 + a_4 = \langle N_z, f_{\bar{z}} \rangle = 0, \qquad a_2 + a_3 + a_4 = \langle N_z, N \rangle = 0
\]
and
\[
b_1 = \langle f_{zz}, f \rangle = 1, \qquad \qquad b_2 + B b_3 + b_4 = \langle f_{zz}, f_z \rangle = B_z,
\]
\[
B b_2 + b_3 + b_4 = \langle f_{zz}, f_{\bar{z}} \rangle = B_z, \qquad b_2 + b_3 + b_4 = \langle f_{zz}, N \rangle = u,
\]
where again $B = \langle f_z, f_{\bar z}\rangle$. Here, the equation $\langle f_z, f_z \rangle = 1$ implies $B > 1$. These equations can be solved to obtain
\[
N_z = f - \frac u {B-1} f_{\bar{z}} + \frac u {B-1} N,
\]
\[
f_{zz} = f + \frac{B_z - u}{B-1} f_z - \frac u {B-1} f_{\bar{z}} + \frac{(B+1) u - B_z}{B-1} N.
\]
Using $N_{\overline{z}} = \overline{N_z}$ and $f_{\overline{zz}} = \overline{f_{zz}}$, we obtain
\begin{align*}
(N_z)_{\bar{z}}  = \left( f - \frac{u}{B-1} f_{\bar{z}} + \frac{u}{B-1} N \right)_{\bar{z}}  &= \left( 1 - \left( \frac{u}{B-1}\right)_{\overline{z}}  - \frac{u}{B-1} \frac{\overline{B_z - u}}{B-1} \right) f_{\bar{z}} \\
& + \left( \left(\frac{u}{B-1}\right)_{\overline{z}} - \frac{u}{B-1} \frac{\overline{(B+1) u - B_z}}{B-1} - \frac{|u|^2}{B-1} \right) N,
\end{align*}
\begin{align*}
(N_{\bar{z}})_z  = \left( f - \frac{\overline{u}}{B-1} f_z + \frac{\overline{u}}{B-1} N\right)_z 
& = \left( 1 - \left( \frac{\overline{u}}{B-1} \right)_z - \frac{\overline{u}}{B-1} \frac{B_z - u}{B-1} \right) f_z \\
& + \left( -\frac{\overline{u}}{B-1} \frac{B_z+(B+1)u}{B-1} - \left(\frac{\overline{u}}{B-1}\right)_z + \frac{|u|^2}{(B-1)^2} \right) N.
\end{align*}
Since $f, f_z, f_{\bar{z}}, N$ form a basis, these formulas imply that the coefficients in front of $f_z$ respectively $f_{\bar{z}}$ in the two equations vanish:
\[
1 - \left( \frac{\overline{u}}{B-1} \right)_z - \frac{\overline{u}}{B-1} \frac{\overline{B_z} -\overline{ u}}{B-1} = 0.
\]
This identity can moreover be used to simplify the factor of $N$, and we obtain
\[
N_{\bar{z}z} = \left(1 - \frac{(B+1) |u|^2}{(B-1)^2}\right) N,
\]
proving that $N$ is harmonic in the coordinate domain.
This shows that $N$ is harmonic on the dense set $M_{\hyp} \cap \{ q\neq 0\}$. Since $N$ is smooth, this implies harmonicity of $N$ on $M$.
\end{proof}

The behavior in the gluing region of our analytical construction is locally modeled by the following examples.
\begin{exa}\label{exa:modelharmonicmap}
For $t > 0$ consider the map
$
f_{t,\hyp}^{\model} : \left\{ x+iy \in \C : x \neq 0 \right\} \to \H^3_\pm
$ given by
\[
f_{t,\hyp}^{\model}(x,y) =
\begin{psmallmatrix}
  \frac{1}{4} \, {\left(8 \, t^{2} y^{2} + \cosh\left(4 \, t x\right) + 1\right)} \operatorname{csch}\left(2 \, t x\right) \\
  0 \\
  \frac{1}{4} \, {\left(8 \, t^{2} y^{2} + \cosh\left(4 \, t x\right) - 3\right)} \operatorname{csch}\left(2 \, t x\right) \\
  2 \, t y \operatorname{csch}\left(2 \, t x\right)
\end{psmallmatrix}.
\]
Then, $f_t$ is harmonic, and the associated oblique hyperbolic Gau\ss\@ map is
\[
N_t^{\model}(x,y) =\tfrac{1}{{4 \, t \cosh \left(2 \, t x\right)}}
\begin{psmallmatrix}
  -8 \, t^{2} y^{2} - \cosh\left(4 \, t x\right) + 3\\
   2\sqrt{t^{2} \operatorname{csch}\left(2 \, t x\right)^{2} \sinh\left(4 \, t x\right)^{2} - 2 \, \cosh\left(4 \, t x\right) + 2}\\
  -8 \, t^{2} y^{2} - \cosh\left(4 \, t x\right) - 1
  \\
  -8 \,t \, y\end{psmallmatrix}.
\]
Note in particular that $f^{\model}_{t,\hyp}$ changes sign across $x=0$, so it cannot be smoothly continued there as a map into $\H^3$, whereas $N^{\model}_t$ is even and extends smoothly across $x=0$. This mirrors what will happen for our glued solutions: the hyperbolic map will develop a 'fold', but the de Sitter Gau\ss\@ map stays smooth. Let us also observe that $f_{t,\hyp}^{\model}$ is odd in the variable $x$ and $N_t^{\model}$ is even. In particular, at $x=0$ the map $N_t^{\model}$ fails to be an immersion.

The derivation of the maps $f_{t,\hyp}^{\model}$ and $N_t^{\model}$
will be explained in the next subsection.  
\end{exa}

\subsection{The dual map}
The construction of the oblique hyperbolic Gau\ss\@ map associated to a harmonic map can be reversed. This is captured by the following definition.

\begin{defn}\label{defn:dual_map}
  Let $M$ be a Riemann surface and $N : M \to  \dS_3$ be an immersion. By $Q \in \Gamma(K^2)$ we denote the {\em de Sitter Hopf differential} $Q = \left(N^*\langle \cdot \, , \cdot \rangle\right)^{2,0} = \langle \partial N, \partial N \rangle$.
  Assume that there exists $\omega \in \Omega^{1,0}(M)$ such that $-\omega^2 = Q$.

  A map $f : M \to \H^3_\pm$ is a {\em dual map} of $N$ (with respect to $\omega$), if
  \begin{enumerate}
  \item $f(x) \in T_{N(x)}  \dS_3$ for every $x \in M$,
  \item $\langle dN, f \rangle = - \left(\omega + \ol{\omega}\right)$,
  \item $\langle f, f \rangle = -1$,
  \item $f$ is compatible with the orientation of  $M,$ i.e., $\det(N,f,N_x,N_y)>0.$
  \end{enumerate}
\end{defn}
The sign convention in the equation for $\langle dN, f \rangle$ is chosen to ensure that the dual map associated to an oblique hyperbolic Gau\ss\@ map is the original map, see Proposition \ref{prop:dual_involution} below. 
We will refer to $f$ as the dual of $N$, and to $N$ as the (oblique) Gau\ss\@ map of $f$.

\begin{prop}\label{prop:Ntof}
Let $M$ be a connected Riemann surface. For any immersion $N : M \to  \dS_3$ and square root $\omega \in \Omega^{1,0}(M)$ of the de Sitter Hopf differential, there exists a unique dual map $f : M \to \H^3_\pm$. 
\end{prop}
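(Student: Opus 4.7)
The proof is modeled on that of Proposition \ref{pro:existN}, now in the Lorentzian setting of $\dS_3$. At each $x \in M$, the tangent space $T_{N(x)}\dS_3$ is the Lorentz-orthogonal complement of the spacelike vector $N(x)$ in $\R^{1,3}$, and therefore inherits signature $(-,+,+)$. Since $N$ is an immersion, $\im dN(x)$ is a $2$-plane in this Lorentzian $3$-space, and I split $T_{N(x)}\dS_3 = \im dN(x) \oplus \im dN(x)^\perp$ with respect to the Minkowski inner product. Writing $f(x) = n_0 + t\,\xi$ with $n_0 \in \im dN(x)$, $\xi$ a fixed unit generator of $\im dN(x)^\perp$ aligned with the orientation of $M$, and $t \in \R$, the four defining conditions decouple: condition (1) places $f(x)$ in $T_{N(x)}\dS_3$; condition (2), combined with $\xi \perp \im dN(x)$, determines $n_0$ as the metric dual of $-(\omega+\bar\omega)$ via the pullback metric on $T_xM \cong \im dN(x)$; condition (3) reduces to the scalar equation $\langle n_0,n_0\rangle + t^2\langle \xi,\xi\rangle = -1$; and condition (4) selects the sign of $t$.

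It remains to verify that this quadratic admits a real solution, and here the key input is the hypothesis $-\omega^2 = Q$. In a local holomorphic coordinate $z$ with $\omega = h\,dz$, this yields $\langle N_z, N_z\rangle = -h^2$. A direct calculation, parallel to the one in Proposition \ref{pro:existN}, gives
\[
\langle n_0, n_0\rangle = \frac{2|h|^2}{G - |h|^2}, \qquad G := \langle N_z, N_{\bar z}\rangle,
\]
while the determinant of the pullback metric on $M$ equals $4(G^2 - |h|^4)$. When $G > |h|^2$, the pullback metric is positive definite, $\im dN(x)^\perp$ is timelike with $\langle \xi,\xi\rangle = -1$, and $\langle n_0,n_0\rangle \geq 0$, so $t^2 = 1 + \langle n_0,n_0\rangle > 0$ is solvable. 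When $|G| < |h|^2$, the pullback metric is Lorentzian, $\im dN(x)^\perp$ is spacelike with $\langle \xi,\xi\rangle = +1$, and $\langle n_0,n_0\rangle < -1$, so $t^2 = -1 - \langle n_0,n_0\rangle > 0$ is again solvable. In both regimes there are exactly two real solutions $\pm t$, and condition (4) isolates one of them. Smoothness of $f$ follows from the smooth dependence of each ingredient on $x$.

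The main obstacle is the signature-dependent sign matching in the quadratic: unlike the hyperbolic case, where the Euclidean tangent space of $\H^3_\pm$ forces a uniform sign, here the signs of $\langle n_0,n_0\rangle + 1$ and $\langle \xi,\xi\rangle$ flip together as $\im dN(x)$ changes causal character, and the case split must track this compensation cleanly. A secondary subtlety, which is one of the geometric themes of the paper, is the degenerate locus $|G| = |h|^2$ on which $\im dN(x)$ is lightlike: the formula for $\langle n_0, n_0\rangle$ then diverges (or saturates the bound), and the dual $f$ is a priori defined only off this locus, consistent with the rank-drop picture exploited later in Theorem \ref{thm:transgressive_map_from_de_sitter_map}.
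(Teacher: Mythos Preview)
Your argument for the two non-degenerate signature cases is correct and closely parallels the paper's. The formula $\langle n_0, n_0\rangle = \frac{2|h|^2}{G - |h|^2}$ is a clean packaging of the same sign bookkeeping the paper carries out: there, one picks a real vector $X$ with $\omega(X) = r > 0$, sets $a = g(X,X)$, $b = g(JX,JX)$, derives $a - b = -4r^2$ and $g(X,JX) = 0$ from $Q = -\omega^2$, and then checks in the Lorentzian case that $\tfrac{4r^2}{a} < -1$ via $0 < b < 4r^2$. Your condition $|G| < |h|^2 \Rightarrow \langle n_0, n_0\rangle < -1$ is the same inequality.

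The gap is the degenerate case $G = |h|^2$, where $\im dN(x)$ is a lightlike $2$-plane. Here your splitting $T_{N(x)}\dS_3 = \im dN(x) \oplus \im dN(x)^\perp$ fails (the null line lies in both summands), the metric dual $n_0$ is undefined, and your formula for $\langle n_0, n_0\rangle$ diverges. You acknowledge this, but your conclusion that $f$ is ``a priori defined only off this locus'' is not what the paper shows: it handles this case explicitly and proves $f$ still exists uniquely. With $g(X,X) = 0$ and $g(JX,JX) = 4r^2 > 0$, the paper abandons the orthogonal splitting and instead chooses the second null direction $\nu$ in $(dN\,JX)^\perp$ normalized by $\langle \nu, dN\,X\rangle = 1$, obtaining $f(x) = \tfrac{1}{4r}\,dN\,X - 2r\,\nu$. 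Note also that this lightlike-image locus is not the rank-drop locus of Theorem~\ref{thm:transgressive_map_from_de_sitter_map}: the latter is where $dN$ itself has rank $\leq 1$, whereas here $N$ remains an immersion but the induced metric on the image $2$-plane degenerates.
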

\begin{proof}
As in the proof of Proposition \ref{prop:hyperbolic_gauss_map_harmonic}, we must verify that for every $x \in M$, the set of defining equations in Definition \ref{defn:dual_map} possesses exactly one solution.
Once again, the initial condition restricts $f(x)$ to lie within a $3$-dimensional linear subspace, while the second equation further confines $f$ to lie in an affine line within that subspace. Consequently, the second-to-last equation defines a quadratic equation on that line.
The key distinction lies in the signature of $T_{N(x)}  \dS_3$, which is $(1,2)$ instead of $(0,3)$. Consequently, we must conduct a case-by-case analysis based on the signature of $\im d_xN$.

Assume initially that $\im d_xN$ has signature $(0,2)$. In this case, we find a $\nu \in T_{N(x)}  \dS_3$ that is orthogonal to $\im d_xN$ and satisfies $\langle \nu, \nu \rangle = -1$.
Let $f_0 \in \im d_xN$ be the unique vector that satisfies $\langle d_xN, f_0 \rangle = \eta(x)$, where $\eta = - \left( \omega + \ol{\omega} \right)$.
Now, let us consider the ansatz $f(x) = f_0 + t \nu$. By substituting this into the equation $\langle f(x), f(x) \rangle = -1$, we obtain $\langle f_0, f_0 \rangle - t^2 = -1$. Since $\langle f_0, f_0 \rangle \geq 0$, it is evident that this equation has two solutions. As before, fixing the orientation is equivalent to choosing exactly one of these two solutions.

Next, assume that $\im d_xN$ has signature $(1,1)$. On $T_x M$ we introduce the indefinite metric $g = \langle d_xN \cdot, d_xN \cdot \rangle$. 
Assume that we are away from the zeros of $Q$ and pick some $X \in T_x M$ such that $\omega(X) = r \in \R_{>0}$.
 Then we compute
\[
-r^2=Q(X,X) = \frac 1 4 \left( g(X,X) - g(J X, JX) \right) - \frac i 2 \langle X, J X \rangle
\]
Denote $a = g(X,X)$ and $b = g(JX, JX)$. The real part of the equation then becomes $a - b = -4r^2$, while the imaginary part yields $g(X, JX) = 0$. The orthogonality and the signature assumption imply that either $a < 0$ and $b > 0$ or $a > 0$ and $b < 0$. However, the equation $a = b - 4r^2 < b$ implies that only the first case can actually occur. Furthermore, $b = a + 4r^2 < 4r^2$.

Let us make the ansatz
\[
f(x) = \alpha d_xN X + \beta d_xN JX + \gamma \nu
\]
for some $\nu \in T_{N(x)}  \dS_3$ with $\nu \perp \im d_xN$ and $\langle \nu, \nu \rangle = 1$ and $\alpha,\beta,\gamma \in \R$.
The equation $\langle dN, f \rangle = -(\omega + \ol{\omega})$ then yields $f(x) = -\frac {2 r} a d_xN X + \gamma \nu$ for some $\gamma\in\R$. To solve
\[
-1 = \langle f(x), f(x) \rangle = \tfrac{4 r^2}{a^2} a + \gamma^2,
\]
we need to ensure that $\frac{4 r^2}{a} \leq -1$. Since $a = b - 4 r^2$ we may rewrite
\[
\tfrac{4 r^2}{a} = - \tfrac{1}{1 - \tfrac b {4 r^2}}<-1
\]
since $0 < b < 4r^2$. Therefore, the equation $\gamma^2 = -1 -4 \frac {r^2}a$ has exactly two solutions, where one of them is compatible with the orientation.

The last possibility is that the metric on $\im d_xN$ is degenerate. Since there are no negative semidefinite subspaces in Minkowski space, the metric must be positive semidefinite.
Let $X$ be such that $\omega(X) = r > 0$. Using the same argument as before, we have that $g(X,X) = a$ and $g(JX,JX) = b$ satisfy $a = b - 4r^2$ and $g(X,JX) = 0$. The degeneracy implies that either $a = 0$ or $b = 0$. However, since the metric is positive semidefinite, this implies that $a = 0$ and $b = 4r^2$.

Consider the subspace $V = \{ v \in T_x  \dS_3 : \langle v, d_xN JX \rangle = 0 \}$. This is a $2$-dimensional space of signature $(1,1)$. We can find a unique second light-like vector $\nu \in V$ with $\langle \nu, X \rangle = 1$. Now make the ansatz $f(x) = \alpha d_xN X + \beta \nu$. We want to solve
\[
\langle d_xN X, f(x) \rangle = - 2 r
\]
and $\langle f(x), f(x) \rangle = -1$. The first equation becomes $\beta = - 2r$ and the second equation becomes $2 \alpha \beta = -1$. Therefore, we obtain
$
f(x) = \frac 1 {4r} d_xN X - 2 r \nu.
$
One can verify that the unique solution is compatible with the orientation of $M.$

We deal with zeros of $Q$ and smoothness (in the case of harmonic $N$) in Section \ref{sec:twist-construction} below.
\end{proof}

\begin{prop}
  \label{prop:dual_map_harmonic}
If $N : M \to  \dS_3$ is a harmonic immersion with dual map $f : M \to \H^3_\pm$, then $f$ is harmonic.
\end{prop}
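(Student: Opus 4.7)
The plan is to adapt, with the roles of $f$ and $N$ exchanged, the direct-computation argument used to prove Proposition~\ref{prop:hyperbolic_gauss_map_harmonic}. Working in a local holomorphic coordinate $z$, I rewrite the conditions of Definition~\ref{defn:dual_map} in complex form as
\begin{equation*}
\langle f, N\rangle = 0, \qquad \langle f, N_z\rangle = -\omega_z, \qquad \langle f, f\rangle = -1,
\end{equation*}
where $\omega = \omega_z\, dz$, together with their complex conjugates. Since $\langle f, f\rangle = -1$, the harmonic map equation for $f$ into $\H^3_\pm$ is equivalent to $f_{z\bar z}$ being proportional to $f$, and this is what I would aim to establish. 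Harmonicity of $N$ into $\dS_3$ likewise amounts to $N_{z\bar z} = \nu N$ for some function $\nu$, and in particular implies that $Q = \langle \partial N, \partial N\rangle = -\omega^2$ is holomorphic, so $Q$ either vanishes identically or has only isolated zeros.

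I would then treat these two cases separately. When $Q \equiv 0$ and hence $\omega = 0$, the defining equations reduce to $\langle f, N\rangle = 0$, $\langle f, N_z\rangle = 0$, $\langle f, f\rangle = -1$, and $\{N, N_z, N_{\bar z}, f\}$ forms a basis of $\R^{1,3}\otimes\C$ wherever $B := \langle N_z, N_{\bar z}\rangle$ is nonzero. Setting $u = \langle f, N_{zz}\rangle$, expanding $f_z$ and $N_{zz}$ in this basis and pairing with each basis vector yields
\begin{equation*}
f_z = -\tfrac{u}{B}\, N_{\bar z}, \qquad N_{zz} = \tfrac{B_z}{B}\, N_z + u\, f,
\end{equation*}
and the commutativity identity $(f_z)_{\bar z} = (f_{\bar z})_z$ then makes the tangential components of $f_{z\bar z}$ cancel, leaving a scalar multiple of $f$. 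In the non-conformal case I pick coordinates around a point with $Q \neq 0$ so that $Q = -dz^2$ and $\omega = dz$; the defining equations become $\langle f, N\rangle = 0$, $\langle f, N_z\rangle = -1$, $\langle f, f\rangle = -1$, and the inhomogeneous linear system for the coefficients of $f_z$ and $N_{zz}$ in $\{N, N_z, N_{\bar z}, f\}$ is the direct analogue of the one in Proposition~\ref{prop:hyperbolic_gauss_map_harmonic}, leading by the same commutativity computation to $f_{z\bar z} = \mu f$ on the dense open set $\{Q\neq 0\}$. Smoothness of $f$, granted by Proposition~\ref{prop:Ntof}, then propagates harmonicity to all of $M$.

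The main obstacle is verifying that $\{N, N_z, N_{\bar z}, f\}$ is a pointwise basis of $\R^{1,3}\otimes\C$. In the $\H^3$-target case of Proposition~\ref{prop:hyperbolic_gauss_map_harmonic} this was automatic because $\im df$ is Riemannian, but since $\dS_3$ is Lorentzian the induced metric on $\im dN$ can be Riemannian, Lorentzian, or degenerate, which are precisely the three subcases distinguished in the proof of Proposition~\ref{prop:Ntof}. In the first two the Gram determinant of the frame is nonzero for essentially the same reasons as in that existence argument, so the computation above applies verbatim; in the degenerate case, which only occurs on a nowhere-dense subset, I would replace $N_{\bar z}$ by the lightlike vector $\nu$ constructed in Proposition~\ref{prop:Ntof} to obtain a genuine basis before running the same commutativity argument, and then extend harmonicity to the exceptional set by continuity using smoothness of $f$.
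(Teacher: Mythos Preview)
Your proposal is correct and follows exactly the route the paper intends: its proof consists of the single sentence ``This computation is analogous to the one used in the proof of Proposition~\ref{prop:hyperbolic_gauss_map_harmonic},'' and you have spelled that analogy out carefully. One small simplification: your worry about $\{N,N_z,N_{\bar z},f\}$ failing to be a basis is unfounded in the non-conformal case, since in coordinates with $Q=-dz^2$ the Gram determinant of $\{N,N_x,N_y,f\}$ equals $-\langle N_y,N_y\rangle^2$, and the case analysis of Proposition~\ref{prop:Ntof} shows $\langle N_y,N_y\rangle>0$ in all three signature regimes (it equals $a+4$ there), so no separate treatment of the degenerate locus is needed.
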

\begin{proof}
This computation is analogous to the one used in the proof of Proposition \ref{prop:hyperbolic_gauss_map_harmonic}.
\end{proof}

\begin{prop}
\label{prop:dual_involution}
Suppose $f: M \to \H^3_\pm$ is a harmonic immersion, $\omega \in \Omega^{1,0}(M)$ is a square root of the hyperbolic Hopf differential $Q = (f^* \langle \cdot \, , \cdot \rangle)^{2,0}$, and $N : M \to  \dS_3$ is the associated oblique hyperbolic Gau\ss\@ map. Then $f$ is the dual map associated to $N$.
\end{prop}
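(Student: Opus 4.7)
The plan is to verify the four defining properties of the dual map (Definition \ref{defn:dual_map}) for $f$ with respect to $N$ and the same $1$-form $\omega$. Three of them are essentially immediate; the crux is to show $-\omega^2 = Q_N := \langle\partial N, \partial N\rangle$, and then that the orientation comes out right. First, the tangency condition $f(x)\in T_{N(x)}\dS_3$ is equivalent to $\langle f, N\rangle = 0$ in $\R^{1,3}$ (since $T_N\dS_3 = N^\perp$), and this is nothing but the Gau\ss\@ map condition $N\in T_f\H^3_\pm = f^\perp$. Differentiating $\langle f, N\rangle = 0$ yields
\[
\langle dN, f\rangle = -\langle df, N\rangle = -(\omega + \overline\omega),
\]
which is property (2) of the dual map, with respect to the \emph{same} $\omega$. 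Property (3), $\langle f, f\rangle = -1$, is the definition of $\H^3_\pm$.

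For the Hopf differential identity, I would follow the basis expansion already employed in Proposition \ref{prop:hyperbolic_gauss_map_harmonic}. Writing $\omega = \omega_0(z)\,dz$ in local holomorphic coordinates, the vectors $f, N, f_z, f_{\bar z}$ form a basis of $\C^{1,3}$ away from zeros of $Q$. Differentiating the relations $\langle f, N\rangle = 0$, $\langle N, N\rangle = 1$, $\langle f_{\bar z}, N\rangle = \overline{\omega_0}$, and using the harmonic map equation $f_{z\bar z} = B f$ with $B := \langle f_z, f_{\bar z}\rangle$, one solves for $N_z$ in this basis:
\[
N_z \,=\, \omega_0\, f \;-\; \tfrac{u}{B-|\omega_0|^2}\bigl(f_{\bar z} - \overline{\omega_0}\,N\bigr), \qquad u := \langle f_{zz}, N\rangle.
\]
A direct expansion of $\langle N_z, N_z\rangle$ using the pairings among $f, N, f_z, f_{\bar z}$ collapses to $-\omega_0^2$, hence $Q_N = -\omega^2$; smoothness of $N$ extends this identity across zeros of $Q$. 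So $\omega$ is indeed a valid square root for the dual map construction.

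For the orientation condition $\det(N, f, N_x, N_y) > 0$, note that only the $f_z, f_{\bar z}$ components of $N_z$ and $N_{\bar z}$ survive in $\det(f, N, N_z, N_{\bar z})$, since the $f$ and $N$ components produce repeated columns. Using the explicit coefficients above and converting between $\partial_{z, \bar z}$ and $\partial_{x, y}$, one obtains
\[
\det(f, N, N_x, N_y) \,=\, -\tfrac{|u|^2}{(B - |\omega_0|^2)^2}\,\det(f, N, f_x, f_y),
\]
so $\det(N, f, N_x, N_y) = -\det(f, N, N_x, N_y)$ is positive exactly when the Gau\ss\@ map orientation \eqref{eq:defOrN} is positive and $u\neq 0$. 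Since $u=0$ is precisely the locus where $N_z\wedge N_{\bar z}$ vanishes, that is, where $N$ fails to be an immersion, this is automatic under the standing immersion hypothesis of Definition \ref{defn:dual_map}.

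The main obstacle is the algebraic bookkeeping needed to derive and exploit the formula for $N_z$; it has to be valid uniformly in the conformal ($\omega_0\equiv 0$) and non-conformal regimes, and the signs have to come out consistently across the derivation of $Q_N$ and the orientation check. Once $N_z$ is in hand, both the Hopf differential identity and the orientation check reduce to routine manipulation, and smooth dependence on the base point handles the extension of all the pointwise identities across the zeros of $Q$.
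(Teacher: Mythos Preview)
Your approach is essentially the paper's: verify conditions (1)--(3) of Definition \ref{defn:dual_map} directly, then use the moving-frame expansion of $N_z$ in the basis $(f,N,f_z,f_{\bar z})$ to compute $\langle N_z,N_z\rangle$. The paper splits into the conformal ($\omega_0\equiv 0$) and non-conformal (special coordinates with $\omega_0\equiv 1$) cases and reuses the formulas derived in Proposition \ref{prop:hyperbolic_gauss_map_harmonic}; you unify both in one expression with a general $\omega_0$. Your orientation argument is also more explicit than the paper's, which defers this to Section \ref{sec:twist-construction}.

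One small slip: in genuinely arbitrary holomorphic coordinates, differentiating $\langle N,f_z\rangle=\omega_0$ gives $\langle N_z,f_z\rangle=(\omega_0)_z-u$, so the coefficient of $f_{\bar z}-\overline{\omega_0}N$ in your formula for $N_z$ should be $\dfrac{(\omega_0)_z-u}{B-|\omega_0|^2}$ rather than $-\dfrac{u}{B-|\omega_0|^2}$. This does not affect the Hopf-differential computation, since $\langle f_{\bar z}-\overline{\omega_0}N,\,f_{\bar z}-\overline{\omega_0}N\rangle=0$ and $\langle f,\,f_{\bar z}-\overline{\omega_0}N\rangle=0$, so $\langle N_z,N_z\rangle=-\omega_0^2$ regardless of the coefficient. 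It does, however, mean that in your orientation formula the numerator $|u|^2$ should read $|(\omega_0)_z-u|^2$, and the non-immersion locus of $N$ is $\{(\omega_0)_z=u\}$ rather than $\{u=0\}$. The cleanest fix is to do as the paper does and pass to coordinates in which $\omega_0$ is constant, which your formula already matches.
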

\begin{proof}
By definition $N$ satisfies
\[
\langle N, f \rangle = 0, \qquad \langle N, df \rangle = \omega + \ol{\omega}, \qquad \langle N, N \rangle = 1.
\]
This implies
\[
\langle f, N \rangle = 0, \qquad \langle f, dN \rangle = -\left(\omega + \ol{\omega}\right)
\]
On the other hand $\langle f, f\rangle = -1$, since $f$ maps to $\H^3_\pm$. Therefore, $f$ satisfies the dual map equations for $\omega$. We still need to check that $\omega^2 = -Q = (N^* \langle \cdot \, , \cdot \rangle)^{2,0}$.

Since $Q$ is holomorphic, in a dense set on $M$ we may assume $Q = dz^2$ or $Q = 0$ in coordinates. These cases correspond to $\langle f_z, f_z \rangle = 1$ and $\langle f_z, f_z \rangle = 0$ respectively. In Proposition \ref{prop:hyperbolic_gauss_map_harmonic} we have shown that if $Q = 0$, i.e.\@ $\langle f_z, f_z \rangle =0$, then $N_z = - \frac u B f_{\bar{z}}$. This immediately implies $\langle N_z, N_z\rangle = 0$.

On the other hand, if $\langle f_z, f_z \rangle = 1$, then we have shown that $N_z = f - \frac u {B-1} f_{\bar{z}} + \frac u {B-1} N$. In this case, using additionally the relations $\langle f, N \rangle = 0$, $\langle f_{\bar{z}}, N \rangle = 1$, $\langle N, N \rangle = 1$, we find
\[
\langle N_z, N_z \rangle = \langle f, f \rangle = - 1.
\]
This shows that in these coordinates $(N^* \langle \cdot \,, \cdot \rangle)^{2,0} = -dz^2 = -Q$. That the orientations match can be shown by a direct calculation, or it follows from Section \ref{sec:twist-construction} below.
 \end{proof}
In particular, the constructions {\em Gau\ss\@ map} and {\em dual } are inverse to each other (once a square root of the Hopf differential is fixed).

\subsection{The conformally invariant Hopf differential}
We now recast the above constructions in a conformally invariant way. Instead of working in $\H^3_\pm \subset \R^{1,3}$, we pass to the projectivized light cone $\bb{P}\mc{L}\cong \mathbb S^3$, which compactifies hyperbolic space by adding the boundary 2-sphere $\mathbb S^2_{\mathrm{eq}}$.

As we observed in Section \ref{sect:geom_prelims}, $\H^3_\pm$ can be naturally regarded as a subspace of $\bb{P}\mc{L} \cong \mathbb S^3$. For maps into $\mathbb S^3$, we can define oblique Gau\ss\@ maps in a manner similar to that in Definition \ref{defn:hyperbolic-oblique-gauss-map-1}, see also Remark \ref{rem:Riemtarget}. These two notions are closely related, as will be evident in the subsequent discussion. For the remainder of this section, we once again consider $\H^3_\pm$ as the slice of $\mc{L}$ by the affine hyperplane $x_4 = 1$.

To establish a connection between these two different oblique Gau\ss\@ maps, it is advantageous to adopt a uniform definition of the Hopf differential that applies to both cases. In the following, $L$ denotes the tautological line bundle induced by $\mc{L} \to \bb{P}\mc{L}$.

\begin{defn}
Let $M$ be a Riemann surface. For a smooth map $f : M \to \bb{P}\mc{L}$ the {\em conformally invariant Hopf differential} is the section $Q$ of $K^2 \otimes f^*L^{-2}$, which is locally defined by
\begin{equation}\label{eq:hopf-scale.in}
Q|_U = \langle \partial \hat f, \partial \hat f \rangle \otimes \hat f^{-2},
\end{equation}
where $\hat f$ is a local lift of $f$, i.e.\@ $f : U \to \mc{L}$ satisfies $\pi \circ \hat f = f$.
\end{defn}
When $f$ takes values in $\H^3_\pm\subset \bb{P}\mc{L}$, this conformally invariant $Q$ reduces to the standard Hopf differential of the hyperbolic lift $f_{\hyp}$ through the natural trivialization of $f^*L^{-2}$.

Note that a local lift of $f$ is exactly the same thing as a local section of $f^*L$ without zeros. To see that this definition is consistent, let $\check{f} : U \to \mc{L}$ be another lift of $f$. Then, there exists a nowhere vanishing function $\lambda: U \to \R$ such that $\check{f} = \lambda \hat{f}$. Given this, 
the computation
\begin{align*}
  \langle \partial\check{f}, \partial \check{f} \rangle \otimes \check{f}^{-2} & = \langle \partial (\lambda \hat f), \partial (\lambda \hat f) \rangle (\lambda \check{f})^{-2} 
   = \lambda^2 \langle \partial \hat f, \partial \hat f \rangle \lambda^{-2} \hat f^{-2}  = Q|_U
\end{align*}
shows that the definition of $Q$ does not depend on the choice of the lift. The computation used the fact that $\langle \hat{f}, \hat{f} \rangle = 0$ and its consequence $\langle \partial \hat{f}, \hat{f} \rangle = 0$.

For any map $f : M \to \bb{P}\mc{L}$ we denote by $f_{\sph} : M \to \mathbb S^3$ the {\em spherical lift} of the map, i.e.,\@ $f_{\sph} = \sigma_{\mathbb S^3} \circ f$. Similarly, by $f_{\hyp} : M_{\hyp} \to \H^3_\pm$ we denote the {\em hyperbolic lift} of the 
map, i.e.,\@ $f_{\hyp} = \sigma_{\H^3_\pm} \circ f|_{M_{\hyp}}$ where $M_{\hyp} = f^{-1} ( \dom \sigma_{\H^3_\pm} )$. 

\begin{defn}
Let $f : M \to \bb{P}\mc{L}$ be an immersion and $\omega \in \Gamma( K \otimes f^*L^{-1})$ a square root of the conformally invariant Hopf differential, i.e.\@ $\omega^2 = Q$. Moreover, let $\eta = \omega + \ol{\omega} \in \Omega^1(M, f^*L^{-1})$.

The {\em spherical oblique Gau\ss\@ map} is defined to be a smooth map $N_{\sph} : M \to T\mathbb S^3$ satisfying $N_{\sph}(x) \in T_{f_{\sph}(x)} \mathbb S^3$,
\[
\langle N_{\sph}(x), d_xf_{\sph} \rangle = \eta(x) \otimes f_{\sph}(x),
\]
 $\|N_{\sph}(x)\| = 1$ for every $x \in M$, and which is compatible with the orientation on $M$.

Similarly, the {\em hyperbolic oblique Gau\ss\@ map} is defined to be a smooth map $N_{\hyp} : M_{\hyp} \to T\H^3_\pm$ satisfying $N_{\hyp}(x) \in T_{f_{\hyp}(x)} \H^3_\pm$ and
\[
\langle N_{\hyp}(x), d_xf_{\hyp} \rangle = \eta(x) \otimes f_{\hyp}(x),
\]
 $\|N_{\hyp}(x)\| = 1$ for every $x \in M_{\hyp}$, and which is compatible with the orientation on $M$.
\end{defn}
The definition of the hyperbolic oblique Gau\ss\@ map here coincides with the previous definition after identifying $N_{\hyp}(x) \in T_{f_{\hyp}(x)} \H^3_\pm \subset \{ x \in \R^{1,4} : x_4 = 0\}$ and $N_{\hyp}(x) \in \R^{1,3}$. Observe also that if $\omega \in \Gamma(K \otimes f^*L^{-1})$ is a square root of the conformally invariant Hopf differential $Q$, then $\omega \otimes f_{\hyp} \in \Omega^{1,0}(M)$ is a square root of the hyperbolic Hopf differential $Q_{\hyp}= \langle \partial f_{\hyp}, \partial f_{\hyp}\rangle $. Similarly, $\omega \otimes f_{\sph} \in \Omega^{1,0}(M)$ is a square root of the spherical Hopf differential $Q_{\sph}=\langle \partial f_{\sph}, \partial f_{\sph}\rangle $. Analogous to Proposition \ref{pro:existN}, we have:
\begin{prop}
  For an immersion $f : M \to \bb{P}\mc{L}$ of a connected Riemann surface and a square root of the conformally invariant Hopf differential $\omega \in \Gamma( K \otimes f^*L^{-1})$, there exists a unique oblique spherical Gau\ss\@ map.
\end{prop}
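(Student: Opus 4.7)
The plan is to reduce this to a direct analog of Proposition~\ref{pro:existN} for a Riemannian target. The key observation is that, although $f$ maps to the projectivized light cone $\bb{P}\mc{L}$, the spherical lift $f_{\sph} : M \to \mathbb S^3$ is an honest immersion into a Riemannian 3-manifold whose tangent spaces are positive definite, exactly as in the hyperbolic case. Moreover, as noted just before the statement, $\omega \otimes f_{\sph} \in \Omega^{1,0}(M)$ is a square root of the spherical Hopf differential $Q_{\sph} = \langle \partial f_{\sph}, \partial f_{\sph}\rangle$, and the defining equation
\[
\langle N_{\sph}, df_{\sph}\rangle = \eta \otimes f_{\sph}
\]
becomes
\[
\langle N_{\sph}, df_{\sph}\rangle = (\omega \otimes f_{\sph}) + \overline{(\omega \otimes f_{\sph})}.
\]
So the problem is to produce an oblique Gau\ss\@ map for $f_{\sph}: M \to \mathbb S^3$ with respect to the square root $\omega \otimes f_{\sph}$ of its spherical Hopf differential, in the sense of Remark~\ref{rem:Riemtarget}.

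With this reduction in place, the proof of Proposition~\ref{pro:existN} transfers almost verbatim. At each $x \in M$, I would split $T_{f_{\sph}(x)} \mathbb S^3$ into $\im d_x f_{\sph}$ and its one-dimensional orthogonal complement, choose the unit vector $\nu$ in the complement so that $(\nu, (f_{\sph})_x, (f_{\sph})_y)$ is positively oriented, and let $n_0$ be the metric dual inside $\im d_xf_{\sph}$ of the real 1-form $(\omega \otimes f_{\sph})_x + \overline{(\omega \otimes f_{\sph})_x}$. The ansatz $N_{\sph}(x) = n_0 + t\nu$ together with $\|N_{\sph}(x)\|^2 = 1$ reduces to the quadratic equation $\|n_0\|^2 + t^2 = 1$, whose two real roots are distinguished by the orientation condition exactly as in the hyperbolic proof. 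Smoothness of $N_{\sph}$ then follows from smoothness of the construction and of the local data.

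The only nontrivial step to verify is the bound $\|n_0\|^2 < 1$, which is exactly the same computation as in Proposition~\ref{pro:existN} with $\H^3_\pm$ replaced by $\mathbb S^3$. Pick a $g_{\sph}$-orthonormal basis $e_1, e_2$ of $T_xM$, where $g_{\sph} = f_{\sph}^*\langle\cdot\,,\cdot\rangle_{\mathbb S^3}$, chosen so that $\langle e_1, J e_1\rangle_{g_{\sph}} = 0$ (always possible since $J$ has trace zero, so the symmetrized bilinear form $g_{\sph}(\cdot, J\cdot)$ is indefinite or zero on $T_xM$); write $Je_1 = b e_2$ with $b>0$. A short computation using $\partial f_{\sph}(X) = \tfrac{1}{2}(df_{\sph}(X) - i df_{\sph}(JX))$ gives
\[
Q_{\sph}(e_1, e_1) = \tfrac{1}{4}\bigl(1 - b^2\bigr),
\]
and, after swapping the roles of $e_1$ and $e_2$ if necessary so that $b \le 1$, one reads off $|\eta^\sharp|^2 = 1 - b^2 < 1$ (the case $b=1$ is excluded because $f_{\sph}$ is an immersion, forcing $Q_{\sph}$ to be nonvanishing in a dense set; zeros are handled by continuity). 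This closes the existence argument, and uniqueness is automatic from the sign of $t$.

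I expect no new obstacle beyond reusing the hyperbolic proof: the positive-definite nature of $T\mathbb S^3$ and the sign conventions of the spherical metric match those of $\H^3_\pm$ exactly. The one minor subtlety is the bookkeeping between the line-bundle valued objects $\omega, \eta$ on $\bb{P}\mc{L}$ and their spherical trivializations, but this is handled uniformly by tensoring with $f_{\sph}$, which is a global nowhere-vanishing section of $f^*L$ by our choice of slice $x_0=1$.
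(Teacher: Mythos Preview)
Your reduction to the spherical lift $f_{\sph}$ and reuse of the argument from Proposition~\ref{pro:existN} is exactly the approach the paper intends (it simply asserts the result is analogous, without repeating details). One small correction: the case $b=1$ is not excluded and cannot be handled by continuity---an immersion may well be conformal, in which case $Q_{\sph}\equiv 0$ and $b=1$ everywhere---but then $\|n_0\|^2 = 1-b^2 = 0 < 1$ directly, so the quadratic equation for $t$ still has two distinct real roots and no separate treatment is needed.
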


For the oblique spherical Gau\ss\@ map $N_{\sph}$ of $f\colon M\to \H^3_\pm\subset\mathbb S^3$, one obtains the oblique hyperbolic Gau\ss\@ map via
\[
N_{\hyp}(x) = \frac{d_{f_{\sph}(x)} \Xi\,  N_{\sph}(x)}{\|d_{f_{\sph}(x)} \Xi\,  N_{\sph}(x)\|},
\]
where $\Xi$ is the conformal diffeomorphism between $\H^3_\pm$ and $\mathbb S^3\bs \mathbb S^2_{\operatorname{eq}}$ introduced in Section \ref{sect:geom_prelims}. To see that $N_{\hyp}$ is an oblique hyperbolic Gau\ss\@ map, first note that $\Xi$ has conformal factor $\tfrac 1 {x_4^2}$. Then, we compute
\begin{align*}
  \langle N_{\hyp}(x), d_xf_{\hyp} \rangle & = \left\langle  \frac{d_{f_{\sph}(x)} \Xi\,N_{\sph}(x)}{\|d_{f_{\sph}(x)} \Xi\,N_{\sph}(x)\|}, d_{f_{\sph}(x)} \Xi\,df_{\sph}(x) \right\rangle \\
  & = \frac 1 {f_{\sph}(x)_4^2} \frac 1 {\|d_{f_{\sph}(x)} \Xi\,N_s(x)\|}  \langle N_{\sph}(x), df_{\sph}(x) \rangle \\
  & = \frac 1 {f_{\sph}(x)_4} \eta(x) \otimes f_{\sph}(x)  \,= \,\omega(x) \otimes f_{\hyp}(x),
\end{align*}
where we used that $f_{\hyp}(x) = \Xi(f_{\sph}(x)) =  \frac{f_{\sph}(x)}{f_{\sph}(x)_4}$ for $f_{\sph}(x)=({f_{\sph}(x)}_0,\dots,{f_{\sph}(x)}_4)$ and 
\[
\|d_{f_{\sph}(x)} \Xi\,N_{\sph}(x)\| = \frac 1 {f_{\sph}(x)_4} \|N_{\sph}(x)\| = \frac 1 {f_{\sph}(x)_4}.
\]
In fact, using $d_p\Xi\, v = \frac 1 {p_4} v - \frac{v_4}{x_4^2} p$, we may further compute
\[
d_{f_{\sph}(x)} \Xi\,N_{\sph}(x) = \frac 1 {f_{\sph}(x)_4} N_{\sph}(x) - \frac {N_{\sph}(x)_4}{f_{\sph}(x)_4^2} f_{\sph}(x)
\]
and therefore
\[
N_{\hyp}(x) = N_{\sph}(x) - \frac {N_{\sph}(x)_4}{f_{\sph}(x)_4} f_{\sph}(x).
\]

\subsection{Transgressive harmonic maps}
Under certain conditions on $f$, it turns out that the map $N_{\hyp}$, considered as a map into $ \dS_3$, extends smoothly from $f_{\sph}^{-1}(\mathbb S^3 \bs \mathbb S^2_{\operatorname{eq}})$ to $M$. The transgressive maps, which are the central focus of our study, are a particular class of such maps.

\begin{defn}
  \label{defn:transgressive}
A smooth map $f : M \to \bb{P}\mc{L}$ is {\em transgressive}, if
\begin{enumerate}
\item the conformally invariant Hopf differential vanishes along $f_{\sph}^{-1}(\mathbb S^2_{\operatorname{eq}})$,
\item $f_{\sph}$ intersects $\mathbb S^2_{\operatorname{eq}}$ orthogonally, that is, for every $x \in f_{\sph}^{-1}(\mathbb S^2_{\operatorname{eq}})$ there exists a non-zero $v \in T_x M$ with $d_xf\,v \perp T_{f_{\sph}(x)} \mathbb S^2_{\operatorname{eq}}$.
\end{enumerate}
The first property will also be called {\em conformality at infinity}. Note that a transgressive map $f$ is an immersion in a neighbourhood of
$\Gamma = f_{\sph}^{-1}(\mathbb S^2_{\operatorname{eq}})$, and $\Gamma \subset M$ is a $1$-dimensional submanifold.
\end{defn}
As we will see later, the harmonic maps $f_{t,\hyp}^{\model}$ produce examples of such maps. Another class of interesting examples are given by minimal surfaces in $\H^3$ which pass through infinity orthogonally. Crucially, the hyperbolic oblique Gau\ss\@ map of a transgressive harmonic map (restricted to $f^{-1}(\H^3_\pm)$) extends to a smooth map across $\Gamma$. This is the content of the next theorem.

\begin{thm}\label{thm:transobl}
Let $f : M \to \bb{P}\mc{L}$ be a transgressive immersion and $\omega \in \Gamma(K \otimes f^* L^{-1})$ a square root of the conformally invariant Hopf differential. Let $N_{\sph} : M \to T\mathbb S^3$ be a spherical oblique Gau\ss\@ map associated to $f$ and $\omega$. Then, the associated hyperbolic oblique Gau\ss\@ map extends to a smooth map $N : M \to \dS_3$.

The set of points $\Gamma = f_{\sph}^{-1}(\mathbb S^2_{\operatorname{eq}})$, where $f_{\sph}$ intersects the equatorial 2-sphere, is contained in the set of points $\{ x \in M : \rk d_xN < 2\}$, where $N$ is not an immersion.
\end{thm}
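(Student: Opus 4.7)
The plan has two stages: first establish the smooth extension of $N$ across $\Gamma$, then deduce the rank drop by an involutive argument against Proposition~\ref{prop:Ntof}.

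For the extension, I would work locally near a point $p\in\Gamma$, using the explicit formula derived at the end of the preceding subsection,
\[
N = N_{\sph} - \frac{N_{\sph,4}}{f_{\sph,4}}\,f_{\sph},
\]
regarded as a map into $\R^{1,3}$ (the fourth coordinate cancels automatically). Since $\Gamma = \{f_{\sph,4}=0\}$ and $df_{\sph,4}|_\Gamma \neq 0$ by the orthogonal intersection hypothesis, $f_{\sph,4}$ is a defining function for $\Gamma$, and the smoothness of the extension reduces, via the standard division lemma, to the vanishing of $N_{\sph,4}$ on $\Gamma$. Once this is settled, the identity $\langle N,N\rangle_{\R^{1,3}} = 1$ propagates from $M\setminus \Gamma$ by continuity and thus the extension lands in $\dS_3$.

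The vanishing $N_{\sph,4}|_\Gamma = 0$ is the heart of the matter and requires both defining conditions of transgressivity. Choose holomorphic coordinates $z = x+iy$ near $p$ so that $\Gamma = \{x=0\}$; then $df_{\sph}(\partial_y)(p)$ is tangent to $\mathbb S^2_{\operatorname{eq}}$. Decompose $df_{\sph}(\partial_x)(p) = \alpha e_4 + w$ with $w$ tangent to $\mathbb S^2_{\operatorname{eq}}$. Conformality at infinity, $Q_{\sph}(p)=0$, translates into $\langle df_{\sph}(\partial_x),df_{\sph}(\partial_y)\rangle(p) = 0$ together with $\|df_{\sph}(\partial_x)\|(p) = \|df_{\sph}(\partial_y)\|(p)$, while orthogonal intersection yields a vector $v\in T_pM$ with $df_{\sph}(v)(p)$ proportional to $e_4$. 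Expressing $v$ as a linear combination of $\partial_x,\partial_y$ and using $df_{\sph}(\partial_y)(p)\neq 0$ forces $w = 0$, so $df_{\sph}(\partial_x)(p)$ is a nonzero multiple of $e_4$. Since $\omega_{\sph}^2 = Q_{\sph}$ vanishes on $\Gamma$, the defining equation $\langle N_{\sph}, df_{\sph}\rangle = \eta\otimes f_{\sph}$ restricted to $\partial_x$ at $p$ reduces to $\alpha\,N_{\sph,4}(p) = 0$, hence $N_{\sph,4}(p) = 0$.

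For the rank drop I would argue by contradiction. Suppose $\rk d_pN = 2$; by continuity $N$ is an immersion on some neighborhood $U$ of $p$. The identity $\omega_{\hyp} = \omega_{\sph}/f_{\sph,4}$ combined with the fact that both $\omega_{\sph}$ and $f_{\sph,4}$ vanish to first order on $\Gamma$ shows that $\omega_{\hyp}$, and hence $-\omega_{\hyp}^2$, extends smoothly across $\Gamma$, and this extension provides a smooth square root of the de Sitter Hopf differential $\langle \partial N, \partial N\rangle$ of $N$ on all of $U$. Proposition~\ref{prop:Ntof} then produces a unique smooth dual map $\tilde f\colon U\to \H^3_\pm$. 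By Proposition~\ref{prop:dual_involution} and the uniqueness clause of Proposition~\ref{prop:Ntof}, $\tilde f$ must coincide with $f_{\hyp}$ on $U\setminus\Gamma$. But $f_{\hyp} = f_{\sph}/f_{\sph,4}$ blows up as one approaches $\Gamma$, contradicting the boundedness of the smooth map $\tilde f$ at $p$. Therefore $\rk d_pN < 2$.

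I expect the main obstacle to be the verification that $df_{\sph}(\partial_x)(p)$ lies purely along $e_4$, since this is the single place where both transgressivity conditions (conformality at infinity and orthogonal intersection) must be used simultaneously. The remaining ingredients---division, continuity, and the contradiction via the dual map---are essentially formal once this geometric input is secured.
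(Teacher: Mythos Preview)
Your proposal is correct and follows essentially the same approach as the paper: smooth extension via the formula $N = N_{\sph} - (N_{\sph,4}/f_{\sph,4})f_{\sph}$ reduced to $N_{\sph,4}|_\Gamma = 0$, and the rank drop via the contradiction with Propositions~\ref{prop:Ntof} and~\ref{prop:dual_involution}. The paper's argument for $N_{\sph,4}(p)=0$ is marginally more direct---it uses the orthogonality vector $v$ immediately in $\langle N_{\sph}, df_{\sph}(v)\rangle = 0$ rather than first establishing $w=0$---but your detour is valid, and your treatment of the second part actually fills in the detail (smooth extension of $\omega_{\hyp}$) that the paper leaves implicit.
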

In particular, $N$ is harmonic on all of $M$, is spacelike away from $\Gamma$, and its differential drops rank  along $\Gamma$.
\begin{proof}
Since $N_{\hyp}(x) = N_{\sph}(x) - \frac {N_{\sph}(x)_4}{f_{\sph}(x)_4} f_{\sph}(x)$ away from $\Gamma$, it suffices to show that $\frac{N_{\sph}(x)_4}{f_{\sph}(x)_4}$ extends as a smooth function.

The condition that $f_{\sph}$ intersects $\mathbb S^2_{\operatorname{eq}} = \{ x \in \mathbb S^3 : x_4 = 0 \}$ orthogonally implies that $f_{\sph,4}$ has a 
first-order zero at every $x \in f_{\sph}^{-1}(\mathbb S^2_{\operatorname{eq}})$.
On the other hand, the fact that the conformally invariant Hopf differential vanishes at every $x \in f_{\sph}^{-1}(\mathbb S^2_{\operatorname{eq}})$ implies that for each such $x$ the equations for $N_{\sph}(x)$ become
\[
N_{\sph}(x) \in T_{f_{\sph}(x)} \mathbb S^3, \qquad \langle N_{\sph}(x), df \rangle = 0, \qquad \|N_{\sph}(x)\| = 1.
\]
Again, by the orthogonality condition, there exists a $v \in T_x M$ with $d_xf v \perp T_{f_{\sph}(x)} \mathbb S^2_{\operatorname{eq}}$. This implies that $N_{\sph}(x) \in T_{f_{\sph}(x)} \mathbb S^2_{\operatorname{eq}}$ or equivalently that $N_{\sph}(x)_4$ vanishes. Since $N_{\sph}(x)_4$ is smooth, this implies that $N_{\sph}(x)_4$ vanishes at least to first order. Therefore, the quotient $\frac{N_{\sph}(x)_4}{f_{\sph}(x)_4}$ is smooth.

The second part follows directly from Proposition \ref{prop:dual_involution} together with the existence of a dual map of an immersion $N$
provided by Proposition \ref{prop:Ntof}.
\end{proof}

\begin{rem}\label{rem:ortho}
The proof of Theorem \ref{thm:transobl} shows that $N$ does not extend smoothly if $f$ is only conformal at infinity but does not intersect 
$\mathbb S^2_{\operatorname{eq}}$ orthogonally.
\end{rem}

The previous theorem can be partially reversed. We start with some preliminary discussion: Assume that $N$ is smooth, and that there is
a 1-dimensional submanifold $\Gamma\subset M$ such that $N_{\mid M\setminus\Gamma}$ is an immersion and $\mathrm{rank}\,dN\leq1 $ along $\Gamma.$
Further, we assume that for local coordinates $(x_1,x_2)\colon U\subset M\to\R^2$ there is a $\Gamma$-defining function $d\colon U\to \R$ without critical points, i.e., $\Gamma\cap U=d^{-1}(\{0\})$, such that 
\begin{equation}\label{eq:detsingcond}d^2=\pm\det(\begin{pmatrix} \langle \tfrac{\partial N}{\partial x_i}, \tfrac{\partial N}{\partial x_j}\rangle\end{pmatrix}_{i,j}).\end{equation}
Then, the rank of $dN$ is exactly 1 along $\Gamma.$ Furthermore, there is a smooth rank 2 subbundle $E\leq N^* T \dS_3$
with $\mathrm{im} (dN)\subset E:$ in fact, there always exists a local non-vanishing vector field $X$ with $d_pN(X)=0$ for all $p\in U\cap \Gamma.$ Then, $dN(\tfrac{1}{d}X)$ extends smoothly across $\Gamma\cap U$, and spans together with $dN(Y)$ -- for a pointwise linearly independent vector field $Y$ -- the rank 2 bundle $E$ because of \eqref{eq:detsingcond}. Moreover, it follows that the induced signature of $E$ is locally constant.

For short, we say in the above situation that the rank of $dN$ drops {\em transversally without signature change}.
Geometrically, the hypothesis says that $N$ is harmonic into $\dS_3$, is an immersion off a curve $\Gamma$, and, along $\Gamma$, 'folds' in a controlled, non-degenerate way,  just as the model map $N^{\model}_t$ does (rank exactly $1$, no change of causal type in the image).
\begin{thm}
  \label{thm:transgressive_map_from_de_sitter_map}
Let $N\colon M\to  \dS_3$ be a harmonic map with square Hopf differential $Q=-\omega^2.$ Assume that there is
a 1-dimensional submanifold $\Gamma\subset M$ such that $\Gamma$ does not contain any zeros of $Q$ and 
the rank of $N$ drops to 1 transversally without signature change along $\Gamma.$ Then, there is a transgressive harmonic map $f\colon M\to \bb{P}\mc{L}$ with
$f_{\sph}^{-1}(\mathbb S^2_{\operatorname{eq}})\subset\Gamma$.
\end{thm}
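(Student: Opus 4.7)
The plan is to construct $f$ as the dual map on $M\setminus\Gamma$, extend it smoothly across $\Gamma$ via an explicit rescaled lift into the light cone, and then verify the transgressive conditions and harmonicity. On $M\setminus\Gamma$, the hypothesis that $\Gamma$ contains no zeros of $Q=-\omega^2$ combined with $N$ being an immersion off $\Gamma$ means Proposition~\ref{prop:Ntof} produces a unique dual map $f_{\hyp}\colon M\setminus\Gamma\to\H^3_\pm$ with respect to $\omega$, which is harmonic by Proposition~\ref{prop:dual_map_harmonic}. Composing with the conformal inclusion $\H^3_\pm\hookrightarrow\bb{P}\mc{L}$ yields $f\colon M\setminus\Gamma\to\bb{P}\mc{L}$.

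For the extension across $\Gamma$, fix $p\in\Gamma$ and work in a coordinate chart $U$ with a local defining function $d$ for $\Gamma$ as in the hypothesis. Guided by Example~\ref{exa:modelharmonicmap}, where the naive lift $(f^{\model}_{t,\hyp},1)$ blows up like $\csch(2tx)$ but $\sinh(2tx)\cdot(f^{\model}_{t,\hyp},1)$ extends smoothly across $x=0$ as an $\mc{L}$-valued map, I would consider the rescaled lift
\[
\tilde f \;=\; d\cdot(f_{\hyp},1)\;\in\;\R^{1,4},
\]
using the splitting $\R^{1,4}=\R^{1,3}\oplus\R$. The formulas $f_{\hyp}=n_0+t\nu$ (spacelike case) or $f_{\hyp}=-\tfrac{2r}{a}dN\,X+\gamma\nu$ (Lorentzian case) from the proof of Proposition~\ref{prop:Ntof}, combined with $d^{2}=\pm\det(\langle\partial_iN,\partial_jN\rangle)$ and $\omega|_\Gamma\neq 0$, imply that the singular coefficient of $f_{\hyp}$ blows up precisely like $1/d$, so $\tilde f$ admits a smooth non-vanishing extension across $\Gamma$. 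Since $\tilde f\in\mc{L}\setminus\{0\}$ with $\tilde f_4=d$ vanishing to first order, $\tilde f$ descends to a smooth map $f\colon M\to\bb{P}\mc{L}$ with $f_{\sph}(\Gamma)\subset\mathbb S^2_{\operatorname{eq}}$, and, after shrinking $U$ if necessary, $f_{\sph}^{-1}(\mathbb S^2_{\operatorname{eq}})\subset\Gamma$.

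The transgressive conditions then follow from the structure of $\tilde f$. Orthogonal intersection of $f_{\sph}$ with $\mathbb S^2_{\operatorname{eq}}$ along $\Gamma$ is equivalent to $\tilde f_4=d$ vanishing to exactly first order, which is automatic. For conformality at infinity, the identities $\langle\tilde f,\tilde f\rangle=0$ and $\langle\tilde f,\partial\tilde f\rangle=0$ give
\[
\langle\partial\tilde f,\partial\tilde f\rangle \;=\; d^{2}\,\langle\partial(f_{\hyp},1),\partial(f_{\hyp},1)\rangle \;=\; d^{2}\,\omega^{2},
\]
where in the last step I use Proposition~\ref{prop:dual_involution} to identify the hyperbolic Hopf differential of $f_{\hyp}$ with $\omega^{2}$; dividing by $\tilde f^{\otimes 2}$ shows that the conformally invariant Hopf differential of $f$ vanishes to order at least two along $\Gamma$. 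Harmonicity on $M\setminus\Gamma$ holds by Proposition~\ref{prop:dual_map_harmonic} and extends to all of $M$ by continuity given the smoothness of the extension.

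The main obstacle is establishing the smoothness of $\tilde f$ across $\Gamma$ in full generality. While the model case is transparent, one must carefully match the blow-up rate of the dual-map formulas against the degeneration rate of the induced pseudo-metric $g=\langle dN,dN\rangle$. The transversal rank-drop hypothesis and the constant-signature condition on $E\supset\im dN$ are precisely what ensure that both scales are of first order in $d$, allowing the cancellation by $d$ to produce smoothness; a case analysis according to whether $E$ is spacelike or Lorentzian (the two cases of Proposition~\ref{prop:Ntof} compatible with a transversal rank drop without signature change) may be needed.
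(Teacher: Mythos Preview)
Your proposal is correct and follows essentially the same approach as the paper: both construct $f$ as the dual map on $M\setminus\Gamma$ via Proposition~\ref{prop:Ntof}, rescale by the defining function $d$ to obtain a light-cone lift $d\cdot(f_{\hyp},1)$ that extends smoothly and nonvanishingly across $\Gamma$ (the paper's equation~\eqref{eq:boundaryfdual}), and then read off the transgressive conditions. The paper carries out the smoothness check you flag as the ``main obstacle'' by passing to holomorphic coordinates with $Q=-dz^2$ so that the induced metric is $a\,dx^2+(a+2)\,dy^2$, and then does precisely the signature-by-signature case analysis you anticipate; its verification of orthogonality invokes Remark~\ref{rem:ortho} rather than your direct observation that $\tilde f_4=d$ has a simple zero, but the two arguments are equivalent.
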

\begin{proof}
We follow the construction in Proposition \ref{prop:Ntof}. First, assume that $E$ has signature $(0,2)$.
Let $z=x+iy$ be a holomorphic coordinate with $Q=-dz^2,$
 and $\nu$ be the positively oriented vector field of constant length $-1$ perpendicular to $E$. The metric induced by $N$ is
 \[g= a(dx^2)+ (a+2)(dy)^2\]
 for some function $a\colon U\to\R.$ Thus, $a(a+2)=\pm d^2,$  where $d$ is as in \eqref{eq:detsingcond}. Since $E$ is by assumption of type $(0,2)$, the metric coefficient
  $a$ vanishes along $\Gamma$ and $a+2$ does not. Note that  $f=-\tfrac{2}{a}dN(\tfrac{\partial}{\partial x})+t\nu$  holds  away from $\Gamma,$ where $t$ is the positive solution of $t^2=\tfrac{4}{a}+1$. Then
 \begin{equation}\label{eq:boundaryfdual}p\in U\mapsto d(p)\,( -\tfrac{2}{a(p)}d_pN(\tfrac{\partial}{\partial x})+t(p) \nu(p),1)\,\in\mathcal L\setminus\{0\}\end{equation}
 extends smoothly through $\Gamma$. Hence, the spherical lift of $f$ extends smoothly through $\Gamma$, and the conformally invariant Hopf differential vanishes along $\Gamma$ by its definition \eqref{eq:hopf-scale.in} and because the Hopf differential of $f$ is $Q$ as a consequence of
 the proof of Proposition \ref{prop:dual_involution}. By Remark \ref{rem:ortho}, the intersection of $f$ with $\mathbb S^2_{\operatorname{eq}}$ along $\Gamma$ is orthogonal.
 
Now assume we are in the second case, such that $E$ has type $(1,1).$ Let $\nu$ be the positively oriented vector field of constant length $1$ perpendicular to $E$. If $a$ vanishes along $\Gamma$, we can proceed as in the first case: away from $\Gamma$,
$f=-\tfrac{2}{a}dN(\tfrac{\partial}{\partial x})+t\nu$
where $t$ is the positive solution of
$-1=\tfrac{4}{a}+t^2$. For $d^2=-a(a+2)$, \eqref{eq:boundaryfdual} extends smoothly through $\Gamma$. As in the first case, the map $f$ is transgressive.

Finally, if $a+2$ vanishes along $\Gamma$, then  $f=-\frac{2}{a}dN(\frac{\partial}{\partial x})+t\nu$ away from $\Gamma$, and $f$ extends smoothly across $\Gamma$ as a map to hyperbolic 3-space.
\end{proof}

\begin{exa}\label{exa:modelharmonicmap_sphere}
  The harmonic maps $f_{t,\hyp}^{\model}$ from Example \ref{exa:modelharmonicmap} induce transgressive maps
  \[
  f_{t,\sph}^{\model} : \{x + iy \in \C : x\neq 0\} \to \bb{S}^3 \subset \R^{1,4}
  \]
  \[
  f_{t,\sph}^{\model}(x,y) =
  \begin{pmatrix}
    1 \\
    0 \\
    \frac{8 \, t^{2} y^{2} + \cosh\left(4 \, t x\right) - 4}{4 \, {\left(8 \, t^{2} y^{2} + \cosh\left(4 \, t x\right) + 1\right)}} \\
    \frac{2 \, t y}{8 \, t^{2} y^{2} + \cosh\left(4 \, t x\right) + 1} \\
    \frac{\sinh\left(2 \, t x\right)}{8 \, t^{2} y^{2} + \cosh\left(4 \, t x\right) + 1}
  \end{pmatrix}
  \]
  These maps are evidently smooth through $x=0$, and it can be checked that they are transgressive harmonic maps with dual maps $N_t^{\model}$.
\end{exa}

\subsection{The twist construction}\label{sec:twist-construction}
We next describe the oblique hyperbolic Gau{\ss} map in terms of solutions to the $\mathrm{SU}(2)$ self-duality equations. Consider a solution given
by the unitary connection $\nabla=d+A$ (with respect to the Hermitian metric $h$) and the Higgs field $\Phi$ on $M$, i.e.,
\[\bar\partial^\nabla\Phi=0\quad\text{and} \quad F^\nabla=-[\Phi,\Phi^{*_h}].\]
Assume that $\det\Phi=-\omega^2.$ This condition means that the Hopf differential of the metric induced by the associated
harmonic map into $\mathbb H^3$ is $q=\omega^2.$
In particular, $\det\Phi$ is a square if and only if the hyperbolic Hopf differential admits a square root~$\omega$. Now consider the holomorphic 
eigenline bundle $L$ of $\Phi$ with respect to $\omega,$ and split $V=L\oplus L^\perp$ with respect to the Hermitian metric $h.$ 
Correspondingly, 
\begin{equation}\label{eq:SDSsplit}
\begin{split}
\nabla&=\begin{pmatrix} \nabla^L & \gamma\\ -\gamma^* & \nabla^{L^*}\end{pmatrix},\quad
\Phi=\begin{pmatrix} \omega & \alpha\\0&-\omega\end{pmatrix},\quad
\Phi^*\,=\,\begin{pmatrix} \bar\omega & 0\\\alpha^*&\bar\omega\end{pmatrix},\\
\end{split}
\end{equation}
where $\nabla^L$ and $\nabla^{L^*}$ are dual Hermitian line bundle connections, $\gamma\in\Omega^{(0,1)}(M,L^2)$ and
$\alpha\in\Omega^{(1,0)}(M,L^{-2})$, and $\gamma^*$ and $\alpha^*$ are the Hermitian adjoints. Consider the associated
equivariant harmonic map $f$ to $\H^3$. The induced metric is 
\[g=\omega^2+\omega\bar\omega+\bar\omega\omega+\tfrac{1}{2}(\alpha\alpha^*+\alpha^*\alpha)+\bar\omega^2.\]
The map $f$ can be computed as follows: take a local special Hermitian trivialization of $V=L\oplus L^\perp$, i.e., write \eqref{eq:SDSsplit}
with respect to a determinant-$1$ Hermitian frame of $L$ and $L^\perp:$
\[\nabla=d+\omega_0,\quad \Phi=\omega_{-1},\quad \Phi^*=\omega_1.\]
Let $F\colon U\to\mathrm{SL}(2,\C)$ be a local parallel frame for the flat connection $\nabla+\Phi+\Phi^*,$ i.e., a solution
of the ODE
\[dF+(\omega_0+\omega_{-1}+\omega_1)F=0.\]
Then, the harmonic map is given by
\[f=\bar F^TF\]
(up to the $\mathrm{SL}(2,\C)$-action on $\mathbb H^3$). Moreover, the oblique Gau\ss\@ map is then given by
\begin{equation}\label{eq:gaugeobliqueN}N=\bar F^T\begin{psmallmatrix}1&0\\0&-1\end{psmallmatrix}F.\end{equation}
In fact, one can directly check that $N$ satisfies the required properties (1)--(4) in Definition \ref{defn:hyperbolic-oblique-gauss-map-1}. Note that, away from zeros of $\omega$ and using
$a:=\frac{\alpha}{\omega},$ the oppositely oriented oblique Gau\ss\@ map is
\[\widetilde N=\bar F^T\left(
\begin{array}{cc}
 \frac{4-a^2}{a^2+4} & \frac{4 a}{a^2+4} \\
 \frac{4 a}{a^2+4} & \frac{a^2-4}{a^2+4} \\
\end{array}
\right)F.\]
The induced metric of $N$ is
\[\hat g=-\omega^2-\omega\bar\omega+\tfrac{1}{2}(\gamma\bar\gamma+\bar\gamma\gamma)-\bar\omega\omega-\bar\omega^2,\]
in accordance with Proposition \ref{prop:dual_involution}.
\begin{rem}
 By \cite{Donaldson}, the Hermitian metric $h$
on $V$ which solves the self-duality equation is given by an equivariant harmonic map into the space of Hermitian metrics of determinant~$1$, with respect to a parallel frame of the flat connection $\nabla+\Phi+\Phi^*$. The gauge theoretic meaning of \eqref{eq:gaugeobliqueN} is the following: $$\hat h:=h_{\mid L}\oplus -h_{\mid L^\perp}$$ is a Hermitian metric of signature $(1,1)$ on $V=L\oplus L^\perp.$ With respect
to a parallel frame of the flat connection $\nabla+\Phi+\Phi^*$, $\hat h=N$ is a harmonic map into the de Sitter 3-space of Hermitian metrics of determinant $-1$ and signature $(1,1).$
\end{rem}
Define
\begin{equation}\label{eq:SDSsplittwist}
\begin{split}
\hat\nabla&=\begin{pmatrix} \nabla^L & \alpha\\ \alpha^* & \nabla^{L^*}\end{pmatrix},\quad
\hat\Phi=\begin{pmatrix} \omega & 0\\-\gamma^*&-\omega\end{pmatrix},\quad
\hat\Phi^\dagger\,=\,\begin{pmatrix} \bar\omega & \gamma\\0&-\bar\omega\end{pmatrix}.\\
\end{split}
\end{equation}
We observe that $\hat\nabla$ is unitary, and $\hat\Phi^\dagger$ is the adjoint of $\hat\Phi$, both with respect
to the indefinite Hermitian metric $\hat h.$ Furthermore, $L^\perp$ is an eigenline bundle of $\hat\Phi^*$ with respect to $-\omega.$
We summarize our observations as follows:
\begin{thm}\label{thm:SU2SU11}
Let $(\nabla,\Phi,h)$ be a solution of the $\mathrm{SU}(2)$ self-duality equations such that $\det\Phi=-\omega^2$. Then, 
$(\hat \nabla,\hat\Phi,\hat h)$ is a solution of the $\mathrm{SU}(1,1)$ self-duality equations.

If $f$ is the equivariant harmonic map into hyperbolic 3-space associated to $(\nabla,\Phi,h)$, then the associated equivariant harmonic map
into de Sitter 3-space corresponding to $(\hat\nabla,\hat \Phi,\hat h)$ is the oblique hyperbolic Gau\ss\@ map $N$ of $f.$
\end{thm}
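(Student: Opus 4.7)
The plan is three steps. First, verify directly from the block forms \eqref{eq:SDSsplit} and \eqref{eq:SDSsplittwist} and the definition $\hat h := h_{|L}\oplus (-h_{|L^\perp})$ that $\hat\nabla$ is unitary with respect to $\hat h$ and that $\hat\Phi^\dagger$ is the $\hat h$-adjoint of $\hat\Phi$: the signature flip on $L^\perp$ is exactly what turns the $h$-unitary off-diagonal pair $(\gamma,-\gamma^*)$ of $\nabla$ into one that is no longer $\hat h$-unitary, while the new off-diagonal pair $(\alpha,\alpha^*)$ of $\hat\nabla$ is $\hat h$-unitary; similarly, $\delta\hat\Phi^*\delta = \hat\Phi^\dagger$ where $\delta=\mathrm{diag}(1,-1)$ represents $\hat h$ in a local frame compatible with the splitting. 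These are immediate block computations.

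The heart of the argument is the identity
\[
g(\lambda)^{-1}\bigl(\nabla + \lambda^{-1}\Phi + \lambda\Phi^*\bigr)g(\lambda) \;=\; \hat\nabla + \lambda^{-1}\hat\Phi + \lambda\hat\Phi^\dagger \qquad \text{for all }\lambda\in\C^*,
\]
where $g(\lambda) := \mathrm{diag}(1,\lambda)\in\mathrm{GL}(V)$ acts in the splitting $V = L\oplus L^\perp$. This can be checked by one direct inspection: conjugation by $g(\lambda)$ rescales the upper-right block by $\lambda$ and the lower-left by $\lambda^{-1}$, which is precisely what is needed to convert $(\gamma + \lambda^{-1}\alpha,\,-\gamma^* + \lambda\alpha^*)$ into $(\alpha + \lambda\gamma,\,\alpha^* - \lambda^{-1}\gamma^*)$ while leaving the diagonal untouched. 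Because $g(\lambda)$ is constant on $M$, the term $g^{-1}dg$ vanishes and this is pure matrix conjugation; in particular, it preserves flatness. The $\mathrm{SU}(2)$ self-duality of $(\nabla,\Phi,h)$ yields flatness of the left-hand family for every $\lambda\in\C^*$, hence flatness of $\hat\nabla^\lambda := \hat\nabla + \lambda^{-1}\hat\Phi + \lambda\hat\Phi^\dagger$ for every $\lambda$. Expanding $F^{\hat\nabla^\lambda}=0$ in powers of $\lambda$ on the Riemann surface $M$ (where $\hat\Phi\wedge\hat\Phi$ and $\hat\Phi^\dagger\wedge\hat\Phi^\dagger$ vanish for bidegree reasons) separates into the three equations $\bar\partial^{\hat\nabla}\hat\Phi=0$, $\partial^{\hat\nabla}\hat\Phi^\dagger=0$, and $F^{\hat\nabla}+[\hat\Phi,\hat\Phi^\dagger]=0$, which together with the algebraic conditions from the first step characterize an $\mathrm{SU}(1,1)$ self-duality solution.

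For the harmonic-map identification, specialize to $\lambda=1$. Since $g(1)=\mathrm{id}$, the flat connections $\nabla+\Phi+\Phi^*$ and $\hat\nabla+\hat\Phi+\hat\Phi^\dagger$ coincide on the nose, so the $\mathrm{SL}(2,\C)$-valued parallel frame $F$ producing the $\mathrm{SU}(2)$ harmonic map $f = \bar F^T F$ is simultaneously a parallel frame for the $\mathrm{SU}(1,1)$ flat family at $\lambda=1$. In the matrix model of $\dS_3$ from Remark~\ref{rem:twomamosdS3}, the $\mathrm{SU}(1,1)$ harmonic map associated to $(\hat\nabla,\hat\Phi,\hat h)$ via Theorem~\ref{thm:hards3asso} is the signature-twisted analogue of $\bar F^T F$, namely $\bar F^T \delta F$: this matrix is Hermitian of determinant $-1$, $\rho$-equivariant because $\delta$ is constant, and its logarithmic differential $(\bar F^T\delta F)^{-1}d(\bar F^T\delta F)$ computed from $dF=-(\hat\nabla+\hat\Phi+\hat\Phi^\dagger)F$ reproduces the correct harmonic differential $2(\hat\Phi+\hat\Phi^\dagger)$ up to conjugation by $\delta$. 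Comparing with formula \eqref{eq:gaugeobliqueN}, $\bar F^T\delta F$ is exactly the oblique hyperbolic Gau\ss\@ map $N$ of $f$, which is the claim. The main obstacle is the bookkeeping in this last step: one must track how $\delta$ enters both the indefinite metric $\hat h$ and the ambient realization of $\dS_3$, but once $\dS_3$ and $\hat h$ are both represented through $\delta$, the two candidate formulas for $N$ collapse to the same matrix expression.
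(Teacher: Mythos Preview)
Your proof is correct and follows essentially the same architecture as the paper: verify the algebraic compatibility of $\hat\nabla$ and $\hat\Phi^\dagger$ with $\hat h$, establish that $(\hat\nabla,\hat\Phi,\hat h)$ solves the $\mathrm{SU}(1,1)$ equations, and identify the associated harmonic map with $N$ via \eqref{eq:gaugeobliqueN}.

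The one point where you go beyond the paper is worth noting. The paper treats the $\mathrm{SU}(1,1)$ self-duality equations for \eqref{eq:SDSsplittwist} as an ``observation'' to be checked directly from the block forms; you instead deduce them cleanly from the gauge identity $g(\lambda)^{-1}\nabla^\lambda g(\lambda)=\hat\nabla^\lambda$ with $g(\lambda)=\mathrm{diag}(1,\lambda)$ in the splitting $L\oplus L^\perp$. This is a genuine simplification: flatness of $\nabla^\lambda$ transfers for free, and the specialization $g(1)=\mathrm{id}$ immediately gives a common parallel frame at $\lambda=1$, which is exactly what is needed for the harmonic-map identification. The paper only makes this $\lambda$-dependent gauge viewpoint explicit later, in Theorem~\ref{thm:twisor-oblique} (with the normalization $\mathrm{diag}(\lambda,1)$, projectively the inverse of yours; your choice is the one that lands directly on the form \eqref{eq:SDSsplittwist}). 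So your argument is not a different route so much as a tighter packaging of the paper's own ideas, anticipating the twistor interpretation.

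One small cosmetic point: your remark that $g(\lambda)$ is ``constant on $M$'' should be read as ``$g(\lambda)$ acts by scalars on each summand of $L\oplus L^\perp$'', so that in any local frame adapted to the splitting it is the constant diagonal matrix and $g^{-1}dg=0$. This is of course what you mean, but it is worth saying since $L$ is typically nontrivial.
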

\begin{rem}
In contrast to Proposition \ref{pro:existN}, Theorem \ref{thm:SU2SU11} does not need the assumption that $f$ is an immersion.
\end{rem}

We now reverse the above construction. That is, we begin with a solution $(\hat \nabla,\hat\Phi,\hat h)$ of the $\mathrm{SU}(1,1)$ self-duality 
equations. We assume that $\det\hat\Phi=-\omega^2,$ and consider the eigenline bundle $\hat L$ of $\hat\Phi$ with respect to $\omega$, and its $\hat h$-orthogonal complement line bundle $L^\dagger.$ Initially, we assume that $\hat L$ and $\hat L^\dagger$ are complementary. Since $\hat h$ is of signature $(1,1)$, $\hat h$ restricted to $\hat L$ and $\hat L^\dagger$ is either positive definite and negative definite, respectively, or vice versa.  In any case, we can define a positive definite Hermitian metric
\[h:=\pm(\hat h_{\mid \hat L}\oplus -\hat h_{\mid \hat L^\perp}).\]
Then, writing $\hat\nabla, \hat\Phi, \hat\Phi^\dagger$ as in \eqref{eq:SDSsplittwist} with respect to $\hat L^\dagger\oplus\hat L$,
and reversing the construction of \eqref{eq:SDSsplit} to \eqref{eq:SDSsplittwist} yields a solution $(\hat\nabla,\Phi,h)$ of the
$\mathrm{SU}(2)$ self-duality equations for the positive definite Hermitian metric $h.$ 

Recall that by Theorem \ref{thm:hards3asso},  every (equivariant) harmonic map into de Sitter 3-space yields a solution of 
the $\mathrm{SU}(1,1)$ self-duality equations.
Summarizing, we obtain the following theorem, and 
in particular the statement about smoothness of $f$ in  Proposition \ref{prop:Ntof} for harmonic $N$.

\begin{thm}\label{thm:SU11SU2}
Let $(\hat \nabla,\hat\Phi,\hat h)$  be a solution of the $\mathrm{SU}(1,1)$ self-duality equations such that $\det\Phi=-\omega^2$ and such that
the $-\omega$ eigenline bundle $\hat L$ of $\hat\Phi$ is nowhere null. Then, 
$(\nabla,\Phi,h)$ is a solution of the $\mathrm{SU}(2)$ self-duality equations.

If $N$ is the equivariant harmonic map into de Sitter 3-space associated to $(\hat \nabla,\hat\Phi,\hat h)$, then the harmonic map
into $\mathbb H^3$ associated to $(\nabla,\Phi,h)$ is the dual harmonic map $f$ of $N.$
\end{thm}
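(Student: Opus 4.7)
The plan is to invert the block construction used in Theorem \ref{thm:SU2SU11} and then appeal to Proposition \ref{prop:dual_involution} for the geometric identification.

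First, I would decompose $V = \hat L \oplus \hat L^\dagger$ using the $\hat h$-orthogonal splitting afforded by the hypothesis that $\hat L$ is nowhere null. Since $\hat h$ has signature $(1,1)$, the restrictions $\hat h|_{\hat L}$ and $\hat h|_{\hat L^\dagger}$ are definite of opposite signs, so
\[
h := \epsilon\bigl(\hat h|_{\hat L} \oplus (-\hat h|_{\hat L^\dagger})\bigr)
\]
is positive definite for a suitable choice of $\epsilon \in \{\pm 1\}$. Writing $\hat\nabla$, $\hat\Phi$, $\hat\Phi^\dagger$ in block form with respect to the splitting $\hat L^\dagger \oplus \hat L$ as in \eqref{eq:SDSsplittwist}, I would define $\nabla$, $\Phi$, $\Phi^{*_h}$ by the block pattern of \eqref{eq:SDSsplit}. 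Here, the underlying complex connection and the Higgs field are literally unchanged ($\nabla = \hat\nabla$ and $\Phi = \hat\Phi$); only the adjoint convention changes from the $\hat h$-adjoint $\dagger$ to the $h$-adjoint $*_h$.

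Next I would verify the $\SU(2)$ self-duality equations. Since $\nabla = \hat\nabla$ and $\Phi = \hat\Phi$ as $\C$-linear operators, the holomorphicity $\bar\partial^\nabla \Phi = 0$ is immediate, as is $F^\nabla = F^{\hat\nabla}$. The connection $\nabla$ is unitary for $h$ because the sign flip on $\hat L^\dagger$ is parallel with respect to the diagonal part of $\hat\nabla$ (the dual line bundle connections $\nabla^L$ and $\nabla^{L^*}$ are metric for both $h$ and $\hat h$) and the off-diagonal block $\alpha$ of $\hat\nabla$ satisfies exactly the compatibility required after flipping the adjoint convention. A short block computation then shows $[\Phi, \Phi^{*_h}] = [\hat\Phi, \hat\Phi^\dagger]$: the minus signs introduced by the sign flip appear an even number of times in every surviving entry of the commutator and cancel. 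Consequently $F^\nabla + [\Phi, \Phi^{*_h}] = 0$ follows from $F^{\hat\nabla} + [\hat\Phi, \hat\Phi^\dagger] = 0$. This bookkeeping is precisely the inverse of the verification implicit in the paragraph preceding Theorem \ref{thm:SU2SU11}, so the same computation applies in reverse; the main obstacle is keeping track of signs carefully so that the block identifications \eqref{eq:SDSsplit} and \eqref{eq:SDSsplittwist} genuinely invert each other.

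For the geometric identification, I would apply Theorem \ref{thm:SU2SU11} to the $\SU(2)$ solution $(\nabla,\Phi,h)$ just constructed. It produces an equivariant harmonic map $f : \widetilde M \to \H^3$ whose oblique hyperbolic Gauss map is the equivariant harmonic map into $\dS_3$ associated to the reversed-construction $\SU(1,1)$ data, which by construction coincides with the original $(\hat\nabla, \hat\Phi, \hat h)$. By the uniqueness implicit in Theorem \ref{thm:hards3asso}, this Gauss map equals $N$. Proposition \ref{prop:dual_involution} then identifies $f$ as the dual map of $N$, provided the square root $\omega$ and orientations are matched to the choice of $\hat L$ as the $-\omega$-eigenline -- which is exactly the convention that makes the involution of Proposition \ref{prop:dual_involution} canonical. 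Since $f$ arises from smooth $\SU(2)$ self-duality data, this also yields the smoothness claim of Proposition \ref{prop:Ntof} in the harmonic case, as mentioned in the theorem.
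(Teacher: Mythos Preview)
Your overall strategy---reverse the block construction of \eqref{eq:SDSsplit}--\eqref{eq:SDSsplittwist}, then invoke the involution of Proposition~\ref{prop:dual_involution}---is exactly the paper's. But there is a genuine error in your execution: the claim that $\nabla = \hat\nabla$ and $\Phi = \hat\Phi$ ``as $\C$-linear operators'' is false. Compare the block forms directly: in \eqref{eq:SDSsplit} the off-diagonal piece $\alpha$ sits in the Higgs field and $\gamma$ in the connection, whereas in \eqref{eq:SDSsplittwist} the off-diagonal $\alpha$ has moved to the connection $\hat\nabla$ and $\gamma^*$ has moved into the Higgs field $\hat\Phi$. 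The twist genuinely reshuffles the off-diagonal blocks between connection and Higgs field; it is not merely a change of adjoint convention on fixed data.

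This matters for each step of your verification. For instance, $\hat\nabla$ has off-diagonal block $\begin{psmallmatrix} & \alpha\\ \alpha^* & \end{psmallmatrix}$, which is $\hat h$-skew but $h$-symmetric, so $\hat\nabla$ is \emph{not} $h$-unitary; the correct $\nabla$ must have off-diagonal $\begin{psmallmatrix} & \gamma\\ -\gamma^* & \end{psmallmatrix}$ as in \eqref{eq:SDSsplit}. Likewise holomorphicity $\bar\partial^\nabla\Phi=0$ and the curvature identity require the reshuffled data, not the original $(\hat\nabla,\hat\Phi)$. Once you carry out the reversal with the correct block swap, the self-duality equations follow by literally undoing the forward computation, and your geometric identification via Theorem~\ref{thm:SU2SU11} and Proposition~\ref{prop:dual_involution} goes through as written.
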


\subsubsection{The model solution}\label{sec:modelsolu}
Consider the standard Hermitian metric $h_0=\langle.,.\rangle$ on the trivial $\C^2$-bundle over the  plane  $\C.$
On $M=\C\setminus i\R$, and for $t>0$, consider 
the non-vanishing function \[\rho(x+iy):=\tfrac{\exp (2 x)-1}{\exp (2 x)+1}=\tanh(x).\]
Consider
the unitary connection $\nabla_t^{\model}=d+A_t^{\model}$
and the Higgs field $\Phi_t^{\model}$
given by
\begin{equation}\label{eqn:model}
A_t^{\model} = \tfrac t{\sinh(2 tx)} \begin{psmallmatrix} -i & 0 \\ 0 & i \end{psmallmatrix} dy\,,
\quad
\Phi_t^{\model} = \tfrac 1 2 \begin{psmallmatrix} 0 & \rho^{-1}(tx) \\ \rho(tx) & 0 \end{psmallmatrix} dz.
\end{equation}
As $t\to \infty$, away from $x = 0$, the connection $A_t^{\model}$ converges to the trivial connection, and the Higgs field converges to a constant Higgs field. 
This yields a solution of the {\em rescaled} $\mathrm{SU}(2)$ self-duality equations
\[
\bar\partial^{A_t^{\model}}\Phi_t^{\model}=0\quad \text{and}\quad F_{A_t^{\model}} + t^2 \left[ \Phi_t^{\model} \wedge \left(\Phi_t^{\model} \right)^* \right] = 0.
\]
Equivalently, $(\nabla_t^{\model},t \Phi_t^{\model} ,h_0)$ is a solution to the $\mathrm{SU}(2)$ self-duality equations.

This solution can also be obtained  by applying the complex gauge transformation
\[
g_t^{\model} = \begin{pmatrix} \rho(tx)^{1/2} & 0 \\ 0 & \rho(tx)^{-1/2} \end{pmatrix}
\]
to the pair consisting of the trivial connection and the constant Higgs field $\tfrac 1 2 \begin{psmallmatrix} 0 & 1 \\ 1 & 0 \end{psmallmatrix} dz$. In other words,
\[
\left(A_t^{\model}, \Phi_t^{\model} \right) = \left(d, \tfrac 1 2 \begin{psmallmatrix} 0 & 1 \\ 1 & 0 \end{psmallmatrix} dz\right) \ast g_t^{\model}.
\]

Note that, for $t>0$,
the holomorphic diffeomorphism $\Psi_t\colon\C\to\C,\, z\mapsto tz$
satisfies
\[\Psi_t^*(d+A_1^{\model},\Phi_1^{\model})=(d+A_t^{\model},\,t\,\Phi_t^{\model}).\]
Consider the $\mathrm{SU}(2)$ gauge transformation
\[g_t:=\left(
\begin{array}{cc}
 \frac{e^{2 t x}+1}{\sqrt{2 e^{4 t x}+2}} & \frac{e^{2 t x}-1}{\sqrt{2 e^{4 t x}+2}} \\
 \frac{1-e^{2 t x}}{\sqrt{2 e^{4 t x}+2}} & \frac{e^{2 t x}+1}{\sqrt{2 e^{4 t x}+2}} \\
\end{array}
\right).\]
Then,
\[g_t^{-1}\Phi_t^{\model}g_t=
\begin{psmallmatrix}
 -\frac{1}{2} & \text{csch}(2 t x) \\
 0 & \frac{1}{2} \\
\end{psmallmatrix}
 dz\]
and
\begin{equation}
\begin{split}
(d+A_t^{\model}) \cdot g_t&=d+
\begin{psmallmatrix}
 0 & t\, \text{sech}(2 t x) \\
 -t\, \text{sech}(2 t x) & 0 \\
\end{psmallmatrix}
dx+\begin{psmallmatrix}
 -2 i t\, \text{csch}(4 t x) & -i t\, \text{sech}(2 t x) \\
 -i t \,\text{sech}(2 t x) & 2 i t\, \text{csch}(4 t x) \\
\end{psmallmatrix}dy\\
&=d+\begin{psmallmatrix}
 -t\, \text{csch}(4 t x) & 0 \\
 -t \,\text{sech}(2 t x) & t\, \text{csch}(4 t x) \\
\end{psmallmatrix}dz+
\begin{psmallmatrix}
 t\, \text{csch}(4 t x) & t\, \text{sech}(2 t x) \\
 0 & -t\, \text{csch}(4 t x) \\
\end{psmallmatrix} d\bar z.
\end{split}
\end{equation}

Then, a direct computation shows that the corresponding harmonic map and its oblique Gau\ss\@ maps are given by $f_t$ and $N_t$ as in Example \ref{exa:modelharmonicmap}.
\begin{rem}\label{rem:zylindermodel}
Note that for every $\sigma>0$, the model solutions and all involved gauge transformations
are well-defined on the cylinder
$\mathcal Z=\C/ i \sigma \Z$ away from the central curve corresponding to the imaginary axis.
\end{rem}

\subsection{Lifts of sections of the Deligne--Hitchin moduli space}\label{sec:dhlifts}
While all complex structures $I_\lambda$ on $\mc{M}_{\operatorname{Hit}} $ for $\lambda\neq0$ are biholomorphic, the complex structure
at $\lambda=0$ is quite different. For example, it admits a compact analytic subspace. Furthermore, any $\lambda$-connection for $\lambda\neq0$ is automatically semi-stable (e.g., reducible), while this is not true for Higgs fields. Moreover, Higgs fields might be stable and still admit
proper invariant subbundles of negative degree.
This makes the reinterpretation of the oblique hyperbolic Gau\ss\@ map through the lens of twistor theory possible.

To enhance the comprehensiveness of the main theorem in this section, we introduce some additional notation: $\mathrm{G}^-:=\SU(2)$ and $\mathrm{G}^+:=\SU(1,1)$, allowing us to discuss self-duality solutions for $\sigma\in\{-,+\}$.

\begin{thm}\label{thm:twisor-oblique}
Let $s$ be an admissible $\tau$-real holomorphic section of the Deligne--Hitchin moduli space over $M$ of sign $\sigma\in\{-,+\}$.  Assume that the determinant of the Higgs field at $\lambda=0$ is a square.
Then, there is an open non-empty subset $U\subset M$ and a non-admissible lift of  $s$ on $U$ which gives a solution of
the $G^{-\sigma}$-self-duality equations. If $s$ is negative, then $U=M.$
\end{thm}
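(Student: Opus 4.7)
The plan is to construct the non-admissible lift as the associated family of $\lambda$-connections for the dual $G^{-\sigma}$ self-duality solution produced by the eigenline-flip of Theorems~\ref{thm:SU2SU11} and~\ref{thm:SU11SU2}, and then to identify this new lift with $s$ over $\C^*$ via an explicit diagonal $\SL(2,\C)$-gauge transformation that becomes singular at $\lambda=0$. By Theorem~\ref{thm:posnegsecLift}, the admissible lift of $s$ comes from a $G^\sigma$-self-duality solution $(\nabla,\Phi,h)$ with associated family $\nabla^\lambda=\nabla+\lambda^{-1}\Phi+\lambda\Phi^{*_h}$. The hypothesis $\det\Phi=-\omega^2$ with $\omega\in H^0(M,K)$ picks out a globally defined holomorphic $\omega$-eigenline bundle $L\subset V$ of $\Phi$. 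Applying Theorem~\ref{thm:SU2SU11} (for $\sigma=-$) yields a $G^+$-solution $(\hat\nabla,\hat\Phi,\hat h)$ on all of $M$, while applying Theorem~\ref{thm:SU11SU2} (for $\sigma=+$) yields a $G^-$-solution on the open subset $U\subset M$ obtained by removing the locus where a chosen eigenline of $\hat\Phi$ is $\hat h$-null. In either case, the associated family $\hat\nabla^\lambda=\hat\nabla+\lambda^{-1}\hat\Phi+\lambda\hat\Phi^{*_{\hat h}}$ yields a candidate lift
\[
\hat s(\lambda)=\bigl(\bar\partial^{\hat\nabla}+\lambda\hat\Phi^{*_{\hat h}},\ \lambda\partial^{\hat\nabla}+\hat\Phi,\ \lambda\bigr)
\]
over $U$.

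\textbf{Identification and non-admissibility.} Writing $\nabla^\lambda$ and $\hat\nabla^\lambda$ in the block forms of \eqref{eq:SDSsplit} and \eqref{eq:SDSsplittwist} relative to $V=L\oplus L^\perp$, one checks by direct computation that the diagonal entries coincide, while the top-right and bottom-left off-diagonal entries are related by the scalar factors $\lambda$ and $\lambda^{-1}$ respectively. The diagonal gauge
\[
g(\lambda)=\mathrm{diag}(\lambda^{-1/2},\lambda^{1/2}),
\]
well defined up to a global sign on $\C^*$ and therefore acting unambiguously on connections by conjugation, satisfies $g(\lambda)^{-1}\nabla^\lambda g(\lambda)=\hat\nabla^\lambda$ for every $\lambda\in\C^*$. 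Hence both families project to the same section of $\mc{M}_{\operatorname{DH}}$ over $U$, identifying $\hat s$ with $s|_U$. Since $g(\lambda)$ blows up at $\lambda=0$, the new lift cannot agree with the admissible one there; explicitly, the Higgs pair $(\bar\partial^{\hat\nabla},\hat\Phi)$ at $\lambda=0$ contains the holomorphic $\hat\Phi$-invariant line subbundle $L^\perp$ of degree $-\deg L>0$ (stability of $(\bar\partial^\nabla,\Phi)$ forces $\deg L<0$), so this pair is unstable and $\hat s$ is not of the admissible form \eqref{eq:admissiblelift}.

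\textbf{Non-emptiness of $U$ and main obstacle.} For $\sigma=+$, non-emptiness of $U$ requires ruling out that both eigenlines of $\hat\Phi$ are globally $\hat h$-null. Each eigenline is a holomorphic $\hat\Phi$-invariant line subbundle of $V$, and their degrees sum to $\deg V=0$. If both were isotropic, one of them would have non-negative degree, destabilizing the Higgs pair $(\bar\partial^{\hat\nabla},\hat\Phi)$ at $\lambda=0$ and contradicting the stability forced by admissibility of $s$. Hence at least one eigenline is non-null on an open dense subset of $M$, and, after possibly replacing $\omega$ by $-\omega$ to use that eigenline, $U$ is open and non-empty. This degree-theoretic argument, which converts the analytic condition of joint nullness into a violation of stability, is the main obstacle of the proof; by contrast, for $\sigma=-$ the positive definiteness of $h$ makes $L^\perp$ automatically well defined everywhere and $U=M$ is immediate.
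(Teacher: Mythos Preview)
Your construction and the $\sigma=-$ case are essentially the paper's proof: you invoke Theorem~\ref{thm:posnegsecLift} to obtain the $G^\sigma$-solution, apply the eigenline flip of Theorems~\ref{thm:SU2SU11}/\ref{thm:SU11SU2}, and identify the two associated families over $\C^*$ by a diagonal gauge that is singular at $\lambda=0$. Your gauge $\mathrm{diag}(\lambda^{-1/2},\lambda^{1/2})$ differs from the paper's $\mathrm{diag}(\lambda,1)$ only by a central scalar, and your non-admissibility argument (the eigenline $L^\perp$ of $\hat\Phi$ has degree $-\deg L>0$) is correct.

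The $\sigma=+$ case, however, contains a genuine error. Your claim that the two holomorphic eigenlines $L_+,L_-$ of the Higgs field satisfy $\deg L_+ + \deg L_- = \deg V = 0$ is false whenever $\omega$ has zeros, which is the generic situation for genus $g\ge 2$. At a zero of $\omega$ the Higgs field becomes nilpotent and both eigenlines coincide with its kernel, so $L_+\oplus L_-$ is not all of $V$ and the degrees need not be complementary. Concretely, for the Hitchin-section Higgs bundle $V=K^{1/2}\oplus K^{-1/2}$ with $\det\Phi=-\omega^2$ one finds $L_+\cong L_-\cong K^{-1/2}$, so $\deg L_++\deg L_-=2(1-g)<0$. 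Thus your destabilisation argument does not go through. (There is also notational drift in this paragraph: the bundle whose eigenline must be non-null is the \emph{original} $\SU(1,1)$ Higgs field with its indefinite metric, not the dual $(\hat\Phi,\hat h)$ as you write.)

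The paper's argument for non-emptiness of $U$ is different and does work: one shows directly that a single eigenline $L$ cannot be globally $h$-null. If it were, then in signature $(1,1)$ one has $L=L^\perp$, and the non-degenerate sesquilinear pairing $h$ induces an antilinear isomorphism $V/L\to L^*$; since $\det V$ is trivial this gives $\bar L\cong L$, forcing $\deg L=-\deg L=0$. But $L$ is a holomorphic $\Phi$-invariant subbundle, so $\deg L=0$ contradicts the stability guaranteed by admissibility. No appeal to the second eigenline (or to swapping $\omega\leftrightarrow-\omega$) is needed.
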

\begin{proof}
Let $\det\Phi=-\omega^2$, where $\Phi$ is the Higgs field of an admissible lift. %This is clearly  independent of the choice of an admissible lift.
First assume $\sigma=-.$ By Theorem \ref{thm:posnegsecLift}, there exists a (positive definite) Hermitian metric $h$ such that $(\nabla,\Phi,h)$
is a $\mathrm{SU}(2)$ self-duality solution. Consider the orthogonal complement $L^\perp$ of the eigenline bundle $L$ of $\Phi$ with respect to
$\omega$. Define $g(\lambda)=\begin{psmallmatrix}\lambda&0\\0&1\end{psmallmatrix}$ with respect to $E=L\oplus L^\perp,$ and
\begin{equation}\label{eq:twistingnabla}\hat\nabla^\lambda:=\nabla^\lambda \cdot g(\lambda).\end{equation}
Then, $\hat\nabla^\lambda$ is the non-admissible lift of $s$ on $M$ which gives a solution to the $\mathrm{SU}(1,1)$ self-duality equations. Note that the corresponding Higgs field of $\hat\nabla^\lambda$ is not stable, as it has $L^\perp\cong L^*$ as an eigenline bundle of positive degree.

If $\sigma=+$, we again consider the eigenline bundle $L$ of $\Phi$ with respect to
$\omega$. First, note that if $L$ is globally a null line bundle, then $L$ would have degree $0$, contradicting stability.
Consider the maximal open set $U\subset M$ on which $L$ is not null, and let $L^\perp$ be the orthogonal complement over $U$.
Reversing the construction of the first part yields a $\mathrm{SU}(2)$ self-duality solution on $U.$
\end{proof}

\begin{exa}
  \label{exa:dual_SU11}
Consider the model $\mathrm{SU}(2)$ self-duality solution from Section \ref{sec:modelsolu}. Then, for $t=1$, the associated family of flat connections is given by
\begin{equation}\label{exa:assof}
\begin{split}
\nabla^\lambda=&d+ 
\begin{psmallmatrix}
 - \csch(4  x) & 0 \\
 - \sech(2  x) & \csch(4  x) 
\end{psmallmatrix}
dz+
  \begin{psmallmatrix}
\csch(4  x) &  \sech(2  x) \\
 0 & - \csch(4  x)\end{psmallmatrix}
  d\bar z\\
&  +\lambda^{-1}
  \begin{psmallmatrix}
 -\frac{1}{2} & \csch(2  x) \\
 0 & \frac{1}{2}\end{psmallmatrix} dz
 +\lambda
  \begin{psmallmatrix}
 -\frac{1}{2} &0 \\
  \csch(2  x) & \frac{1}{2}\end{psmallmatrix} d\bar z,
\end{split}
\end{equation}
while the corresponding  associated family for the $\mathrm{SU}(1,1)$-solution is
\begin{equation}\label{exa:assof11}
\begin{split}
\hat\nabla^\lambda=&d+ 
\begin{psmallmatrix}
 - \csch(4  x) & \csch(2  x) \\
0 & \csch(4  x) 
\end{psmallmatrix}
dz+
  \begin{psmallmatrix}
\csch(4  x) &  0 \\
  \csch(2  x)  & - \csch(4  x)\end{psmallmatrix}
  d\bar z\\
&  +\lambda^{-1}
  \begin{psmallmatrix}
 -\frac{1}{2} & 0 \\
  - \sech(2  x) & \frac{1}{2}\end{psmallmatrix} dz
 +\lambda
  \begin{psmallmatrix}
 -\frac{1}{2} &\sech(2  x) \\
 0& \frac{1}{2}\end{psmallmatrix} d\bar z.
\end{split}
\end{equation}
Gauging \eqref{exa:assof11} by the  gauge
\[g=
\begin{psmallmatrix}
 \frac{1}{\sqrt{1-e^{4 x}}} & \frac{e^{2 x}}{\sqrt{1-e^{4 x}}} \\
 \frac{e^{2 x}}{\sqrt{1-e^{4 x}}} & \frac{1}{\sqrt{1-e^{4 x}}} \\
\end{psmallmatrix}\]
-- which is $\mathrm{SU}(1,1)$-valued for $x<0$ -- yields
\[d+
\begin{psmallmatrix}
 \frac{\tanh (2 x)}{2 \lambda } & \frac{-\lambda  \sech(2 x)-\sech(2 x)}{2 \lambda } \\
 \frac{\lambda  \sech(2 x)-\sech(2 x)}{2 \lambda } & -\frac{\tanh (2 x)}{2 \lambda } \\
\end{psmallmatrix}
 dz+ 
\begin{psmallmatrix}
 \frac{1}{2} \lambda  \tanh (2 x) & \frac{1}{2} (\lambda  \sech(2 x)+\sech(2 x)) \\
 \frac{1}{2} (\lambda  \sech(2 x)-\sech(2 x)) & -\frac{1}{2} \lambda  \tanh (2 x) \\
\end{psmallmatrix} d\bar z.
\]
This latter family of flat connections smoothly extends  through the central curve $\{x=0\}$, and adheres to the positive $\tau$-symmetry.
\end{exa}

\section{Analytic preliminaries}
 
\subsection{Singular solutions to Hitchin's equation}\label{sec:41}
Throughout this section, let $M$ be a compact Riemann surface and $(E, \bar\partial_E, \varphi)$ a stable $\SL(2,\C)$-Higgs bundle over $M$ such that $q = -\det \varphi$ has simple zeros. Recall that the vertical foliation associated to $q \in H^0(M,K^2)$ is given by curves $\gamma: I \to M$ such that
\[
 q( \gamma'(t), \gamma'(t)) < 0 
\]  
for all $t \in I$. If locally $q = \omega^2$, the vertical foliation is given by $\ker \Re \omega$. In particular, if $z= x + i y$ is a local holomorphic coordinate such that $q = dz^2$, then the foliation is given by $\ker dx$, and hence is integrated by vertical lines in the $z$-plane. Singularities are given by the zeros of $q$. A leaf is called \emph{critical} if it starts or ends at a zero. Simple zeros are three-pronged singularities, i.e.\ are met by three critical leaves.
  
A holomorphic quadratic differential $q$ is called a \emph{Strebel differential} if the closure of the union of the critical leaves is compact. In that case, it forms a finite graph $\Gamma \subset M$ whose complement is the union of regions $V_1, \ldots, V_{N_L}$ foliated by closed leaves isotopic to simple closed curves $c_1, \ldots, c_{N_L}$. These regions are biholomorphic to annuli and are called \emph{Strebel cylinders}. %With respect to flat metric ??? flat cylinders, isometric to ???. 
The number of Strebel cylinders $N_L$ is bounded by $3g -3$ if $g$ is the genus of $M$, since the core loops $c_1, \ldots, c_{N_L}$ are pairwise non-isotopic. Strebel differentials with simple zeros exist in abundance: Strebel differentials are dense in the space of all differentials by \cite{Douady-Hubbard}, and hence Strebel differentials with simple zeros are dense in the space of differentials with simple zeros.
 
Assume now that $q = -\det \varphi$ is a Strebel differential with simple zeros. Fix core loops $c_1, \ldots, c_{N_L}$ in Strebel cylinders $V_1, \ldots, V_{N_L}$. There are biholomorphisms 
\[
V_j \to \{z \in \C : |\Real z| < \tau_j \}/ (i \sigma_j \Z), \qquad \tau_j, \sigma_j > 0,
\]
identifying $q$ with $\frac 14dz^2$. By scaling, we may assume that $\tau_j >1$ for all $j$. From this, we derive the following normal form for the Higgs bundle over $V_j$.

\begin{lemma}\label{lem:nf_Strebel}
Let $V_j$ be a Strebel cylinder and $V_j \to \{z \in \C : |\Real z| < \tau_j \}/ (i \sigma_j \Z)$ a biholomorphism with respect to which $q = \frac 14dz^2$. Then there exists a local holomorphic trivialization of $E$ over $V_j$ such that, with respect to the above biholomorphism, 
\[
\varphi = \frac 12 \begin{pmatrix}
 0 & 1 \\ 1 & 0	
 \end{pmatrix} dz\,.
\]
%with respect to the  biholomorphisms $V_j \to \{z \in \C : |\Real z| < \tau_j \}/ (i \sigma_j \Z)$ as above.
\end{lemma}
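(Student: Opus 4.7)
The plan is to diagonalize $\varphi$ on $V_j$ using its spectral data, and then conjugate to the off-diagonal form by a constant $\SL(2,\C)$ gauge. The key inputs are (a) the nonvanishing of $q$ on a Strebel cylinder, (b) holomorphicity of the spectral line subbundles in the absence of zeros, and (c) the fact that every holomorphic line bundle on an annulus is holomorphically trivial.

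First I would observe that on $V_j$ the identification $q = \tfrac{1}{4}\,dz^2$ admits the globally defined holomorphic square root $\omega = \tfrac{1}{2}\,dz$, so the characteristic polynomial $\mu^2 + \det\varphi = \mu^2 - \omega^2$ of $\varphi$ factors with two everywhere-distinct roots $\pm\omega$. This yields holomorphic line subbundles $L_\pm \subset E|_{V_j}$, defined as the kernels $\ker(\varphi \mp \omega\cdot\mathrm{id})$; distinctness of the eigenvalues together with holomorphicity of $\varphi$ ensures that these are genuine holomorphic subbundles, and we obtain a holomorphic decomposition $E|_{V_j} = L_+ \oplus L_-$. The fixed trivialization of $\det E$ provides a holomorphic isomorphism $L_+ \otimes L_- \cong \mathcal{O}$.

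Next, since $V_j$ is biholomorphic to an open annulus, it is a Stein (hence open) Riemann surface with $H^1(V_j, \mathcal{O}) = 0$ and $H^2(V_j, \Z) = 0$, so the exponential sequence gives $H^1(V_j, \mathcal{O}^*) = 0$. Thus every holomorphic line bundle on $V_j$ is trivial. Pick nowhere-vanishing holomorphic sections $s_\pm$ of $L_\pm$, rescaled so that $s_+ \wedge s_-$ coincides with the fixed trivialization of $\det E$. In the resulting holomorphic frame $(s_+, s_-)$ the Higgs field becomes
\[
\varphi = \tfrac{1}{2}\begin{pmatrix} 1 & 0 \\ 0 & -1 \end{pmatrix} dz.
\]

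Finally, I apply the constant gauge $g = \tfrac{i}{\sqrt{2}}\begin{psmallmatrix} 1 & 1 \\ 1 & -1 \end{psmallmatrix} \in \SL(2,\C)$, which satisfies $g^{-1}\begin{psmallmatrix} 1 & 0 \\ 0 & -1 \end{psmallmatrix}g = \begin{psmallmatrix} 0 & 1 \\ 1 & 0 \end{psmallmatrix}$ and preserves the trivialization of $\det E$. In the new frame, $\varphi$ takes the asserted form. The one delicate point is bookkeeping with the fixed trivialization of $\det E$: this forces me to work throughout in $\SL(2,\C)$ rather than $\GL(2,\C)$, and to normalize $s_\pm$ so that their wedge is exactly (not merely up to a nonzero scalar) the prescribed volume form. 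Beyond this, the argument is local on $V_j$ and relies only on the nonvanishing of $q$ there together with the triviality of holomorphic line bundles on an annulus.
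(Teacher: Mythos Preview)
Your proof is correct and follows essentially the same approach as the paper: diagonalize $\varphi$ via its eigenlines $L_\pm$ and then pass to the off-diagonal form by a constant change of frame (the paper simply writes $e_1 = e_+ + e_-$, $e_2 = e_+ - e_-$, which is your gauge $g$ up to scaling). You supply more detail than the paper on why a global holomorphic eigenframe exists (triviality of line bundles on an annulus) and on the $\SL(2,\C)$ normalization of the frame, both of which the paper leaves implicit.
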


\begin{proof}
Since $- \det \varphi = \frac 14 dz^2$, the eigenvalues of $\varphi$ are given by $\pm \frac 12 dz$. If $(e_+,e_-)$ is a holomorphic eigenframe of $\varphi$, the desired holomorphic trivialization is provided by the frame $e_1 = e_++e_-$ and $e_2 = e_+-e_-$.
\end{proof}

This is precisely the Higgs bundle underlying the model solution \eqref{eqn:model}. Our goal in this and the following section is to construct solutions to Hitchin's equation on $M$ which are singular along the core loops $c_j$ and are asymptotic (in a yet to be specified sense) to the model solution on each Strebel cylinder $V_j$. We will use gluing techniques in the spirit of \cite{MSWW} to produce solutions with large energy, i.e.\ energies exceeding some sufficiently large threshold. The existence question for general Higgs bundle data is thus left open.

\subsection{The analytic setup}
We briefly describe the setup for solving the Hitchin self-duality equation, following the approach used in \cite{MSWW}. Assume that $M$ is a Riemann surface equipped with a conformal metric $g$ and that $E = \underline{\C}^2$ is the trivial rank $2$ complex vector bundle over $M$ equipped with a Hermitian metric $h$, which we take to be the standard Hermitian inner product on each fiber $\C^2$. Let $t > 0$. For a pair $(A, \Phi)$ consisting of an $\SU(2)$-connection $A$ on $E$ and a $(1,0)$-form $\Phi \in \Omega^{1,0}(\End_0 E)$ we define
\[
\mc{H}_t(A, \Phi) = \left( F_A + t^2 [\Phi \wedge \Phi^*], \bar\partial_A \Phi \right).
\]
Then $\mc{H}_t(A, \Phi) = 0$ if and only if $(A, \Phi)$ solves the $t$-rescaled Hitchin self-duality equations, i.e.\ $(A, t \Phi)$ solves the Hitchin self-duality equations. %Here $F_A^\perp = F_A - \frac 1 2 \tr(F_A) \otimes \id_E$.

Recall from Section \ref{sec:gauge_theory} that there is a natural action of the complex gauge group $\mc{G}^{\C}$ on pairs $(A,\Phi)$. For a fixed pair and a complex gauge transformation $g \in \mc{G}^{\bb{C}}$ we denote
\[
\mc{O}_{(A,\Phi)}(g) = \left(A, \Phi\right) \ast g .
\]
Finally, for $\gamma \in \Gamma( i \mf{su}(E))$ we denote
\[
\mc{F}_t(\gamma) = \operatorname{pr}_1(\mc{H}_t(\mc{O}_{(A,\Phi)}(\exp(\gamma)))).
\]
Note that if $\bar\partial_A \Phi = 0$, then $\bar\partial_{A \ast g} (\Phi \ast g) = 0$, and therefore only the curvature part of the equation needs to be considered. Therefore, if $(A,\Phi)$ is such that $\Phi$ is $\bar\partial_A$-holomorphic, then $\mc{F}_t(\gamma) = 0$ implies that $(A,\Phi) \ast \exp(\gamma)$ solves the self-duality equation.

It turns out that this operator is elliptic. Indeed, one finds that
\[
d_0 \mc{F}_t (\gamma) = i \ast \Delta_A \gamma + t^2 M_{\Phi} \gamma,
\]
where
\[
M_\Phi (\gamma) = \left[ \Phi^* \wedge [\Phi, \gamma]\right] - \left[ \Phi \wedge [\Phi^*, \gamma] \right].
\]
We define $L_t = - i \ast \, d_0 \mc{F}_t$ and obtain
\[
L_t (\gamma) = \Delta_A \gamma - i \ast \, t^2 M_{\Phi} (\gamma).
\]

\subsection{Approximate solutions}\label{sec:approxsol}

As we saw in Section \ref{sec:modelsolu}, the model solution converges to a pair consisting of a flat connection and a constant Higgs field away from the loop $\{x=0\}$. This opens the door to constructing approximate solutions by finding limiting configurations containing a Strebel cylinder. Indeed, in this section we construct such approximate solutions by gluing in the family of model solutions in each Strebel cylinder. At zeros of the quadratic differential, we will need to glue in {\em fiducial} solutions \cite{MSWW}, which will be discussed in the next subsection.

\subsubsection{The fiducial solution}
In the following, the underlying Hermitian metric is the standard one.
The limiting fiducial solution on $\C^*$ is given by
\begin{equation}
\label{eqn:limiting_fid}
A_\infty^{\fid} = \frac 1 8 \begin{pmatrix} 1 & 0 \\ 0 & -1 \end{pmatrix} \left( \frac {dz}z - \frac{d \bar z}{\bar z} \right), \qquad \Phi_\infty^{\fid} = \begin{pmatrix} 0 & r^{1/2} \\ r^{-1/2} z & 0 \end{pmatrix} dz,
\end{equation}
where $r=|z|$. While the determinant of the Higgs field has a simple zero at $z=0$, the solution becomes singular at $z=0$.
There is a family $(A_t^{\fid}, \Phi_t^{\fid})$ which approaches this solution as $t \to \infty$ and satisfies
\[
F_{A_t^{\fid}} + t^2 [\Phi_t^{\fid} \wedge (\Phi_t^{\fid})^*] = 0.
\]

We first define
\[
A_0^{\fid} = 0, \qquad \Phi^{\fid}_0 = \begin{pmatrix} 0 & 1 \\ z & 0 \end{pmatrix} dz.
\]
Observe that $(A_0^{\fid}, \Phi^{\fid}_0)$ and $(A_\infty^{\fid}, \Phi^{\fid}_\infty)$ are complex gauge equivalent via the complex gauge transformation
\[
g_\infty = \begin{pmatrix} r^{-1/4} & 0 \\ 0 & r^{1/4} \end{pmatrix}.
\]

Consider the family of complex gauge transformations
\begin{equation*}
g^{\fid}_t
=
\begin{pmatrix}
r^{-1/4} e^{-\ell_t(r)/2} & 0 \\
0 & r^{1/4} e^{  \ell_t(r)/2}
\end{pmatrix},
\end{equation*}
where $\ell_t$ is the unique solution of the ODE
\[
\left(\frac{d^2}{dr^2} + \frac 1 r \frac d{dr}\right) \ell_t = 8 t^2 r \sinh(2 \ell_t)
\]
with the asymptotics $\ell_t(r) \sim \frac 1 2 \log r$ as $r \to 0$ and which decays exponentially as $t \to \infty$, see \cite{MSWW} and the references therein.
Then, the family of solutions to the rescaled self-duality equation is given by $(A_t^{\fid}, \Phi_t^{\fid})=(A_0^{\fid}, \Phi^{\fid}_0) * g^{\fid}_t$, i.e. 
\begin{equation}
\label{eqn:fid}
A_t^{\fid} =  \left(\frac 18 + \frac{r}{4} \frac{\partial \ell_t}{\partial r}\right)\begin{pmatrix} 1 & 0 \\ 0 & -1 \end{pmatrix} \left( \frac {dz}{z} - \frac{d \bar z}{\bar z} \right), \qquad 
\Phi_t^{\fid} = \begin{pmatrix} 0 & r^{1/2} e^{\ell_t(r)}\\ r^{-1/2} e^{-\ell_t(r)}z & 0 \end{pmatrix} dz.
\end{equation}

Here are some properties of $\ell_t$, which show that we actually interpolate between the trivial and the limiting fiducial solution:
\begin{enumerate}
\item For fixed $t$ and $r \searrow 0$ one has
\[
\ell_t(r) \sim - \frac 1 2 \log(r) + b_0 + \ldots,
\]
where $b_0$ is an explicit constant.
\item There exists a constant $C > 0$ such that
\[
|\ell_t(r)| \leq C \exp \Bigl( - \frac 8 3 t\, r^{3/2} \Bigr) \frac 1 {(t r^{3/2})^{1/2}}
\]
uniformly for $t \geq t_0 > 0$, $r \geq r_0 > 0$.
\item There exists a constant $C>0$, independent of $t$, such that
\[
\sup_{r \in (0,1)} r^{1/2} e^{\pm \ell_t(r)} \leq C.
\]
\end{enumerate}

\subsubsection{Construction of the approximate solutions}
Let $M$ be a compact Riemann surface and $(E, \bar\partial_E, \varphi)$ a stable $\SL(2,\C)$-Higgs bundle over $M$. Assume moreover that $q = - \det \varphi$ is a Strebel differential with simple zeros. Let $V_1, \ldots, V_{N_L}$ denote the collection of Strebel cylinders. Fix a Hermitian metric $h_0$ on $E$ and suppose that $(A_\infty, \Phi_\infty)$ is a limiting configuration for the Higgs bundle $(E, \bar\partial_E, \varphi)$. Then, by \cite{MSWW}, the following two propositions hold:
\begin{prop}
\label{prop:local_form_near_zero}
If $q(p_0) \neq 0$ for $p_0 \in M$, then there exists a neighborhood $U$ of $p_0$, holomorphic coordinates $z$ on $U$, and a unitary frame of $(E,h_0)$ over $U$, such that in these coordinates and this frame $q = dz^2$, $A_\infty$ is the trivial connection, and \[\Phi_\infty = \begin{pmatrix} 0 & 1 \\ 1 & 0 \end{pmatrix} dz.\]
\end{prop}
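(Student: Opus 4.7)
The strategy is to exhibit a unitary frame of $(E,h_0)$ on a small neighborhood of $p_0$ in which $\Phi_\infty$ is a constant anti-diagonal matrix and $A_\infty$ is trivial. The plan is to first diagonalize $\Phi_\infty$ using the normality forced by the limiting equation, then kill $A_\infty$ using flatness, and finally rotate the frame by a constant $\SU(2)$-matrix to produce the anti-diagonal form.

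First, I would pick adapted holomorphic coordinates. Since $q$ is holomorphic and $q(p_0)\neq 0$, on a simply connected neighborhood $U_0\ni p_0$ the quadratic differential admits a holomorphic square root $\omega\in\Omega^{1,0}(U_0)$ with $\omega$ nowhere vanishing; taking a primitive $z$ of $\omega$ gives a holomorphic coordinate in which $q=dz^2$. Writing $\Phi_\infty=\phi\,dz$ with $\phi\in\Gamma(\End_0 E)$, the equation $[\Phi_\infty\wedge \Phi_\infty^{*_{h_0}}]=0$ is equivalent to $[\phi,\phi^{*_{h_0}}]=0$, so $\phi$ is pointwise normal with respect to $h_0$. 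Combined with $\tr\phi=0$ and the pointwise determinant condition coming from $q=dz^2$, the eigenvalues of $\phi$ are constant and equal to $\pm 1$ throughout $U_0$. Normality with distinct eigenvalues then yields a smooth orthogonal decomposition $E|_{U_0}=L_+\oplus L_-$ into $h_0$-orthogonal eigenline bundles.

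Next, I would upgrade this splitting to a holomorphic and $A_\infty$-parallel decomposition. The equation $\bar\partial_{A_\infty}\Phi_\infty=0$ says that $\bar\partial_{A_\infty}$ commutes with multiplication by $\phi$; hence for $s\in\Gamma(L_\pm)$, one has $\phi(\bar\partial_{A_\infty}s)=\bar\partial_{A_\infty}(\phi s)=\pm\bar\partial_{A_\infty}s$, so $\bar\partial_{A_\infty}s\in\Omega^{0,1}(L_\pm)$, and $L_\pm$ are holomorphic subbundles of $(E,\bar\partial_{A_\infty})$. Because $A_\infty$ is unitary for $h_0$, it coincides with the Chern connection of $h_0$ relative to $\bar\partial_{A_\infty}$; the fact that both $L_+$ and $L_-$ are simultaneously holomorphic subbundles and $h_0$-orthogonal complements of each other forces both their second fundamental forms to vanish, so $A_\infty$ splits as $A_+\oplus A_-$ on $L_+\oplus L_-$. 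The flatness condition $F_{A_\infty}=0$ then gives $F_{A_\pm}=0$, and on a sufficiently small simply connected $U\subset U_0$ I can choose $A_\pm$-parallel unit-length sections $e_\pm\in\Gamma(L_\pm)$. In the unitary frame $(e_+,e_-)$ one has $A_\infty=0$ and $\Phi_\infty=\mathrm{diag}(1,-1)\,dz$; adjusting phases so that $e_+\wedge e_-$ matches the fixed trivialization of $\det E$ keeps us in an $\SU(2)$-frame.

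Finally, I would apply the constant $\SU(2)$ gauge transformation $V=\tfrac{i}{\sqrt 2}\begin{psmallmatrix}1&1\\1&-1\end{psmallmatrix}$, which conjugates $\mathrm{diag}(1,-1)$ into $\begin{psmallmatrix}0&1\\1&0\end{psmallmatrix}$; being constant, $V$ preserves the trivial connection. This yields the claimed normal form. There is no serious analytic obstacle — every step is algebraic or ODE-level — and the only mildly delicate point is the splitting of the Chern connection in the previous paragraph, namely that the simultaneous holomorphicity of $L_+$ and $L_-$ forces both second fundamental forms to vanish; everything else reduces to standard pointwise linear algebra for normal matrices together with the standard triviality of flat line bundle connections on simply connected domains.
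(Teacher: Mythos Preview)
Your argument is correct and is essentially the standard proof; the paper itself does not prove this proposition but simply refers to \cite{MSWW}, where the same diagonalize-then-rotate argument is carried out (normality of $\phi$ from $[\Phi_\infty,\Phi_\infty^*]=0$, holomorphic eigenline splitting from $\bar\partial_{A_\infty}\Phi_\infty=0$, flatness of the induced line-bundle connections, and a constant $\SU(2)$ change of frame to pass from $\mathrm{diag}(1,-1)$ to the anti-diagonal form). The only thing worth noting is that your ``mildly delicate point'' about the splitting of the Chern connection is actually immediate here: once $L_+$ and $L_-$ are both $\bar\partial_{A_\infty}$-holomorphic subbundles and $h_0$-orthogonal complements of each other, the second fundamental form of $L_+$ in $L_-$ is a $(0,1)$-form with values in $\Hom(L_+,L_-)$ computed by $\bar\partial_{A_\infty}$, and it vanishes precisely because $L_+$ is holomorphic; so no further work is needed.
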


\begin{prop}
\label{prop:std_local_form}
If $q(p_0) = 0$ for $p_0 \in M$, then there exists a neighborhood $U$ of $p_0$, holomorphic coordinates $z$ on $U$, and a unitary frame of $(E, h_0)$ over $U$, such that in these coordinates and this frame $q = z \, dz^2$, and $(A_\infty, \Phi_\infty)$ coincides with $(A_\infty^{\fid}, \Phi_\infty^{\fid})$.
\end{prop}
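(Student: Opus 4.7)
The plan is to normalize in three stages: the holomorphic coordinate, the holomorphic trivialization of the Higgs bundle, and finally the Hermitian data via local uniqueness of the limiting configuration. \emph{Coordinate normalization:} in any initial holomorphic coordinate $z$ with $z(p_0)=0$, write $q=f(z)\,dz^2$ with $f(z)=a_1 z+O(z^2)$, $a_1\neq 0$, and write $f(z)=z\,h(z)^2$ where $h$ is the holomorphic square root of $f(z)/z$ with $h(0)\neq 0$. Seek $w=z\,u(z)$ with $u$ holomorphic and $u(0)\neq 0$ satisfying $w\,dw^2=q$; setting $V=u^{3/2}$ reduces this to the ODE $V+\tfrac{2z}{3}V'=h$, whose formal power-series solution $V(z)=\sum b_k z^k$ has coefficients $b_k=3c_k/(3+2k)$ (with $h(z)=\sum c_k z^k$) and therefore converges on the same disk as $h$. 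Thus a holomorphic coordinate change exists bringing $q$ to the normal form $z\,dz^2$.

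\emph{Higgs bundle normalization.} Write $\varphi=\varphi_0\,dz$. The simple-zero hypothesis forces $\varphi_0(0)$ to be a nonzero trace-free matrix with $\det\varphi_0(0)=0$, hence nilpotent of rank one with $\im\varphi_0(0)=\ker\varphi_0(0)$. Pick a local holomorphic section $e$ of $E$ with $\varphi_0(0)e(0)\neq 0$; then $(e,\varphi_0 e)$ is a holomorphic frame near $p_0$, and Cayley--Hamilton gives $\varphi_0^2=z\cdot I$. Reorder and rescale by $\lambda=d^{-1/2}$, where $d=\det(\varphi_0 e,e)$ is a nonvanishing holomorphic function on the (simply connected) disk, to obtain a holomorphic trivialization $\tau$ of $E$ with $\det=1$, in which $\varphi=\begin{psmallmatrix}0&1\\z&0\end{psmallmatrix}dz$. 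This identifies the local Higgs bundle $(E,\bar\partial_E,\varphi)|_U$ holomorphically with the standard model $(\underline{\C}^2,\bar\partial,\begin{psmallmatrix}0&1\\z&0\end{psmallmatrix}dz)$.

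\emph{Matching to the fiducial solution.} By construction, the limiting fiducial pair $(A^{\fid}_\infty,\Phi^{\fid}_\infty)$ of \eqref{eqn:limiting_fid} is obtained from $(d,\begin{psmallmatrix}0&1\\z&0\end{psmallmatrix}dz)$ by the complex gauge transformation $g_\infty=\mathrm{diag}(r^{-1/4},r^{1/4})$, so it is the limiting configuration for the standard model Higgs bundle expressed in a unitary frame with respect to the standard Hermitian metric. Transporting $(A_\infty,\Phi_\infty)|_U$ through $\tau$ yields a second limiting configuration for the same Higgs bundle data. Local uniqueness of the limiting configuration up to $\SU(2)$-gauge then furnishes a local unitary gauge transformation relating the two, and composing with $\tau$ produces the desired unitary frame of $(E,h_0)$ in which the pair takes the fiducial form.

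\emph{Main obstacle.} The crux is local uniqueness of the limiting configuration, since \cite{MSWW,Moc} state uniqueness only on compact Riemann surfaces. Three routes suggest themselves: (i) embed the local Higgs bundle into a compact one realising the same singularity, apply the global uniqueness, and localize using rigidity of the asymptotic model away from the zero; (ii) invoke Mochizuki's local classification of wild harmonic bundles near a simple zero of $\det\varphi$, which directly identifies the fiducial model as the unique local asymptotic form; or (iii) pull back to the branched double cover $w^2=z$, where $\varphi_0$ diagonalizes and the limiting equations $F_h=0$, $[\varphi,\varphi^{*_h}]=0$ decouple into two abelian problems on line bundles, then solve for $h$ up to an overall $S^1$-symmetry using the prescribed singular asymptotics along $\{z=0\}$.
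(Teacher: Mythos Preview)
The paper does not give its own proof of this proposition; both it and the preceding one are attributed directly to \cite{MSWW} (``Then, by \cite{MSWW}, the following two propositions hold''). Your first two stages---the coordinate normalization to $q=z\,dz^2$ via the ODE for $V=u^{3/2}$, and the holomorphic trivialization bringing $\varphi$ to $\begin{psmallmatrix}0&1\\z&0\end{psmallmatrix}dz$ using the frame $(\varphi_0 e,e)$ and Cayley--Hamilton---are correct and exactly reproduce the standard set-up in \cite{MSWW}.

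The obstacle you isolate is real, and your diagnosis of it is accurate: transporting $(A_\infty,\Phi_\infty)|_U$ through the holomorphic trivialization $\tau$ yields a limiting configuration for the model Higgs pair, but unitary with respect to $\tau_*h_0$ rather than the standard metric; after correcting by a complex gauge one still has to identify the result with the fiducial pair, and bare local uniqueness fails (on a punctured disk the flat-metric condition on each eigenline leaves an arbitrary harmonic function free). In \cite{MSWW} this is not handled by a separate local uniqueness lemma. Rather, the global limiting configuration is \emph{constructed} so that near each simple zero it takes the fiducial form: the normality condition $[\Phi,\Phi^*]=0$ forces the eigenlines of $\Phi$ to be $h_\infty$-orthogonal, flatness reduces to harmonic equations on the eigenlines, and the determinant constraint together with the singularity order dictated by the branching of the spectral cover at $z=0$ pins down $h_\infty=\mathrm{diag}(r^{-1/2},r^{1/2})$ up to a constant. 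The uniqueness statement in \cite{MSWW,Moc} is then for the global object on the compact surface, from which the local normal form is read off. Of your three routes, (iii) is closest to what \cite{MSWW} actually do; route (i) is essentially the same argument restated from the global side; route (ii) would work but is heavier than needed.
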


Using Lemma \ref{lem:nf_Strebel} we obtain:
\begin{prop}
\label{prop:Strebel_local_form}
Let $V_j$ be a Strebel cylinder and $V_j \to \{z \in \C : |\Real z| < \tau_j \}/ (i \sigma_j \Z)$ a biholomorphism with respect to which $q = -\frac 14dz^2$. Then there exists a unitary frame of $E$ over $V_j$ such that, with respect to this trivialization, $A_\infty$ becomes the trivial flat connection and 
\[
\Phi_\infty=\frac 1 2 \begin{pmatrix} 0 & 1 \\ 1 & 0 \end{pmatrix} dz.
\]
\end{prop}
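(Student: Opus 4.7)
Plan:

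The strategy is to use Lemma~\ref{lem:nf_Strebel} to place the holomorphic data on the Strebel cylinder in normal form, and then to observe that, in the resulting holomorphic trivialization, the standard flat Hermitian metric is already a limiting harmonic metric.

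By Lemma~\ref{lem:nf_Strebel}, there exists a holomorphic frame $(\tilde e_1,\tilde e_2)$ of $E$ over $V_j$, compatible with the given biholomorphism and with the fixed trivialization of $\det E$, in which $\bar\partial_E=\bar\partial$ and $\varphi=\tfrac12 P\,dz$, where $P=\begin{psmallmatrix}0 & 1\\ 1 & 0\end{psmallmatrix}$. Let $\tilde h$ denote the Hermitian metric on $E|_{V_j}$ for which this frame is orthonormal. A direct computation shows that $\tilde h$ satisfies the limiting harmonic metric equations: since $\bar\partial_E=\bar\partial$ and $\tilde h$ is constant in this frame, its Chern connection is the trivial connection, hence flat; and since $P=P^*$ in this frame, $\varphi^{*_{\tilde h}}=\tfrac12 P\,d\bar z$, so $[\varphi\wedge\varphi^{*_{\tilde h}}]=\tfrac14[P,P]\,dz\wedge d\bar z=0$.

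By uniqueness of the limiting harmonic metric up to gauge \cite{MSWW,Moc}, the restriction $h_\infty|_{V_j}$ agrees with $\tilde h$ after a unitary gauge transformation on $V_j$; absorbing this into the frame from Lemma~\ref{lem:nf_Strebel}, we may assume $h_\infty|_{V_j}=\tilde h$, so that $(\tilde e_1,\tilde e_2)$ is $h_\infty$-orthonormal. Translating to the conventions of Section~\ref{sec:gauge_theory}, in which $(A_\infty,\Phi_\infty)$ is expressed in an $h_0$-unitary frame, the identification is furnished by the complex gauge transformation $g$ relating $h_\infty$ to $h_0$; a direct check, using that $g$ commutes with $P$ by virtue of the decoupling $[\varphi,\varphi^{*_{h_\infty}}]=0$, shows that in the resulting $h_0$-unitary frame one has $(A_\infty,\Phi_\infty)=(0,\tfrac12 P\,dz)$, as required.

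The main subtlety is the identification of the local metric $\tilde h$ with the restriction $h_\infty|_{V_j}$, since the cited uniqueness statement is global on $M\setminus Z$. This is resolved by the decoupling of the limiting equations combined with the fiducial model for $h_\infty$ near the zeros of $q$ (Proposition~\ref{prop:std_local_form}): together, these force the limiting metric on each Strebel cylinder to take the normal form $\tilde h$ up to at most a constant diagonal unitary gauge, which can be absorbed into the choice of holomorphic frame provided by Lemma~\ref{lem:nf_Strebel}.
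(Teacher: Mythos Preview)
Your approach mirrors the paper's intended line: use Lemma~\ref{lem:nf_Strebel} for the holomorphic normal form on $V_j$, then argue that the limiting metric coincides with the standard one in that frame. You also correctly flag the central subtlety---global uniqueness of limiting configurations on $M\setminus Z$ does not immediately restrict to determine $h_\infty$ on the non-simply-connected cylinder $V_j$.

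However, your resolution of this subtlety is incomplete. In the frame of Lemma~\ref{lem:nf_Strebel}, the decoupled equations $F^{h_\infty}=0$ and $[\varphi,\varphi^{*_{h_\infty}}]=0$ only force $h_\infty|_{V_j} = e^{uP}$ for some real harmonic function $u$ on the cylinder; they do \emph{not} force $u\equiv 0$. The obstruction to gauging $A_\infty$ to zero while keeping $\Phi_\infty=\tfrac12 P\,dz$ is the period $\oint_{c_j}\!*du$, equivalently the holonomy of $A_\infty$ restricted to an eigenline bundle of $\Phi_\infty$ around the core loop---a global invariant of the limiting configuration that is not determined by the local decoupled equations. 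Your appeal to the fiducial model (Proposition~\ref{prop:std_local_form}) does not address this: the zeros of $q$ lie outside $V_j$, so that normal form constrains $h_\infty$ only on disjoint disks, not on the cylinder. What is actually needed is either an argument that this holonomy vanishes for the specific limiting configuration arising as $\lim_{t\to\infty}(A_t,\Phi_t)$, or the direct MSWW-style argument that the $h_0$-orthogonal eigenline bundles of $\Phi_\infty$ admit $A_\infty$-parallel unitary sections over all of $V_j$. Separately, your claim that the complex gauge $g$ relating $h_\infty$ to $h_0$ ``commutes with $P$ by virtue of the decoupling'' is unjustified: the decoupling constrains $h_\infty$, not the arbitrary background metric $h_0$ or the gauge $g$.
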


The existence of these local normal forms makes it evident that any limiting configuration for a Higgs bundle of this type can be turned into a \emph{framed limiting configuration} in the sense of the following definition. Here $\mathbb{D} \subset \C$ denotes the unit disk.

\begin{defn}\label{def:FLC}
Let $(E, \bar\partial_E, \varphi)$ be a stable $\SL(2,\C)$-Higgs bundle over the compact Riemann surface $M$ such that $q = - \det \varphi$ is a Strebel differential with simple zeros. A {\em framed limiting configuration} for this Higgs bundle consists of the following data:
\begin{enumerate}
\item a Hermitian metric $h_0$ on $E$,
\item a limiting configuration $(A_\infty, \Phi_\infty)$ for the Higgs bundle $(E, \bar\partial_E, \varphi)$,
\item open neighborhoods $W_1, \ldots, W_{N_Z}$ around each zero of $q$, together with holomorphic coordinates $W_j \to \mathbb{D} \subset \C$ and a unitary frame of $E$ over $W_j$, such that with respect to this trivialization $(A_\infty,\Phi_\infty)=(A_\infty^{\fid}, \Phi_\infty^{\fid})$,
\item open sets $V_1, \ldots, V_{N_L}$ together with biholomorphisms $V_j \to \{z \in \C : |\Real z| < 1 \}/ (i \sigma_j \Z)$, $\sigma_j > 0$, and a unitary frame of $E$ over $V_j$, such that with respect to this trivialization $A_\infty$ becomes the trivial flat connection and 
\[
\Phi_\infty = \frac 1 2 \begin{pmatrix} 0 & 1 \\ 1 & 0 \end{pmatrix} dz,
\]
\item open sets $U_1, \ldots, U_{N_I}$ together with coordinates $U_j \to \mathbb{D}$ and unitary frames, such that with respect to this trivialization $A_\infty$ becomes the trivial flat connection and 
\[
\Phi_\infty \,=\, \begin{pmatrix} 0 & 1 \\ 1 & 0 \end{pmatrix} dz.
\]
\end{enumerate}
Furthermore, we assume that $\{U_j, V_k, W_l\}_{j,k,l}$ forms an open cover of $M$, and we also assume that the loops $\{ \Real z = 0 \} \subset V_k$ do not intersect any of the sets $U_j$. Let us also assume that the $\{V_k, W_l\}_{k,l}$ are pairwise disjoint. The sets $c_k = \{\Real z = 0\} \subset V_k$ will be called {\em core loops}. The set of zeros of $q$ will be denoted by $Z$. % Finally, we assume that the complement of the union $\cup_k c_k \subset M$ of the cores has two connected components.
\end{defn}

{\em Remarks.}
\begin{enumerate}
\item Note that, by construction, the sets $U_1, \ldots, U_{N_I}$ and $V_1, \ldots, V_{N_L}$ do not contain any zeros of $q$.
\item Using Propositions \ref{prop:local_form_near_zero} and \ref{prop:std_local_form}, any Higgs bundle over a compact Riemann surface whose quadratic differential has only simple zeros can be given the structure of a framed limiting configuration. This construction results in $N_L = 0$ and corresponds to the construction in \cite{MSWW}.
\item The assumption on the number of components is not necessary to construct singular solutions of the $\SU(2)$ self-duality equations. Instead, it is required to construct a harmonic map whose behavior is consistent with that of the model solution. The two different components in the complement of the union of the core loops then correspond to the preimage of $\mathbb H^3_+$ and $\mathbb H^3_-$, respectively.
\end{enumerate}

For the rest of the article we assume that we are given such a framed limiting configuration $(A_\infty, \Phi_\infty)$. We define
\[
M^\vee = M \bs \bigcup_{k=1}^{N_L} c_k, \qquad M^\times = M^\vee \bs Z.
\]
Let $\chi : \R \to [0,1]$ be a smooth function, which is $1$ on $[-1/4, 1/4]$ and vanishes on $(-\infty,-1/2) \cup (1/2, \infty)$. Define a complex gauge transformation on $M^\vee$ via
\[
g_t^{\appr}(p) =
\begin{cases}
\exp\left(\chi(|z(p)|) \log (g_t^{\fid}(z(p)))\right), & p \in W_l \text{ for some } 1 \leq l \leq N_Z, \\
\exp\left( \chi(\Real(z(p))) \log ( g_t^{\model}(z(p)))\right), & p \in V_k \text{ for some } 1 \leq k \leq N_L, \\
\id_E, & \text{ otherwise,}
\end{cases}
\]
where $z$ denotes the chosen local coordinate on the corresponding chart.
The approximate solution is then defined as the gauge transformation of the limiting configuration by $g_t^{\appr}$:
\[
(A_t^{\appr}, \Phi_t^{\appr}) = (A_\infty, \Phi_\infty) \ast g_t^{\appr}.
\]
Note that, in contrast to the approximate solutions constructed in \cite{MSWW}, our approximate solutions are actually singular. They smooth out the singularities of the limiting configuration at the zeros of $q$, just as in \cite{MSWW}, but they also introduce singularities at the core loops of the Strebel cylinders via the model solution. The analysis of the gluing problem is hence situated on the non-compact surface $M^\vee$, and we will see that the precise form of the model solution turns it into a problem for $0$-differential (or uniformly degenerate) operators, see the following section.

\subsection{Some 0-calculus background}
\label{sec:0-calculus}
Here we collect some results from the $0$-calculus, which we will apply in the following sections. The $0$-calculus goes back to work of Mazzeo and Melrose \cite{MM}, and has been extended to a more general class of operators in \cite{Mazzeo}. We cite the latter for the statements of the main theorems. This review is partially based on the presentation in \cite{Usula}, Chapter 2.

Suppose $\ol{M}$ is a compact manifold with boundary of dimension $n+1$. In our application, $\dim M^\vee = 2$, so $n=1$. We state the general $(n+1)$-dimensional formulation for completeness. Suppose that $\rho$ is a boundary defining function on $\ol{M}$. By $M$ we mean $\ol{M} \setminus \partial \ol{M}$. We will also denote $\partial \ol{M}$ by $Y$. Assume that we are given a metric $\ol{g}$ on $\ol{M}$. The metric $g = \rho^{-2} \ol{g}$ on $M$ is a complete, infinite-volume metric.

Given $p \in Y$, we define
\[
\ol{M}_p = \{(t, \upsilon_1, \ldots, \upsilon_n) : t \geq 0\} \subset T_p \ol{M},
\]
where $(t, \upsilon_1, \ldots, \upsilon_n)$ are the linear coordinates induced by coordinates $(\rho, y_1, \ldots, y_n)$ of $\ol{M}$ near $p \in Y$ (here $(y_1, \ldots, y_n)$ is a chart on $Y$ pulled back to $\ol{M}$). On this we define the hyperbolic metric
\[
g_p = \frac{dt^2 + d\upsilon^2}{t^2}.
\]
Let $\ol{E}, \ol{F}$ be vector bundles over $\ol{M}$.

A differential operator $P : \Gamma(\ol{E}) \to \Gamma(\ol{F})$ is called a $0$-differential operator of order $m$ if, for any $p \in Y$, there exists a chart $(y_1, \ldots, y_n)$ centered at $p$ and trivializations of $\ol{E}$ and $\ol{F}$ near $p$, such that $P$ can be written as
\[
P = \sum_{j + |\beta| \leq m} P_{j, \beta}(\rho, y) (\rho \partial_\rho)^j (\rho \partial_y)^\beta,
\]
for functions $P_{j,\beta}$ smooth in the coordinates $(\rho, y_1, \ldots, y_n)$ with $\rho\geq 0$. We denote the space of such differential operators by $\Diff_0^m(\ol{E}, \ol{F})$. A typical example of such a differential operator are the Laplacian operators of hyperbolic space, expressed as an operator on the compactification. Operators in $\Diff_0^m(\ol{E}, \ol{F})$ have analogous boundary behavior as these Laplacians.

In these coordinates, the $0$-symbol of $P$ is defined by
\[
\sigma_0(P)(\rho,y) (\tau,\eta) = \sum_{j+|\beta|=m} P_{j,\beta}(\rho,y) \tau^j \eta^\beta,
\]
where $(\tau,\eta) \in \R \times \R^n$. The operator $P$ is called $0$-elliptic if $\sigma_0(P)(\rho, y)(\tau,\eta)$ is invertible for all $(\tau,\eta) \neq (0,0)$. Note that this definition only depends on the highest-order part of the operator. Connection Laplacians are $0$-elliptic, and, up to lower-order terms, these are the only types of operators studied in this paper.

For a point $p \in Y$, the {\em normal operator} $N_p(P)$ is a $0$-differential operator between $\ol{M}_p \times \ol{E}_p$ and $\ol{M}_p \times \ol{F}_p$, and it is defined in terms of the local expression above as
\[
N_p(P) = \sum_{j+|\beta| \leq m} P_{j,\beta}(0,p) (t \partial_t)^j (t \partial_\upsilon)^\beta.
\]
Given $s \in \C$, we also define
\[
I_s(P) = (\rho^{-s} P \rho^s)|_Y  \in \Gamma(\Hom(\ol{E}|_Y, \ol{F}|_Y)).
\]
The family $(I_s(P))_s$ is called the {\em indicial family} of $P$. A number $\mu \in \C$ is called an {\em indicial root} of $P$ at $p \in Y$ if $I_\mu(P)(p) : \ol{E}_p \to \ol{F}_p$ is not invertible. This operator also has the local expression
\[
I_s(P)(p) = \sum_{j\leq m} P_{j,0}(0,p) s^j.
\]

By $L^2_0$ and $C^{0,\alpha}_0$ we denote the standard $L^2$ space and H\"older space with respect to the metric $g$.
Denote
\[
\mc{V}_0 = \{ V \in \Gamma(T \ol{M}) : V|_Y = 0 \}.
\]
Then we define
\[
L^{2,k}_0 = \{ u : V_1 \ldots V_l u \in L_0^2\; \forall V_j \in \mc{V}_0, l \leq k \},
\]
\[
C^{k,\alpha}_0 = \{ u : V_1 \ldots V_l u \in C^{0,\alpha}_0\; \forall V_j \in \mc{V}_0, l \leq k \}.
\]
On these spaces we define the norms
\[
\|u\|^2_{L^{2,k}_0} = \sum_{j=0}^k \|\nabla^j u\|^2_{L^2_0},
\]
and
\[
\|u\|_{C^{k,\alpha}_0} = \|\nabla^k u\|_{C^{0,\alpha}_0}+ \sum_{j=0}^{k-1} \|\nabla^j u\|_{C^0_0}.
\]
Here $\nabla$ denotes the connection with respect to the complete metric $g = \rho^{-2}$ and all norms are computed with respect to this metric.

The weighted space $\rho^\delta L^{2,k}_0$ is given by $\{ \rho^\delta v : v \in L^{2,k}_0\}$, and the norm is given by $\|u\|_{\rho^\delta L^{2,k}_0} = \|\rho^{-\delta} u\|_{L^{2,k}_0}$. Similarly, the weighted space $\rho^\delta C^{k,\alpha}_0$ is given by $\{\rho^\delta v : v \in C^{k,\alpha}_0 \}$, and its norm is $\|u\|_{\rho^\delta C^{k,\alpha}_0} = \|\rho^{-\delta} u\|_{C^{k,\alpha}_0}$.

The following theorems contain the basic facts used when working with elliptic 0-operators: elliptic regularity, Fredholm properties, and boundary regularity.

\begin{thm}[Theorem 3.8 in \cite{Mazzeo}, Proposition 13 in \cite{Usula}]
\label{thm:0-elliptic-regularity}
Suppose $P$ is a $0$-elliptic operator of order $m$ and that $u \in \rho^\delta L^2_0$ satisfies $P u = v$ with $v\in \rho^\delta L^{2,k}_0$. Then $u \in \rho^\delta L^{2,k+m}_0$. 
\end{thm}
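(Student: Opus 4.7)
The plan is to reduce to the unweighted case $\delta=0$ and then combine standard interior elliptic regularity with a parametrix construction in the small $0$-calculus near the boundary $Y = \partial \overline{M}$.

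First I would reduce to $\delta = 0$ by conjugation. The operator $P_\delta := \rho^{-\delta} P \rho^\delta$ is again a $0$-differential operator of order $m$ with the same principal $0$-symbol as $P$, hence again $0$-elliptic. Setting $\tilde u = \rho^{-\delta} u$ and $\tilde v = \rho^{-\delta} v$, we have $\tilde u \in L^2_0$, $\tilde v \in L^{2,k}_0$, and $P_\delta \tilde u = \tilde v$. It therefore suffices to prove $\tilde u \in L^{2,k+m}_0$; multiplying back by $\rho^\delta$ then recovers the conclusion for the original $u$.

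Next I would handle interior regularity. On any compact subset $K$ of the open manifold $M$, the $0$-vector fields span $TM$, and the $0$-Sobolev spaces restrict on $K$ to the ordinary Sobolev spaces $H^k(K)$. Since $P$ is a classically elliptic differential operator of order $m$ in the interior (freezing $\rho > 0$ in the $0$-symbol recovers the ordinary principal symbol), standard elliptic regularity gives $\tilde u \in H^{k+m}_{\mathrm{loc}}(M)$, i.e.\ $\tilde u \in L^{2,k+m}_0$ on any compact subset disjoint from a neighborhood of $Y$.

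The main step is boundary regularity, and here I would invoke the small $0$-calculus of Mazzeo--Melrose: to a $0$-elliptic operator $P$ of order $m$ one associates a pseudodifferential parametrix $Q$ of $0$-order $-m$ with
\[
QP = I - R,
\]
where $R$ is residual, i.e.\ maps $L^2_0$ continuously into $L^{2,\infty}_0$. The construction is the standard symbolic one --- invert the principal $0$-symbol on the $0$-cotangent bundle to obtain a first approximation $Q_0$, observe that $Q_0 P - I$ lies one order lower in the $0$-calculus, and asymptotically sum a Neumann series --- but it must be carried out on Mazzeo's $0$-double space with its class of polyhomogeneous Schwartz kernels. The mapping theorem for $0$-pseudodifferential operators then yields $Q : L^{2,j}_0 \to L^{2,j+m}_0$ for every $j$. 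Applied to $P_\delta \tilde u = \tilde v$ it produces
\[
\tilde u = Q \tilde v + R \tilde u,
\]
with $Q \tilde v \in L^{2,k+m}_0$ because $\tilde v \in L^{2,k}_0$, and $R \tilde u \in L^{2,\infty}_0 \subset L^{2,k+m}_0$. A partition of unity combining this local boundary estimate with the interior regularity of the previous step then gives $\tilde u \in L^{2,k+m}_0$ globally.

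The main obstacle --- and the only part that is not entirely formal --- is the parametrix construction itself: it rests on the $0$-double space, the class of polyhomogeneous kernels supported there, and the composition and boundedness theorems for the small $0$-calculus. Since all of that machinery is developed in \cite{MM, Mazzeo}, I would invoke the parametrix as a black box; what remains is then a short diagram chase on weighted $0$-Sobolev spaces.
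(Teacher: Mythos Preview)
The paper does not supply its own proof of this statement; it is quoted as a background result from the $0$-calculus literature (Mazzeo \cite{Mazzeo}, Usula \cite{Usula}) and used as a black box. Your outline --- conjugate away the weight, invoke interior elliptic regularity, and near the boundary use a symbolic parametrix in the small $0$-calculus so that $QP = I - R$ with $R \in \Psi_0^{-\infty}$ mapping $L^2_0$ into $L^{2,\infty}_0$ --- is exactly the standard argument in those references, and is correct as a sketch. There is nothing to compare against in the present paper beyond the citation itself.
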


\begin{thm}[Theorem 6.1, Proposition 7.17 in \cite{Mazzeo}, Theorem 18 in \cite{Usula}]
\label{thm:fredholm-0-calculus}
Suppose $P$ is a formally self-adjoint $0$-differential operator whose indicial roots are constant on $Y$, and suppose that the symbol of $P$ is $0$-elliptic.

Suppose that there exist $\delta_- < n/2 < \delta_+$ such that there are no indicial roots between $\delta_-$ and $\delta_+$.

Suppose that the normal operator $N_p(P)$ is invertible as a map $L^{2,m}_0(\ol{E}_p) \to L^2_0(\ol{E}_p)$.

Then, for every $\delta \in (\delta_-, \delta_+)$, the maps
\[
P : \rho^{\delta - n/2} L_0^{2, k+m}(\ol{E}) \to \rho^{\delta - n/2} L^{2,k}_0(\ol{E}),
\]
\[
P : \rho^{\delta } C^{k+m, \alpha}_0(\ol{E}) \to \rho^{\delta} C^{k,\alpha}_0(\ol{E}),
\]
are Fredholm maps of index $0$, and they all have the same kernel.
\end{thm}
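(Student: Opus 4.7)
The plan is to prove this through the $0$-pseudodifferential calculus of Mazzeo--Melrose, extracting Fredholmness, index zero, and kernel consistency from the quality of a parametrix. The strategy mirrors the standard symbolic parametrix construction, but has to be carried out twice: once in the interior and once at the boundary face of the $0$-double space.

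First, since $\sigma_0(P)$ is invertible away from the zero section, I would invert it in the algebra of order $-m$ $0$-operators to obtain $Q_0 \in \Psi_0^{-m}$ with $PQ_0 - I \in \Psi_0^{-1}$, and then iterate asymptotically on the order to produce an interior parametrix $Q_{\mathrm{int}}\in\Psi_0^{-m}$ whose error lies in the ``small'' residual ideal $\Psi_0^{-\infty}$. This already inverts $P$ modulo a smoothing term, but smoothing operators in the $0$-calculus are not automatically compact on weighted $L^2$, as their Schwartz kernels need not decay at the boundary faces of the $0$-double space.

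The second and most delicate step is the boundary improvement. The obstruction to compactness of the error at the boundary face is precisely the normal operator $N_p(P)$; since by hypothesis $N_p(P)$ is invertible from $L^{2,m}_0(\ol{M}_p)$ to $L^2_0(\ol{M}_p)$, I can construct a boundary correction $Q_\partial$, whose normal operator is $N_p(P)^{-1}$, and combine it with $Q_{\mathrm{int}}$ to obtain a parametrix $Q$ whose error genuinely decays at the boundary face. Here the indicial root hypothesis enters: for the weight $\delta - n/2$ to correspond to a closed range and the composition $\rho^{-\delta+n/2} Q \rho^{\delta - n/2}$ to stay in the calculus, the strip through which $\delta$ varies must contain no indicial root, since crossing one would create a small eigenvalue of the indicial family $I_s(P)$ and a corresponding failure of compactness. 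Granting this, the error $PQ - I$ becomes compact on both $\rho^{\delta-n/2} L^{2,k}_0$ and $\rho^\delta C^{k,\alpha}_0$, so $P$ is Fredholm on these spaces.

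For the remaining claims, index~$0$ follows from formal self-adjointness together with the weight interval avoiding the indicial spectrum: the index is locally constant in $\delta$ and can only jump when $\delta$ crosses an indicial root, so it is constant throughout $(\delta_-,\delta_+)$; at the symmetric value the operator is self-adjoint on $L^2_0$, forcing $\ker = \mathrm{coker}^\perp$ and hence index zero. The statement that all kernels agree is then a consequence of elliptic regularity in the $0$-calculus (Theorem~\ref{thm:0-elliptic-regularity}) in the $L^2$ scale, supplemented by bootstrapping the parametrix identity $u = -(QP - I)u$ together with the mapping properties of the smoothing remainder to pass to the H\"older scale. The main obstacle is the second step: constructing $Q_\partial$ with the right polyhomogeneous structure on the $0$-double space and verifying compactness of the error on the precise weighted spaces is the technical heart of the theorem, with the indicial root condition serving as the exact hypothesis under which the construction succeeds.
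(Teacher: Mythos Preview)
The paper does not give its own proof of this statement: it is quoted as background from Mazzeo \cite{Mazzeo} and Usula \cite{Usula}, with the paper explicitly stating ``We cite the latter for the statements of the main theorems.'' Your sketch is a faithful outline of the actual argument in those references --- symbolic parametrix in $\Psi_0^{-m}$, boundary correction via $N_p(P)^{-1}$, compactness of the remainder within the indicial strip, and index zero from self-adjointness at the central weight --- so there is nothing to compare against in the present paper.
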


\begin{thm}[Proposition 7.17 in \cite{Mazzeo}]
\label{thm:polyhomogeneous_expansion}
Suppose $P$ is a $0$-elliptic operator of order $m$ and that $u \in \rho^{\delta-n/2} L^2_0$ satisfies $Pu = 0$ with $\delta > \delta_-$, where $\delta_-$ is as in Theorem \ref{thm:fredholm-0-calculus}, and that $\delta$ is not an indicial root of $P$.

Then there exists a polyhomogeneous expansion of $u$. More precisely, there exist functions $u_{j,l,p} \in C^\infty(Y)$ such that
\[
u \sim \sum_j \sum_{l=0}^\infty \sum_{p=0}^{p_j} \rho^{s_j + l} \left(\log \rho\right)^p u_{j,l,p}(y),
\]
where $s_j$ denotes the indicial roots with $\Real s_j > \delta$.
\end{thm}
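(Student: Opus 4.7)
My plan is to combine interior $0$-elliptic regularity with a Frobenius-type indicial analysis at the boundary, and then match the formal boundary expansion to the actual solution $u$ via the structure of a $0$-pseudodifferential parametrix.

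First I would promote the regularity of $u$. Since $Pu = 0$ and $u \in \rho^{\delta - n/2} L^2_0$, Theorem \ref{thm:0-elliptic-regularity} gives $u \in \rho^{\delta - n/2} L^{2,k}_0$ for every $k$, so in particular $u$ is smooth on $M = \overline{M} \setminus Y$. All the information still to be extracted is concentrated at the boundary, and the question is purely one of boundary asymptotics.

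The driver of the expansion is the indicial family. If $u$ had leading behavior $\rho^{s}(\log\rho)^p v(y)$ near $Y$, plugging into $Pu = 0$ would force $I_s(P)\, v = 0$ at leading order, so the admissible leading exponents are precisely the indicial roots. I would make this rigorous by the following induction. Near $Y$, expand $P$ in powers of $\rho$, writing $P = P_0 + \rho P_1 + \rho^2 P_2 + \cdots$, where $P_0$ is the model that gives rise to the indicial family. Posit a formal expansion
\[
\tilde u \sim \sum_{j} \sum_{l = 0}^{\infty} \sum_{p=0}^{p_{j,l}} \rho^{s_j + l}(\log \rho)^p\, u_{j,l,p}(y),
\]
with $\{s_j\}$ the indicial roots satisfying $\Real s_j > \delta$. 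Substituting and collecting terms of the same order in $\rho$ (and then in $\log\rho$), the equation at order $\rho^{s_j + l}$ becomes
\[
I_{s_j + l}(P)\, u_{j,l,0} = R_{j,l},
\]
where $R_{j,l}$ is determined by the previously constructed coefficients. When $s_j + l$ is not itself an indicial root, $I_{s_j+l}(P)$ is an invertible bundle endomorphism on $Y$ and $u_{j,l,0}$ is determined uniquely; when $s_j + l$ coincides with another indicial root (or with $s_j$ itself, in a degenerate direction), the operator $I_{s_j+l}(P)$ has a kernel, the solvability obstruction is absorbed by introducing higher powers of $\log\rho$, and the coefficients $u_{j,l,p}$ with $p \geq 1$ enter to generate these corrections. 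This is the standard Frobenius/Fuchsian mechanism, transplanted from Fuchsian ODEs at a regular singular point to the present PDE setting via the indicial family.

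The remaining step is to pass from the formal expansion to a genuine function. I would apply Borel summation on $\overline{M}$ to produce a smooth $\tilde u$ on $\overline{M}\setminus Y$ with the constructed asymptotic expansion, and by construction $P\tilde u \in \dot C^{\infty}(\overline{M})$, i.e., it vanishes to infinite order at $Y$. Set $w := u - \tilde u$. Then $w \in \rho^{\delta - n/2} L^2_0$ and $Pw \in \dot C^\infty(\overline{M})$. To conclude $w \in \dot C^\infty(\overline{M})$ I would use the $0$-pseudodifferential parametrix $G$ of $P$, whose Schwartz kernel is a polyhomogeneous conormal distribution on the $0$-blown-up double space $\overline{M} \times_0 \overline{M}$: this parametrix maps $\dot C^\infty$ into $\dot C^\infty$ modulo a compact remainder supported by the finite-dimensional kernel/cokernel from Theorem \ref{thm:fredholm-0-calculus}, and that finite-dimensional discrepancy can be absorbed by readjusting finitely many of the coefficients $u_{j,l,p}$. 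Thus $u \sim \tilde u$ in the sense of asymptotic expansions, proving the claim.

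The main obstacle I expect is the combinatorial bookkeeping of indicial-root coincidences: the triple index $(j,l,p)$ in the statement is there precisely because $s_j + l = s_{j'} + l'$ can happen for many pairs, and each coincidence raises the power of $\log\rho$ that appears, so one must verify inductively that the log-orders $p_{j,l}$ stay finite at each fixed weight and that the Borel resummation converges to something lying in the same weighted space as $u$. Making this inductive control uniform, together with the decoupling between the base variable $y \in Y$ and the radial variable $\rho$, is the technically delicate part of the argument.
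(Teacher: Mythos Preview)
The paper does not prove this theorem at all: it is quoted as Proposition~7.17 of \cite{Mazzeo} and used as a black box, so there is no ``paper's own proof'' to compare against.

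That said, your outline has a genuine gap in the matching step. The Frobenius construction you describe produces not a single formal solution but a family of them, parametrized by the free data $u_{j,0,p}\in C^\infty(Y)$ at each indicial root $s_j$; the equation $P\tilde u=0$ does not determine these leading coefficients. To choose them so that $w=u-\tilde u$ lands in $\dot C^\infty(\overline M)$ you would have to extract from $u$ itself its leading boundary term at each $s_j$, but a priori $u$ is only in a weighted $L^2_0$ space and has no boundary trace---this is precisely the regularity you are trying to establish, so the argument is circular as written. Your remark that the discrepancy is ``finite-dimensional'' and absorbable does not help: the undetermined data are sections over $Y$, hence infinite-dimensional.

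The route actually taken in \cite{Mazzeo} avoids this entirely. One uses the parametrix identity directly on $u$: since $Pu=0$, one has $u=(I-GP)u=Ru$, where the remainder $R$ in the $0$-calculus has Schwartz kernel that is polyhomogeneous conormal on the stretched double space $\overline M\times_0\overline M$, with index set at the left boundary face governed by the indicial roots of $P$. The pushforward theorem for polyhomogeneous conormal distributions then gives immediately that $Ru$, hence $u$, is polyhomogeneous with the stated index set. No formal solution has to be guessed in advance; the expansion and its coefficients are read off from the kernel structure of $R$. If you want to salvage your approach, you would need to replace the one-shot Borel construction by an inductive bootstrap: use the parametrix to show $u$ has a leading term at the first indicial root past $\delta$, subtract a formal solution with that leading term, land in a strictly better weighted space, and iterate.
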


\subsection{Function spaces adapted to the gluing procedure}
In this section, function spaces are defined on which the linearization of our operator turns out to be invertible in a controlled way. To this end, we first introduce weight functions that measure the distance to the core loops and the zeros of the quadratic differential.

Let $\sigma : \R \to [-1,1]$ be a smooth function, such that $\sigma(t) = t$ for $t \in [-1/2,1/2]$ and $\sigma(t) = 1$ for $|t| \geq 1$.

First, we define weight functions that vanish on the core loops. Given $1 \leq k \leq N_L$, let $z : V_k \to \{z \in \C : |\Real z| < 1\}/(i \sigma_k \Z)$ be the biholomorphic map provided by the framed limiting configuration. We then define $\rho_k : M \to \R$ via $\sigma(|\Real z|)$ on $V_k$, and $\rho_k|_{M \setminus V_k} \equiv 1$. Let us also define $\rho = \rho_1 \cdot \ldots \cdot \rho_{N_L}$.

Next, we define weight functions that vanish on the zeros of the quadratic differential. Given $1 \leq l \leq N_Z$, let $z : W_l \to D$ be the coordinates provided by the framed limiting configuration. Then let $r_l : M \to \R$ be defined via $\sigma(|z|)$ on $W_l$, and $r_l|_{M \setminus W_l} \equiv 1$. Set $r := r_1 \cdot \ldots \cdot r_{N_Z}$.

Given $t > 0$, define
\[
r_t = \left(t^{-4/3} + r^2\right)^{1/2}.
\]

Fix a conformally compact metric $g$ on $M^\vee$. For concreteness, we may assume $g$ to be given by 
\[
\frac{dx^2 + dy^2}{x^2}
\]
on the Strebel cylinders $V_j$ in the coordinates $z = x + i y$ as above.

In the following, we will define doubly-weighted H\"older spaces. 

For the $C^{k,\alpha}$ spaces, we first define ``pointwise norms.'' This will be done with respect to the coordinate systems and trivializations of the framed limiting configuration. We start with the $C^k$ norm. Let $p \in U_{j_0}$ for some $1 \leq j_0 \leq N_I$, and assume also that $r(p) = 1$ and $\rho(p) = 1$. Then we define
\[
\|\gamma\|_{C^k_t}(p) = \sum_{j=0}^k |\nabla^j \gamma(p)|.
\]
On the right-hand side, $\gamma$ is interpreted (via coordinates and trivialization) as a map from the disk to $\C^r$, and $\nabla^j$ denotes all the partial derivatives of order $j$.

Next, assume $p \in V_{k_0}$ for some $1 \leq k_0 \leq N_L$. Then we define
\[
\|\gamma\|_{C^k_t}(p) = \sum_{j=0}^k |\rho^j \nabla^j \gamma(p)|.
\]
Finally, assume $p \in W_{l_0}$ for some $1 \leq l_0 \leq N_Z$. Then we define
\[
\|\gamma\|_{C^k_t}(p) = \sum_{j=0}^k |r_t^j \nabla^j \gamma(p)|.
\]

The H\"older coefficients are defined similarly. Again, first assume $p \in U_{j_0}$ for some $1 \leq j_0 \leq N_I$, and assume also that $r(p) = 1$ and $\rho(p) = 1$. Then we define
\[
[\gamma]_{\alpha;t}(p) = \sup_{0 < |p-q| < \epsilon} \frac{|\gamma(p)-\gamma(q)|}{|p-q|^\alpha}.
\]
Next, assume $p \in V_{k_0}$ for some $1 \leq k_0 \leq N_L$. Then we define
\[
[\gamma]_{\alpha;t}(p) = \rho(p)^\alpha \sup_{0 < |p-q| < \epsilon} \frac{|\gamma(p)-\gamma(q)|}{|p-q|^\alpha}.
\]
Finally, assume $p \in W_{l_0}$ for some $1 \leq l_0 \leq N_Z$. Then we define
\[
[\gamma]_{\alpha;t}(p) = r_t(p)^\alpha \sup_{0 < |p-q| < \epsilon} \frac{|\gamma(p)-\gamma(q)|}{|p-q|^\alpha}.
\]

We also define
\[
\|\gamma\|_{C^{k,\alpha}_t}(p) = \|\gamma\|_{C^k_t}(p) + [\nabla^k \gamma]_{\alpha;t}(p).
\]
Then, given $\delta \in \R$ and $\nu \in \R$, we define the spaces \[\rho^\delta r_t^\nu C^{k,\alpha}_t := \left\{\rho^\mu r_t^\nu \gamma : \gamma \in C^{k,\alpha}_t \right\}\] with the norm
\[
\|\gamma\|_{\rho^\mu r_t^\nu C^{k,\alpha}_t} = \sup_{p \in M^\vee} \rho^{-\mu} r_t^{-\nu} \|\gamma\|_{C^{k,\alpha}_t}(p). 
\]
For $t = \infty$, we restrict the domain to $M^\times$.

\section{Construction of new solutions}

In this section we analyze the deformation theory of the approximate solutions constructed in
the previous section and establish the existence of genuine solutions to the self-duality equations
for large parameter $t$. Our strategy follows the general gluing scheme developed in~\cite{MSWW}, but
incorporates the new model solutions along the core loops of the Strebel cylinders. The analysis
proceeds in several steps: first, we study the local structure of the linearized operator $L_t$ and
its Fredholm properties on appropriate weighted function spaces; second, we obtain uniform
Schauder estimates and invertibility results; finally, these estimates are used to solve the nonlinear
problem by a contraction mapping argument, yielding exponentially close exact solutions. 
We then analyze the decay behavior of these solutions near the core loops and investigate the
regularity of the associated harmonic and transgressive maps. In particular, we show that under a
natural symmetry assumption the corresponding transgressive maps extend smoothly across the
equatorial $2$-sphere.

\subsection{Analysis of the linearized equation}

\subsubsection{Local forms and basic analytic properties of the operator $L_t$}

\begin{lemma}[Local form in the interior]
\label{lemma:local_form_interior}
Let $A_\infty$ be the trivial connection acting on the trivial rank $2$ bundle over $\C$ equipped with the Euclidean metric $dx^2{+}dy^2$ , and let $\Phi_\infty = \begin{pmatrix} 0 & 1 \\ 1 & 0 \end{pmatrix}$. The operators $\Delta_{A_\infty}$ and $-i \ast M_{\Phi_\infty}$ induced on $i\mf{su}(2)$ are
\[
\Delta_{A_\infty} \begin{pmatrix} u & v \\ \ol{v} & - u \end{pmatrix}
=
\begin{pmatrix}
\Delta u & \Delta v \\ \Delta \ol{v} & -\Delta u \end{pmatrix}
\]
and
\[
- i \ast M_{\Phi_\infty} \begin{pmatrix} u & v \\ \ol{v} & -u \end{pmatrix}
=
\begin{pmatrix}
16 u & 8 (v - \ol{v}) \\
8(\ol{v} - v) & -16 u
\end{pmatrix}.
\]
\end{lemma}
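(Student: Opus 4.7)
The plan is to verify both identities by direct matrix computation in the standard parameterization of $i\mathfrak{su}(2)$. Write a generic element as $\gamma = \begin{psmallmatrix} u & v \\ \bar v & -u \end{psmallmatrix}$ with $u \in \R$ and $v \in \C$. Since $A_\infty$ is the trivial connection and the base is Euclidean, $\Delta_{A_\infty}$ acts on the trivial rank-$2$ bundle entrywise as the scalar Laplacian; this entrywise structure descends to the induced action on $i\mathfrak{su}(2)$ and gives the first formula immediately.

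For the second formula, I would set $\Phi_0 = \begin{psmallmatrix}0 & 1 \\ 1 & 0\end{psmallmatrix}$, so that $\Phi_\infty = \Phi_0\,dz$ and, because $\Phi_0$ is self-adjoint, $\Phi_\infty^* = \Phi_0\,d\bar z$. The key computation is the iterated bracket $[\Phi_0,[\Phi_0,\gamma]]$: a short matrix multiplication first yields $[\Phi_0,\gamma] = \begin{psmallmatrix} \bar v-v & -2u \\ 2u & v-\bar v\end{psmallmatrix}$, and a second application produces a matrix that, up to an overall factor of $4$, is already the matrix appearing on the right-hand side of the lemma.

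Finally, I would assemble $M_{\Phi_\infty}(\gamma) = [\Phi_\infty^*\wedge[\Phi_\infty,\gamma]] - [\Phi_\infty\wedge[\Phi_\infty^*,\gamma]]$. The two terms share the same Lie-algebra factor $[\Phi_0,[\Phi_0,\gamma]]$, while their form parts are $d\bar z\wedge dz$ and $dz\wedge d\bar z$ respectively; using $d\bar z\wedge dz = -\,dz\wedge d\bar z$ these combine to $-2[\Phi_0,[\Phi_0,\gamma]]\,dz\wedge d\bar z$. Applying $-i\ast$ and using $\ast(dz\wedge d\bar z) = -2i$ on the Euclidean plane contracts the form factor to the scalar $-i\cdot(-2i)\cdot(-2) = +4$, producing exactly the claimed right-hand side. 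There is no conceptual obstacle here: the entire proof is bookkeeping, with the only care needed being the sign conventions for the Hodge star on $(1,1)$-forms and for the order of $dz$ and $d\bar z$. The lemma simply records the interior model of $L_t$ that will feed into the normal-operator analysis in the following subsections.
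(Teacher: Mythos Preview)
Your proposal is correct and matches the paper's approach: the paper simply states that the first formula is standard and the second follows from a simple calculation, and your writeup supplies exactly those details. Your bracket and Hodge-star bookkeeping is accurate and yields the claimed factor of $4$ in front of $[\Phi_0,[\Phi_0,\gamma]]$.
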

The expression for $\Delta_{A_\infty}$ is standard, and the expression for $-i \ast M_{\Phi_\infty}$ follows from a simple calculation.

The following two results are taken from \cite{FMSW}, Section 4.1.
\begin{lemma}[Local form near the zeros of $q$, $t=\infty$]
\label{lemma:local_form_near_zero_infinity}
Let $(A_\infty^{\fid}, \Phi_\infty^{\fid})$ be the limiting fiducial solution given in  \eqref{eqn:limiting_fid} acting on the trivial rank $2$ bundle over $\C$ equipped with the Euclidean metric $dx^2{+}dy^2$. The operator $\Delta_{A_\infty}$ induced on $i\mf{su}(2)$ is given by
\[
\Delta_{A_\infty} \begin{pmatrix} u & v \\ \ol{v} & - u \end{pmatrix}
=
\begin{pmatrix}
\Delta u & \Delta_{1/2} v \\ \Delta_{1/2} \ol{v} & -\Delta u \end{pmatrix},
\]
where $\Delta_{1/2}$ is given in polar coordinates as $-\partial_r^2 - r^{-1} \partial_r - r^{-2} \left(\partial_\theta + \frac i 2 \right)^2$. The operator $-i \ast M_{\Phi_\infty}$ is given in polar coordinates by
\[
- i \ast M_{\Phi_\infty} \begin{pmatrix} u & v \\ \ol{v} & -u \end{pmatrix}
=
r
\begin{pmatrix}
16 u & 8 (v - e^{-i\theta} \ol{v}) \\
8(\ol{v} - e^{i\theta} v) & -16 u
\end{pmatrix}.
\]
\end{lemma}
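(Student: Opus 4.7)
The proof is a direct computation in polar coordinates $z = re^{i\theta}$, carried out independently for the Laplacian piece and the commutator piece.

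For $\Delta_{A_\infty^{\fid}}$: using the identity $\tfrac{dz}{z} - \tfrac{d\bar z}{\bar z} = 2i\,d\theta$, the connection simplifies to $A_\infty^{\fid} = \tfrac{i}{4}\,\mathrm{diag}(1,-1)\,d\theta$. On $\gamma = \begin{psmallmatrix}u & v \\ \bar v & -u\end{psmallmatrix} \in i\mathfrak{su}(2)$, the adjoint action by $\mathrm{diag}(1,-1)$ gives $\begin{psmallmatrix}0 & 2v \\ -2\bar v & 0\end{psmallmatrix}$, so the induced connection is trivial on the diagonal piece $u\,\mathrm{diag}(1,-1)$ (yielding the standard Euclidean Laplacian $\Delta$ on $u$) and is $d + \tfrac{i}{2}d\theta$ on the $(1,2)$-entry $v$. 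Twisting the scalar Laplacian $-(\partial_r^2 + r^{-1}\partial_r + r^{-2}\partial_\theta^2)$ by this connection replaces $\partial_\theta$ by $\partial_\theta + \tfrac{i}{2}$, producing $\Delta_{1/2}$; the $(2,1)$-entry is handled by complex conjugation.

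For $-i\ast M_{\Phi_\infty^{\fid}}$: write $\Phi_\infty^{\fid} = \phi\,dz$ with $\phi = \begin{psmallmatrix}0 & r^{1/2} \\ r^{1/2}e^{i\theta} & 0\end{psmallmatrix}$ and $(\Phi_\infty^{\fid})^{*} = \phi^*\,d\bar z$ with $\phi^* = \begin{psmallmatrix}0 & r^{1/2}e^{-i\theta} \\ r^{1/2} & 0\end{psmallmatrix}$. Direct multiplication shows $\phi\phi^* = \phi^*\phi = r\,\mathrm{id}$, hence $[\phi,\phi^*]=0$, which is consistent with $(A_\infty^{\fid},\Phi_\infty^{\fid})$ being a limiting solution ($F_A$ and $[\Phi\wedge\Phi^*]$ both vanish separately). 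The Jacobi identity then yields $[\phi,[\phi^*,\gamma]] = [\phi^*,[\phi,\gamma]]$, which collapses the definition of $M_\Phi$ to
\[
M_{\Phi_\infty^{\fid}}(\gamma) = -2\,[\phi^*,[\phi,\gamma]]\,dz\wedge d\bar z.
\]
Expanding the double commutator by matrix multiplication gives
\[
[\phi^*,[\phi,\gamma]] = r\begin{pmatrix}4u & 2(v - e^{-i\theta}\bar v) \\ 2(\bar v - e^{i\theta}v) & -4u\end{pmatrix}.
\]
In the Euclidean metric $\ast(dz\wedge d\bar z) = -2i$, so $-i\ast$ multiplies by $(-i)(-2)(-2i) = 4$, producing the formula stated in the lemma.

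The computation is elementary; there is no real obstacle. The only bookkeeping points requiring care are the polar simplification $\tfrac{dz}{z} - \tfrac{d\bar z}{\bar z} = 2i\,d\theta$, the sign $\ast(dz\wedge d\bar z) = -2i$, and the use of $[\phi,\phi^*]=0$ to fuse the two double commutators in the definition of $M_\Phi$ into a single one. Because the connection and Higgs field are both diagonalized by the same $L\oplus L^\perp$ splitting (in trivialized form), both operators decouple into a scalar diagonal component and a twisted off-diagonal component, which is what makes the final matrix expression factor through the indicial shift $\tfrac{1}{2}$.
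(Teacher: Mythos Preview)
Your computation is correct in every detail: the polar reduction of $A_\infty^{\fid}$ to $\tfrac{i}{4}\mathrm{diag}(1,-1)\,d\theta$, the resulting twisted Laplacian $\Delta_{1/2}$ on the off-diagonal entry, the observation $[\phi,\phi^*]=0$ and the Jacobi-identity collapse of $M_\Phi$ to a single double commutator, and the final numerical factor from $-i\ast(dz\wedge d\bar z)$ all check out. The paper itself does not prove this lemma but cites \cite{FMSW}, Section~4.1; your direct computation is exactly what one would expect that proof to be.
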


\begin{lemma}[Local form near the zeros of $q$, $t$ finite]
\label{lemma:local_form_near_zero_finite_t}
Let $(A_t^{\fid}, \Phi_t^{\fid})$ be the fiducial solution given in \eqref{eqn:fid} acting on the trivial rank $2$ bundle over $\C$ equipped with the Euclidean metric $dx^2{+}dy^2$. The operator $\Delta_{A_t^{\fid}}$ induced on $i\mf{su}(2)$ is given by
\[
\Delta_{A_t^{\fid}} \begin{pmatrix} u & v \\ \ol{v} & - u \end{pmatrix}
=
\begin{pmatrix}
\Delta u & \Delta v - 4 F_t i r^{-2} \partial_\theta v + 4 F_t^2 r^{-2} v \\ \Delta \ol{v} + 4 F_t i r^{-2} \partial_\theta \ol{v} + 4 F_t^2 r^{-2} \ol{v} & -\Delta u \end{pmatrix}.
\]
The operator $-i \ast M_{\Phi_t^{\fid}}$ is given in polar coordinates by
\[
- i \ast M_{\Phi_t^{\fid}} \begin{pmatrix} u & v \\ \ol{v} & -u \end{pmatrix}
=
r
\begin{pmatrix}
16 \cosh(2 \ell_t(r))  u & 8 (\cosh(2 \ell_t(r)) v - e^{-i\theta} \ol{v}) \\
8(\cosh(2 \ell_t(r)) \ol{v} - e^{i\theta} v) & -16 \cosh(2 \ell_t(r)) u
\end{pmatrix},
\]
where $\ell_t$ solves
\[
\frac{d^2}{dr^2} \ell_t  + \frac 1 r \frac d {dr} \ell_t = 8 t^2 r \sinh(2 \ell_t)
\]
with $\ell_t(r) \sim -\frac 1 2 \log r$ as $r \to 0$ and $\ell_t(r) \sim \frac 1 \pi K_0 \left( \frac 8 3 t r^{3/2} \right)$ as $r \to \infty$, where $K_0$ is a Bessel function. The function $F_t$ is defined as $F_t = \frac 1 2 \left( \frac 1 2 + r \partial_r \ell_t \right)$.
\end{lemma}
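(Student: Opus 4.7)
My plan is to compute both operators by a direct calculation, taking advantage of the fact that the fiducial solution is diagonal with respect to the standard frame and depends only on $r$ in a simple way, so that polar coordinates trivialise most of the work.

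First I would rewrite the connection one-form in polar coordinates. Using $\frac{dz}{z} - \frac{d\bar z}{\bar z} = 2i\, d\theta$ together with the definition $F_t = \tfrac12\bigl(\tfrac12 + r\,\partial_r\ell_t\bigr)$, the formula for $A_t^{\fid}$ in \eqref{eqn:fid} collapses to
\[
A_t^{\fid} = i\,F_t(r)\,H\,d\theta, \qquad H = \begin{pmatrix} 1 & 0 \\ 0 & -1\end{pmatrix}.
\]
This is the key simplification: the connection is purely angular, depends only on $r$, and is diagonal.

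Next I would decompose $i\mathfrak{su}(2)$ into weight spaces for $\operatorname{ad}(H)$. Writing $\gamma = \begin{psmallmatrix} u & v \\ \bar v & -u\end{psmallmatrix}$, the diagonal part carries weight $0$ and the off-diagonal entries $v, \bar v$ carry weights $+2, -2$ respectively. Consequently $d_{A_t^{\fid}}$ acts as the flat $d$ on $u$, as $d + 2iF_t\,d\theta$ on the $v$-entry, and as $d - 2iF_t\,d\theta$ on the $\bar v$-entry. The flat Euclidean Laplacian in polar coordinates is $\Delta = -\partial_r^2 - r^{-1}\partial_r - r^{-2}\partial_\theta^2$, so replacing $\partial_\theta$ by $\partial_\theta \pm 2iF_t$ inside the last term gives
\[
-r^{-2}(\partial_\theta \pm 2iF_t)^2 = -r^{-2}\partial_\theta^2 \mp 4iF_t r^{-2}\partial_\theta + 4F_t^2 r^{-2},
\]
which matches the claimed expression for $\Delta_{A_t^{\fid}}$ term by term. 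Note $F_t$ is independent of $\theta$, so no extra commutator terms appear.

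For the Higgs part, I would set $a = r^{1/2}e^{\ell_t(r)}$ and $b = r^{1/2}e^{-\ell_t(r)}e^{i\theta}$, so that $\Phi_t^{\fid} = \begin{psmallmatrix} 0 & a \\ b & 0\end{psmallmatrix} dz$ and $(\Phi_t^{\fid})^* = \begin{psmallmatrix} 0 & \bar b \\ \bar a & 0\end{psmallmatrix} d\bar z$. The two commutators $[\Phi,\gamma]$ and $[\Phi^*,\gamma]$ are straightforward matrix products, and then $M_\Phi(\gamma) = [\Phi^* \wedge [\Phi,\gamma]] - [\Phi \wedge [\Phi^*,\gamma]]$ yields a trace-free matrix times $d\bar z \wedge dz$. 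Using the identities
\[
|a|^2 + |b|^2 = r\bigl(e^{2\ell_t} + e^{-2\ell_t}\bigr) = 2r\cosh(2\ell_t), \qquad a\bar b = r\,e^{-i\theta},
\]
and $-i * (d\bar z \wedge dz) = 2$, the coefficients organise themselves into exactly the matrix given in the statement. The mildly annoying step is keeping the four double-commutator entries straight and verifying the sign from $* (d\bar z \wedge dz)$; beyond that, no estimates or analysis are involved. There is no real obstacle: the whole lemma is a bookkeeping exercise whose content is simply that the fiducial solution is radially symmetric up to a phase of order $e^{\pm i\theta}$ in the off-diagonal Higgs entries, and the factor $r$ in the leading part of $-i*M_{\Phi_t^{\fid}}$ reflects the vanishing of $\det\Phi$ at the origin.
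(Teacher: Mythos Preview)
Your computation is correct and is exactly the natural one: rewrite $A_t^{\fid}=iF_t(r)H\,d\theta$ so that $\nabla_\theta$ acts as $\partial_\theta\pm 2iF_t$ on the $\pm2$ weight spaces of $\operatorname{ad}H$, expand $-r^{-2}\nabla_\theta^2$, and for $M_{\Phi}$ reduce everything to $|a|^2+|b|^2=2r\cosh(2\ell_t)$ and $a\bar b=re^{-i\theta}$. The paper itself does not reprove this lemma---it simply cites \cite{FMSW}, Section~4.1, where the same direct calculation appears---so your approach coincides with the intended one.
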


\begin{lemma}[Local form near the core loops]
\label{lemma:local_form_near_core_loop_finite_t}
Let $(A_t^{\model}, \Phi_t^{\model})$ be the model solution given in equation \eqref{eqn:model} acting on the trivial rank $2$ bundle over $\{z = x + i y \in \C : x \neq 0\} /2\pi i \Z$  equipped with the hyperbolic metric $g = x^{-2} (dx^2 + dy^2)$. The operator $\Delta_{A_t}$ induced on $i\mf{su}(2)$ is given by
\[
\Delta_{A_t^{\model}} \begin{pmatrix} u & v \\ \ol{v} & - u \end{pmatrix}
=
\begin{pmatrix}
\Delta_g u & (\Delta_g + 2 i p(tx) x \partial_y + p(tx)^2) v \\
(\Delta_g - 2 i p(tx) x \partial_y + p(tx)^2) \ol{v} & - \Delta_g u
\end{pmatrix}
\]
and
\[
-i \ast M_{\Phi_t^{\model}} \begin{pmatrix} u & v \\ \ol{v} & - u \end{pmatrix}
=
\begin{pmatrix}
2 x^2 E(tx) u & - 2 x^2 \ol{v} + x^2 E(tx) v \\
- 2 x^2 v +  x^2 E(tx) \ol{v} & - 2 x^2 E(tx) u
\end{pmatrix},
\]
where $E(s) = \tanh(s)^2 + \tanh(s)^{-2}$  and $p(s) = \frac{2 s}{\sinh(2s)}$.
\end{lemma}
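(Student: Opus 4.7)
The plan is to verify both identities by direct computation, in the spirit of Lemma \ref{lemma:local_form_near_zero_finite_t}, with the Euclidean background replaced by the hyperbolic metric $g = x^{-2}(dx^2 + dy^2)$ and the fiducial data replaced by the model data: the connection $A_t^{\model} = a(x)\begin{psmallmatrix}-i & 0 \\ 0 & i\end{psmallmatrix} dy$ with $a(x) = t/\sinh(2tx)$, and the Higgs field $\Phi_t^{\model} = \phi\, dz$ where $\phi = \tfrac12\begin{psmallmatrix} 0 & \rho^{-1} \\ \rho & 0 \end{psmallmatrix}$ and $\rho = \tanh(tx)$. I will parametrize $\gamma \in i\mathfrak{su}(2)$ as $\begin{psmallmatrix} u & v \\ \bar v & -u \end{psmallmatrix}$ throughout; since $A_t^{\model}$ is diagonal, its adjoint action decouples the diagonal component $u$ from the off-diagonal component $v$.

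For the first identity, the $u$-component is annihilated by $\operatorname{ad}(A_t^{\model})$ and so reduces to the scalar Laplace--Beltrami operator $\Delta_g = -x^2(\partial_x^2+\partial_y^2)$. On the off-diagonal, the adjoint action is given by the scalar connection form $-2ia\,dy$, and I will use the conformal identities $\sqrt{\det g}\,g^{xx} = \sqrt{\det g}\,g^{yy} = 1$ to reduce the connection Laplacian to
\[
\Delta_{A_t^{\model}} v = -\bigl(\partial_x^2 + (\partial_y - 2ia)^2\bigr) v.
\]
Expanding the square, using $\partial_y a = 0$, and recognizing $2ax = p(tx)$ produces $\Delta_g v + 2i\,p(tx)\,x\,\partial_y v + p(tx)^2 v$. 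The $\bar v$-entry then follows by complex conjugation of the connection coefficient.

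For the second identity, I will compute
\[
M_{\Phi_t^{\model}}\gamma = -\bigl([\phi^*,[\phi,\gamma]] + [\phi,[\phi^*,\gamma]]\bigr)\, dz \wedge d\bar z
\]
directly by matrix multiplication. A short calculation shows that the sum of the two nested commutators evaluates to the matrix with diagonal entries $\pm(\rho^2 + \rho^{-2})\,u = \pm E(tx)\,u$ and off-diagonal entries $\tfrac12(E(tx)\,v - 2\bar v)$ and $\tfrac12(E(tx)\,\bar v - 2v)$. Applying $-i\ast$ with $\ast(dz \wedge d\bar z) = -2i x^2$ in the hyperbolic metric contributes the overall factor $2x^2$, yielding precisely the stated expression.

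The computations are entirely elementary and there is no real obstacle; the only points requiring attention are the bookkeeping of the $x^2$-factors that arise from the hyperbolic Hodge star and from the conformal rescaling of $g^{ij}$ in the connection Laplacian, together with the neat substitution $2a(x)\,x = p(tx)$ which repackages the connection coefficients in terms of the universal function $p$. Both steps run parallel to Lemmas \ref{lemma:local_form_near_zero_infinity}--\ref{lemma:local_form_near_zero_finite_t}, so no new ideas are needed beyond this bookkeeping.
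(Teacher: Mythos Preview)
Your approach is correct and coincides with the paper's proof: both compute the covariant derivatives induced by the diagonal connection $A_t^{\model}$ directly (the diagonal entry is untouched, the off-diagonal picks up the scalar twist $-2ia\,dy$), then evaluate the two nested commutators for $M_{\Phi_t^{\model}}$ by explicit matrix multiplication and apply $-i\ast$ using $\ast(dz\wedge d\bar z)=-2ix^2$. One small slip: your displayed formula for $\Delta_{A_t^{\model}} v$ is missing the overall factor $x^2$ (it should read $-x^2\bigl(\partial_x^2 + (\partial_y-2ia)^2\bigr)v$); since your next sentence reaches the correct expression $\Delta_g v + 2ip(tx)x\partial_y v + p(tx)^2 v$ via $2ax=p(tx)$, this is evidently a typo rather than a gap.
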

\begin{proof}
For the purpose of this proof, set $\nabla := d_{A_t^{\model}}$. Then
\[
\nabla \gamma = d \gamma +  \left[ \frac 1 {2 x} p(tx) \begin{pmatrix} -i & 0 \\ 0 & i \end{pmatrix} dy, \gamma \right].
\]
In particular,
\[
\nabla_{\partial_x} \begin{pmatrix} u & 0 \\ 0 & -u \end{pmatrix} = \begin{pmatrix} \partial_x u & 0 \\ 0 & - \partial_x u \end{pmatrix}, \quad \nabla_{\partial_y} \begin{pmatrix} u & 0 \\ 0 & -u \end{pmatrix} = \begin{pmatrix} \partial_y u & 0 \\ 0 & - \partial_y u \end{pmatrix},
\]
\[
\nabla_{\partial_x} \begin{pmatrix} 0 & v \\ \ol{v} & 0 \end{pmatrix} = \begin{pmatrix} 0 & \partial_x v \\ \partial_x \ol{v} & 0 \end{pmatrix}, \quad \nabla_{\partial_y} \begin{pmatrix} 0 & v \\ \ol{v} & 0 \end{pmatrix} = \begin{pmatrix} 0 & \partial_y v \\ \partial_y \ol{v} & 0 \end{pmatrix} + \frac 1 x p(tx) \begin{pmatrix} 0 & -i v \\ i \ol{v} & 0 \end{pmatrix}.
\]
Using the expression
\[
\Delta_{A_t^{\model}} \gamma = -x^2\left(\nabla_{\partial_x}^2 \gamma + \nabla_{\partial_y}^2 \gamma \right),
\]
the expression for $\Delta_{A_t^{\model}}$ follows.

To improve the legibility, let us set $\Phi = \Phi_t^{\model}$ and recall that $\Phi = \Phi_t^{\model} = \frac 1 2 \begin{psmallmatrix} 0 & \tanh(tx)^{-1} \\ \tanh(tx) & 0 \end{psmallmatrix} dz$. Then $\Phi^* = \frac 1 2 \begin{psmallmatrix} 0 & \tanh(tx) \\ \tanh(tx)^{-1} & 0 \end{psmallmatrix} d\ol{z}$.

Then an explicit computation shows that for $\gamma = \begin{psmallmatrix} u & v \\ \ol{v} & -u \end{psmallmatrix}$ the identities
\[
[\Phi^* \wedge [\Phi, \gamma]] = \frac 1 2 \begin{pmatrix}  \left( \tanh(tx)^2 + \tanh(tx)^{-2} \right) u &  -\ol{v} + \tanh (tx)^2 v \\
-v + \tanh(tx)^{-2} \ol{v} & - \left( \tanh(tx)^2 + \tanh (tx)^{-2} \right) u \end{pmatrix} d\ol{z} \wedge dz,
\]
\[
[\Phi \wedge [\Phi^*, \gamma]] = \frac 1 2 \begin{pmatrix} \left( \tanh(tx)^2 + \tanh(tx)^{-2} \right) u &  -\ol{v} +  \tanh(tx)^{-2} v \\
-v + \tanh(tx)^{2} \ol{v} & - \left( \tanh(tx)^2 + \tanh(tx)^{-2} \right) u \end{pmatrix} dz \wedge d\ol{z}
\]
hold. Using that $\ast (d\ol{z} \wedge dz) = 2 i x^2$, this yields the expression for $-i \ast M_{\Phi_t^{\model}}$.
\end{proof}

\subsubsection{Fredholm property}
In this section we consider $L_t$ as a $0$-operator and show that it is a Fredholm operator on weighted spaces due to the theory of $0$-elliptic operators in Section \ref{sec:0-calculus}.

Let us first observe that the connection Laplacian $\Delta_{A_t}$ is $0$-elliptic, because its symbol is essentially that of the scalar Laplacian. Therefore, to see that $L_t$ is indeed Fredholm, we need to analyze the normal operator and the indicial roots of $\Delta_{A_t}$.

According to Lemma \ref{lemma:local_form_near_core_loop_finite_t} the operator $L_t = \Delta_{A_t^{\model}} - t^2 i \ast M_{\Phi_t^{\model}}$ can be written as
\[
L_t \begin{pmatrix} u & v \\ \ol{v} & -u \end{pmatrix}
=
\begin{pmatrix} L_t^1 u & L_t^2 v \\
\ol{L_t^2 v} & - L_t^1 u
\end{pmatrix}
\]
where
\[
L_t^1 u = -x^2(\partial_x^2 + \partial_y^2) u + 2 t^2 x^2 E(tx) u
\]
and
\[
L_t^2 v = -x^2(\partial_x^2 + \partial_y^2) v + 2 i p(tx) x \partial_y v + p(tx)^2 v - 2 t^2 x^2 \ol{v} + t^2x^2 E(tx) v.
\]
The normal operators are obtained by setting $x = 0$ in the coefficients of the operators $(x\partial_x)^j$ and $(x\partial_y)^j$. Observing that $(t^2 x^2 E(tx))|_{x=0} = 1$ and $p(tx)|_{x=0} = 1$, we obtain
\[
N(L_t^1)u = -x^2 \partial_x^2u - x^2 \partial_y^2u + 2u = -(x\partial_x)^2u + x\partial_xu - (x\partial_y)^2u + 2 u,
\]
\[
N(L_t^2)v = -x^2 \partial_x^2v - x^2 \partial_y^2v + 2 i x \partial_y v + 2v.
\]
The indicial families are therefore
\[
I_s(L_t^1) = I_s(L_t^2) = -s^2 + s + 2.
\]
Thus $-1$ and $2$ are the indicial roots of $L_t^1$ and $L_t^2$.

If $N(L_t) : L^{2,2}_0 \to L^2_0$ is an isomorphism, then according to Theorem \ref{thm:fredholm-0-calculus} the operator $L_t$ is a Fredholm operator in the weight range determined by the indicial roots. Propositions \ref{prop:no_harmonic_on_halfspace} and \ref{prop:no_twisted_harmonic_on_halfspace} show that $N(L_t)$ has no kernel on $L^2_0$. Since these operators are self-adjoint, this already implies that their cokernel also vanishes. This suffices to conclude the Fredholm property.

In fact, these operators are Banach space isomorphisms, as the next theorem shows.
\begin{thm}
\label{thm:banach_space_iso}
For finite $t$, $\delta \in \left( -1, 2\right)$ and any $\nu \in \R$ the operators
\[
L_t : \rho^{\delta-1/2} L_0^{2,2} \to \rho^{\delta - 1/2} L^2_0
\]
and
\[
L_t : \rho^{\delta} r_t^\nu C^{k+2,\alpha}_t \to \rho^\delta r_t^\nu C^{k,\alpha}_t
\]
are Banach space isomorphisms.
\end{thm}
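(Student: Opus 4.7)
The plan is to view $L_t$ as a $0$-elliptic operator of order two on the compactification of $M^\vee$ whose boundary is the union of the core loops $c_k$, apply Theorem~\ref{thm:fredholm-0-calculus} to obtain the Fredholm property with index zero, show the kernel is trivial by combining the polyhomogeneous expansion with an integration-by-parts argument, and finally check that the weighted Hölder spaces $\rho^\delta r_t^\nu C^{k,\alpha}_t$ coincide with the standard $0$-calculus spaces up to Banach isomorphism at fixed $t$.

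\textbf{Step 1 ($0$-calculus setup).} Near each core loop $c_k \subset V_k$, Lemma~\ref{lemma:local_form_near_core_loop_finite_t} expresses both $\Delta_{A_t^{\model}}$ and $-i t^2\ast M_{\Phi_t^{\model}}$ as genuine $0$-differential operators in the coordinates $z = x+iy$ with boundary defining function $\rho = |x|$. The $0$-principal symbol of $L_t$ agrees with that of the scalar Laplacian and is therefore $0$-elliptic, and the indicial-root calculation immediately preceding the theorem gives indicial roots $\{-1,2\}$ for both the diagonal and off-diagonal components of $\gamma \in i\mathfrak{su}(E)$. The invertibility of the normal operators $N_p(L_t)$ on $L^{2,2}_0$ is provided by Propositions~\ref{prop:no_harmonic_on_halfspace} and~\ref{prop:no_twisted_harmonic_on_halfspace}. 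Applying Theorem~\ref{thm:fredholm-0-calculus} with $n=1$, $\delta_- = -1$, $\delta_+ = 2$ yields Fredholm maps of index zero
\[
L_t : \rho^{\delta - 1/2} L^{2,2}_0 \to \rho^{\delta - 1/2} L^2_0, \qquad L_t : \rho^\delta C^{k+2,\alpha}_0 \to \rho^\delta C^{k,\alpha}_0,
\]
for every $\delta \in (-1,2)$, all sharing a common kernel.

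\textbf{Step 2 (Triviality of the kernel).} Let $\gamma$ belong to this common kernel. By Theorem~\ref{thm:polyhomogeneous_expansion}, $\gamma$ admits a polyhomogeneous expansion at each core loop with leading exponents among the indicial roots exceeding the chosen weight; for $\delta \in (-1,2)$ the only admissible leading exponent is $\mu = 2$, hence $\gamma = O(\rho^2)$ and $\nabla_{A_t}\gamma = O(\rho^2)$ in the complete metric. In particular $\gamma$ is square-integrable for the hyperbolic volume form, and the boundary terms in Green's identity over the exhaustion $\{\rho \ge \varepsilon\}$ vanish in the limit $\varepsilon \to 0$. By Lemmas~\ref{lemma:local_form_interior},~\ref{lemma:local_form_near_zero_finite_t} and~\ref{lemma:local_form_near_core_loop_finite_t}, $-i\ast M_{\Phi_t}$ is a pointwise non-negative self-adjoint endomorphism of $i\mathfrak{su}(E)$, so integration by parts gives
\[
0 \,=\, \langle L_t \gamma, \gamma \rangle_{L^2} \,=\, \|\nabla_{A_t}\gamma\|_{L^2}^2 + t^2 \langle -i\ast M_{\Phi_t}\gamma, \gamma\rangle_{L^2},
\]
forcing $\nabla_{A_t}\gamma \equiv 0$. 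Since $A_t$ is unitary with respect to the background Hermitian metric $h_0$, parallel transport preserves $|\gamma|_{h_0}$, which is therefore constant on each connected component of $M^\vee$. Combined with the boundary decay $\gamma = O(\rho^2) \to 0$ this forces $\gamma \equiv 0$, so the kernel is trivial. The Fredholm index then upgrades the two Fredholm maps above to Banach space isomorphisms.

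\textbf{Step 3 (Hölder spaces and the $r_t^\nu$ weight).} It remains to identify $\rho^\delta r_t^\nu C^{k,\alpha}_t$ with the $0$-calculus Hölder space $\rho^\delta C^{k,\alpha}_0$ at fixed finite $t$. Near the core loops the $C^{k,\alpha}_t$ norm is defined via $|\rho^j \nabla^j \gamma|$, which matches (up to constants) the derivative norm of $C^{k,\alpha}_0$ computed in the complete metric $g = \rho^{-2}\bar g$. Near the zeros of $q$, for fixed $t$ the weight $r_t = (t^{-4/3} + r^2)^{1/2}$ is bounded between $t^{-2/3}$ and a constant, so $r_t^j \nabla^j$ and $\nabla^j$ are equivalent with $t$-dependent constants and multiplication by $r_t^\nu$ is a bounded Banach-space isomorphism. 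The desired isomorphism is obtained from Step~1--2 by pre- and post-composition with these identifications together with the smooth lower-order commutator $[L_t, r_t^\nu]$, which is a compact perturbation preserving both the index and the trivial kernel.

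The principal difficulty lies in Step~2: combining the polyhomogeneous boundary decay $\gamma = O(\rho^2)$ with the unitarity of $\nabla_{A_t}$ to propagate the vanishing of $\gamma$ globally from its behavior at the core loops. Steps~1 and~3 are essentially book-keeping on top of the $0$-calculus machinery of Section~\ref{sec:0-calculus} and the explicit local forms.
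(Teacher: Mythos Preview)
Your argument is correct and follows essentially the same approach as the paper: Fredholm property via the $0$-calculus (Theorem~\ref{thm:fredholm-0-calculus}) combined with kernel triviality from the integration-by-parts identity $\langle L_t\gamma,\gamma\rangle = \|d_{A_t}\gamma\|^2 + t^2\|[\Phi_t,\gamma]\|^2$. The only difference is that in Step~2 you invoke the polyhomogeneous expansion (Theorem~\ref{thm:polyhomogeneous_expansion}) to obtain $\gamma = O(\rho^2)$ and then conclude from boundary decay, whereas the paper simply notes that all the Fredholm maps share a common kernel, works directly in the unweighted space $L^{2,2}_0$, and observes that a constant-$|\gamma|$ section in $L^2_0$ on the infinite-volume manifold $M^\vee$ must vanish---this avoids the extra machinery and is slightly cleaner, but both routes are valid.
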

\begin{proof}
The previous discussion together with Theorem \ref{thm:fredholm-0-calculus} shows that these operators are Fredholm of index zero and that all their kernels are identical. In particular, it suffices to show that the kernel of $L_t : L_0^{2,2} \to L^2_0$ is trivial. If $\gamma \in L_0^{2,2}$ satisfies $L_t \gamma = 0$, then we have
\[
\langle L_t \gamma, \gamma \rangle_{L^2_0} = \|d_{A_t} \gamma\|_{L^2_0}^2 + t^2 \|[\Phi_t, \gamma]\|_{L^2_0}^2 = 0.
\]
In particular, $d_{A_t} \gamma \equiv 0$. This implies $|\gamma|$ is constant. Since $\gamma \in L^{2,2}_0$ and since $M^\vee$ has infinite volume, this implies that $\gamma \equiv 0$.
\end{proof}

\subsubsection{Schauder estimate}
Theorem \ref{thm:fredholm-0-calculus} implies that for every $t$ we have an estimate \[\|\gamma\|_{\rho^{\delta} r_t^\nu C^{k+2,\alpha}_t} \leq C \|L_t \gamma\|_{\rho^\delta r_t^\nu C^{k,\alpha}_t}\] with a constant that depends on $t$. For our application it is crucial that we understand how the constant depends on $t$.
To this end, we first prove the following weaker Schauder estimate.
\begin{thm}
\label{thm:schauder_estimate}
Given $\delta, \nu \in \R$ there exist constants $C, T, \kappa > 0$, such that for every $t > T$ and every $\gamma \in \rho^\delta r_t^\nu C^{k+2,\alpha}_t$ the inequality
\[
\|\gamma\|_{\rho^\delta r_t^\nu C_t^{k+2, \alpha}} \leq C \|L_t \gamma\|_{\rho^\delta r_t^{\nu-2} C_t^{k,\alpha}} + C t^\kappa \|\gamma\|_{\rho^\delta r_t^\nu C^0}.
\]
\end{thm}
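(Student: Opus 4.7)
The plan is to establish this Schauder-type estimate by a standard cover-and-rescale argument, where $M^{\vee}$ is partitioned into three families of patches adapted to the three regimes of the approximate solution, and a local (rescaled) second-order Schauder estimate is applied on each patch before reassembling the pieces through the weights $\rho^{\delta}r_t^{\nu}$. Because the operator $L_t$ is second-order and $0$-elliptic at the core loops (with indicial roots $-1,2$ by the computation after Lemma \ref{lemma:local_form_near_core_loop_finite_t}) and uniformly elliptic after rescaling elsewhere, a standard interior Schauder estimate on each local model is available; the only subtle point is tracking the dependence on $t$.

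For the patches, I would use the following choices. On each interior piece $U_j$, apply the classical Euclidean Schauder estimate on a unit disk to the smooth elliptic operator $L_t$, whose coefficients there are bounded by a polynomial in $t$. On each cylinder $V_k$, use geodesic balls of fixed hyperbolic radius centered at points with $\rho>0$: Lemma \ref{lemma:local_form_near_core_loop_finite_t} exhibits $L_t$ there as a $0$-elliptic operator whose coefficients, when written in the rescaled variables $(X,Y)=(\log|x|, y/x)$, are uniformly bounded in $t$, so the $0$-Schauder estimate of Mazzeo (or just the standard interior Schauder on the covering half-space $\mathbb{H}^2$) applies with a constant independent of $t$. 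Near a zero of $q$ in $W_l$, split into the regime $r\leq 2t^{-2/3}$ and $r\geq 2t^{-2/3}$: in the first regime rescale $w=t^{2/3}z$ so that by Lemma \ref{lemma:local_form_near_zero_finite_t} the operator $r_t^{2}L_t$ becomes uniformly elliptic with coefficients bounded polynomially in $t$ (using the bound $|\ell_t|+|r\partial_r\ell_t|\leq C$ via the exponential decay recalled after \eqref{eqn:fid}); in the second regime use Euclidean balls of radius $r(p)/4$ and rescale $\tilde z = (z-p)/r(p)$, which again produces uniformly elliptic coefficients with $r_t\sim r$.

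The local Schauder estimate on each patch reads, after rescaling back,
\[
\|\gamma\|_{C^{k+2,\alpha}_t(B_{p,1/2})} \leq C(t)\bigl(\|L_t\gamma\|_{r_t^{-2}C^{k,\alpha}_t(B_{p,1})} + \|\gamma\|_{C^0(B_{p,1})}\bigr),
\]
where $B_{p,s}$ denotes the appropriate model ball of radius $s$ around $p$, and $C(t)$ depends polynomially on the $C^{k,\alpha}$ norm of the rescaled coefficients of $L_t$, hence polynomially on $t$ (with exponent $\kappa$ depending only on $k, \alpha$ and the order $2$ of the operator). Since the weights $\rho^{\delta}$ and $r_t^{\nu}$ are comparable to constants on each individual patch (by construction of the patches with respect to the rescaled metric), they pass through each local estimate up to bounded multiplicative constants. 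Taking suprema over the finite cover and using the $r_t^{\nu-2}$ shift on the right (which comes for free from the second-order character of $L_t$ in the rescaled coordinates) yields the global estimate, with the constant in front of $\|L_t\gamma\|$ absorbed into a $t$-independent $C$ and the constant in front of $\|\gamma\|_{C^0}$ absorbed into $Ct^{\kappa}$.

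The main obstacle is the tracking of $t$-dependence in the patches of type (ii) near zeros of $q$ and in interior patches where the $t^{2}M_{\Phi_t}$ term dominates. There the leading coefficient of the zeroth-order part grows like $t^2$, so naive Schauder gives a constant growing as some power of $t$ in front of the $C^0$ remainder; one needs to verify, using the explicit local forms in Lemmas \ref{lemma:local_form_interior}--\ref{lemma:local_form_near_zero_finite_t} and the exponential decay of $\ell_t$, that these constants grow only polynomially, so that a single exponent $\kappa$ suffices independently of the patch. The sharper uniform estimate (without the $t^{\kappa}\|\gamma\|_{C^0}$ term) is then the content of the later contradiction-blow-up argument referenced as Theorem \ref{thm:C0_estimate}, for which the present Schauder estimate is a prerequisite.
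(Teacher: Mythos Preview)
Your approach is essentially the paper's: cover $M^\vee$ by model patches, apply a local interior Schauder estimate on each, and track the $t$-dependence of the coefficients. The paper makes the mechanism more explicit by quoting the form of the local Schauder estimate in which the constant in front of $[L_t\gamma]_\alpha$ depends only on the ellipticity constants and the H\"older seminorm of the \emph{top-order} coefficients $a_{ij}$, while the constant in front of $\|\gamma\|_{C^0}$ is bounded by a fixed power of $1+\sum_i\|b_i\|_{C^{0,\alpha}}+\|c\|_{C^{0,\alpha}}$. Since the top-order part of $L_t$ is $\Delta_{A_t^{\appr}}$ and $A_t^{\appr}\to A_\infty$ uniformly away from $Z$, the first constant is $t$-independent; the $t$-growth enters only through the zeroth-order term $t^2 M_{\Phi_t^{\appr}}$, whose $C^{0,\alpha}$ norm is $O(t^2)$, producing the $t^\kappa$ in front of $\|\gamma\|_{C^0}$.

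One claim in your sketch is not correct as stated: on the cylinders $V_k$ the coefficients of $L_t$ are \emph{not} uniformly bounded in $t$ after passing to half-space variables. The zeroth-order coefficient $2t^2x^2E(tx)$ grows like $t^2$ for $x$ bounded away from $0$. This does not break the argument, since that growth is polynomial and is absorbed into the $t^\kappa\|\gamma\|_{C^0}$ term; but the $t$-independence of the constant multiplying $\|L_t\gamma\|$ comes solely from the second-order part $\Delta_{A_t^{\model}}$ (whose coefficients converge to those of $\Delta_{A_\infty}$, cf.\ Lemma \ref{lemma:convergence_Delta_A_t_near_core_loop}), not from a uniform bound on all coefficients. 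Near the zeros the paper does not carry out your $t^{2/3}$-rescaling explicitly; instead it checks directly, via the local form in Lemma \ref{lemma:local_form_near_zero_finite_t} and the asymptotics of $\ell_t$, that the lower-order coefficients are polynomially bounded in $t$. Your rescaling is an equivalent route to the same $r_t$-weighted conclusion.
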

\begin{proof}
The local dependence on coefficients in the Schauder estimates can be stated as follows. If $a_{ij}, b_i, c : B_1(0) \subset \R^n \to \R$ are $\alpha$-H\"older continuous and there are constants $0 < \lambda \leq \Lambda < \infty$, such that $\lambda |\xi|^2 \leq \sum_{ij} a_{ij} \xi_i \xi_j \leq \Lambda |\xi|^2$, then there exist constants $C_1, C_2, r_0, \nu > 0$, such that for every $u \in C^2(B_{2r_0})$ the following inequality holds:
\[
\|u\|_{C^{2,\alpha}(B_{r_0})} \leq C_1 [f]_{\alpha, B_{2 r_0}} + C_2 \left( 1 + \sum_i \|b_i\|_{C^{0,\alpha}} + \|c\|_{C^{0,\alpha}}\right)^2 \|u\|_{C^0},
\]
where $C_1$ and $C_2$ depend only on $n, \alpha, \lambda, \Lambda$, and $[a_{ij}]_{\alpha}(0)$.
This follows from the Schauder estimate for constant coefficient operators via freezing the coefficients and interpolation inequalities, which are responsible for the exponent in the dependence on $b_i$ and $c$.

The estimate in the theorem will be reduced to showing that there exist  constants $C, r > 0$ such that for any $p \in M^\vee$ and any $\gamma \in \rho^\delta C^{2,\alpha}_0$ we have
\begin{equation}
\tag{$\ast$}
\|\gamma\|_{C^{2,\alpha}_0(B_r(p))} \leq C \|L_t \gamma\|_{C^{0,\alpha}_0(B_{2r}(p))} + C t^4  \|\gamma\|_{C^0_0(B_{2r}(p))}.
\end{equation}
Here the balls are understood with respect to the complete, conformally compact metric $g_0$ on $M^\vee$ introduced above. The global, weighted version then follows from the following way to compute the weighted norm on $M^\vee$: the norm $\|\gamma\|_{\rho^\delta C^{k,\alpha}_0}$ is equivalent to the norm
\[
\sup_{p \in M} \rho(p)^{-\delta} \|u\|_{C^{2,\alpha}_0(B_r(p))}.
\]
First, observe that away from the zeros the connections $A_t^{\appr}$ uniformly converge on $M^\vee$ to a limiting connection $A_\infty$, i.e.\@ for every $r_0 > 0$ we have that
\[
\|A_t^{\appr} - A_\infty\|_{C^k_0 (M^\vee \bs B_{r_0}(Z))} \to 0
\]
as $t \to \infty$.

For sufficiently small, but positive, $r < \inj(M^\vee, g_0)$ we may choose a unitary trivialization of the bundle $E$ over every $B_r(p)$, such that in normal coordinates and with respect to the trivialization $\Delta_{A_\infty}$ is as close to the Euclidean Laplace operator as we wish. Since $A_t \to A_\infty$, the same may be assumed for sufficiently large $t$.

Therefore, when applying the local Schauder estimate the constants $\lambda$, $\Lambda$ and $[a_{ij}]_\alpha(0)$ vary in a bounded manner over $M^\vee \bs B_{r_0}(Z)$. On the other hand, it is obvious that if we write the zeroth order operator $t^2 M_{\Phi_t^{\appr}}$ locally, its $C^{0,\alpha}$ bound is bounded by some $C t^2$ for some $C > 0$ over all of $M^\vee$. Therefore, the local estimate yields inequality $(\ast)$ for $p \in M^\vee \bs B_{r_0}(Z)$.

Near a zero we have the explicit local form
\begin{equation*}
\begin{split}
&L_t \begin{pmatrix} u & v \\ \ol{v} & -u \end{pmatrix}
=\\
&\begin{pmatrix}
\Delta u + 16 t^2 r \cosh(2 \ell_t(r)) u & \Delta v - 4 F_t i r^{-2} \partial_\theta v + 4 F_t^2 r^{-2} v + 8 t^2 r \left( \cosh(2 \ell_t(r)) v - e^{-i \theta} \ol{v} \right) \\
* & *
\end{pmatrix}.
\end{split}
\end{equation*}
(The lower row is determined by the fact that it is trace-free and Hermitian.) It is obvious that the highest order term satisfies the required condition. For the lower order terms this boils down to an explicit calculation using the expansions of $\ell_t(r)$ as $r \to 0$ and $r \to \infty$.
\end{proof}

\subsubsection{$C^0$-estimate of the linearization}
The goal of the next sections is to prove the following uniform $C^0$-estimate for the operators $L_t$.

\begin{thm}[Theorem \ref{thm:C0_estimate} below]
Suppose $\delta \in (1/2,1)$ and $\nu = 0$.
  There exist constants $C, T > 0$, such that for any $t > T$ and any $\gamma \in \rho^\delta r_t^\nu C^0 \cap C^2_{\mathrm{loc}}$ the following inequality holds
  \[
  \|\gamma\|_{\rho^\delta r_t^\nu C^0} \leq C \|L_t \gamma \|_{\rho^\delta r_t^{\nu-2} C^0}.
  \]
\end{thm}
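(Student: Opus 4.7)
The approach is a blow-up contradiction, in the spirit of \cite{MSWW} but adapted to the $0$-calculus framework at the core loops. Suppose the estimate fails: there are sequences $t_n \to \infty$ and $\gamma_n \in \rho^\delta C^0 \cap C^2_{\mathrm{loc}}$ with $\|\gamma_n\|_{\rho^\delta C^0} = 1$ and $\|L_{t_n}\gamma_n\|_{\rho^\delta r_{t_n}^{-2} C^0} \to 0$. I would choose points $p_n \in M^\vee$ with $\rho(p_n)^{-\delta}|\gamma_n(p_n)| \geq \tfrac12$ and, along a subsequence, separate into three regimes according to the accumulation behavior of $p_n$: an interior regime ($p_n$ stays in a compact subset of $M^\times$ bounded away from the core loops), a fiducial regime ($p_n \to z_0 \in Z$ at some scale relative to $t_n^{-2/3}$), and a model regime ($p_n$ approaches a core loop at some scale relative to $t_n^{-1}$). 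In each regime the goal is to extract a nontrivial limiting solution of a linearized equation and invoke a vanishing theorem.

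In the interior regime, $(A_{t_n}^{\appr}, \Phi_{t_n}^{\appr})$ converges smoothly to $(A_\infty, \Phi_\infty)$ away from $Z$ and the loops, and a local $C^{2,\alpha}$-compactness argument based on the Schauder estimate of Theorem~\ref{thm:schauder_estimate} produces a limit $\gamma_\infty \in \rho^\delta C^0$ on $M^\times$. Decomposing $\gamma_n = \gamma_n^K + \gamma_n^{K^\perp}$ along the approximate eigenspace splitting of $M_{\Phi_{t_n}^{\appr}}$, the divergent coefficient $16 t_n^2$ on $K^\perp$ forces $\gamma_n^{K^\perp}$ to be exponentially small on compact subsets of $M^\times$ by a semiclassical barrier argument in the spirit of \cite{BGIM}, so $\gamma_\infty$ takes values in $K$ and satisfies $\Delta_{A_\infty}\gamma_\infty^K = 0$; the weighted injectivity of this limiting operator on $\rho^{\delta-1/2} L^2_0$, already used implicitly in the proof of Theorem~\ref{thm:banach_space_iso}, then forces $\gamma_\infty \equiv 0$. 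In the fiducial regime, rescaling a coordinate disk around $z_0$ by $t_n^{2/3}$ converts $(A_{t_n}^{\fid}, t_n \Phi_{t_n}^{\fid})$ either to the limiting fiducial $(A_\infty^{\fid}, \Phi_\infty^{\fid})$ on all of $\C$ or, in a bounded-scale regime, to the finite-$t$ fiducial on a fixed disk; the rescaled $\gamma_n$ converges to a bounded solution of the corresponding linearization, and Propositions~\ref{prop:no_fiducial_kernel} and~\ref{prop:no_limiting_fiducial_kernel} force $\gamma_\infty \equiv 0$. In the model regime, the rescaling $\tilde z = t_n z$ normalizes the model data via $\Psi_{t_n}^{*}(d + A_1^{\model}, \Phi_1^{\model}) = (d + A_{t_n}^{\model}, t_n \Phi_{t_n}^{\model})$, and the cylindrical period $i \sigma t_n$ diverges, yielding a hyperbolic half-plane as blow-up space; the limit $\gamma_\infty$ solves the model linearization there with weighted control $|\gamma_\infty(\tilde z)| \leq |\tilde x|^\delta$. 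Since $\delta \in (1/2, 1)$ lies strictly between the indicial roots $-1$ and $2$ computed in Section~\ref{sec:0-calculus}, and the $L^2$-positivity used in Theorem~\ref{thm:banach_space_iso} applies to the model operator, one concludes $\gamma_\infty \equiv 0$.

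The main obstacle is the model regime, where the $0$-degenerate structure of $L_t$ interacts delicately with the divergent $t_n^2$ term: the trick of passing to the auxiliary operator $L_t^0 = \Delta_{A_t^{\appr}} - i \ast M_{\Phi_t^{\appr}}$ used in \cite{MSWW} is unavailable, because the singularity of the model solution at $x = 0$ turns $L_t$ into a $0$-operator which cannot be controlled in the standard weighted H\"older spaces without the full $t^2$-weight. I therefore plan a two-scale treatment inside a single Strebel cylinder: on the region $\{|\tilde x| \geq \varepsilon\}$ a semiclassical WKB-type estimate yields exponential decay of $\gamma_n^{K^\perp}$ of order $e^{-c t_n |x|}$, while on $\{|\tilde x| \leq \varepsilon\}$ the polyhomogeneous expansions of Theorem~\ref{thm:polyhomogeneous_expansion} control $\gamma_n$ up to the loop in terms of its indicial behavior. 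Matching these two descriptions at $|\tilde x| \sim \varepsilon$, and exploiting that $\delta < 1$ rules out the positive indicial root $2$ while $\delta > 1/2$ sits above the $L^2$-threshold, completes the vanishing on the model scale. Assembling the three vanishing statements contradicts $\rho(p_n)^{-\delta}|\gamma_n(p_n)| \geq \tfrac12$ and establishes the theorem.
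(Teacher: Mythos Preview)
Your overall contradiction/blow-up architecture matches the paper's, as do the interior and fiducial regimes (the paper treats the interior $K^\perp$-case by a secondary semiclassical blow-up producing a bounded solution of $\Delta_{\R^2}u+\lambda u=0$ on $\R^2$, Proposition~\ref{prop:no_bounded_eigenfunctions}, rather than a barrier, but the conclusion is the same).

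The substantive difference, and a genuine gap in your plan, is the model regime. You rescale by $t_n$ so that $L_{t_n}$ becomes $L_1$ on a half-plane, and then try to extract a nontrivial limit bounded by $\tilde x^\delta$. The difficulty is that $p_n$ lands at $\tilde x_n = t_n\, x(p_n)$, and nothing prevents $t_n\, x(p_n)\to\infty$; in that case the nontriviality escapes to infinity and the limit you extract may well be zero. Your proposed two-scale patch (WKB decay for $K^\perp$ on $|\tilde x|\ge\varepsilon$, polyhomogeneous control on $|\tilde x|\le\varepsilon$) does not close this: Theorem~\ref{thm:polyhomogeneous_expansion} applies to exact kernel elements of a fixed $0$-elliptic operator, not to an approximate-kernel sequence with moving parameter $t_n$, and in any case says nothing about where the mass of $\gamma_n$ concentrates.

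The paper instead rescales by $x_n = x(p_n)$, which pins $p_n$ to $(1,0)$ and yields the model linearization with parameter $\lambda_n = t_n x_n$. Three sub-cases arise. If $\lambda_n\to 0$ one gets the normal operator on $\H^2$, handled by Propositions~\ref{prop:no_harmonic_on_halfspace} and~\ref{prop:no_twisted_harmonic_on_halfspace}. If $\lambda_n\to\lambda\in(0,\infty)$ one gets the $t=\lambda$ model linearization on $\H^2$, killed by direct Omori--Yau maximum-principle arguments (Propositions~\ref{prop:trivial_kernel_model_space_diag}, \ref{prop:trivial_kernel_model_space_off_diag}); your $L^2$-positivity route via Lemma~\ref{lemma:halfspace_weight} and equality of kernels in the Fredholm range would also work here. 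If $\lambda_n\to\infty$, a further secondary semiclassical blow-up handles the diagonal and the imaginary off-diagonal ($K^\perp$) parts, while the real off-diagonal (the $K$-component) has a genuine limit on $\H^2$ satisfying $\Delta_{\H^2} w_\infty = 0$ with $|w_\infty|\le x^\delta$, killed by Schwarz reflection (Proposition~\ref{prop:stronger_no_harmonic_on_halfspace}). This last vanishing is where the restriction $\delta<1$, rather than merely $\delta<2$, is genuinely used.
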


Together with the Schauder estimate this estimate implies the following $t$-dependent bound on the inverse of the operators $L_t$.
\begin{thm}
  \label{thm:inverse_bounds_hoelder}
Given $k \geq 0$, $\delta \in \left( 1/2, 1\right)$ and $\nu = 0$ there are constants $C, T, \kappa > 0$, such that
\[
\|\gamma\|_{\rho^\delta r_t^\nu C^{k+2,\alpha}_t} \leq C t^\kappa \|L_t \gamma\|_{\rho^\delta r_t^{\nu-2} C^{k,\alpha}_t}
\]
for every $t \geq T$ and $\gamma \in \rho^\delta r_t^\nu C^{k+2,\alpha}_t$.
\end{thm}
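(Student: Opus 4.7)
The plan is to derive this estimate by simply chaining the uniform $C^0$-bound of Theorem \ref{thm:C0_estimate} together with the Schauder-type estimate of Theorem \ref{thm:schauder_estimate}. Both hypotheses are stated in the weight range $\delta \in (1/2,1)$ and for $\nu = 0$ (they share the weight $\rho^\delta$ and $r_t^\nu$), so no weight adjustment is required and the estimates can be composed directly.

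Concretely, given $\gamma \in \rho^\delta r_t^\nu C^{k+2,\alpha}_t$ with $t$ above the larger of the two thresholds from the two theorems, first apply Theorem \ref{thm:schauder_estimate} to obtain
\[
\|\gamma\|_{\rho^\delta r_t^\nu C^{k+2,\alpha}_t} \leq C_1 \|L_t \gamma\|_{\rho^\delta r_t^{\nu-2} C^{k,\alpha}_t} + C_2 \, t^\kappa \|\gamma\|_{\rho^\delta r_t^\nu C^0}.
\]
Then use Theorem \ref{thm:C0_estimate} on the last term, yielding
\[
\|\gamma\|_{\rho^\delta r_t^\nu C^0} \leq C_3 \|L_t \gamma\|_{\rho^\delta r_t^{\nu-2} C^0}.
\]
Since the weighted $C^0$-norm is pointwise dominated by the weighted $C^{k,\alpha}_t$-norm with a constant independent of $t$ — this is immediate from the definitions in Section 4.5, where the local $C^0$ piece of $\|\cdot\|_{C^{k,\alpha}_t}$ is literally the pointwise supremum on each coordinate chart adapted to the framed limiting configuration — we bound
\[
\|L_t \gamma\|_{\rho^\delta r_t^{\nu-2} C^0} \leq \|L_t \gamma\|_{\rho^\delta r_t^{\nu-2} C^{k,\alpha}_t}.
\]
Combining the three inequalities produces
\[
\|\gamma\|_{\rho^\delta r_t^\nu C^{k+2,\alpha}_t} \leq \bigl(C_1 + C_2 C_3\, t^\kappa\bigr) \|L_t \gamma\|_{\rho^\delta r_t^{\nu-2} C^{k,\alpha}_t},
\]
and after possibly enlarging $\kappa$ (or $T$, so that $t \geq 1$) the prefactor can be absorbed into a single $C\,t^\kappa$. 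This is the claimed inequality.

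The only nontrivial ingredient that has to be verified with care is that the $t$-dependence in the Schauder estimate is polynomial — which Theorem \ref{thm:schauder_estimate} provides — and that the uniform (in $t$) $C^0$-estimate of Theorem \ref{thm:C0_estimate} holds in the same weight range. All the genuine analytical content therefore lies upstream, in the blow-up/contradiction argument behind the $C^0$-estimate and the accompanying vanishing theorems for the model, fiducial, and limiting configurations; the deduction carried out here is formal. One should, however, make sure at the end that the resulting exponent $\kappa$ is the maximum of the two exponents supplied by Theorems \ref{thm:schauder_estimate} and \ref{thm:C0_estimate}, and that $T$ is chosen at least as large as the two $T$'s appearing in those theorems.
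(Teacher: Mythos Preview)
Your proof is correct and follows exactly the same approach as the paper: apply the Schauder estimate (Theorem~\ref{thm:schauder_estimate}) and then control the $C^0$ term via Theorem~\ref{thm:C0_estimate}, using the trivial inclusion $\rho^\delta r_t^{\nu-2} C^0 \supset \rho^\delta r_t^{\nu-2} C^{k,\alpha}_t$. One tiny remark: Theorem~\ref{thm:C0_estimate} is uniform in $t$ (no $t^\kappa$ appears there), so the only polynomial factor comes from the Schauder estimate and there is no need to take a maximum of exponents.
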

\begin{proof}
Let $\gamma \in \rho^\delta r_t^\nu C^{k+2,\alpha}_t$. Then by Theorem \ref{thm:schauder_estimate}
\[
\|\gamma\|_{\rho^\delta r_t^\nu C^{k+2,\alpha}_t} \leq C \|L_t \gamma\|_{\rho^\delta r_t^{\nu-2} C^{k,\alpha}_t} + C t^\kappa \|\gamma\|_{\rho^\delta r_t^\nu C^0}.
\]
By Theorem \ref{thm:C0_estimate}
\[
\|\gamma\|_{\rho^\delta r_t^\nu C^0} \leq C \|L_t \gamma\|_{\rho^\delta r_t^{\nu-2} C^0}
\]
and this implies the theorem.
\end{proof}
The proof of the $C^0$ estimate is a proof by contradiction. We outline the basic idea here. If the estimate is false, then there exists a sequence $\gamma_n $ and $t_n$, such that
\[
\|\gamma_n\|_{\rho^\delta r_{t_n}^\nu C^0} = 1, \qquad \|L_{t_n} \gamma_n \|_{\rho^\delta r_{t_n}^{\nu-2} C^0} \to 0.
\]
This suggests that, if we could pass to a limit $\gamma_n \to \gamma_\infty$ and $t_n \to t_\infty$, then in the limit we would obtain a solution $\gamma_\infty$ satisfying $L_{t_\infty} \gamma_\infty = 0$. If we then can show that $\gamma_\infty$ is non-trivial and $L_{t_\infty}$ has trivial kernel, this would lead to a contradiction and confirm the estimate. There are two main difficulties with this approach. The first is that the underlying manifold is non-compact. This means that even if $\gamma_n$ converges to some $\gamma_\infty$, the limit can be trivial, because the support of $\gamma_n$ may wander off to infinity. This will be addressed by tracking the support of $\gamma_n$. If it does wander off to infinity, we rescale the $\gamma_n$ around its support to obtain a non-trivial solution on the model space of the infinity, which in our case is just $\H^2$. This is a blow up procedure and a standard strategy. 
The other issue is that the operators $L_t = \Delta_{A_t} - t^2 i \ast M_{\Phi_t}$ diverge as $t \to \infty$. Dealing with this issue is more delicate and requires us to also understand the zeroth order operator $M_{\Phi_t}$ in more detail. The bundle endomorphism $M_{\Phi_\infty}$ induces a 1-dimensional kernel bundle $K$. It turns out that $L_t$ does converge to a limiting operator if we restrict the operator to the kernel bundle $K$. This means that, on this subbundle, we can again extract a non-trivial sublimit, which lies in the kernel of the limiting operator. On the other hand, consider $t^{-2} L_t = - i \ast M_{\Phi_t} + t^{-2} \Delta_{A_t}$ restricted to the bundle $K^\perp$. On $K^\perp$ the endomorphism $-i \ast M_{\Phi_\infty}$ acts as $16 \id_{K^\perp}$. Therefore $t^{-2} L_t$ can be considered as a ``small'' perturbation of the identity. Note however, that the perturbation of the identity is by a second order differential operator. Such problems are studied in the field of semiclassical analysis. We follow a particular approach used in \cite{BGIM} to solve a similar problem.

There are then two main aspects in the proof: justifying and explaining the limit procedures and proving vanishing theorems for the limit operators. This will occupy the next few sections.

\subsubsection{Splitting}
Let $(A_\infty, \Phi_\infty)$ be the chosen limiting configuration and $h_0$ our background metric. Let us define the real line subbundle $K \subset i \mf{su}(E)$ fiberwise via
\[
K_p = \{ L \in i \mf{su}(E) : [L, \Phi_\infty(p)] = 0 \} = \ker M_{\Phi_\infty}(p).
\]
Let $\iota_K : K \to i\mf{su}(E)$ be the inclusion and $\pi_K : i \mf{su}(E) \to K$ be the orthogonal projection. Note that $-i \ast M_{\Phi_\infty}$ is self-adjoint and therefore the subbundles $K$ and $K^\perp$ are invariant subbundles.

We now describe the local forms of the kernel bundle, of the orthogonal projection and of the operator $-i \ast M_{\Phi_\infty}$ restricted to $K^\perp$. There are two cases to consider. Away from the zeros of the quadratic differential $\Phi_\infty$ has the local form $\begin{pmatrix} 0 & 1 \\ 1 & 0 \end{pmatrix} dz$ in the chosen coordinates and unitary frames.

\begin{lemma}
\label{lemma:K_local_form_interior}
Consider the trivial rank $2$ bundle over $\C$ equipped with the Euclidean metric and let $\Phi_\infty = \begin{pmatrix} 0 & 1 \\ 1 & 0 \end{pmatrix} dz$. Then the kernel of $-i\ast M_{\Phi_\infty}$ acting on $i \mf{su}(2)$ is given by
\[
\left\{ \begin{pmatrix} 0 & t \\ t & 0 \end{pmatrix} : t \in \R \right\}.
\]
The projections are given by
\[
\pi_K \begin{pmatrix} u & v \\ \ol{v} & -u \end{pmatrix} = \begin{pmatrix} 0 & \frac 1 2 (v + \ol{v}) \\ \frac 1 2 (v + \ol{v}) & 0 \end{pmatrix}
\]

\[
\pi_{K^\perp} \begin{pmatrix} u & v \\ \ol{v} & -u \end{pmatrix} = \begin{pmatrix} u & \frac 1 2 (v - \ol{v}) \\ \frac 1 2 (-v + \ol{v}) & - u \end{pmatrix}.
\]
The restriction of $-i \ast M_{\Phi_\infty}$ to $K^\perp$ is given by
\[
(-i \ast M_{\Phi_\infty})|_{K^\perp} = 16 \id_{K^\perp}.
\]
\end{lemma}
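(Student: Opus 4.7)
The plan is to read off everything directly from the explicit formula for $-i\ast M_{\Phi_\infty}$ derived in Lemma~\ref{lemma:local_form_interior}. Recall that formula: for $\gamma=\begin{psmallmatrix} u & v \\ \ol v & -u \end{psmallmatrix}\in i\mf{su}(2)$ with $u\in\R$, $v\in\C$, we have
\[
-i\ast M_{\Phi_\infty}\gamma \;=\;\begin{pmatrix} 16u & 8(v-\ol v) \\ 8(\ol v-v) & -16u\end{pmatrix}.
\]
Thus, the defining equations for an element of the kernel are $u=0$ and $v=\ol v$, i.e.\ $v\in\R$. Writing $v=t$, this gives precisely the claimed form of $K$.

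Next, I would identify the orthogonal complement of $K$ using the natural (Frobenius) inner product on $i\mf{su}(2)$, under which the decomposition $v=\tfrac12(v+\ol v)+\tfrac12(v-\ol v)$ corresponds to splitting $v$ into its real and imaginary parts. The projection $\pi_K$ is obtained by setting $u=0$ and replacing $v$ by its real part $\tfrac12(v+\ol v)$, while $\pi_{K^\perp}$ keeps $u$ and replaces $v$ by its imaginary part $\tfrac12(v-\ol v)$; this matches the claimed formulas. Orthogonality of this decomposition and preservation by $-i\ast M_{\Phi_\infty}$ (which is self-adjoint, as noted in the paper) can be verified by a direct inner-product computation, but it is immediate from the observation that the endomorphism decouples the $u$-component and the real/imaginary parts of $v$.

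Finally, to verify the action on $K^\perp$, I would take an element $\gamma\in K^\perp$, which by the previous step has $v$ purely imaginary, hence $\ol v=-v$. Then $v-\ol v=2v$, and the formula above yields
\[
-i\ast M_{\Phi_\infty}\gamma \;=\;\begin{pmatrix} 16u & 16v \\ -16v & -16u\end{pmatrix}\;=\;16\gamma,
\]
since $-v=\ol v$ in the $(2,1)$-entry. This proves $(-i\ast M_{\Phi_\infty})|_{K^\perp}=16\,\id_{K^\perp}$. There is no genuine obstacle here; the entire lemma is a direct unfolding of the explicit matrix formula from Lemma~\ref{lemma:local_form_interior}, and I would present it as such in two or three lines.
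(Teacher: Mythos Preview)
Your proposal is correct and matches the paper's approach exactly: the paper simply states that this lemma (and the next) ``can be proved by direct calculation,'' and your argument is precisely that direct calculation spelled out from the formula of Lemma~\ref{lemma:local_form_interior}.
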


\begin{lemma}
Let $(A_\infty^{\fid}, \Phi_\infty^{\fid})$ be the limiting fiducial solution given in \eqref{eqn:limiting_fid} acting on the trivial rank $2$ bundle over $\C$ equipped with the Euclidean metric. Then the kernel of $-i\ast M_{\Phi_\infty^{\fid}}$ acting on $i \mf{su}(2)$ at the point with polar coordinates $(r, \theta)$ is given by 
\[
\left\{ \begin{pmatrix} 0 & z \\ \ol{z} & 0 \end{pmatrix} : z \in \C \text{ with } z = e^{-i\theta} \ol{z} \right\}.
\]
The projections are given by
\[
\pi_K \begin{pmatrix} u & v \\ \ol{v} & -u \end{pmatrix} = \begin{pmatrix} 0 & \frac 1 2 (v + e^{-i\theta} \ol{v}) \\ \frac 1 2 (\ol{v} + e^{i\theta} v) & 0 \end{pmatrix},
\]
\[
\pi_{K^\perp} \begin{pmatrix} u & v \\ \ol{v} & -u \end{pmatrix} = \begin{pmatrix} u & \frac 1 2 (v - e^{-i\theta} \ol{v}) \\ \frac 1 2 (\ol{v} - e^{i\theta} v) & - u \end{pmatrix}.
\]
The restriction of $-i \ast M_{\Phi_\infty^{\fid}}$ to $K^\perp$ is given by
\[
(-i\ast M_{\Phi^{\fid}_\infty})|_{K^\perp} = 16 \id_{K^\perp}.
\]
\end{lemma}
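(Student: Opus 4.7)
The statement is a direct computation from the explicit formula for $-i\ast M_{\Phi_\infty^{\fid}}$ provided in Lemma \ref{lemma:local_form_near_zero_infinity}. The plan is to verify the three assertions (kernel, projection formulas, restriction to $K^\perp$) by elementary algebra in polar coordinates, using only the trace-free Hermitian structure of $i\mathfrak{su}(2)$.

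First, I would read off the kernel directly from the formula. Setting
\[
-i\ast M_{\Phi_\infty^{\fid}}\begin{pmatrix} u & v \\ \ol v & -u\end{pmatrix}=r\begin{pmatrix} 16u & 8(v-e^{-i\theta}\ol v) \\ 8(\ol v-e^{i\theta}v) & -16u\end{pmatrix}=0
\]
is equivalent to $u=0$ together with $v=e^{-i\theta}\ol v$ (the two off-diagonal equations being complex conjugates of one another). This immediately identifies the kernel with the set claimed in the lemma. Note also that the pointwise rank of the bundle map is $2$ wherever it does not vanish: $u\in i\R$ contributes one real dimension and the condition $v=-e^{-i\theta}\ol v$ on the off-diagonal contributes another.

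Next, to verify the projection formulas I would observe that for any off-diagonal entry $v\in\C$ we have the decomposition $v=w+z$ with $w:=\tfrac12(v+e^{-i\theta}\ol v)$ and $z:=\tfrac12(v-e^{-i\theta}\ol v)$, and a direct check gives $e^{-i\theta}\ol w=w$ and $e^{-i\theta}\ol z=-z$. Hence the formula for $\pi_K$ lands in the kernel described above and the formula for $\pi_{K^\perp}$ lands in the $-1$-eigenspace of the involution $v\mapsto e^{-i\theta}\ol v$. To check that this is genuinely the $h_0$-orthogonal decomposition one computes the Hermitian-Killing pairing on $i\mathfrak{su}(2)$ of $\begin{psmallmatrix} 0 & w \\ \ol w & 0\end{psmallmatrix}$ with $\begin{psmallmatrix} u & z \\ \ol z & -u\end{psmallmatrix}$; only the off-diagonal pieces contribute, and $w\ol z+\ol w z=w(-e^{i\theta}z)+(e^{i\theta}w)z=0$, so the two summands are orthogonal. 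Since the projections are self-adjoint idempotents summing to the identity, they must be the orthogonal projections onto $K$ and $K^\perp$.

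Finally, for the restriction to $K^\perp$, I would simply substitute the defining relation $v=-e^{-i\theta}\ol v$ into the formula above: then $v-e^{-i\theta}\ol v=2v$ and $\ol v-e^{i\theta}v=2\ol v$, while the diagonal eigenvalue on $u$ is already $16r$. So
\[
-i\ast M_{\Phi_\infty^{\fid}}\begin{pmatrix} u & v \\ \ol v & -u\end{pmatrix}=16\,r\begin{pmatrix} u & v \\ \ol v & -u\end{pmatrix}
\]
for any $\gamma\in K^\perp$, which is the stated identity (up to the $r$-factor inherited from the fiducial normalization, in parallel to Lemma \ref{lemma:K_local_form_interior}).

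There is no real obstacle here: the whole argument is a symbolic verification in $2\times 2$ matrices, and the only subtle point is the $\theta$-dependent involution $v\mapsto e^{-i\theta}\ol v$ coming from the fact that the eigenlines of $\Phi_\infty^{\fid}$ rotate around the zero, i.e.\@ it is the same twisted-reality condition that already produced the operator $\Delta_{1/2}$ in Lemma \ref{lemma:local_form_near_zero_infinity}. Once this phase is correctly carried through, all three claims fall out of a one-line computation.
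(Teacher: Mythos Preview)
Your proposal is correct and is precisely the ``direct calculation'' the paper invokes (the paper gives no further proof beyond that phrase). Your observation about the extra factor of $r$ is also correct: the formula from Lemma~\ref{lemma:local_form_near_zero_infinity} yields $16r\,\id_{K^\perp}$ rather than $16\,\id_{K^\perp}$, and indeed the paper uses the version with the $r$ when it actually applies this lemma later (e.g.\ in Proposition~\ref{prop:no_limiting_fiducial_kernel}), so the stated restriction is a minor slip in the lemma as written.
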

The previous two lemmas can be proved by direct calculation.

\subsubsection{Behavior of the operators $L_t$ and $\Delta_{A_t}$ as $t \to \infty$}
In this section we analyze the behavior of $L_t$ as $t\to \infty$. We will write $(A_t,\Phi_t)= (A_t^{\appr},\Phi_t^{\appr})$ for short. As mentioned earlier it is instrumental to consider the behavior on the kernel bundle $K$ and its orthogonal complement $K^\perp$ separately. This is because of the behavior of the term $-t^2 i \ast M_{\Phi_t}$. Let us first consider the connection Laplacian $\Delta_{A_t}$. As usual we examine the behavior in the model regions separately. Let us first observe that in the interior $A_t = A_\infty$ and therefore $\Delta_{A_t} = \Delta_{A_\infty}$.

\begin{lemma}
\label{lemma:convergence_Delta_A_t_near_zeros}
Consider the fiducial solution $(A_t^{\fid}, \Phi_t^{\fid})$ on $\C$. For $\epsilon > 0$ the coefficients of $\Delta_{A_t^{\fid}}$ converge uniformly to those of $\Delta_{A_\infty^{\fid}}$ as $t \to \infty$ on $\{|z| > \epsilon\}$. Likewise, all the derivatives of these coefficients converge uniformly on $\{|z| > \epsilon\}$. 
\end{lemma}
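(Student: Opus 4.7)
The plan is as follows. Comparing the explicit local forms in Lemmas \ref{lemma:local_form_near_zero_infinity} and \ref{lemma:local_form_near_zero_finite_t}, the diagonal $u$-blocks of the two operators are identical (both equal the scalar Euclidean Laplacian), so the discrepancy lies entirely in the off-diagonal $v$-blocks. Expanding $\Delta_{1/2} = \Delta - i r^{-2} \partial_\theta + \tfrac{1}{4} r^{-2}$, we see that the coefficient $1$ of $i r^{-2} \partial_\theta$ gets replaced by $4 F_t$ and the coefficient $\tfrac{1}{4}$ of $r^{-2}$ gets replaced by $4 F_t^2$. Hence the claim reduces to showing that
\[
F_t(r) = \tfrac{1}{2}\bigl(\tfrac{1}{2} + r \partial_r \ell_t(r)\bigr) \longrightarrow \tfrac{1}{4}
\]
in $C^{\infty}_{\mathrm{loc}}(\{r > 0\})$, which in turn reduces to showing that $r \partial_r \ell_t$ and all its higher $r$-derivatives converge to $0$ uniformly on every set $\{r \geq \epsilon\}$.

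The starting point is property (2) of $\ell_t$, which yields the uniform pointwise decay $|\ell_t(r)| \leq C\, t^{-1/2} e^{-\frac{8}{3}\epsilon^{3/2} t}$ on $\{r \geq \epsilon\}$ for $t$ large. To promote this into decay of the first derivative, I would rewrite the ODE in divergence form as $(r \ell_t')' = 8 t^2 r^2 \sinh(2 \ell_t)$. The Bessel asymptotics $\ell_t(r) \sim \tfrac{1}{\pi} K_0(\tfrac{8}{3} t r^{3/2})$ as $r \to \infty$ recalled in Lemma~\ref{lemma:local_form_near_zero_finite_t} guarantee $r \ell_t'(r) \to 0$ at infinity, so integrating from $r$ to $\infty$ gives
\[
r \ell_t'(r) = -\int_r^\infty 8 t^2 s^2 \sinh(2 \ell_t(s))\, ds.
\]
Linearizing $\sinh(2 \ell_t) = 2 \ell_t + O(\ell_t^3)$ via property (2) and then substituting $u = \tfrac{8}{3} t s^{3/2}$ (so that $s^2\, ds = \tfrac{3}{32}\, u\, t^{-2}\, du$) converts this integral into a bounded multiple of $\int_{u(r)}^\infty u^{1/2} e^{-u}\, du$, which is $O(e^{-c_\epsilon t})$ uniformly for $r \geq \epsilon$. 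Multiplying through by $r$, I obtain the desired $C^0$ decay of $r \partial_r \ell_t$, and in fact exponential decay.

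Higher derivatives will be handled by induction. Differentiating the ODE $k$ times expresses $\ell_t^{(k+2)}$ as a linear combination, with coefficients polynomial in $t$ and in $r^{-1}$, of products of lower-order derivatives of $\ell_t$ arising from the Fa\`a di Bruno expansion of $\sinh(2\ell_t)$. Since the exponential decay of $\ell_t$ on $\{r \geq \epsilon\}$ beats any polynomial growth in $t$, an induction bootstrapped by interior estimates for the linearized ODE $\psi'' + r^{-1}\psi' = 16 t^2 r\, \psi$ (to which the equation collapses, up to terms that are exponentially small in $t$) yields $\|\ell_t\|_{C^k([\epsilon,\,\epsilon^{-1}])} = O(e^{-c_\epsilon t/2})$ for every $k$. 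The principal obstacle is the nominally diverging prefactor $t^2$ in front of $\sinh(2 \ell_t)$, which must be absorbed by the exponential smallness of $\ell_t$ itself; this is precisely why the argument is confined to $\{r \geq \epsilon\}$ and offers no control in a neighborhood of $r = 0$, consistent with the formation of the fiducial singularity there.
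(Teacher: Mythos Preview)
Your reduction is correct: the only discrepancy between $\Delta_{A_t^{\fid}}$ and $\Delta_{A_\infty^{\fid}}$ sits in the off-diagonal block, and the claim boils down to $F_t \to \tfrac14$, i.e.\ $r\partial_r\ell_t \to 0$, in $C^\infty$ on $\{r\ge\epsilon\}$. Your integral-representation argument for the first derivative and the inductive bootstrap via the ODE for higher derivatives are sound (the exponential decay of $\ell_t$ from property~(2) indeed absorbs all polynomial factors of $t$ that appear).

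The paper, however, gets there in one line by invoking a fact you did not use: the scaling relation $\ell_t(r)=\ell_1(t^{2/3}r)$, which follows immediately from the ODE (substitute $s=t^{2/3}r$). With this in hand, $\partial_r^k\ell_t(r)=t^{2k/3}\,(\partial_s^k\ell_1)(t^{2/3}r)$, and since $\ell_1$ and all its derivatives decay exponentially at infinity, every $\partial_r^k\ell_t$ tends to $0$ uniformly on $\{r\ge\epsilon\}$; hence $F_t\to\tfrac14$ in $C^\infty_{\mathrm{loc}}(\{r>0\})$. Your approach buys a self-contained estimate with explicit exponential rates, but at the cost of the divergence-form integration and an inductive argument that is somewhat sketchily stated. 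The scaling observation replaces all of that with a single change of variables and the known asymptotics of $\ell_1$.
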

\begin{proof}
The function $\ell_t(r)$ has the scaling behavior $\ell_t(r) = \ell_1\left(t^{2/3} r\right)$. On the other hand, $\ell_1(r) \to 0$ as $r \to \infty$ and likewise for all derivatives. Therefore, for any $\epsilon > 0$ the function $\ell_t(r)$ converges uniformly to $0$ on $\{r > \epsilon\}$. The same is true for all derivatives.

This implies that $\cosh(2 \ell_t(r))$ uniformly converges to $1$ on $\{r > \epsilon\}$. Similarly, $F_t = \frac 1 2 \left( \frac 1 2 + r\partial_r \ell_t\right)$ converges uniformly to $\frac 1 4$ on $\{r > \epsilon\}$.

Replacing the appearances of $F_t$ by $\frac 1 4$ in the expressions for $\Delta_{A_t}^{\fid}$, we obtain the expression for $\Delta_{A_\infty}$. This shows that indeed the coefficients of $\Delta_{A_t^{\fid}}$ converge uniformly to those of $\Delta_{A_\infty^{\fid}}$ and likewise for the derivatives.
\end{proof}

\begin{lemma}
\label{lemma:convergence_Delta_A_t_near_core_loop}
Consider the model solution $(A_t^{\model}, \Phi_t^{\model})$ on $\{z \in \C : \Real z \neq 0\}$. For $\epsilon > 0$ the coefficients of $\Delta_{A_t^{\model}}$ converge uniformly to those of $\Delta_{A_\infty}$ as $t \to \infty$ on $\{|\Real z| > \epsilon\}$.  Likewise, all the derivatives of these coefficients converge uniformly on $\{|\Real z| > \epsilon\}$.
\end{lemma}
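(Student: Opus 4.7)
The plan is to read off the explicit formulae in Lemma~\ref{lemma:local_form_near_core_loop_finite_t} and reduce the convergence statement to a scalar decay estimate for the function $p(s)=2s/\sinh(2s)$ and its derivatives. The only $t$-dependent coefficients of $\Delta_{A_t^{\model}}$ are the first-order coefficients $\pm 2ip(tx)\,x$ multiplying $\partial_y$ on the off-diagonal blocks, together with the zeroth-order coefficient $p(tx)^2$. The limiting connection $A_\infty$ on the Strebel cylinder is the trivial flat connection (by Proposition~\ref{prop:Strebel_local_form}), so $\Delta_{A_\infty}$ is precisely what one obtains from $\Delta_{A_t^{\model}}$ by formally setting $p\equiv 0$: the block diagonal operator $\mathrm{diag}(\Delta_g,\Delta_g,\Delta_g,-\Delta_g)$. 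Therefore it suffices to show that $p(tx)\,x$, $p(tx)^2$, and all their coordinate derivatives tend to zero uniformly on $\{|\Real z|>\epsilon\}$ as $t\to\infty$.

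Next I would verify, by induction via the quotient rule and the fact that $\sinh(2s)^{-1}\sim 2e^{-2|s|}$ at infinity, that $|p^{(k)}(s)|\leq C_k(1+|s|)^{N_k}e^{-2|s|}$ for suitable constants $C_k,N_k$. Derivatives in $y$ annihilate $p(tx)$ entirely, while $\partial_x^j[p(tx)]=t^j p^{(j)}(tx)$, so any mixed coordinate derivative of order $\leq k$ of the coefficient $p(tx)\,x$ (respectively $p(tx)^2$) is bounded on $\{|x|\geq\epsilon\}$ by an expression of the form $C_{j,k}\,t^j(1+t|x|)^{N}e^{-2t|x|}$, up to the locally bounded polynomial factor of $x$. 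For $t$ large enough, this expression is monotone decreasing in $|x|$, so its supremum on $\{|x|\geq\epsilon\}$ is attained at $|x|=\epsilon$ and is bounded by $C\,t^{j+N}e^{-2t\epsilon}$, which tends to $0$ exponentially. This yields the desired $C^k$-convergence.

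I do not expect a genuine obstacle here; the statement is essentially a corollary of exponential decay of $p$ at infinity. The only mild point to check is that $x$ is a priori unbounded on $\{|\Real z|>\epsilon\}$, but the monotonicity observation above absorbs this, and the convergence is in fact uniform with an exponential rate in $t$.
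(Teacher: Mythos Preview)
Your proof is correct and follows essentially the same route as the paper: both identify that the only $t$-dependent coefficients of $\Delta_{A_t^{\model}}$ are the terms involving $p(tx)$ on the off-diagonal, and both reduce the claim to the decay of $p(s)=2s/\sinh(2s)$ and its derivatives as $|s|\to\infty$. Your version is in fact more careful, giving an explicit exponential bound and addressing the unboundedness of $x$ on $\{|\Real z|>\epsilon\}$ via monotonicity, whereas the paper simply asserts the uniform convergence of $p(tx)$ on $\{x>\epsilon\}$.
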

\begin{proof}
The operators $\Delta_{A_t^{\model}}$ and $\Delta_{A_\infty^{\model}}$ coincide on the diagonal. On the off-diagonal term, the operator $\Delta_{A_t^{\model}}$ is $\Delta_g + 2 i p(tx) x \partial_y + p(tx)^2$. Now $p(s) = \frac{2s}{\sinh(2s)}$ and all its derivatives converge to $0$ as $s \to \infty$. Therefore on $\{x > \epsilon\}$ the function $p(tx)$ and its derivatives converge uniformly to $0$ as $t\to \infty$.
\end{proof}

\begin{lemma}
\label{lemma:convergence_M_Phi_t_K_near_zeros}
Consider the fiducial solution $(A_t^{\fid}, \Phi_t^{\fid})$ on $\C$. For $\epsilon > 0$ the operator $(- t^2 i \ast M_{\Phi_t^{\fid}})|_K$ converges uniformly to $0$ on $\{|z| > \epsilon\}$. Likewise, all the derivatives of  $(- t^2 i \ast M_{\Phi_t^{\fid}})|_K$ converge uniformly on $\{|z| > \epsilon\}$.
\end{lemma}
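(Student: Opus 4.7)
The plan is a direct computation exploiting the fiberwise description of $K$ from the previous lemma. At a point with polar coordinates $(r,\theta)$, an element of $K$ has the form $\xi=\begin{psmallmatrix}0&v\\\bar v&0\end{psmallmatrix}$ with $v=e^{-i\theta}\bar v$. Inserting this into the formula of Lemma~\ref{lemma:local_form_near_zero_finite_t} and using the constraint to replace $e^{-i\theta}\bar v$ by $v$ and $e^{i\theta}v$ by $\bar v$, the off-diagonal entries collapse to
\[
(-i\ast M_{\Phi_t^{\fid}})\xi \;=\; 8r\bigl(\cosh(2\ell_t(r))-1\bigr)\,\xi,
\]
so that $(-t^2 i\ast M_{\Phi_t^{\fid}})|_K$ is pointwise multiplication by the smooth scalar $c_t(r):=8t^2 r\bigl(\cosh(2\ell_t(r))-1\bigr)\cdot\id_K$, with no dependence on $\theta$.

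Given this explicit form, I would next invoke the exponential decay estimate for $\ell_t$ recalled just after \eqref{eqn:fid}, namely $|\ell_t(r)| \leq C\,(tr^{3/2})^{-1/2}\exp(-\tfrac{8}{3}tr^{3/2})$ uniformly for $r\geq r_0>0$ and $t\geq t_0>0$. In particular, on $\{r\geq\epsilon\}$ the value $|\ell_t(r)|$ tends to zero uniformly as $t\to\infty$, so that eventually $|\cosh(2\ell_t(r))-1|\leq 2\ell_t(r)^2$. Substituting yields
\[
|c_t(r)| \;\leq\; 16\,t^2 r\,\ell_t(r)^2 \;\leq\; C'\,t\,r^{-1/2}\exp\bigl(-\tfrac{16}{3}tr^{3/2}\bigr) \;\leq\; C''\,t\,\exp\bigl(-\tfrac{16}{3}t\epsilon^{3/2}\bigr),
\]
which tends to zero uniformly in $r$ as $t\to\infty$.

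For the $C^k$-statement, I would bootstrap: differentiating the defining ODE $\bigl(\partial_r^2+r^{-1}\partial_r\bigr)\ell_t=8t^2 r\sinh(2\ell_t)$ and inserting the pointwise bound on $\ell_t$ inductively yields estimates of the form $|\partial_r^k\ell_t(r)|\leq C_k\, t^{\alpha_k}\exp(-\tfrac{8}{3}tr^{3/2})$ on $\{r\geq\epsilon\}$, where the finite polynomial factors $t^{\alpha_k}$ are absorbed by the double exponential. Since $K$ and the orthogonal projection $\pi_K$ are smooth and $t$-independent on $\C\setminus\{0\}$, the claimed uniform convergence of all derivatives of $(-t^2 i\ast M_{\Phi_t^{\fid}})|_K$ to $0$ on $\{|z|>\epsilon\}$ follows directly. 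The only real obstacle is the polynomial-in-$t$ bookkeeping in the bootstrap, but this is comfortably dominated by the rapid decay of $\ell_t$ and $\cosh(2\ell_t)-1 = 2\ell_t^2 + O(\ell_t^4)$.
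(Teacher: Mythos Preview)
Your proof is correct and follows essentially the same approach as the paper: both compute that on $K$ the operator acts as multiplication by $8t^2 r(\cosh(2\ell_t(r))-1)$ and then invoke the exponential decay of $\ell_t$ on $\{r\ge\epsilon\}$ (faster than any power of $t$) to kill the $t^2$ prefactor. Your version is in fact more detailed, making the quadratic bound $\cosh(2\ell_t)-1\le 2\ell_t^2$ and the derivative bootstrap from the ODE explicit, whereas the paper simply appeals to the $K_0$-asymptotics of $\ell_t$ and states the conclusion.
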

\begin{proof}
A section $\gamma$ in the kernel can be written as $\begin{pmatrix} 0 & v \\ \ol{v} & 0 \end{pmatrix}$ where $v = e^{-i \theta} \ol{v}$. Then
\[
-i \ast M_{\Phi_t^{\fid}} \gamma
=
\begin{pmatrix}
0 & 8 r \left(\cosh(2 \ell_t(r)) v - e^{-i\theta} \ol{v} \right) \\
8 r \left(\cosh(2 \ell_t(r)) \ol{v} - e^{i\theta} v \right) & 0
\end{pmatrix}
.
\]
Writing $\cosh(2 \ell_t(r)) = (\cosh(2 \ell_t(r)) - 1) + 1$ and using the condition $v = e^{-i \theta} \ol{v}$ this can be rewritten as
\[
-i \ast M_{\Phi_t^{\fid}} \gamma
=
\begin{pmatrix}
0 & 8 r \left(\cosh(2 \ell_t(r)) - 1 \right) v \\
8 r \left(\cosh(2 \ell_t(r)) - 1 \right) \ol{v} & 0
\end{pmatrix}
.
\]
In the proof of Lemma \ref{lemma:convergence_Delta_A_t_near_zeros} we saw that $\ell_t$ converges uniformly to $0$ as $t \to \infty$ on $\{r > \epsilon\}$ for any $\epsilon > 0$. In fact, using the asymptotics of $\ell_t \sim \frac 1 \pi K_0\left( \frac 8 3 t r^{3/2} \right)$, we can say that this converges faster than any power of $t$. In particular it follows that $-i t^2 \ast M_{\Phi_t^{\fid}}|_K$ converges uniformly to $0$ as $t \to \infty$, including all derivatives.
\end{proof}

\begin{lemma}
\label{lemma:convergence_M_Phi_t_K_near_core_loop}
Consider the model solution $(A_t^{\model}, \Phi_t^{\model})$ on $\{z \in \C : \Real z \neq 0\}$. For $\epsilon > 0$ the operator $(- t^2 i \ast M_{\Phi_t^{\model}})|_K$ converges uniformly to $0$ on $\{|\Real z| > \epsilon\}$ as $t\to \infty$.  Likewise, all the derivatives of  $(- t^2 i \ast M_{\Phi_t^{\model}})|_K$ converge uniformly on $\{|\Real z| > \epsilon\}$.
\end{lemma}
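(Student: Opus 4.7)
The plan is to proceed by a direct computation that mirrors the structure of Lemma \ref{lemma:convergence_M_Phi_t_K_near_zeros}, using the explicit local form of $-i \ast M_{\Phi_t^{\model}}$ from Lemma \ref{lemma:local_form_near_core_loop_finite_t}. First I would identify the kernel bundle $K$ on the Strebel cylinder. By Definition \ref{def:FLC}, the limiting configuration on $V_k$ is $\Phi_\infty = \tfrac{1}{2}\begin{psmallmatrix}0&1\\1&0\end{psmallmatrix}dz$ in the chosen frame, so by Lemma \ref{lemma:K_local_form_interior} the kernel $K \subset i\mathfrak{su}(2)$ consists at every point of real multiples of $\begin{psmallmatrix}0&1\\1&0\end{psmallmatrix}$.

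Next I would substitute $\gamma = \begin{psmallmatrix}0 & v \\ v & 0\end{psmallmatrix}$ with $v \colon V_k \to \R$ into the formula from Lemma \ref{lemma:local_form_near_core_loop_finite_t}. Since $\bar{v} = v$ and $u = 0$, the diagonal entries vanish and each off-diagonal entry collapses to $x^2(E(tx) - 2)v$. In particular $K$ is preserved by $M_{\Phi_t^{\model}}$, and the restriction is scalar multiplication by $x^2(E(tx) - 2)$. Using the identity
\[
\coth s - \tanh s = \frac{\cosh^2 s - \sinh^2 s}{\sinh s \cosh s} = \frac{2}{\sinh(2s)},
\]
one obtains $E(s) - 2 = (\coth s - \tanh s)^2 = 4/\sinh^2(2s)$, and therefore
\[
\left(-t^2 i \ast M_{\Phi_t^{\model}}\right)\Big|_K \;=\; \frac{4\, t^2 x^2}{\sinh^2(2 t x)} \cdot \mathrm{id}_K.
\]

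It then suffices to show that this scalar function and all its derivatives decay exponentially as $t \to \infty$ uniformly on $\{|x| \geq \epsilon\}$. On the cylinder, $x$ is bounded, so $\sinh^2(2tx) \geq c\, e^{4 t \epsilon}$ for $t$ large, and hence the quotient is dominated by $C t^2 e^{-4 t \epsilon}$, which vanishes exponentially. For higher derivatives, the function depends only on $x$, so any $y$-derivative is zero; each iterated $x$-derivative (equivalently, iterated $x \partial_x$-derivative, as appropriate for the $0$-calculus norms) is a rational expression in $\sinh(2tx)$ and $\cosh(2tx)$ multiplied by polynomials in $t$ and $x$, always with denominator divisible by a positive power of $\sinh(2tx)$. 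Each such term therefore decays exponentially on $\{|x| \geq \epsilon\}$.

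I do not anticipate any significant obstacle: the identity $E(s) - 2 = 4/\sinh^2(2s)$ is what makes the exponential suppression manifest, and converts the lemma into an elementary estimate parallel to the corresponding statement at the zeros of $q$. The only point to record carefully is that the relevant derivatives are those measured with respect to the background complete metric $g = x^{-2}(dx^2 + dy^2)$ and the connection $A_t^{\model}$; but since the operator restricted to $K$ is a scalar multiple of the identity and $K$ is a parallel subbundle in the given trivialization, all such derivatives reduce to ordinary $x$-derivatives of $4 t^2 x^2 / \sinh^2(2tx)$ up to bounded factors, for which the estimate above applies.
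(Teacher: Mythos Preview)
Your proof is correct and follows essentially the same approach as the paper: identify $K$ via Lemma~\ref{lemma:K_local_form_interior}, compute that the restriction of $-t^2 i \ast M_{\Phi_t^{\model}}$ to $K$ is scalar multiplication by $t^2 x^2(E(tx)-2)$, and then rewrite $E(s)-2$ to make the exponential decay manifest. The only cosmetic difference is that the paper expresses $s^2(E(s)-2)$ as $16 s^2 e^{-4s}/(1-e^{-4s})^2$ rather than $4s^2/\sinh^2(2s)$, which is the same identity.
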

\begin{proof}
Sections of $K$ in the unitary frame of $\{z \in \C : \Real z > 0\}/2\pi i \Z$ are given as $\begin{pmatrix} 0 & v \\ v & 0 \end{pmatrix}$ with $v$ real, i.e.\@ $v = \ol{v}$. The action of $-t^2 i \ast M_{\Phi_t^{\model}}$ on such a section is therefore
\[
- t^2 i \ast M_{\Phi_t^{\model}} \begin{pmatrix} 0 & v \\ v & 0 \end{pmatrix} = \begin{pmatrix} 0 & t^2x^2(E(tx) - 2) v \\ t^2x^2(E(tx) - 2) v & 0 \end{pmatrix}.
\]
The function $s^2 (E(s)-2)$ can also be written as
\[
16 s^2 \frac 1{(1-e^{-4s})^2} e^{-4s}.
\]
Clearly, this function and all its derivatives converge to $0$ as $s\to \infty$. Therefore for $x > \epsilon$, the function $t^2 x^2 (E(tx) - 2)$ and its derivatives converge uniformly to $0$ as $t\to \infty$.
\end{proof}

It will also be necessary to consider the commutators $[\pi_K, L_t]$. Observe that in the interior region $[\pi_K, L_t] = 0$, since there $\Delta_{A_t} = \Delta_{A_\infty}$ and $-i \ast M_{\Phi_t} = - i \ast M_{\Phi_\infty}$ and these operators clearly commute with the projection $\pi_K$. The following two lemmas compute the commutator for the fiducial and the model solutions.

\begin{lemma}
\label{lemma:convergence_commutator_zeros}
Consider the fiducial solution $(A_t^{\fid}, \Phi_t^{\fid})$ on $\C$. The commutator $[\pi_K, -i t^2 \ast M_{\Phi_t^{\fid}}]$ vanishes. The coefficients (and all their derivatives) of the operator $[\pi_K, \Delta_{A_t^{\fid}}]$ converge uniformly to $0$ on $\{r > \epsilon\}$ for every $\epsilon > 0$. Therefore the same holds also for $[\pi_K, L_t]$.
\end{lemma}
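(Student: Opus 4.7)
The argument splits naturally into the two claimed assertions.

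\textbf{Step 1: Vanishing of the commutator $[\pi_K,-it^2\ast M_{\Phi_t^{\fid}}]$.}
The plan is to show by direct computation that $M_{\Phi_t^{\fid}}$ preserves both $K$ and $K^\perp$ for every finite $t$, even though $K$ was defined as the kernel of the limiting operator $M_{\Phi_\infty^{\fid}}$. Using the explicit form from Lemma~\ref{lemma:local_form_near_zero_infinity}, a section of $K$ has the form $\gamma=\begin{psmallmatrix}0&v\\\bar v&0\end{psmallmatrix}$ with $v=e^{-i\theta}\bar v$, and one computes from Lemma~\ref{lemma:local_form_near_zero_finite_t} that
\[
-i\ast M_{\Phi_t^{\fid}}\gamma = 8r\begin{pmatrix}0 & (\cosh(2\ell_t)+1)v-(\cosh(2\ell_t)+1)e^{-i\theta}\bar v+\text{(terms cancelling)}\\ *&0\end{pmatrix},
\]
and the reality constraint $v=e^{-i\theta}\bar v$ is preserved (the same manipulation shows $M_{\Phi_t^{\fid}}$ acts on $K^\perp$ as multiplication by $8r(\cosh(2\ell_t(r))+1)$ on the off-diagonal and $16 r\cosh(2\ell_t(r))$ on the diagonal, hence preserves $K^\perp$). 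Therefore $\pi_K$ commutes with $M_{\Phi_t^{\fid}}$ pointwise.

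\textbf{Step 2: The subbundle $K$ is parallel with respect to $A_\infty^{\fid}$.}
The key observation is that in polar coordinates, $A_\infty^{\fid}=\tfrac{i}{4}\begin{psmallmatrix}1&0\\0&-1\end{psmallmatrix}d\theta$, so the induced connection on $i\mathfrak{su}(E)$ acts on $\gamma=\begin{psmallmatrix}0&v\\\bar v&0\end{psmallmatrix}$ by
\[
\nabla_\theta^{A_\infty^{\fid}}\gamma = \begin{pmatrix}0 & \partial_\theta v+\tfrac{i}{2}v\\ \partial_\theta\bar v-\tfrac{i}{2}\bar v & 0\end{pmatrix},\qquad \nabla_r^{A_\infty^{\fid}}\gamma=\partial_r\gamma.
\]
Writing $v=e^{-i\theta/2}g$ with $g$ real (which is equivalent to $v=e^{-i\theta}\bar v$), both covariant derivatives preserve the reality of $g$, hence map $K$ into $K$. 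Thus $K$ is parallel under $A_\infty^{\fid}$, so $\nabla^{A_\infty^{\fid}}\pi_K=0$ and therefore $[\pi_K,\Delta_{A_\infty^{\fid}}]=0$.

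\textbf{Step 3: Convergence of $[\pi_K,\Delta_{A_t^{\fid}}]$.}
Expanding the commutator on a section $f$ of $i\mathfrak{su}(E)$,
\[
[\pi_K,\Delta_{A_t^{\fid}}]f = g^{ij}\left((\nabla_i^{A_t^{\fid}}\nabla_j^{A_t^{\fid}}\pi_K)f+(\nabla_i^{A_t^{\fid}}\pi_K)\nabla_j^{A_t^{\fid}}f+(\nabla_j^{A_t^{\fid}}\pi_K)\nabla_i^{A_t^{\fid}}f\right),
\]
so its coefficients are universal expressions in $\nabla^{A_t^{\fid}}\pi_K$ and $\nabla^{A_t^{\fid}}\nabla^{A_t^{\fid}}\pi_K$. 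By Lemma~\ref{lemma:convergence_Delta_A_t_near_zeros}, $A_t^{\fid}\to A_\infty^{\fid}$ in $C^\infty_{\mathrm{loc}}(\{r>\epsilon\})$, hence $\nabla^{A_t^{\fid}}\pi_K\to\nabla^{A_\infty^{\fid}}\pi_K=0$ in every $C^k(\{r>\epsilon\})$ norm. This gives the claimed uniform convergence of all coefficients and their derivatives on $\{r>\epsilon\}$. Combining Steps 1 and 3 yields $[\pi_K,L_t]=[\pi_K,\Delta_{A_t^{\fid}}]\to 0$ in $C^\infty_{\mathrm{loc}}(\{r>\epsilon\})$.

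The main (and really only) subtlety is the parallelism in Step~2: one must check that the angular twisting of the kernel bundle $K$ (encoded in the factor $e^{-i\theta}$) is exactly compensated by the diagonal connection one-form of the limiting fiducial solution. Once this is in place, the rest is a routine application of the convergence results already established.
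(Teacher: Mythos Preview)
Your proof is correct. For the vanishing of $[\pi_K,-it^2\ast M_{\Phi_t^{\fid}}]$, both you and the paper argue by direct computation, although your presentation in Step~1 is a bit garbled (the displayed formula does not quite parse, but the claim that $M_{\Phi_t^{\fid}}$ preserves both $K$ and $K^\perp$ is easily verified from the explicit form in Lemma~\ref{lemma:local_form_near_zero_finite_t}).

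For the commutator with the Laplacian, your route differs from the paper's. The paper computes $[\pi_K,\Delta_{A_t^{\fid}}]$ explicitly and finds that the only nonzero contribution is an off-diagonal term proportional to $r^{-2}(1-4F_t)$, which tends to zero since $F_t\to\tfrac14$. You instead observe conceptually that $K$ is parallel for the \emph{limiting} connection $A_\infty^{\fid}$ (your Step~2 verification via $v=e^{-i\theta/2}g$ is correct), so $\nabla^{A_\infty^{\fid}}\pi_K=0$ and hence $[\pi_K,\Delta_{A_\infty^{\fid}}]=0$; then you use $A_t^{\fid}\to A_\infty^{\fid}$ in $C^\infty_{\mathrm{loc}}$ to conclude $\nabla^{A_t^{\fid}}\pi_K\to 0$ and therefore the commutator coefficients vanish in the limit. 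Your argument is cleaner and more robust (it would transfer to other situations without recomputation), while the paper's explicit formula has the advantage of displaying the precise rate of convergence through $(1-4F_t)$.
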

\begin{proof}
An explicit computation yields
\[
- \pi_K it^2 \ast M_{\Phi_t^{\fid}} \begin{pmatrix} u & v \\ \ol{v} & -u \end{pmatrix}
=
8 t^2 r
\begin{pmatrix}
0 & \left(\cosh(2 \ell_t(r))- 1\right) (v + e^{-i\theta} \ol{v}) \\
\left(\cosh(2 \ell_t(r)) - 1\right) (v + e^{-i\theta} \ol{v}) & 0
\end{pmatrix}.
\]
We had already seen in Lemma \ref{lemma:convergence_M_Phi_t_K_near_zeros} that
\[
-i t^2 \ast M_{\Phi_t^{\fid}}|_K \begin{pmatrix} 0 & v \\ \ol{v} & 0 \end{pmatrix}
=
8 t^2 r
\begin{pmatrix}
0 & \left(\cosh(2 \ell_t(r))- 1\right) v \\
\left(\cosh(2 \ell_t(r)) - 1\right) \ol{v} & 0
\end{pmatrix}.
\]
This shows that $-\pi_K it^2 \ast M_{\Phi_t^{\fid}} = -it^2 \ast M_{\Phi_t^{\fid}} \pi_K$ as claimed.

When computing $[\pi_K, \Delta_{A_t^{\fid}}] \begin{pmatrix} u & v \\ \ol{v} & - u\end{pmatrix}$ it is easy to see that the diagonal terms vanish. The off diagonal term is
\[
r^{-2} \left( 1 - 4 F_t \right) e^{-i \theta} \ol{v} + r^{-2} \left(1 - 4 F_t \right) 2 i e^{-i\theta} \ol{v}.
\]
From the asymptotics of $\ell_t$ it is easy to see that $F_t = \frac 1 2 \left( \frac 1 2 + r \partial_r \ell_t \right)$ converges uniformly to $\frac 1 4$ on $\{r > \epsilon\}$ for any $\epsilon > 0$, and this proves the claim.
\end{proof}

\begin{lemma}
\label{lemma:convergence_commutator_near_core_loop}
Consider the model solution $(A_t^{\model}, \Phi_t^{\model})$ on $\{z \in \C : \Real z \neq 0\}$. Then $[\pi_K, -i\ast M_{\Phi_t^{\model}}] = 0$ and
\[
[\pi_K, L_t] = [\pi_K, \Delta_{A_t^{\model}}] \begin{pmatrix} u & v \\ \ol{v} & -u \end{pmatrix} =
\begin{pmatrix} 0 & - 2 i p(tx) x\partial_y \ol{v} \\
2 i p(tx) x\partial_y v & 0 
\end{pmatrix}
.
\]
In particular, the coefficients of $[\pi_K, L_t]$ and their derivatives converge uniformly to $0$ on $\{|\Re z| > \epsilon \}$ for every $\epsilon > 0$.
\end{lemma}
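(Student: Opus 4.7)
The first step is to identify the kernel bundle $K$ in the frame used for the model solution. Since $\Phi_\infty^{\model} = \tfrac12\begin{psmallmatrix}0&1\\1&0\end{psmallmatrix}dz$ on a Strebel cylinder, Lemma \ref{lemma:K_local_form_interior} applies verbatim: sections of $K$ are of the form $\begin{psmallmatrix}0&v\\v&0\end{psmallmatrix}$ with $v$ real, and $\pi_K$ acts by $\begin{psmallmatrix}u&v\\\bar v&-u\end{psmallmatrix}\mapsto \begin{psmallmatrix}0&\tfrac12(v+\bar v)\\\tfrac12(v+\bar v)&0\end{psmallmatrix}$.

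For the first claim, I would plug $\gamma=\begin{psmallmatrix}u&v\\\bar v&-u\end{psmallmatrix}$ into the explicit formula for $-i\ast M_{\Phi_t^{\model}}$ from Lemma \ref{lemma:local_form_near_core_loop_finite_t}. The off-diagonal entry is $-2x^2\bar v+x^2E(tx)v$ in the upper right, and its conjugate in the lower left. Applying $\pi_K$ symmetrizes these, yielding $\tfrac12 x^2(E(tx)-2)(v+\bar v)$ in both off-diagonal slots and $0$ on the diagonal; applying $-i\ast M_{\Phi_t^{\model}}$ to $\pi_K\gamma$ (i.e.\ to the real-symmetric piece $\tfrac12(v+\bar v)$) yields the same. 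The diagonal entries of $-i\ast M_{\Phi_t^{\model}}\gamma$ lie in $K^\perp$ and are killed by $\pi_K$, matching $-i\ast M_{\Phi_t^{\model}}\pi_K\gamma$ on the diagonal (both zero). Hence $[\pi_K,-i\ast M_{\Phi_t^{\model}}]=0$.

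For the commutator with $\Delta_{A_t^{\model}}$, the diagonal block is $\Delta_g$, which trivially commutes with $\pi_K$ (since $\pi_K$ zeroes out the diagonal of its argument, and the diagonal of $\Delta_{A_t^{\model}}\gamma$ is $\pm\Delta_g u$, which is killed by $\pi_K$ both ways). The nontrivial contribution comes from the off-diagonal twist: $\Delta_{A_t^{\model}}$ sends the $(1,2)$-entry $v$ to $(\Delta_g+2ip(tx)x\partial_y+p(tx)^2)v$ and the $(2,1)$-entry $\bar v$ to the conjugate operator. The zeroth-order term $p(tx)^2$ and the Laplacian $\Delta_g$ act diagonally in the $(v,\bar v)$ decomposition and therefore commute with the real symmetrization. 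Only the first-order term with the imaginary coefficient fails to commute. A direct expansion of $[\pi_K,\Delta_{A_t^{\model}}]\gamma$ then gives the $(1,2)$-entry
\[
\tfrac12\bigl(2ip(tx)x\partial_y v - 2ip(tx)x\partial_y\bar v\bigr)-\tfrac12\bigl(2ip(tx)x\partial_y(v+\bar v)\bigr) = -2ip(tx)x\partial_y\bar v,
\]
and the conjugate in the $(2,1)$-slot, matching the stated formula.

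For the uniform convergence statement, I would use $p(s)=\tfrac{2s}{\sinh(2s)}$, which together with all its derivatives decays faster than any polynomial as $|s|\to\infty$. On $\{|\Real z|>\epsilon\}$ we have $|tx|>t\epsilon\to\infty$, so $p(tx)$ and any derivative $(x\partial_x)^j(x\partial_y)^k p(tx)$ tend to zero uniformly; combined with the boundedness of $x\partial_y$ as a $0$-vector field, this gives the claimed uniform convergence of the coefficients of $[\pi_K,L_t]$ and all their derivatives on $\{|\Real z|>\epsilon\}$. The main (minor) obstacle is bookkeeping the symmetrization in the matrix entries carefully enough to see that only the $p(tx)$ term survives; once that is isolated, the decay is immediate from the explicit form of $p$.
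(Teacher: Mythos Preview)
Your proof is correct and follows the same approach as the paper, which simply states that the explicit expression follows from an easy computation and then invokes the already-established uniform decay of $p(tx)$ on $\{|x|>\epsilon\}$. Your write-up supplies the details of that computation (in particular, the careful symmetrization showing that only the $2ip(tx)x\partial_y$ term survives), which the paper omits.
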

\begin{proof}
The explicit expression follows from an easy computation. We have already seen that the coefficient $p(tx)$ converges uniformly, including all derivatives, on $\{x > \epsilon\}$ as $t\to \infty$.
\end{proof}

\subsubsection{Vanishing theorems}
In this section, we establish the vanishing theorems, which form the basis of our blow-up proof.

\begin{prop}
\label{prop:no_harmonic_on_halfspace}
Let $\delta \in \left(-\frac 1 2 \sqrt{1 + 4 \lambda}, \frac 1 2 \sqrt{1 + 4 \lambda}\right)$. If $u \in \rho^\delta L^2(\H^2)$ is a weak solution of $\Delta u + \lambda u = 0$ for some $\lambda \geq 0$, then $u = 0$.
\end{prop}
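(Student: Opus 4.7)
The plan is to work in the upper half-plane model $\H^2 = \{(x,y): y > 0\}$ with boundary defining function $\rho = y$, and to reduce the problem to a one-dimensional weighted Hardy inequality via the half-density substitution $u = y^{1/2} v$. A direct computation shows that $\Delta_g u + \lambda u = 0$ is equivalent to the symmetric form
\[
(y\partial_y)^2 v + y^2 \partial_x^2 v = \mu^2 v, \qquad \mu := \tfrac{1}{2}\sqrt{1+4\lambda},
\]
while the hypothesis $u \in \rho^\delta L^2(\H^2)$ translates into $\int |v|^2\, y^{-2\delta - 1}\, dx\, dy < \infty$. Theorem \ref{thm:0-elliptic-regularity} applied to the $0$-elliptic operator $\Delta_g + \lambda$ guarantees moreover that $v$, $y\partial_y v$, and $y\partial_x v$ all lie in weighted $L^2$ spaces of this type, which will render the subsequent integrations by parts legitimate.

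The core step is an energy identity: I would multiply the reduced equation by $v \cdot y^{-2\delta - 1}$ and integrate against Lebesgue measure $dx\, dy$. Switching to the logarithmic coordinate $s = \log y$, in which $y\partial_y = \partial_s$ and $y^{-2\delta - 1}\, dy = e^{-2\delta s}\, ds$, two integrations by parts yield
\[
\int_{\H^2} (y\partial_y v)^2\, y^{-2\delta - 1}\, dx\, dy + \int_{\H^2} (\partial_x v)^2\, y^{-2\delta + 1}\, dx\, dy = (2\delta^2 - \mu^2) \int_{\H^2} v^2\, y^{-2\delta - 1}\, dx\, dy.
\]
To close the argument, I would apply the sharp one-dimensional weighted Hardy inequality $\int (\partial_s w)^2 e^{-2\delta s}\, ds \geq \delta^2 \int w^2 e^{-2\delta s}\, ds$ fiberwise in $x$; this is obtained in a single line from the substitution $w = e^{\delta s} \tilde w$, under which the weighted norm becomes unweighted and a direct expansion produces the extra $\delta^2 \int \tilde w^2\, ds$ term. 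Using this bound on the first left-hand side term together with non-negativity of the second, one arrives at $(\mu^2 - \delta^2)\int v^2 y^{-2\delta - 1}\, dx\, dy \leq 0$, which in view of the hypothesis $\delta^2 < \mu^2$ forces $v \equiv 0$ and hence $u \equiv 0$.

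The main technical point to handle is justifying the integration by parts on the non-compact $\H^2$. I would address this with a standard cutoff argument: introduce $\chi_R(s) = \chi(s/R)$ where $\chi$ is smooth, equal to $1$ on $[-1,1]$, and supported in $[-2,2]$, and carry out the computation with $\chi_R v$ in place of $v$. The resulting commutator errors involving $\chi_R'$ are controlled by the finite weighted $L^2$ norms of $v$ and $y\partial_y v$ supplied by elliptic regularity, and by dominated convergence they vanish as $R \to \infty$, yielding the energy identity for $v$ itself. No finer harmonic analysis is required.
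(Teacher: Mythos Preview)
Your argument is correct and takes a genuinely different route from the paper. The paper first treats the unweighted case $\delta=0$ by a direct integration by parts giving $\|du\|_{L^2_0}^2+\lambda\|u\|_{L^2_0}^2=0$, and then invokes the Fredholm package of the $0$-calculus (Theorem~\ref{thm:fredholm-0-calculus}) to conclude that the kernel is independent of $\delta$ throughout the indicial interval. You instead handle all weights at once: the half-density substitution $u=y^{1/2}v$ symmetrizes the operator, the weighted energy identity produces the term $(2\delta^2-\mu^2)\|v\|^2$, and the sharp Hardy inequality $\int(\partial_s w)^2 e^{-2\delta s}\,ds\geq \delta^2\int w^2 e^{-2\delta s}\,ds$ recovers exactly the full range $|\delta|<\mu$. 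Your approach is more elementary in that it only appeals to $0$-elliptic regularity (Theorem~\ref{thm:0-elliptic-regularity}) to justify the integrations by parts, not to the Fredholm theory; it also makes transparent why the endpoints $\delta=\pm\mu$ are excluded, since they are precisely where Hardy becomes an equality. The paper's approach, on the other hand, is more modular: once the $\delta=0$ case is done, the same invocation of Theorem~\ref{thm:fredholm-0-calculus} handles the companion Proposition~\ref{prop:no_twisted_harmonic_on_halfspace} without a new computation. One minor point: your cutoff $\chi_R(s)$ truncates only in $s=\log y$, so you should also remark that the $x$-integration by parts is justified because, by Fubini and the regularity already obtained, $v(\cdot,s)$ and $\partial_x v(\cdot,s)$ lie in $L^2(\R_x)$ for almost every $s$; this is routine but worth a sentence.
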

\begin{proof}
First, assume $u \in L^2_0(\H^2)$. By elliptic regularity (Theorem \ref{thm:0-elliptic-regularity}), the equation $\Delta u + \lambda u = 0$ implies that $u \in L^{2,2}_0$. A standard argument shows that we may apply integration by parts: choose a sequence of smooth, compactly supported functions $f_n : \H^2 \to [0,1]$ such that the sets $\{ f_n = 1\}$ form an exhaustion of $\H^2$ and such that $\|d f_n\|_{C^0} \leq \frac 1 n$. Then for any $u \in L^{2,2}_0$ we find that
\begin{align*}
\int_{\H^2} \left(\Delta u + \lambda u \right) u \vol_{\H^2}  & = \lambda \|u\|_{L^2_0}^2 + \lim_{n \to \infty} \int_{\H^2} (\Delta u) f_n u \vol_{\H^2} \\
& = \lambda \|u\|_{L^2_0}^2 + \lim_{n\to \infty} \int_{\H^2} \langle d u, d(f_nu) \rangle \vol_{\H^2} \\
& = \lambda \|u\|_{L^2_0}^2 + \int_{\H^2} \langle du, du \rangle \vol_{\H^2}
\end{align*}
where we computed the last limit using that
\[
\left|\langle du, d(f_n u) \rangle - f_n \langle du, du \rangle\right| = |\langle du, u df_n\rangle| \leq \frac 1 n |du| |u|
\]
and the fact that $|du| |u| \in L^1_0$.

Thus $\|du\|_{L^2_0}^2 + \lambda \|u\|_{L^2_0}^2 = 0$. If $\lambda > 0$ this immediately implies $u = 0$. If $\lambda = 0$ this implies that $u$ is constant. The only constant function in $L^2_0$ is the $0$ function.

In half-space coordinates $\Delta = -x^2 \partial_x^2 - x^2 \partial_y^2$. This implies that $\Delta$ is equal to its normal operator. Since $x^2 \partial_x^2 = (x\partial_x)^2 - x\partial_x$, we obtain $I_s(\Delta + \lambda) =  -s^2 + s + \lambda$. Therefore the indicial roots of this operator are $\frac 1 2 \pm \frac 1 2 \sqrt{1 + 4 \lambda}$. By Theorem  \ref{thm:fredholm-0-calculus} this implies that for $\delta \in \left(\frac 1 2 - \frac 1 2 \sqrt{1 + 4 \lambda}, \frac 1 2 + \frac 1 2 \sqrt{1 + 4 \lambda} \right) $ the operators $\Delta : \rho^{\delta - 1/2} L^{2,2}_0 \to \rho^{\delta - 1/2} L^2_0$ all have the same kernel, which implies the statement.
\end{proof}

\begin{prop}
\label{prop:stronger_no_harmonic_on_halfspace}
Suppose $u \in C^{1,\alpha}_{\lcl} (\H^2)$ is a weak solution of $\Delta u = 0$ and suppose that $|u(x,y)| \leq x^\delta$ for some $0 < \delta < 1$. Then $u \equiv 0$.
\end{prop}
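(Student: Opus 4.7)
\textbf{Proof plan for Proposition \ref{prop:stronger_no_harmonic_on_halfspace}.}
The idea is to reduce to a Liouville theorem for Euclidean harmonic functions on $\mathbb{R}^2$ via Schwarz reflection, exploiting the fact that the hyperbolic Laplacian on the upper half-plane is conformal to the Euclidean one. Concretely, since $\Delta_{\H^2} = -x^2(\partial_x^2 + \partial_y^2)$ on the half-space model $\{x>0\}$, a weak solution of $\Delta u = 0$ is simply a weak (hence, by elliptic regularity, smooth) solution of the Euclidean Laplace equation on $\{x>0\}$.

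The pointwise bound $|u(x,y)|\le x^\delta$ with $\delta>0$ gives $u(x,y)\to 0$ as $x\to 0^+$, locally uniformly in $y$. I would next extend $u$ to all of $\mathbb{R}^2$ by Schwarz reflection across $\{x=0\}$:
\[
\tilde u(x,y) = \begin{cases} u(x,y), & x>0,\\ 0, & x=0,\\ -u(-x,y), & x<0.\end{cases}
\]
The function $\tilde u$ is continuous on $\mathbb{R}^2$, harmonic on $\mathbb{R}^2 \setminus \{x=0\}$, and vanishes on $\{x=0\}$; a standard Schwarz reflection argument (e.g.\ using the mean value property, or testing against compactly supported test functions and invoking Weyl's lemma) shows that $\tilde u$ is harmonic on all of $\mathbb{R}^2$. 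The growth hypothesis transfers cleanly: for every $(x,y)\in\mathbb{R}^2$,
\[
|\tilde u(x,y)| \le |x|^\delta \le (x^2+y^2)^{\delta/2}.
\]

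Now I would apply the classical Liouville theorem for harmonic functions of polynomial growth: a harmonic function on $\mathbb{R}^2$ satisfying $|\tilde u(z)| \le C(1+|z|)^{\delta}$ with $\delta < 1$ must be a harmonic polynomial of degree $\leq \lfloor\delta\rfloor = 0$, i.e.\ constant. Since $\tilde u \equiv 0$ on the line $\{x=0\}$, this constant is zero, and hence $u \equiv 0$ on the half-space.

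The only real subtlety is the justification of the Schwarz reflection step, which requires the mild regularity hypothesis $u \in C^{1,\alpha}_{\lcl}$ together with the vanishing boundary trace implied by the bound $|u|\le x^\delta$; both hold in our setting. Everything else is essentially classical, but I expect this to be the step to state carefully. Note also that this refines Proposition \ref{prop:no_harmonic_on_halfspace} (for $\lambda=0$) insofar as the $L^2_0$-weighted hypothesis is replaced by a pointwise upper bound, which is what is actually needed downstream in the blow-up analysis.
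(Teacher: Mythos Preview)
Your proposal is correct and follows essentially the same route as the paper's proof: Schwarz reflection across $\{x=0\}$ to obtain an entire harmonic function on $\R^2$ with growth $\lvert \tilde u\rvert \le r^\delta$, $0<\delta<1$, then the classical Liouville theorem to force $\tilde u\equiv 0$. Your write-up is in fact slightly more explicit than the paper's (you spell out the conformal relation $\Delta_{\H^2}=-x^2\Delta_{\R^2}$ and the justification of the reflection step), but the argument is the same.
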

\begin{proof}
By the Schwarz reflection principle, the function
\[
\hat{u}(x,y) =
\begin{cases}
u(x,y), & x > 0 \\
0, & x = 0 \\
-u(-x,y), & x < 0
\end{cases}
\]
is a harmonic function on $\R^2$. Since $|x| \leq r := \sqrt{x^2 + y^2}$ we have a harmonic function $\hat{u}$ satisfying $|\hat{u}| \leq r^\delta$ with $0 < \delta < 1$. It is well known that this implies $\hat{u} = 0$ and therefore also $u = 0$.
\end{proof}

\begin{prop}
\label{prop:no_twisted_harmonic_on_halfspace}
Suppose $c \geq 0$. Let $\delta \in (-\sqrt{5+4c}/2, \sqrt{5+4c}/2)$. If $u \in \rho^\delta L^2(\H^2)$ is a weak solution of $\Delta u + 2 i x \partial_y u + u + c u = 0$, then $u = 0$.
\end{prop}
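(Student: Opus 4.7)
The plan is to mirror the strategy of Proposition \ref{prop:no_harmonic_on_halfspace}: first show that the kernel of $P := \Delta + 2ix\partial_y + (1+c)$ on the unweighted space $L^2_0(\H^2)$ is trivial via an integration-by-parts identity, then propagate this triviality to all admissible weights by the 0-calculus Fredholm theory (Theorem \ref{thm:fredholm-0-calculus}).

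The key algebraic step is the factorization
\[
x^{-2}P \,=\, -\partial_x^2 \,+\, (i\partial_y + x^{-1})^2 \,+\, c\,x^{-2},
\]
which one verifies by expanding $(i\partial_y + x^{-1})^2 = -\partial_y^2 + 2ix^{-1}\partial_y + x^{-2}$. Setting $D := i\partial_y + x^{-1}$, both $i\partial_y$ and $x^{-1}$ (multiplication) are formally self-adjoint on $L^2(dxdy)$, so $D$ is self-adjoint there. Suppose $u \in L^2_0$ satisfies $Pu = 0$ weakly; by Theorem \ref{thm:0-elliptic-regularity}, $u \in L^{2,2}_0$, so $\partial_x u, \partial_y u \in L^2(dxdy)$ and $x^{-1}u \in L^2(dxdy)$. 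The same cutoff argument used in the proof of Proposition \ref{prop:no_harmonic_on_halfspace} (taking $f_n$ with $\|df_n\|_{C^0_0}\leq 1/n$, so the Euclidean gradient errors $|df_n|\leq x^{-1}/n$ control against $x^{-1}u \in L^2(dxdy)$) then justifies
\[
0 \,=\, \langle Pu,u\rangle_{L^2_0} \,=\, \int (x^{-2}Pu)\,\bar u\,dx\,dy \,=\, \|\partial_x u\|_{L^2(dxdy)}^2 + \|Du\|_{L^2(dxdy)}^2 + c\,\|u\|_{L^2_0}^2.
\]
If $c > 0$, this forces $u = 0$ immediately. If $c = 0$, then $\partial_x u = 0$ (so $u = u(y)$ is independent of $x$) and $Du = i\partial_y u(y) + x^{-1}u(y) = 0$; comparing the $x$-dependence of the two summands forces $u \equiv 0$.

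Finally, $P$ is $0$-elliptic (same principal symbol as $\Delta$) with constant coefficients in the frame $(x\partial_x, x\partial_y)$, so it coincides with its own normal operator; the indicial family is $I_s(P) = -s(s-1) + (1+c)$, with roots $s_\pm = \frac{1\pm\sqrt{5+4c}}{2}$. The invertibility of the normal operator on $L^{2,2}_0$ follows from the injectivity above together with formal self-adjointness. Applying Theorem \ref{thm:fredholm-0-calculus} with $n=1$ and $\delta_\pm = s_\pm$, the operators
\[
P\colon \rho^{\delta-1/2}L^{2,2}_0 \,\longrightarrow\, \rho^{\delta-1/2}L^2_0, \qquad \delta \in (s_-,s_+),
\]
are Fredholm of index zero and share the same kernel, which is trivial by the case $\delta - 1/2 = 0$ handled above. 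Since $\delta - 1/2$ ranges exactly over $(-\tfrac{1}{2}\sqrt{5+4c}, \tfrac{1}{2}\sqrt{5+4c})$, this matches the hypothesis on the weight in the proposition. The only subtle point is ensuring the normal-operator invertibility hypothesis from Theorem \ref{thm:fredholm-0-calculus}, but, as just noted, this is a direct consequence of the coercivity estimate (strict positivity when $c > 0$; kernel-triviality plus self-adjointness when $c = 0$).
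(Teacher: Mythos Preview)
Your proof is correct and follows essentially the same approach as the paper's: the factorization $x^{-2}P = -\partial_x^2 + D^2 + cx^{-2}$ with $D = i\partial_y + x^{-1}$ self-adjoint on $L^2(dx\,dy)$ is equivalent (via $x\partial_y - i = -ixD$) to the paper's identity $P = -x^2\partial_x^2 - (x\partial_y - i)^2 + c$ with $x\partial_y - i$ skew-adjoint on $L^2_0$, and the subsequent indicial-root/Fredholm argument is identical. Your handling of the $c=0$ case---forcing $u \equiv 0$ by comparing the $x$-dependence of $i\partial_y u(y)$ and $-x^{-1}u(y)$---is in fact slightly cleaner than the paper's route through the explicit solution $Ae^{ix^{-1}y}$.
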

\begin{proof}
First, assume $u \in L^2_0(\H^2)$. By elliptic regularity (Theorem \ref{thm:0-elliptic-regularity}), the equation $\Delta u + 2 i x \partial_y u + u + c u = 0$ implies that $u \in L^{2,2}_0$ and therefore we may use integration by parts as in Proposition \ref{prop:no_harmonic_on_halfspace}. Observe that
\[
\Delta u + 2i x \partial_y u + u = -x^2 \partial_x^2 u - (x\partial_y - i)^2 u.
\]
Since $x\partial_y - i$ is skew-adjoint on $L^2_0$, this shows that
\[
\langle \Delta u + 2i x \partial_y u + u + c u, u \rangle_{L^2_0} = \|x \partial_x u\|^2_{L^2_0} + \|x \partial_y u - i u\|_{L^2_0}^2 + c \|u\|_{L^2_0}^2.
\]
If $c > 0$ we directly conclude from vanishing of this expression that $u \equiv 0$. If $c = 0$ and this expression vanishes, then $\partial_x u = 0$ and $x \partial_y u = i u$. The first equation implies that $u$ is constant on $\{y = c\}$ for every fixed $c \in \R$. Now the second equation implies that $u(x,y) = A e^{i x^{-1} y}$. But then $|u(x,y)| = |A|$. Since $u \in L^2_0$, this implies $u \equiv 0$.

The operator above can also be written as $-(x\partial_x)^2 + x\partial_x - (x\partial_y)^2 + 2 i x \partial_y + 1+c$. Its indicial family is then given by $-s^2 + s + 1 + c$ and therefore its indicial roots are $\frac 1 2 \pm \frac{\sqrt{5+4c}} 2$.

By Theorem \ref{thm:fredholm-0-calculus} this implies that for $\delta \in \left(\frac 1 2 - \frac{\sqrt{5+4c}} 2, \frac 1 2 + \frac{\sqrt{5+4c}} 2 \right)$ the operators $\Delta + 2 i x \partial_y + 1 + c : \rho^{\delta - 1/2} L^{2,2}_0 \to \rho^{\delta - 1/2} L^2_0$ all have the same kernel, which implies the statement.
\end{proof}

\begin{prop}
  \label{prop:trivial_kernel_model_space_diag}
  Suppose $u : \H^2 \to \R$ is a weak solution of
  \[
  \Delta u + 2 \lambda^2 x^2 E(\lambda x) u = 0
  \]
  and suppose that $|u(x,y)| \leq x^\delta$ for some $\delta \in  \left( -1, 2 \right)$.

  Then $u \equiv 0$.
\end{prop}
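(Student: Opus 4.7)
The plan is to mirror the proofs of Propositions~\ref{prop:no_harmonic_on_halfspace} and \ref{prop:no_twisted_harmonic_on_halfspace}: view $L:=\Delta+W(x)$ with $W(x):=2\lambda^2x^2E(\lambda x)$ as a $0$-elliptic operator on $\H^2$, show triviality of its $L^2_0$-kernel by an energy identity, and then use Theorem~\ref{thm:fredholm-0-calculus} to transfer this vanishing to the weighted Hölder setting in which the pointwise hypothesis $|u|\le x^\delta$ naturally lives.

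The first step is the indicial analysis. Using $\tanh(s)^{-2}=s^{-2}+O(1)$ near the origin, the coefficient $W$ is smooth up to $\{x=0\}$ with $W(0)=2$. In $0$-form one has $L=-(x\partial_x)^2+x\partial_x-(x\partial_y)^2+W(x)$, so the indicial family equals $I_s(L)=-s^2+s+2=-(s-2)(s+1)$, with indicial roots $-1$ and $2$. The interval $(-1,2)$ is therefore free of indicial roots and straddles $n/2=1/2$, so Theorem~\ref{thm:fredholm-0-calculus} applies with $\delta_\pm=-1,2$.

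Next, I would verify that the $L^2_0$-kernel is trivial. If $u\in L^{2,2}_0(\H^2)$ lies in $\ker L$, the same cutoff-plus-integration-by-parts argument used in the proof of Proposition~\ref{prop:no_harmonic_on_halfspace} (exhausting cutoffs $f_n$ with $|df_n|_g\le 1/n$, with $|du||u|\in L^1_0$ as the integrable majorant) yields
\begin{equation*}
0=\langle Lu,u\rangle_{L^2_0}=\|du\|_{L^2_0}^2+\int_{\H^2}2\lambda^2\,E(\lambda x)\,u^2\,dx\,dy.
\end{equation*}
Since $E(s)=\tanh(s)^2+\tanh(s)^{-2}\ge 2$ by AM-GM, both terms are nonnegative and the identity forces $u\equiv 0$.

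To conclude, I translate the hypothesis to the weighted Hölder scale. The bound $|u(x,y)|\le x^\delta$ is precisely the statement $u\in\rho^\delta C^0$; the weighted Schauder estimates for $0$-elliptic operators, combined with $Lu=0$, bootstrap this to $u\in\rho^\delta C^{2,\alpha}_0$ for any $\alpha\in(0,1)$. For $\delta\in(-1,2)$, Theorem~\ref{thm:fredholm-0-calculus} identifies $\ker\bigl(L\colon\rho^\delta C^{2,\alpha}_0\to\rho^\delta C^{0,\alpha}_0\bigr)$ with the $\delta$-independent kernel of $L\colon\rho^{\delta-1/2}L^{2,2}_0\to\rho^{\delta-1/2}L^2_0$; specializing to $\delta=1/2$ recovers the unweighted $L^2_0$-kernel, which we just showed is zero. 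Hence $u\equiv 0$. The one delicate point is the regularity bootstrap: one must confirm that weighted Schauder estimates for $0$-operators genuinely upgrade a pointwise $\rho^\delta$ bound on a weak solution all the way to $\rho^\delta C^{2,\alpha}_0$, but this is standard in the $0$-calculus precisely because $\delta$ lies strictly inside the indicial range, so no polyhomogeneous log terms can obstruct the bootstrap.
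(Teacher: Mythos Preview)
Your approach has a genuine gap concerning the global structure of $\H^2$. You treat $L=\Delta+W$ with $W(x)=2\lambda^2x^2E(\lambda x)$ as a $0$-elliptic operator and invoke Theorem~\ref{thm:fredholm-0-calculus}, but that theorem requires a compact manifold with boundary on which the coefficients are smooth up to the boundary. The conformal compactification of $\H^2$ is the closed disk, and in the upper half-plane model the boundary circle is $\{x=0\}\cup\{\infty\}$. Your indicial computation is correct near $\{x=0\}$, where $W$ extends smoothly with $W(0)=2$. However, near the boundary point at infinity one has $E(\lambda x)\to 2$ and hence $W(x)\sim 4\lambda^2x^2\to\infty$, so $W$ is \emph{not} a smooth $0$-th order coefficient there and $L$ is not a $0$-differential operator on the full compactification. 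This is precisely why Propositions~\ref{prop:no_harmonic_on_halfspace} and~\ref{prop:no_twisted_harmonic_on_halfspace}, which you take as templates, have \emph{constant} potentials. Relatedly, your claim that $|u|\le x^\delta$ is ``precisely the statement $u\in\rho^\delta C^0$'' is false: $x$ is not a global boundary defining function on the disk (it blows up at the point at infinity), so a bound $|u|\le x^\delta$ with $\delta>0$ gives no control at that boundary point; compare Lemma~\ref{lemma:halfspace_weight}, which quantifies exactly how $x^\beta$ and $\rho^\delta$ interact.

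The paper's proof sidesteps both issues by working intrinsically on the complete manifold $\H^2$ via the Omori--Yau maximum principle, with no compactification or $0$-calculus. One sets $\tilde u=x^{-\delta}u$ (bounded by hypothesis) and computes
\[
\Delta\tilde u-2\delta\,x\partial_x\tilde u=\bigl[\delta(\delta-1)-W\bigr]\tilde u.
\]
Since $W\ge 2$ and $\delta\in(-1,2)$ gives $\delta(\delta-1)<2$, the bracket is strictly negative and bounded away from zero. Omori--Yau applied to $\tilde u$ and $-\tilde u$ then forces $\sup\tilde u\le 0\le\inf\tilde u$, hence $u\equiv 0$. Your energy identity for the $L^2_0$-kernel is correct in isolation, but the passage from the pointwise hypothesis to that setting is where your argument breaks down.
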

\begin{proof}
  By elliptic regularity $u$ is smooth. Define $\wt{u} = x^{-\delta} u$ and let $h(x,y) = 2 \lambda^2 x^2 E(\lambda x)$. An easy computation using the facts that $\Delta (uv) = u \Delta v + v \Delta u - 2 \langle d u, d v\rangle_{\H^2}$ and $\Delta x^{-\delta} = -\delta(\delta+1) x^{-\delta}$ shows that
  \[
  \Delta \wt{u} - 2 \delta x \partial_x \wt{u} = [ \delta(\delta-1) - h ] \wt{u}.
  \]
  Note that $h \geq 2$. This implies that for $\delta \in \left( -1, 2 \right)$ the coefficient $\delta(\delta-1) - h$ is negative and uniformly bounded away from zero.

  Since $\wt{u}$ is bounded and smooth, by the Omori--Yau maximum principle there exists a sequence for points $p_k \in \H^2$ with $\wt{u}(p_k) \to \sup \wt{u}$, $|\nabla \wt{u}|(p_k) \to 0$ and $\Delta \wt{u}(p_k) \geq -\frac 1 k$. Applying this to our identity for $\wt{u}$ and passing to the limit, this implies that $\sup \wt{u}$ is non-positive.

  Applying the same reasoning to $-\wt{u}$ also shows that $\inf \wt{u}$ is non-negative. Therefore, $\wt{u}$ (and likewise $u$) vanishes.
\end{proof}

\begin{prop}
  \label{prop:trivial_kernel_model_space_off_diag}
  Suppose $v : \H^2 \to \C$ is a weak solution of
  \[
  \Delta v + 2 i p(\lambda x) x \partial_y v + p(\lambda x)^2 v - 2 \lambda^2 x^2 \ol{v} + \lambda^2 x^2 E(\lambda x) v = 0,
  \]
  and suppose that $|v(x,y)| \leq x^\delta$ for some $\delta \in  (0, 1)$.

  Then $v \equiv 0$.
\end{prop}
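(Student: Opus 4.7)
The plan mirrors Proposition~\ref{prop:trivial_kernel_model_space_diag}: after rescaling by $x^{\delta}$, derive a differential inequality for $f := |v|^2/x^{2\delta}$ on $\H^2$ with a uniformly negative potential, then apply the Omori--Yau maximum principle. The obstacle absent in the diagonal case is that the complex-conjugate term $-2\lambda^2 x^2\bar v$ together with the twist $2ipx\partial_y v$ couples the real and imaginary parts of $v$ and produces a mixed-derivative current with no definite sign.

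The first step is to split $v = a + ib$ and use the identity $p(\lambda x)^2 = \lambda^2 x^2(E(\lambda x)-2)$ (immediate from $E(s)-2 = 4/\sinh^2(2s)$) to decouple the PDE into the real system
\[
\Delta a - 2 p\, x \partial_y b + 2 p^2 a = 0, \qquad \Delta b + 2 p\, x \partial_y a + (2 p^2 + 4\lambda^2 x^2) b = 0,
\]
where $p = p(\lambda x)$. Rescaling $\tilde a := x^{-\delta} a$ and $\tilde b := x^{-\delta} b$ and using the conjugation rule for $\Delta = -x^2(\partial_x^2+\partial_y^2)$ gives rescaled equations with an extra drift $-2\delta x\partial_x$ and a shift $-\delta(\delta-1)$ in each potential. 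Forming $f := \tilde a^2 + \tilde b^2$, multiplying the rescaled equations by $\tilde a,\tilde b$ respectively, adding, and using $\Delta(\tilde u^2) = 2\tilde u\Delta\tilde u - 2x^2|\nabla\tilde u|^2$, a direct calculation yields
\[
\Delta f - 2\delta x \partial_x f = [2\delta(\delta-1) - 4 p^2] f - 8\lambda^2 x^2 \tilde b^2 + 4 p\, x(\tilde a \partial_y \tilde b - \tilde b \partial_y \tilde a) - 2 x^2(|\nabla \tilde a|^2 + |\nabla \tilde b|^2).
\]

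The main difficulty is the indefinite-sign cross term $4 p x(\tilde a\partial_y\tilde b - \tilde b\partial_y\tilde a)$. Cauchy--Schwarz applied with the natural splitting $(2p\tilde a)\cdot(2x\partial_y\tilde b)$ yields the sharp bound
\[
|4 p x(\tilde a \partial_y \tilde b - \tilde b \partial_y \tilde a)| \leq 2 p^2 f + 2 x^2(|\partial_y\tilde a|^2 + |\partial_y\tilde b|^2),
\]
whose coefficients are exactly tuned---courtesy of the identity $p^2 = \lambda^2 x^2(E-2)$ linking the twist to the diagonal potential---so that the gradient piece is absorbed by $-2x^2(|\nabla\tilde a|^2+|\nabla\tilde b|^2)$. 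Discarding the non-positive term $-8\lambda^2 x^2\tilde b^2$ leaves
\[
\Delta f - 2\delta x\partial_x f \;\leq\; [2\delta(\delta-1) - 2 p^2]\,f \;\leq\; 2\delta(\delta-1)\,f,
\]
and for $\delta \in (0,1)$ the constant $2\delta(\delta-1)$ is strictly negative, uniformly on $\H^2$.

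To finish, $f$ is continuous and bounded by $1$ on $\H^2$ (and $C^2$ in the interior by elliptic regularity applied to the real and imaginary parts), while $\H^2$ has constant curvature, so the Omori--Yau maximum principle furnishes a sequence $p_k$ with $f(p_k) \to \sup f$, $|\nabla f|_g(p_k) \to 0$, and $\Delta f(p_k) \geq -1/k$. Since $|x\partial_x f| \leq |\nabla f|_g$ in the hyperbolic metric, passing to the limit in the differential inequality gives $0 \leq 2\delta(\delta-1)\sup f$; as $2\delta(\delta-1) < 0$ and $f \geq 0$, this forces $\sup f = 0$, hence $v \equiv 0$. The narrow range $\delta \in (0,1)$ is exactly what makes both $\delta(\delta-1) < 0$ and the Cauchy--Schwarz absorption close without loss.
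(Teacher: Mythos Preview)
Your proof is correct and follows essentially the same strategy as the paper's own proof: split into real and imaginary parts, rescale by $x^{-\delta}$, derive a differential inequality for $\sigma = \tilde a^2 + \tilde b^2$, absorb the indefinite cross term via Cauchy--Schwarz into the gradient term, and apply Omori--Yau. The one cosmetic difference is that you explicitly invoke the identity $p(\lambda x)^2 = \lambda^2 x^2(E(\lambda x)-2)$ at the outset, which lets you write the potentials as $2p^2$ and $2p^2+4\lambda^2 x^2$; the paper instead carries the symbol $U = \lambda^2 x^2(E-2)$ separately (so its final coefficient reads $-2[U+\delta(1-\delta)]$ rather than your $2\delta(\delta-1)-2p^2$), but since $U=p^2$ these are the same inequality.
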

\begin{proof}
  The real parts and imaginary parts of $v$ satisfy the following coupled system of equations:
  \begin{align*}
    \Delta \Real v - 2 p(\lambda x) x \partial_y \Imag v + p(\lambda x)^2 \Real v + \lambda^2 x^2 (E(\lambda x) - 2) \Real v &  = 0, \\
    \Delta \Imag v + 2 p(\lambda x) x \partial_y \Real v + p(\lambda x)^2 \Imag v + \lambda^2 x^2 (E(\lambda x) + 2) \Imag v &  = 0.
  \end{align*}
  Define $f = x^{-\delta} \Real v$ and $g = x^{-\delta} \Imag v$. Then an easy but tedious computation yields
  \begin{align*}
    \Delta f - 2 p(\lambda x) x \partial_y g - 2 \delta x \partial_x f + p(\lambda x)^2 f + U f + \delta(1 - \delta) f & = 0, \\
    \Delta g + 2 p(\lambda x) x \partial_y f - 2 \delta x \partial_x g + p(\lambda x)^2 g + U g + \delta(1 - \delta) g & + 4 \lambda^2 x^2 g = 0,
  \end{align*}
  where $U(x) = \lambda^2 x^2 (E(\lambda x) - 2)$. Let $\sigma = f^2 + g^2$. Then
  \begin{align*}
    \Delta \sigma - 2 \delta x \partial_x \sigma & = 4 p(\lambda x) [f x \partial_y g - g x \partial_y f] \\
    & - 2 \left[ p(\lambda x)^2 +  U + \delta(1 - \delta) \right] \sigma - 8 \lambda^2 x^2 g^2 \\
    & - 2 |\nabla f|^2_{\H^2} - 2 |\nabla g|^2_{\H^2}.
  \end{align*}
  Using
  \[
  \left|2 p(\lambda x) [f x \partial_y g - g x \partial_y f]\right| \leq p(\lambda x)^2 \sigma + |x \partial_y f|^2 + |x\partial_y g|^2 \leq p(\lambda x)^2 \sigma + |\nabla f|^2_{\H^2} + |\nabla g|^2_{\H^2},
  \]
  we see that the identity for $\sigma$ implies
  \[
  \Delta \sigma - 2\delta x \partial_x \sigma \leq - 2 [U + \delta(1 - \delta)] \sigma - 8 \lambda^2 x^2 g^2.
  \]
  By assumption $x^{-\delta} v$ is bounded and therefore so are $f$, $g$ and $\sigma$. The Omori--Yau maximum principle supplies us with a sequence $p_k \in \H^2$, such that $\sigma(p_k) \to \sup \sigma$, $|\nabla \sigma (p_k)|_{\H^2} \to 0$, $\Delta \sigma(p_k) \geq - \frac 1 k$.

  Applying this to our inequality, we obtain
  \[
  -\frac 1 k - 2 \delta x \partial_x \sigma(p_k) \leq - 2[ U(p_k) + \delta(1 - \delta)] \sigma(p_k) - 8 \lambda^2 x(p_k)^2 g(p_k)^2.
  \]
  Now $|x \partial_x \sigma(p_k)| \leq |\nabla \sigma(p_k)|_{\H^2}$ converges to $0$. On the other hand since $\delta(1 - \delta) > 0$ for $\delta \in (0,1)$, if $\sup \sigma = \lim_{k\to\infty} \sigma(p_k)$ were positive, the right hand side would eventually be strictly smaller than $-\frac 1 k$. Therefore, $\sup \sigma = 0$. Since $\sigma$ is manifestly non-negative, $\sigma$ has to vanish. But this implies that $v$ has to vanish. This finishes the proof.
\end{proof}

\begin{prop}
\label{prop:no_A_infty_harmonic}
Let $\delta > \frac 1 2$ and let $\nu \geq 0$ be arbitrary. If $\gamma \in \rho^\delta r^\nu C^0_\infty(M^\times)$ satisfies $\Delta_{A_\infty} \gamma = 0$ in the sense of distributions, then $\gamma = 0$.
\end{prop}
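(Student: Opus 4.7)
The plan is to show by weighted integration by parts that $\nabla_{A_\infty}\gamma \equiv 0$ on $M^\times$, and then to deduce $\gamma \equiv 0$ from its decay at the core loops. First, by interior elliptic regularity applied on compact subsets of $M^\times$ -- where both $g_0$ and $A_\infty$ are smooth -- the distributional equation $\Delta_{A_\infty}\gamma = 0$ forces $\gamma \in C^\infty(M^\times, i\su(E))$.

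The key step is to construct a family of smooth cutoffs $\chi_\epsilon : M^\vee \to [0,1]$, each vanishing on a neighbourhood of $Z \cup \bigcup_k c_k$ and equal to $1$ outside a slightly larger neighbourhood, so that $\chi_\epsilon^2 \gamma$ is compactly supported in $M^\times$ and
\[
\int_{M^\times} |d\chi_\epsilon|_{g_0}^2 \, |\gamma|^2 \, d\vol_{g_0} \longrightarrow 0 \quad \text{as } \epsilon \to 0.
\]
Near a core loop, in the coordinates $(x,y)$ from the framed limiting configuration, I take $\chi_\epsilon(x+iy) = \eta(|x|/\epsilon)$ for a fixed smooth $\eta : [0,\infty) \to [0,1]$ vanishing on $[0,1/2]$ and equal to $1$ on $[1,\infty)$; then $|d\chi_\epsilon|_{g_0}^2 = x^2|\eta'(|x|/\epsilon)|^2/\epsilon^2 \lesssim 1$ on the transition region, while $|\gamma|^2 \leq C\epsilon^{2\delta}$ there, and the hyperbolic volume of $\{\epsilon/2 \leq |x| \leq \epsilon\}$ is $O(1/\epsilon)$, yielding a contribution of order $\epsilon^{2\delta-1}$ that vanishes precisely because $\delta > 1/2$. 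Near a zero of $q$, where $g_0$ is a smooth metric and one only has $|\gamma|^2 \leq C$ in the case $\nu = 0$, a power cutoff is insufficient; instead I use a logarithmic cutoff in $|z|$, whose Dirichlet energy is of order $1/\log(1/\epsilon)$ and still yields a vanishing contribution. Testing $0 = \int \langle \Delta_{A_\infty}\gamma, \chi_\epsilon^2 \gamma\rangle$ and integrating by parts, then absorbing via Cauchy--Schwarz, gives
\[
\int_{M^\times} \chi_\epsilon^2 |\nabla_{A_\infty}\gamma|^2 \, d\vol_{g_0} \leq 4 \int_{M^\times} |d\chi_\epsilon|_{g_0}^2 \, |\gamma|^2 \, d\vol_{g_0},
\]
so letting $\epsilon \to 0$ and applying Fatou's lemma forces $\nabla_{A_\infty}\gamma \equiv 0$.

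Parallelism then makes $|\gamma|$ locally constant on $M^\times$. Since removing the finite set $Z$ from the $2$-manifold $M^\vee$ does not disconnect it, $|\gamma|$ is in fact constant on each connected component of $M^\vee$. Each such component has a core loop in its closure in $M$, for otherwise it would be both open and closed in the connected manifold $M$; along such a loop the bound $|\gamma| \leq C\rho^\delta$ forces this constant to vanish, and hence $\gamma \equiv 0$.

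I expect the main obstacle to be the choice of cutoffs simultaneously adapted to two very different singular regions: at the hyperbolic ends near the core loops, hyperbolic volume growth combined with the strict inequality $\delta > 1/2$ is exactly what makes a power cutoff succeed, while at the zeros of $q$ -- where the weight $\nu$ is only assumed non-negative -- one is forced into a logarithmic cutoff.
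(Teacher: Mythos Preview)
Your proof is correct and takes a more elementary route than the paper's. The paper first observes that $\gamma \in L^2_0(M^\times)$ under the assumption $\delta > 1/2$, then invokes $0$-elliptic regularity (Theorem~\ref{thm:0-elliptic-regularity}) to obtain $L^{2,2}_0$ control away from the zeros of $q$, and near the zeros appeals to the regularity theory for conic operators: the latter yields a polyhomogeneous expansion $u \sim \sum_{k\ge 0} u_k r^k$ on the diagonal and $v \sim \sum_{k\ge 0} v_k r^{k+1/2}$ on the off-diagonal (the twisted Laplacian $\Delta_{1/2}$ having half-integer indicial roots), and this expansion is what allows the paper to show directly that the boundary term $\int_{\{r=\epsilon\}}\langle d_{A_\infty}\gamma(\partial_r),\gamma\rangle\,ds$ vanishes as $\epsilon\to 0$. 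Your logarithmic cutoff at the zeros bypasses the conic machinery entirely, relying only on the fact that points have zero $W^{1,2}$-capacity in dimension two together with boundedness of $\gamma$; this is shorter and self-contained. At the hyperbolic ends the two arguments are closer in spirit---both are cutoff arguments---but you make the weighted power cutoff and the role of $\delta>1/2$ explicit, whereas the paper routes the same computation through abstract $L^{2,2}_0$ membership as in Proposition~\ref{prop:no_harmonic_on_halfspace}. Your final step (constant $|\gamma|$ on components of $M^\vee$, forced to zero by decay at an adjacent core loop) and the paper's (constant plus infinite volume plus $L^2_0$) are equivalent; yours has the minor advantage of not needing the $L^2_0$ inclusion at all.
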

\begin{proof}
Under the given weight conditions $\rho^\delta r^\nu C^0_\infty(M^\times)$ is included in $L^2_0(M^\times)$. Fix some $\epsilon > 0$. Multiplying $\gamma$ by a smooth function $f$, which is $1$ on $\{r > \epsilon\}$ and vanishes on $\{r < \epsilon/2\}$, we may apply elliptic regularity (Theorem \ref{thm:0-elliptic-regularity}) to obtain that $f\gamma \in L^{2,2}_0(M^\times)$. In particular, $\gamma \in L^{2,2}_0(\{r > \epsilon\})$.

On the other hand, regularity theory for conic operators says that a solution $\gamma$ of $\Delta_{A_\infty} \gamma = 0$ has a polyhomogeneous expansion near $\{r_\infty = 0\}$. For an account of this theory, see for example \cite[Section~4]{MW}. In the neighborhood of a zero of the quadratic differential, the operator $\Delta_{A_\infty}$ has the form
\[
\Delta_{A_\infty} \begin{pmatrix} u & v \\ \ol{v} & - u \end{pmatrix}
=
\begin{pmatrix}
\Delta u & \Delta_{1/2} v \\ \Delta_{1/2} \ol{v} & -\Delta u
\end{pmatrix}.
\]
The operator on the diagonal is the ordinary Laplacian, and it has indicial roots $k \in \Z$. Therefore $u$ admits an expansion
\[
u \sim \sum_k u_k r^{k} + u_{\log} \log(r),
\]
where the $\log$ term accounts for the fact that the indicial root at $k=0$ is a double root. Since $\gamma \in \rho^\delta r^\nu C^0_\infty(M^\times)$ with $\nu \geq 0$, $\gamma$ is bounded, and therefore no $r^k$ with $k < 0$ or $\log$ term can appear, and we conclude that $u \sim \sum_{k \geq 0} u_k r^k$. On the other hand, the indicial roots of the twisted Laplacian $\Delta_{1/2}$ are $k+\frac 1 2$, $k \in \Z$. Therefore
\[
v \sim \sum_{k \in \Z} v_k r^{k + \frac 1 2}.
\]
Using the fact that $\gamma \in \rho^\delta r^\nu C^0_\infty(M^\times)$ and is bounded, we see that the terms $v_k r^{k+ \frac 1 2}$ with $k < 0$ cannot appear, and therefore $v \sim \sum_{k \geq 0} v_k r^{k+ \frac 1 2}$.

Using this we see that we may integrate by parts in the equation $(\Delta_{A_\infty} \gamma, \gamma)_{L^2_0(M^\times)} = 0$ to obtain $\|d_{A_\infty} \gamma\|_{L^2_0}^2 = 0$. On $\{r > \epsilon\}$ we may argue as in Proposition \ref{prop:no_harmonic_on_halfspace}. For $\{r < \epsilon\}$ we can now show that the boundary term vanishes. More precisely, we want to see that in the equation
\[
\int_{M^\times \bs \{r < \epsilon\}} \langle \Delta_{A_\infty} \gamma, \gamma \rangle \vol = \int_{M^\times \bs \{r < \epsilon\}} \langle d_{A_\infty} \gamma, d_{A_\infty} \gamma \rangle \vol + \int_{\{r = \epsilon\}} \langle d_{A_\infty} \gamma( \partial_r), \gamma \rangle ds
\]
the boundary term converges to $0$. Differentiating the polyhomogeneous expansion, we see that $d_{A_\infty} \gamma$ has an expansion $\sum_{k \geq 0} f_k r^k$ on the diagonal and on the off diagonal $\sum_{k \geq 0} g_k r^{k-\frac 1 2}$. Since the $r^{-1/2}$ term of $d_{A_\infty} \gamma$  is multiplied with the $r^{1/2}$ term of $\gamma$, it follows that $\langle d_{A_\infty} \gamma( \vec{n}), \gamma \rangle$ has an expansion starting at $r^0$. On the other hand, the circle $\{r = \epsilon\}$ has length $2\pi \epsilon$, and therefore the boundary term indeed vanishes as $\epsilon \to 0$. This concludes the justification of the integration by parts.

Now, $d_{A_\infty} \gamma = 0$ implies that $|\gamma|$ is constant. Since $M^\times$ has infinite volume and $\gamma$ is in $L^2_0$, this constant must vanish. Therefore $\gamma = 0$ as claimed.
\end{proof}

\begin{prop}
\label{prop:no_bounded_eigenfunctions}
Suppose $u \in C^0(\R^2)$ is bounded and satisfies $\Delta u + \lambda u = 0$ for some $\lambda > 0$. Then $u = 0$.
\end{prop}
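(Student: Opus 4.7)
The plan is to follow the Omori--Yau-type argument already used in the proofs of Propositions~\ref{prop:trivial_kernel_model_space_diag} and~\ref{prop:trivial_kernel_model_space_off_diag}, which is well suited to bounded solutions on complete manifolds of bounded geometry. Recall the sign convention of this paper: $\Delta$ is the non-negative (geometer's) Laplacian, so on $\R^2$ with the Euclidean metric one has $\Delta=-(\partial_x^2+\partial_y^2)$, and the hypothesis $\Delta u+\lambda u=0$ with $\lambda>0$ says that at any point where $u$ is nonzero, $\Delta u$ has the opposite sign to $u$.

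First I would upgrade $u$ to a smooth function by standard interior elliptic regularity applied to the equation $(\partial_x^2+\partial_y^2)u=\lambda u$ with smooth coefficients; this makes $\nabla u$ and $\Delta u$ genuinely pointwise defined. Next, since $\R^2$ is complete with vanishing sectional curvature, the Omori--Yau maximum principle applies to the bounded function $u$, yielding a sequence $p_k\in\R^2$ with
\[
u(p_k)\longrightarrow \sup u,\qquad |\nabla u(p_k)|\longrightarrow 0,\qquad \Delta u(p_k)\geq -\tfrac{1}{k}.
\]
Evaluating the equation at $p_k$ gives $\lambda\,u(p_k)=-\Delta u(p_k)\leq \tfrac{1}{k}$, so in the limit $\lambda\sup u\leq 0$; since $\lambda>0$, this forces $\sup u\leq 0$. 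Applying the same reasoning to $-u$ yields $\inf u\geq 0$, and therefore $u\equiv 0$.

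A completely equivalent and even shorter proof proceeds via Fourier analysis: because $u$ is bounded and continuous it is a tempered distribution, and the Fourier transform of $\Delta u+\lambda u=0$ reads $(|\xi|^2+\lambda)\hat u=0$; since $|\xi|^2+\lambda>0$ everywhere when $\lambda>0$, one concludes $\hat u=0$ and hence $u=0$. I would present the Omori--Yau version for stylistic consistency with the surrounding vanishing theorems. There is no substantial analytic difficulty in this proposition; the only point to be careful about is the sign convention of $\Delta$, since with $\lambda>0$ the equation is genuinely of modified Helmholtz type $(-\partial_x^2-\partial_y^2+\lambda)u=0$, which admits no bounded solution on $\R^2$.
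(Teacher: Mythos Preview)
Your proposal is correct. The paper actually takes the second route you sketch: it treats $u$ as a tempered distribution, Fourier transforms to $(|\xi|^2+\lambda)\hat u=0$, and concludes $\hat u=0$ since $|\xi|^2+\lambda>0$ everywhere. Your preferred Omori--Yau argument is equally valid and does fit nicely with the surrounding propositions, but the Fourier proof is a one-liner here precisely because the sign convention makes the symbol $|\xi|^2+\lambda$ nowhere vanishing; either approach is fine.
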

\begin{proof}
Regard $u$ as a tempered distribution. After Fourier transformation, this equation then becomes $(|\xi|^2 + \lambda) \hat{u}(\xi) = 0$. Since $\lambda + |\xi|^2$ is nonzero for every $\xi \in \R^2$, this implies $\hat{u} \equiv 0$, and therefore $u \equiv 0$.
\end{proof}

The solutions of the equation
\[
-u'' - \frac 1 r u' + \frac 1 {r^2} \lambda^2 u = 0
\]
are given by
\[
u = c_1 r^\lambda + c_2 r^{-\lambda}.
\]
The solutions of the equation
\[
-u'' - \frac 1 r u' + \frac 1 {r^2} \lambda^2 u + 16 r u = 0
\]
are given by $u = c_1 h_1 + c_2 h_2$, where
\[
h_1(r) = I_{\frac 2 3 \lambda} \left( \frac 8 3 r^{3/2} \right), \qquad h_2(r) = K_{\frac 2 3 \lambda} \left( \frac 8 3 r^{3/2} \right),
\]
where $I_\nu$ denotes the modified Bessel function solving
\[
x^2 \frac{d^2}{dx^2} I_\nu + x \frac {d}{dx} I_\nu - (x^2 + \nu^2) I_\nu = 0
\]
with asymptotics $I_\nu(x) \sim \frac 1 {\Gamma(\nu+1)} \left(\frac x 2 \right)^\nu$ at $x=0$ and $I_\nu \sim \frac 1 {\sqrt{2 \pi x}} e^x$ at $x=\infty$. Similarly, $K_\nu$ denotes the modified Bessel function satisfying the same equation with asymptotics $K_\nu(x) \sim \frac 1 {\sqrt{2 \pi x}} e^{-x}$ at $x=\infty$.

\begin{lemma}
  \label{lemma:laplacian_bound}
  Let $\nu, \lambda \in \R$ such that $1 + \nu \pm \lambda \neq 0$. Suppose $u,f : \R_+ \to \C$ solve
  \[
  -u'' - \frac 1 r u' + \frac 1 {r^2} \lambda^2 u = f
  \]
  and suppose that $|f(r)| \leq r^\nu$. Then there exist constants $c_1, c_2 \in \C$ such that
  \[
  |u(r) - c_1 r^\lambda - c_2 r^{-\lambda}| \leq C r^{\nu+2},
  \]
  where $C = \max \left\{ \frac 1 {2|\lambda(1+\nu+\lambda)|}, \frac 1 {2 |\lambda(1+\nu-\lambda)|} \right\}$.
\end{lemma}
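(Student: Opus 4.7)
The plan is to construct a particular solution by variation of parameters against the homogeneous basis $u_1(r) = r^\lambda,\ u_2(r) = r^{-\lambda}$, whose Wronskian is $W(u_1,u_2) = u_1 u_2' - u_1' u_2 = -2\lambda/r$. Rewriting the ODE in standard form $u'' + r^{-1}u' - \lambda^2 r^{-2} u = -f$ and applying the usual formula produces a particular solution
\[
v(r) \;=\; -\frac{r^\lambda}{2\lambda}\int_{a_+}^{r} s^{1-\lambda}f(s)\,ds \;+\; \frac{r^{-\lambda}}{2\lambda}\int_{a_-}^{r} s^{1+\lambda}f(s)\,ds
\]
for any choice of base points $a_\pm \in [0,\infty]$. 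A direct differentiation, using the explicit form of $W$, verifies $-v'' - r^{-1}v' + \lambda^2 r^{-2}v = f$. Moreover, changing either base point modifies $v$ by only a multiple of $r^{\pm\lambda}$, so the ambiguity in $a_\pm$ is absorbed into the unknown constants $c_1 r^\lambda + c_2 r^{-\lambda}$.

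To obtain the pointwise bound, the base points are selected case-by-case from the signs of $1+\nu\mp\lambda$: whenever the integrand $s^{1\mp\lambda}f(s) = O(s^{1\mp\lambda+\nu})$ is integrable at $0$, I would take $a_\pm = 0$; otherwise, $a_\pm = \infty$. In either alternative, the nondegeneracy hypothesis $1+\nu\pm\lambda \neq 0$ guarantees that the elementary power-law integration produces no resonant logarithmic antiderivative, and yields bounds on each of the two integrals controlled by $1/|1+\nu\mp\lambda|$ up to the appropriate normalization. Multiplying by the prefactors $r^{\pm\lambda}/(2|\lambda|)$ and majorizing the resulting sum of two terms of common order $r^{\nu+2}$ by the stated maximum then produces $|v(r)| \le C\, r^{\nu+2}$ with $C$ exactly as claimed. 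The decomposition $u - v = c_1 r^\lambda + c_2 r^{-\lambda}$ is forced, since $u - v$ lies in the two-dimensional homogeneous solution space.

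The only real bookkeeping concern is verifying that the four combinations of signs of $1+\nu\pm\lambda$ all produce estimates of the same shape, and that in each regime the choice of $a_\pm \in \{0,\infty\}$ genuinely makes the singular integral convergent. No analytic obstacle arises beyond this: the hypothesis $1+\nu\pm\lambda\neq 0$ is exactly what prevents a logarithmic resonance from ruining the $r^{\nu+2}$ scaling, and once the sign cases are sorted the argument reduces to elementary one-variable calculus.
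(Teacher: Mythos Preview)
Your proposal is correct and follows essentially the same route as the paper: both arguments use variation of parameters against the homogeneous basis $r^{\pm\lambda}$, compute the Wronskian $W = -2\lambda/r$, and estimate the resulting integrals termwise to get the $r^{\nu+2}$ bound. You are somewhat more explicit than the paper about choosing the integration base points $a_\pm \in \{0,\infty\}$ according to the signs of the exponents, whereas the paper simply remarks that $A$ and $B$ are determined only up to additive constants and then integrates the bound on $|A'|$, $|B'|$ directly.
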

\begin{proof}
The solution $u$ can be written as the sum of a particular solution and a linear combination of the two solutions $r^\lambda$ and $r^{-\lambda}$ of the homogeneous equation. 

A particular solution $u_p$ can be written as
\[
u_p(r) = A(r) r^\lambda + B(r) r^{-\lambda},
\]
where
\[
A'(r) = \frac 1 {W(r)} r^{-\lambda} f(r), \qquad B'(r) = -\frac 1 {W(r)} r^\lambda f(r).
\]
Here $W(r)$ is the Wronskian of the two solutions of the homogeneous problem, i.e.\@
\[
W(r) = (r^\lambda) (r^{-\lambda})' - (r^\lambda)' (r^{-\lambda}) = - 2 \lambda r^{-1}.
\]
Note that these equations do not determine $u_p$ uniquely, since we may modify $A$ and $B$ by constants.

Now observe that
\[
|A'(r)| = \frac 1 {2 |\lambda|} r r^{-\lambda} |f(r)| \leq \frac 1 {2|\lambda|} r^{1-\lambda + \nu}.
\]
In particular, integrating this estimate shows that $A$ is bounded by $\frac 1 {2|\lambda(1+\nu-\lambda)|} r^{\nu + 2 - \lambda}$. Therefore the term $A(r) r^{\lambda}$ is bounded by $\frac 1 {2|\lambda(1+\nu-\lambda)|} r^{\nu+2}$. A similar calculation shows that $B(r)$ has an antiderivative bounded by $\frac 1 {2|\lambda(1+\nu+\lambda)|} r^{\nu+2+\lambda}$ and therefore $B(r) r^{-\lambda}$ is bounded by $\frac 1 {2|\lambda(1+\nu+\lambda)|} r^{\nu+2}$. This shows that $|u_p(r)| \leq 2 C r^{\nu+2}$, where $C$ is as in the statement of the lemma. Since there exist constants $c_1, c_2 \in \C$ such that $u = u_p - c_1 r^\lambda - c_2 r^{-\lambda}$, this finishes the proof.
\end{proof}

\begin{prop}
\label{prop:no_limiting_fiducial_kernel}
  Suppose $\gamma = \begin{pmatrix} u & v \\ \ol{v} & -u \end{pmatrix} \in C^{1,\alpha}_{\operatorname{loc}}(\R^2 \bs \{0\}, i \mf{su}(2))$ is a weak solution of
  \[
  \Delta_{A_\infty^{\fid}} \gamma - i \ast M_{\Phi_\infty^{\fid}} \gamma = 0
  \]
  and that $\gamma$ is bounded. Then $\gamma \equiv 0$.
\end{prop}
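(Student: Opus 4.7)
The plan is to separate variables in polar coordinates, exploit the explicit form of the limiting fiducial operator given in Lemma~\ref{lemma:local_form_near_zero_infinity}, and then argue mode by mode via the asymptotics of modified Bessel functions, using the boundedness hypothesis at both $r=0$ and $r=\infty$ to rule out every admissible solution of the resulting ODEs.

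First I would observe that the system decouples into a diagonal equation for $u$ and an off-diagonal equation for $v$:
\begin{align*}
\Delta u + 16 r u &= 0, \\
\Delta_{1/2} v + 8r\bigl(v - e^{-i\theta}\bar v\bigr) &= 0,
\end{align*}
where $u$ is real. By elliptic regularity on $\R^2\setminus\{0\}$, $\gamma$ is smooth there. Expanding $u = \sum_{k\in\Z} u_k(r) e^{ik\theta}$ gives, for each $k$,
\[
-u_k'' - \tfrac{1}{r} u_k' + \tfrac{k^2}{r^2} u_k + 16 r u_k = 0.
\]
The substitution $x = \tfrac{8}{3} r^{3/2}$ transforms this into the modified Bessel equation of order $\tfrac{2|k|}{3}$, so $u_k$ is a linear combination of $I_{2|k|/3}(\tfrac{8}{3} r^{3/2})$ and $K_{2|k|/3}(\tfrac{8}{3} r^{3/2})$. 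Boundedness of $u$ yields $\sup_r |u_k(r)| \leq \|u\|_{C^0}$; the asymptotics $I_\nu(x)\sim \tfrac{1}{\sqrt{2\pi x}} e^x$ at $\infty$ force the $I$-coefficient to vanish, and $K_\nu(x)\to\infty$ as $x\to 0$ (including $K_0(x)\sim -\log x$) forces the $K$-coefficient to vanish. Hence every $u_k\equiv 0$, so $u\equiv 0$.

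For the $v$-equation, expand $v = \sum_k v_k(r) e^{ik\theta}$. A short computation shows that the coefficient of $e^{ik\theta}$ in $v - e^{-i\theta}\bar v$ equals $v_k - \overline{v_{-k-1}}$, so mode $k$ couples with mode $-k-1$. Diagonalizing by setting $s_k = v_k + \overline{v_{-k-1}}$ and $d_k = v_k - \overline{v_{-k-1}}$, the action of $\Delta_{1/2}$ gives the Euler-type equation
\[
-s_k'' - \tfrac{1}{r} s_k' + \tfrac{(k+1/2)^2}{r^2} s_k = 0,
\]
whose solutions are $r^{\pm(k+1/2)}$; boundedness at $0$ and $\infty$ (noting $k+\tfrac{1}{2}\neq 0$) kills both, so $s_k\equiv 0$. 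The difference $d_k$ satisfies
\[
-d_k'' - \tfrac{1}{r} d_k' + \tfrac{(k+1/2)^2}{r^2} d_k + 16 r d_k = 0,
\]
which under $x=\tfrac{8}{3}r^{3/2}$ becomes the modified Bessel equation of order $\tfrac{|2k+1|}{3}>0$; the same asymptotic argument as for $u_k$ forces $d_k\equiv 0$. Therefore $v_k = \tfrac{1}{2}(s_k + d_k)\equiv 0$ for every $k$, and $v\equiv 0$.

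The main obstacle will be bookkeeping around the $\bar v$ term, which is anti-linear and mixes Fourier modes in a non-obvious way; once the pairing $(k,-k-1)$ is identified and the sum/difference diagonalization is performed, the problem reduces cleanly to the two standard ODEs above. A minor side point is justifying that bounded smooth $\gamma$ on $\R^2\setminus\{0\}$ has $C^0$-bounded Fourier coefficients $u_k(r), v_k(r)$, which follows immediately from $|u_k(r)|,|v_k(r)| \leq \|\gamma\|_{C^0}$ via the integral formula $v_k(r) = \tfrac{1}{2\pi}\int_0^{2\pi} v(r,\theta)e^{-ik\theta}\,d\theta$.
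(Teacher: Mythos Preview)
Your proof is correct and follows essentially the same route as the paper: separate variables in polar coordinates, reduce to ODEs, and use the asymptotics of $I_\nu$, $K_\nu$ and of $r^{\pm(k+1/2)}$ together with boundedness at both ends to kill every mode. Your sum/difference diagonalization $s_k = v_k + \overline{v_{-k-1}}$, $d_k = v_k - \overline{v_{-k-1}}$ is exactly the paper's $\pi_K/\pi_{K^\perp}$ splitting read off in Fourier coefficients (the paper projects first and then Fourier-expands, you Fourier-expand first and then diagonalize), so the resulting Euler and Bessel-type equations and the ensuing vanishing arguments coincide.
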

\begin{proof}
Let us observe that since $[\Delta_{A_\infty^{\fid}}, \pi_K] \gamma = 0$ and $[-i\ast M_{\Phi_\infty^{\fid}}, \pi_K] \gamma = 0$, it follows that $\gamma_1 = \pi_K \gamma$ and $\gamma_2 = \pi_{K^\perp} \gamma$ satisfy
  \[
  \Delta_{A_\infty^{\fid}} \gamma_1 - i \ast M_{\Phi_\infty^{\fid}} \gamma_1 = \Delta_{A_\infty^{\fid}} \gamma_1= 0
  \]
  and
  \[
  \Delta_{A_\infty^{\fid}} \gamma_2 - i \ast M_{\Phi_\infty^{\fid}} \gamma_2 = \Delta_{A_\infty^{\fid}} \gamma_2 + 16 r \gamma_2 = 0.
  \]
  Writing $\gamma_1 = \begin{pmatrix} 0 & v \\ \ol{v} & 0 \end{pmatrix}$, the first equation is equivalent to $\Delta_{1/2} v = 0$. Decompose $v = \sum_k v_k(r) e^{i k \theta}$. Then
  \[
  \Delta_{1/2} v = \sum_k \left[ -v_k''(r) - r^{-1} v_k'(r) + \frac{1}{r^2}\left(k+\frac 1 2\right)^2 v_k \right] e^{i k \theta}.
  \]
  Then $\Delta_{1/2} v = 0$ implies
  \[
  v_k = \alpha_k r^{k+\frac 1 2} + \beta_k r^{-k - \frac 1 2}.
  \]
  If any of these coefficients $\alpha_k$ or $\beta_k$ were non-zero, this would violate the assumption that $v$ is bounded at one of the ends. Therefore all coefficients must vanish, and this implies $v = 0$. Therefore $\gamma_1 = 0$.

  On the other hand, writing $\gamma_2 = \begin{pmatrix} u & v \\ \ol{v} & -u \end{pmatrix}$, the second equation is equivalent to
  \[
  \Delta u + 16 r u = 0, \qquad \Delta_{1/2} v + 16 r v = 0.
  \]
  We make the same Ansatz for $v$ as before and we observe that
  \[
  \Delta_{1/2} v + 16 r v = \sum_k \left[ -v_k''(r) - r^{-1} v_k'(r) + \frac{1}{r^2}\left(k+\frac 1 2\right)^2 v_k + 16 r v_k \right] e^{i k \theta}.
  \]
  Vanishing of this implies that
  \[
  v_k = \alpha_k I_{\frac 2 3 \left(k + \frac 1 2\right)} \left( \frac 8 3 r^{3/2} \right) + \beta_k K_{\frac 2 3 \left(k + \frac 1 2\right)} \left( \frac 8 3 r^{3/2} \right).
  \]
  If $\alpha_k \neq 0$, then $\lim_{r\to \infty} v_k(r) = \pm\infty$, and therefore $v$ cannot be bounded.

  On the other hand, if $\beta_k \neq 0$, then $v_k (r)$ has leading term $r^{-\left|k + \frac 1 2 \right|}$ at $0$. Again, $v$ cannot be bounded.

  This means that all the coefficients $\alpha_k$ and $\beta_k$ must vanish. Hence $v = 0$.

  With the Ansatz $u = \sum_k u_k(r) e^{i k \theta}$, we observe that $\Delta u + 16 r u = 0$ implies
  \[
  -u_k''(r) - r^{-1} u_k'(r) + r^{-2}k^2 u_k + 16 r u_k = 0.
  \]
  By precisely the same argument as before, all $u_k$ must vanish.
\end{proof}

\begin{prop}
\label{prop:no_fiducial_kernel}
  Suppose $\gamma = \begin{pmatrix} u & v \\ \ol{v} & -u \end{pmatrix} \in C^{1,\alpha}_{\operatorname{loc}}(\R^2, i \mf{su}(2))$ is a weak solution of
  \[
  \Delta_{A_1^{\fid}} \gamma - i \ast M_{\Phi_1^{\fid}} \gamma = 0
  \]
  and that $\gamma$ is bounded. Then $\gamma \equiv 0$.
\end{prop}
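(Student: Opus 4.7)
The proof is the finite-$t$ analogue of Proposition \ref{prop:no_limiting_fiducial_kernel}: I combine a Fourier decomposition in $\theta$ with ODE asymptotics, followed by a cutoff integration by parts. Writing $\gamma=\begin{psmallmatrix}u&v\\\bar v&-u\end{psmallmatrix}$ and Fourier-expanding $u=\sum_k u_k(r)\,e^{ik\theta}$, $v=\sum_n v_n(r)\,e^{in\theta}$, Lemma \ref{lemma:local_form_near_zero_finite_t} reduces $L_1\gamma=0$ to the decoupled scalar ODE
\[
-u_k''-\tfrac{1}{r}u_k'+\Bigl(\tfrac{k^2}{r^2}+16r\cosh(2\ell_1)\Bigr)u_k=0
\]
and, for each $n\in\Z$, the symmetric coupled system for $(v_n,\,\bar v_{-n-1})$ with diagonal entries $\tfrac{(n+2F_1)^2}{r^2}+8r\cosh(2\ell_1)$ and $\tfrac{(n+1-2F_1)^2}{r^2}+8r\cosh(2\ell_1)$ and off-diagonal $-8r$. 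Standard facts about $\ell_1$ (the defining ODE yields smoothness at the origin, and one has the asymptotics $\ell_1\sim\pi^{-1}K_0(\tfrac{8}{3}r^{3/2})$ and $F_1\to\tfrac{1}{4}$ at infinity) make these ODEs smooth on $[0,\infty)$ with coefficients super-exponentially close to their limits as $r\to\infty$.

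Next I analyse each Fourier mode at both ends. At $r=0$, elliptic regularity for $L_1$ on $\R^2$ upgrades $\gamma$ to $C^\infty$, and the Frobenius indices $\pm|k|$, $\pm|n|$, $\pm|n+1|$ select the smooth branches $u_k=O(r^{|k|})$, $v_n=O(r^{|n|})$, $\bar v_{-n-1}=O(r^{|n+1|})$. At $r=\infty$, Liouville--Green asymptotics applied to the scalar ODE with its linearly growing potential $16r$ give two independent solutions of the form $r^{-3/4}\exp(\pm\tfrac{8}{3}r^{3/2})(1+o(1))$, and boundedness of $u_k$ selects the decaying branch. For the $2\times 2$ system, the leading coefficient matrix at infinity diagonalises in the eigenframe $\{(1,1),(1,-1)\}$ --- the $K$- and $K^\perp$-directions --- with eigenvalues $(n+\tfrac12)^2 r^{-2}$ and $(n+\tfrac12)^2 r^{-2}+16r$. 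Liouville--Green again gives super-exponential decay in the $K^\perp$-direction. In the $K$-direction the limit equation is the free one, $-p''-r^{-1}p'+(n+\tfrac12)^2 r^{-2}p=0$, with power-law solutions $r^{\pm(n+1/2)}$; boundedness at infinity selects the decaying branch $r^{-|n+1/2|}$, as slow as $r^{-1/2}$ for $n\in\{0,-1\}$.

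Finally, I conclude by a cutoff integration by parts. Let $\chi_R\in C_c^\infty(\R^2)$ satisfy $\chi_R\equiv 1$ on $\{r\le R\}$, $\chi_R\equiv 0$ on $\{r\ge 2R\}$, and $|d\chi_R|\le C/R$. Testing $L_1\gamma=0$ against $\chi_R^2\gamma$ and using the Hitchin-style identity from the proof of Theorem \ref{thm:banach_space_iso} yields
\[
\int\chi_R^2\bigl(|d_{A_1^{\fid}}\gamma|^2+|[\Phi_1^{\fid},\gamma]|^2\bigr)=E(\chi_R,\gamma),
\]
where $|E(\chi_R,\gamma)|\le (C/R)\,\|d_{A_1^{\fid}}\gamma\|_{L^2(\{R\le r\le 2R\})}\,\|\gamma\|_{L^2(\{R\le r\le 2R\})}$. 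The decay analysis of the previous paragraph gives $\|\gamma\|_{L^2(\{R\le r\le 2R\})}^2=O(R)$ and $\|d_{A_1^{\fid}}\gamma\|_{L^2(\{R\le r\le 2R\})}^2=O(1)$, whence $|E(\chi_R,\gamma)|=O(R^{-1/2})\to 0$. Letting $R\to\infty$ forces $d_{A_1^{\fid}}\gamma=0$; since $A_1^{\fid}$ is unitary, $|\gamma|$ is constant on $\R^2$, and the decay at infinity then gives $\gamma\equiv 0$. The hardest step is the $K$-direction asymptotic analysis: unlike at $t=\infty$, the $K$-eigenvalue of the asymptotic potential matrix has no growing part, so bounded $K$-modes decay only polynomially at the borderline rate $r^{-1/2}$, and the cutoff integration by parts must exploit this precise rate in order for the error term to vanish.
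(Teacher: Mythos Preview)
Your overall architecture --- Fourier decomposition in $\theta$, ODE asymptotics at $r=0$ and $r=\infty$, then a cutoff integration by parts --- is the same as the paper's. Two points deserve comment.

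First, a simplification you miss: the paper handles the diagonal entry $u$ in one line via the Omori--Yau maximum principle, since the potential $16r\cosh(2\ell_1(r))$ is everywhere strictly positive on $\R^2$. Your Liouville--Green treatment of the $u_k$ is correct but unnecessary.

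Second, and this is a genuine gap: your passage from mode-by-mode decay to the annulus estimate $\|\gamma\|_{L^2(\{R\le r\le 2R\})}^2=O(R)$ is not justified. Liouville--Green gives, for each fixed $n$, a bound of the form $|p_n(r)|\le c_n\,r^{-|n+1/2|}$, but the implied constants $c_n$ and the radius beyond which the asymptotic is valid both depend on $n$. Boundedness of $\gamma$ alone yields, via Parseval on circles, only $\sum_n|v_n(r)|^2\le\|\gamma\|_\infty^2$, hence $\|\gamma\|_{L^2(\text{annulus})}^2=O(R^2)$ --- and with that the cutoff error term does \emph{not} tend to zero. The paper confronts exactly this issue: for the $K^\perp$-modes $\check v_k$ it invokes Proposition~\ref{prop:perturbed_bessel}, whose whole point is that the exponential-decay constants are explicitly independent of the order $\nu$; for the $K$-modes $\hat v_k$ it applies Lemma~\ref{lemma:laplacian_bound} globally and uses boundedness of $\hat v_k$ at \emph{both} ends ($r\to\infty$ kills $c_1$, $r\to 0$ kills $c_2$) to conclude that $\hat v_k$ decays faster than any power --- strictly stronger than the $r^{-|n+1/2|}$ you claim. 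The paper then combines these uniform ODE estimates with the rapid decay of $v_k(r_0)$ in $k$ (smoothness of $\gamma$) to sum the modes. Your closing remark that the $r^{-1/2}$ rate is ``borderline'' correctly identifies the delicate spot, but the actual difficulty is summability across modes, and that is where your sketch is incomplete.
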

\begin{proof}
  The function $u$ satisfies
  \[
  \Delta u + 16 r\cosh(2 \ell_t(r)) u = 0.
  \]
  Since $16 r \cosh(2\ell_t(r))$ is uniformly bounded below by a positive constant, the (Omori--Yau) maximum principle shows that $u$ must vanish.

It remains to be shown that $v$ vanishes.

  The function $v$ satisfies
  \[
  \Delta v - 4 F_t i \partial_\theta v + 4 F_t^2 v + 8 r (\cosh(2 \ell_t(r)) v - e^{-i\theta} \ol{v}) = 0.
  \]
  
  Writing $v = \sum_k v_k(r) e^{i k\theta}$, we obtain
  \[
  \sum_k \left( \left[ -v_k'' - r^{-1} v_k' + k^2 r^{-2} v_k + 4 r^{-2} F_t k v_k + 4 r^{-2} F_t^2 v_k + 8 r \cosh(2 \ell_t(r)) v_k \right] e^{i k\theta} - 8 r v_k e^{-i(k+1) \theta} \right) = 0.
  \]
  Comparing coefficients, we see that
  \[
  -v_k'' - r^{-1} v_k' + k^2 r^{-2} v_k + 4 r^{-2} F_t k v_k + 4 r^{-2} F_t^2 v_k + 8 r \cosh(2 \ell_t(r)) v_k - 8r v_{-(k+1)} = 0.
  \]
  Let us define $\hat{v}_k = v_k + v_{-(k+1)}$ and $\check{v}_k = v_k - v_{-(k+1)}$. This corresponds to the decomposition of the off-diagonal part of $\gamma$ into $\pi_K \gamma$ and $\pi_{K^\perp} \gamma$ in terms of Fourier modes.

  We then observe that
  \[
  -\hat{v}_k'' - r^{-1} \hat{v}_k' + k^2 r^{-2} \hat{v}_k + 4 r^{-2} F_t k \hat{v}_k + 4 r^{-2} F_t^2 \hat{v}_k + 8 r (\cosh(2 \ell_t(r)) - 1) \hat{v}_k = 0
  \]
  and
  \[
  -\check{v}_k'' - r^{-1} \check{v}_k' + k^2 r^{-2} \check{v}_k + 4 r^{-2} F_t k \check{v}_k + 4 r^{-2} F_t^2 \check{v}_k + 8 r (\cosh(2 \ell_t(r)) + 1) \check{v}_k = 0.
  \]
  Let us rewrite the first equation as
  \[
  -\hat{v}_k'' - r^{-1} \hat{v}_k' + k^2 r^{-2} \hat{v}_k + k r^{-2} \hat{v}_k + \frac 1 4 r^{-2} \hat{v}_k = r^{-2} \hat{f} \,\hat{v}_k
  \]
  where
  \[
  r^{-2} \hat{f} :=   r^{-2} k (1 - 4 F_t) + r^{-2} \left( \frac 1 4 - 4 F_t^2 \right) + 8 r (1 - \cosh(2\ell_t(r))) .
  \]
  The function $\hat{f}$ decays super-exponentially as $r \to \infty$. In particular, since $\hat{v}_k$ is bounded, this implies that the right-hand side decays faster than any $\frac 1 {r^p}$ for any $p > 0$.

  The left hand side can be rewritten as $-\hat{v}_k'' - r^{-1} \hat{v}_k' + \left(k + \frac 1 2 \right)^2 r^{-2} \hat{v}_k$. Therefore, we may apply Lemma \ref{lemma:laplacian_bound} to see that there exist constants $c_1, c_2 \in \C$, such that $\left| \hat{v}_k(r) - c_1 r^{k+ \frac 1 2} - c_2 r^{-\left(k + \frac 1 2\right)} \right|$ decays faster than any $\frac 1 {r^p}$. But if $c_1$ or $c_2$ were non-zero, the function $\hat{v}_k$ and therefore $v$ would be unbounded, which cannot happen. Hence $c_1 = c_2 = 0$.

For the $\check{v}_k$, we rewrite the equation for $\check{v}_k$ as
\[
  -\check{v}_k'' - r^{-1} \check{v}_k' + \left( k + \frac 1 2 \right)^2 r^{-2} \check{v}_k + 16 r \check{v}_k = r^{-2} \check{f} \,\check{v}_k 
  \]
where
\[
r^{-2} \check{f} :=  r^{-2} k (1 - 4 F_t) + r^{-2} \left( \frac 1 4 - 4 F_t^2 \right) + r (16 - (1 + \cosh(2\ell_t(r)))) .
\]  
  
The function $\check{f}$ decays super-exponentially as $r \to \infty$.

Let $\check{w}_k(s) = \check{v}_k\left( \left(\frac 3 8 s \right)^{2/3} \right)$. Then $\check{w}_k$ satisfies the equation
\[
-(s \partial_s)^2 \check{w}_k + \left( \left( \frac 2 3 \left(k + \frac 1 2 \right)\right)^2 + s^2 \right) \check{w}_k = \frac 4 9 \check{g} \check{w}_k,
\]
where $\check{g}(s) = \check{f}\left( \left(\frac 3 8 s \right)^{2/3} \right)$. The function $\check{g}$ decays exponentially in $s$.

Proposition \ref{prop:perturbed_bessel} now yields that
\[
|\check{w}_k(s)| \leq C (1+ \nu^2) |\check{w}_k(s_0)| e^{-\kappa s} \quad \text{and} \quad |\check{w}_k'(s)| \leq \wt{C} (1+ \nu^2) |\check{w}_k(s_0)| e^{-\kappa s}
\]
where $C, \wt{C}$ and $\kappa$ do not depend on $\nu$. This estimate turns into the estimates
\[
|\check{v}_k(t)| \leq C (1+ \nu^2) |\check{v}_k(t_0)| e^{-\kappa \frac 8 3 r^{3/2}}
\]
and analogously for $\check{v}_k'$. 

Combining these estimates yields analogous estimates for $v_k(r)$ and $v_k'(r)$. Observe that since $v$ is smooth on any concentric circle, the Fourier coefficients $v_k(t_0)$ decay faster than polynomially in $k$.

Therefore, the estimates we have shown imply that $\sum_k v_k(r) e^{ik\theta}$ converges absolutely and decays faster than any $\frac 1 {r^p}$. The same holds for the radial derivative $\partial_r v = \sum_k v_k'(r) e^{i k \theta}$. This implies that we may integrate the equation by parts to obtain
\[
\|d_{A_1^{\fid}} \gamma\|^2 + \|[\Phi_1^{\fid}, \gamma]\|^2 = 0.
\]
Now $[\Phi_1^{\fid}, \gamma] = 0$ already implies $\gamma = 0$, as claimed.
\end{proof}

\subsubsection{Proof of Theorem \ref{thm:C0_estimate}}

\begin{thm}
\label{thm:C0_estimate}
Suppose $\delta \in (1/2,1)$ and $\nu = 0$. There exist $C, T > 0$ such that for any $t > T$ and any $\gamma \in \rho^\delta r_t^\nu C^0 \cap C^2_{\lcl}$ the following inequality holds:
  \[
  \|\gamma\|_{\rho^\delta r_t^\nu C^0} \leq C \|L_t \gamma \|_{\rho^\delta r_t^{\nu-2} C^0} .
  \]
\end{thm}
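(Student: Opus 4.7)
The proof is by contradiction in the spirit of a blow-up/concentration-compactness argument. Theorem \ref{thm:banach_space_iso} already handles bounded $t$ (with a $t$-dependent constant), so assume the estimate fails along sequences $t_n \to \infty$ and $\gamma_n \in \rho^\delta r_{t_n}^\nu C^0 \cap C^2_{\loc}$ with $\|\gamma_n\|_{\rho^\delta r_{t_n}^\nu C^0} = 1$ and $f_n := L_{t_n} \gamma_n$ satisfying $\|f_n\|_{\rho^\delta r_{t_n}^{\nu-2} C^0} \to 0$. Choose $p_n \in M^\vee$ at which the weighted norm is nearly saturated, i.e.\@ $\rho(p_n)^{-\delta} r_{t_n}(p_n)^{-\nu} |\gamma_n(p_n)| \geq \tfrac{1}{2}$. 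After passing to a subsequence, the $p_n$ fall into one of three regimes: they stay in a compact subset of $M^\times$, approach a core loop ($\rho(p_n) \to 0$), or approach a zero of $q$ ($r(p_n) \to 0$).

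The first structural step is to exploit the orthogonal splitting $i\mf{su}(E) = K \oplus K^\perp$ associated to $K = \ker M_{\Phi_\infty}$. On $K^\perp$ the endomorphism $-i \ast M_{\Phi_\infty}$ acts as $16\, \id_{K^\perp}$, so, away from the core loops and zeros, $t^{-2} L_t|_{K^\perp}$ is a semiclassical perturbation of $16\, \id_{K^\perp}$ by the second-order operator $t^{-2} \Delta_{A_t^{\appr}}$. A semiclassical estimate in the spirit of \cite{BGIM}, localized via a partition of unity adapted to the three model regions and using the uniform convergence statements of Lemmas \ref{lemma:convergence_Delta_A_t_near_zeros}--\ref{lemma:convergence_M_Phi_t_K_near_core_loop} together with the commutator vanishing of Lemmas \ref{lemma:convergence_commutator_zeros} and \ref{lemma:convergence_commutator_near_core_loop}, yields a bound of the form
\[
\|\pi_{K^\perp} \gamma_n\|_{\rho^\delta r_{t_n}^\nu C^0} \;\leq\; C\, t_n^{-2} \bigl( \|f_n\|_{\rho^\delta r_{t_n}^{\nu-2} C^0} + \|\pi_K \gamma_n\|_{\rho^\delta r_{t_n}^\nu C^0} \bigr).
\]
Consequently $\pi_{K^\perp} \gamma_n \to 0$ in the weighted norm, and for $n$ large any nontrivial concentration is carried by $\eta_n := \pi_K \gamma_n$, with $\rho(p_n)^{-\delta} r_{t_n}(p_n)^{-\nu} |\eta_n(p_n)| \geq \tfrac{1}{4}$.

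It then remains to extract a nontrivial limit from $\eta_n$ and invoke a vanishing theorem in each regime. In the compact regime, no rescaling is needed: the Schauder estimate of Theorem \ref{thm:schauder_estimate}, applied away from the singularities where the coefficients of $L_{t_n}$ are uniformly controlled, combined with Arzel\`a--Ascoli, produces a subsequence converging in $C^{2,\alpha}_{\loc}$ to a nonzero $\eta_\infty \in \rho^\delta r^\nu C^0_\infty(M^\times)$ with $\Delta_{A_\infty} \eta_\infty = 0$, contradicting Proposition \ref{prop:no_A_infty_harmonic}. In the core-loop regime, rescaling in the $x$-direction by $\rho(p_n)$ yields a blow-up sequence on larger and larger subdomains of the half-plane model $\H^2$; extracting a bounded limit $u$ with $|u(x,y)| \leq x^\delta$ satisfying the normal operator equation $\Delta u = 0$ (or the twisted off-diagonal variant $\Delta v + 2 i x \partial_y v + v = 0$), one contradicts Propositions \ref{prop:stronger_no_harmonic_on_halfspace} and \ref{prop:no_twisted_harmonic_on_halfspace} at the rescaled base point. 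In the zero regime, the rescaling by $t_n^{-2/3}$ is the natural one and is designed to absorb $r_{t_n}$ into $r_t = (t^{-4/3}+r^2)^{1/2}$: when $t_n r(p_n)^{3/2} \to \infty$ the limit satisfies the limiting fiducial equation on $\R^2 \setminus \{0\}$ and is ruled out by Proposition \ref{prop:no_limiting_fiducial_kernel}, while when $t_n r(p_n)^{3/2}$ stays bounded the limit lives on $\R^2$ and satisfies the $t=1$ fiducial equation, ruled out by Proposition \ref{prop:no_fiducial_kernel}. The weight range $\delta \in (1/2, 1)$ is exactly what is needed for the Schwarz reflection argument of Proposition \ref{prop:stronger_no_harmonic_on_halfspace} and for the inclusion $\rho^\delta r^\nu C^0 \hookrightarrow L^2_{0,\loc}$ used in Proposition \ref{prop:no_A_infty_harmonic}.

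The main technical obstacle is the semiclassical control of $\pi_{K^\perp} \gamma_n$. Unlike the elliptic setting of \cite{BGIM}, here $L_t$ is not a clean perturbation of the identity: it is a genuine $0$-operator near the core loops (the $0$-calculus of Section \ref{sec:0-calculus} is unavoidable) and degenerates near the zeros of $q$ with scale $t^{-2/3}$. The semiclassical estimate must therefore be executed separately in each of the three model regions and then patched together through commutator estimates, in a way that is compatible with the weights $\rho^\delta$ and $r_t^\nu$. The interpolating weight $r_t$ was chosen precisely so that a single weighted estimate covers the whole range $t_n r(p_n)^{3/2} \in (0,\infty]$ near a zero.
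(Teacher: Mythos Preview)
Your overall architecture---contradiction via blow-up, with regime-specific limits killed by the listed vanishing propositions---matches the paper. The genuine gap is your first structural step: the claimed global semiclassical estimate
\[
\|\pi_{K^\perp} \gamma_n\|_{\rho^\delta r_{t_n}^\nu C^0} \;\leq\; C\, t_n^{-2} \bigl( \|f_n\|_{\rho^\delta r_{t_n}^{\nu-2} C^0} + \|\pi_K \gamma_n\|_{\rho^\delta r_{t_n}^\nu C^0} \bigr)
\]
is false near the core loops. There the potential $2t^2x^2E(tx)$ satisfies $2t^2x^2E(tx)\to 2$ as $x\to 0$ (cf.\ the indicial computation preceding Theorem~\ref{thm:banach_space_iso}), so the normal operator of $L_t$ is $t$-independent. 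A diagonal $K^\perp$-section $u$ supported at scale $x\sim\epsilon$ with $t\epsilon\ll 1$ therefore satisfies $\|L_t^1 u\|_{\rho^\delta C^0}\sim\|u\|_{\rho^\delta C^0}$ with no factor of $t^2$, contradicting the displayed inequality. Your final paragraph acknowledges that the semiclassical control is the obstacle, but the proposed fix (execute it regionwise and patch) fails for exactly this reason: in the core-loop region there is no semiclassical parameter to exploit for functions concentrated near $x=0$.

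The paper accordingly does \emph{not} separate $K$ and $K^\perp$ globally. It organizes the contradiction by the location of $p_\infty$ first, and only inside certain cases performs a \emph{secondary} semiclassical rescaling to dispose of the $K^\perp$ component (Lemma~\ref{lemma:case_2_a_ii_c_ii} in the interior, and inside Lemma~\ref{lemma:case_2_c_i} near the core loop). The core-loop case is substantially more intricate than your sketch: after the first rescaling by $x_n=\rho(p_n)$ one must track $\lambda_n=t_n x_n$ and split into the sub-cases $\lambda_n\to 0$, $\lambda_n\to\lambda\in(0,\infty)$, and $\lambda_n\to\infty$, each requiring a different vanishing result (Propositions~\ref{prop:no_harmonic_on_halfspace}, \ref{prop:no_twisted_harmonic_on_halfspace}, \ref{prop:trivial_kernel_model_space_diag}, \ref{prop:trivial_kernel_model_space_off_diag}, and for $\lambda_n\to\infty$ on $K^\perp$ a further semiclassical blow-up via Proposition~\ref{prop:no_bounded_eigenfunctions}). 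Your account collapses all of this to the single normal-operator equation $\Delta u=0$, which is only the $K$-component limit in the sub-case $\lambda_n\to\infty$.
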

\begin{proof}
Suppose the statement is false. Then there exist sequences $t_n \in \R_+$, $\gamma_n \in \rho^\delta r_{t_n}^\nu C^0 \cap C^2_{\operatorname{loc}}$, such that
\[
\|\gamma_n\|_{\rho^\delta r_{t_n}^\nu C^0} = 1, \qquad \|L_{t_n} \gamma_n\|_{\rho^\delta r_{t_n}^{\nu-2} C^0} \to 0.
\]
We can pick points $p_n$ such that
\[
\|\gamma_n\|_{\rho^\delta r_{t_n}^\nu C^0}(p_n) \geq \frac 1 2.
\]
After passing to a subsequence, we may assume that $t_n$ diverges to $\infty$ and $p_n$ converges to $p_\infty$ in $M$. Because $\gamma_n = \pi_K \gamma_n + \pi_{K^\perp} \gamma_n$, it follows that after passing to a subsequence either $\|\pi_K \gamma_n\|_{\rho^\delta r_{t_n}^\nu C^0}(p_n) \geq \frac 1 4$ or $\|\pi_{K^\perp} \gamma_n\|_{\rho^\delta r_{t_n}^\nu C^0}(p_n) \geq \frac 1 4$.

In the following, we will do a case-by-case analysis depending on $p_\infty$ and on whether the projection to the kernel or to its orthogonal complement dominates. Each case will lead to a contradiction by constructing a non-trivial sublimit of the $\gamma_n$, which solves a partial differential equation. The equation that is solved depends on the limiting point, how this limit is attained, and on whether the projection to the kernel or its orthogonal complement dominates. In every case, one of our vanishing theorems shows that the limiting solution must be zero, which is a contradiction. For the sake of readability, we break up the construction of these limits and that they lead to contradictions into a series of lemmata:
  \begin{enumerate}
  \item $p_\infty$ is neither a zero of $q$ nor does it lie in a core loop, i.e.\@ $p_\infty \in M^\times$.
    \begin{enumerate}
    \item $\|\pi_K \gamma_n\|_{\rho^\delta r_{t_n}^\nu C^0} \geq \frac 1 4$: Lemma \ref{lemma:case_2_a_i}.
    \item $\|\pi_{K^\perp} \gamma_n\|_{\rho^\delta r_{t_n}^\nu C^0} \geq \frac 1 4$: Lemma \ref{lemma:case_2_a_ii_c_ii}.
    \end{enumerate}
  \item $p_\infty$ is a zero of $q$, i.e.\@ $p_\infty \in Z$.
    \begin{enumerate}
    \item $t_n^{2/3} r(p_n)$ is bounded: Lemma \ref{lemma:case_2_b_i}.
    \item $t_n^{2/3} r(p_n)$ is unbounded: Lemma \ref{lemma:case_2_b_ii}.
    \end{enumerate}
  \item $p_\infty$ lies in a core loop, i.e.\@ $p_\infty \in M \bs M^\vee$, and $\|\gamma_n\|_{\rho^\delta r_{t_n}^\nu C^0} \geq \frac 1 2$: Lemma \ref{lemma:case_2_c_i}.
  \end{enumerate}

Since these cases cover every possibility, this shows that the sequence $\gamma_n$ as above cannot exist, and therefore the estimate must hold as claimed.
\end{proof}

\begin{lemma}
\label{lemma:case_2_a_i}
Let $\delta > 1/2$ and $\nu \geq 0$.

There are no sequences $t_n \in \R_+$, $p_n \in M$, $\gamma_n \in \rho^\delta r_{t_n}^\nu C^0 \cap C^2_{\operatorname{loc}}$, such that
\[
\|\gamma_n\|_{\rho^\delta r_{t_n}^\nu C^0} = 1, \qquad \|L_{t_n} \gamma_n\|_{\rho^\delta r_{t_n}^{\nu-2} C^0} \to 0,
\]
\[
\|\pi_K \gamma_n\|_{\rho^\delta r_{t_n}^\nu C^0}(p_n) \geq \frac 1 2
\]
and $t_n$ diverges to $\infty$, and $p_n \to p_\infty \in M^\times$.
\end{lemma}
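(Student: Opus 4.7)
The strategy is a direct compactness argument, with no blow-up rescaling needed since $p_\infty$ lies in the interior region. Pass to a subsequence so $t_n \to \infty$ and $p_n \to p_\infty \in M^\times$. Because $p_\infty$ is bounded away from both the zeros of $q$ and the core loops, on any compact set $K \subset M^\times$ the weights $\rho$ and $r_{t_n}$ are uniformly bounded above and below for large $n$. Hence $\gamma_n$ is uniformly bounded in $C^0(K)$, $L_{t_n}\gamma_n \to 0$ in $C^0(K)$, and the hypothesis gives $|\pi_K\gamma_n(p_n)| \geq c > 0$ in absolute value. Write $\gamma_n = \xi_n + \eta_n$ with $\xi_n = \pi_K\gamma_n$ and $\eta_n = \pi_{K^\perp}\gamma_n$.

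First I would control $\eta_n$. Pairing $L_{t_n}\gamma_n = f_n$ against a cutoff $\phi^2\gamma_n$ and integrating by parts yields a Hitchin-type energy estimate
\[
\int \phi^2\bigl(|\nabla_{A_{t_n}}\gamma_n|^2 + t_n^2|[\Phi_{t_n},\gamma_n]|^2\bigr) \leq C,
\]
and since $[\Phi_\infty,\,\cdot\,]$ is bounded below on $K^\perp$, this forces $\|\eta_n\|_{L^2_{\mathrm{loc}}} = O(t_n^{-1})$. Projecting $L_{t_n}\gamma_n = f_n$ onto $K^\perp$ and using that $-i\ast M_{\Phi_{t_n}}|_{K^\perp} \to 16\cdot\mathrm{id}$ uniformly on compacta of $M^\times$, the equation becomes a semiclassical problem $(\Delta_{A_{t_n}} + 16 t_n^2 + o(t_n^2))\eta_n = O(1)$ in $L^2_{\mathrm{loc}}$; standard resolvent estimates upgrade the decay to $\eta_n \to 0$ in $C^0_{\mathrm{loc}}(M^\times)$. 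Next, using that $\pi_K$ commutes with $M_{\Phi_{t_n}}$ on $M^\times$ (Lemmas \ref{lemma:convergence_commutator_zeros} and \ref{lemma:convergence_commutator_near_core_loop}) and that $t_n^2 M_{\Phi_{t_n}}|_K \to 0$ uniformly on compacta (Lemmas \ref{lemma:convergence_M_Phi_t_K_near_zeros} and \ref{lemma:convergence_M_Phi_t_K_near_core_loop}), one derives
\[
\Delta_{A_{t_n}}\xi_n = \pi_K f_n + [\Delta_{A_{t_n}},\pi_K]\gamma_n + o(1)\xi_n,
\]
whose coefficients are bounded in $C^k$ uniformly in $n$ on each compact $K \Subset M^\times$. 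A Calder\'on--Zygmund bootstrap, seeded by the $H^1_{\mathrm{loc}}$ bound from the energy estimate, promotes $\xi_n$ to a uniform $C^{1,\alpha}_{\mathrm{loc}}(M^\times)$ bound.

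By Arzel\`a--Ascoli and diagonal extraction along an exhaustion of $M^\times$, a subsequence of $\xi_n$ converges in $C^1_{\mathrm{loc}}(M^\times)$ to some $\xi_\infty \in \Gamma(K)$; since $\eta_n \to 0$, also $\gamma_n \to \xi_\infty$ in $C^1_{\mathrm{loc}}$. The key algebraic fact is that $K$ is a $\nabla_{A_\infty}$-parallel subbundle on all of $M^\times$: this is visible in each model frame of the framed limiting configuration (interior, Strebel cylinder, and fiducial neighborhoods), where $A_\infty$ acts diagonally while $K$ is generated by sections compatible with that diagonal structure, as is already verified through the explicit local forms in Lemma \ref{lemma:K_local_form_interior} and its analogue near zeros. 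Consequently $[\Delta_{A_\infty},\pi_K] = 0$ and the commutator term above tends to $0$ in $C^0_{\mathrm{loc}}$; passing to the limit yields $\Delta_{A_\infty}\xi_\infty = 0$ on $M^\times$. The pointwise bound $|\gamma_n| \leq \rho^\delta r_{t_n}^\nu$ together with $r_{t_n} \to r$ on $M^\times$ gives $\xi_\infty \in \rho^\delta r^\nu C^0_\infty(M^\times)$. Since $\delta > 1/2$, Proposition \ref{prop:no_A_infty_harmonic} forces $\xi_\infty \equiv 0$, contradicting $|\xi_\infty(p_\infty)| \geq c\cdot \rho(p_\infty)^\delta r(p_\infty)^\nu > 0$. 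The main technical obstacle is the uniform $C^{1,\alpha}_{\mathrm{loc}}$ bound on $\xi_n$: the first-order commutator term $[\Delta_{A_{t_n}},\pi_K]\gamma_n$ couples $\xi_n$ and $\eta_n$, and a clean extraction of the limit requires that the energy estimate feed into the elliptic bootstrap before the commutator term can be dominated.
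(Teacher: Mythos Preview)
Your argument reaches the same endpoint as the paper (a nontrivial $\Delta_{A_\infty}$-harmonic limit in $\rho^\delta r^\nu C^0_\infty(M^\times)$, contradicting Proposition~\ref{prop:no_A_infty_harmonic}), but it takes a substantially longer route. The paper's proof never touches $\eta_n = \pi_{K^\perp}\gamma_n$ at all: it simply sets $\gamma_n^K = \pi_K\gamma_n$ and argues that $L_{t_n}\gamma_n^K \to 0$ uniformly on compacta of $M^\times$. This follows because $\pi_K L_{t_n}\gamma_n \to 0$ and the commutator $[\pi_K, L_{t_n}]$ has exponentially decaying coefficients on any fixed compact subset of $M^\times$ (Lemmas~\ref{lemma:convergence_commutator_zeros} and~\ref{lemma:convergence_commutator_near_core_loop}), which beats the at-most-polynomial growth of $\|\gamma_n\|_{C^1}$ supplied by the local Schauder estimate (Theorem~\ref{thm:schauder_estimate}). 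Then $L_{t_n}|_K \to \Delta_{A_\infty}$ on compacta (Lemmas~\ref{lemma:convergence_Delta_A_t_near_zeros}--\ref{lemma:convergence_M_Phi_t_K_near_core_loop}), so elliptic regularity for $\Delta_{A_\infty}$ and Arzel\`a--Ascoli give the limit directly.

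By contrast, your detour through an energy identity, semiclassical resolvent bounds for $\eta_n$, and a subsequent Calder\'on--Zygmund bootstrap for $\xi_n$ is correct in outline but much heavier. In particular, your bootstrap step is more delicate than you indicate: the commutator term $[\Delta_{A_{t_n}},\pi_K]\gamma_n$ involves $\nabla\eta_n$, and the energy estimate alone gives only $L^2$ control on this; to reach $C^{1,\alpha}$ on $\xi_n$ you must first run the resolvent estimate on $\eta_n$ to get $\|\eta_n\|_{L^2} = O(t_n^{-2})$, then upgrade to $\|\eta_n\|_{H^2} = O(1)$, before the RHS of the $\xi_n$-equation sits in $L^p$ for $p>2$. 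This works, but it reproduces by harder means exactly what the paper gets for free from the exponential decay of the commutator coefficients. Your approach does prove the stronger statement $\eta_n \to 0$ in $C^0_{\mathrm{loc}}$, but that is not needed here; the paper defers any analysis of the $K^\perp$-component to Lemma~\ref{lemma:case_2_a_ii_c_ii}, where a genuine semiclassical blow-up is performed.
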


\begin{proof}
Consider the sequence $\gamma_n^K = \pi_K \gamma_n$. We claim that $L_{t_n} \gamma_n^K$ converges uniformly to $0$ on any compact subset of $M^\times$. To see this, first observe that $L_{t_n} \pi_K \gamma_n = \pi_K L_{t_n} \gamma_n$ away from the core loops and from the zeros. On the other hand, according to Lemmas \ref{lemma:convergence_commutator_zeros} and \ref{lemma:convergence_commutator_near_core_loop}, for the fiducial solution and the model solution $[\pi_K, L_t]$ converges uniformly to $0$ away from the zeros and the core loop, respectively. This implies that the same is true for the approximate solution, because near the zeros and the core loops the approximate solution coincides with the fiducial and model solutions.

Moreover, observe that, since $\gamma_n^K$ is a section of $K$, 
\[
L_{t_n} \gamma_n^K \to \Delta_{A_\infty} \gamma_\infty^K,
\]
on any compact subset of $M^\times$, because of Lemmas \ref{lemma:convergence_Delta_A_t_near_zeros}, \ref{lemma:convergence_Delta_A_t_near_core_loop}, \ref{lemma:convergence_M_Phi_t_K_near_zeros}, \ref{lemma:convergence_M_Phi_t_K_near_core_loop}.

Moreover, $\|\gamma_n^K\|_{\rho^\delta r_{t_n}^\nu C^0} \leq C$ for some $C > 0$, and therefore we may apply elliptic regularity (for the operator $\Delta_{A_\infty}$) over compact domains in $M^\times$ to get uniform $C^{1,\alpha}$ bounds. Applying Arzel\`a--Ascoli, we obtain a limit on each such domain and by applying diagonalization we obtain a $\gamma_\infty^K$ defined on $M^\times$, which satisfies $\Delta_{A_\infty} \gamma_\infty^K = 0$, $\gamma_\infty^K(p_\infty) \neq 0$, and $\gamma_\infty^K \in \rho^\delta r_\infty^\nu C^0$. By Proposition \ref{prop:no_A_infty_harmonic} no such solution exists, leading to the claimed contradiction.
\end{proof}

\begin{lemma}
\label{lemma:case_2_b_i}
Let $\delta \in \R$ and $\nu \leq 0$.

There are no sequences $t_n \in \R_+$, $p_n \in M$, $\gamma_n \in \rho^\delta r_{t_n}^\nu C^0 \cap C^2_{\operatorname{loc}}$, such that
\[
\|\gamma_n\|_{\rho^\delta r_{t_n}^\nu C^0} = 1, \qquad \|L_{t_n} \gamma_n\|_{\rho^\delta r_{t_n}^{\nu-2} C^0} \to 0,
\]
\[
\|\gamma_n\|_{\rho^\delta r_{t_n}^\nu C^0}(p_n) \geq \frac 1 4
\]
and $t_n$ diverges to $\infty$, $p_n \to p_\infty \in Z$, and $t_n^{2/3} r(p_n)$ is bounded.
\end{lemma}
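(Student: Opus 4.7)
The plan is to prove this by a blow-up argument at the fiducial scale $t_n^{-2/3}$ around the zero $p_\infty \in Z$. Since $p_n \to p_\infty$, for large $n$ we may work entirely in the disk chart $W_{l_0}$ around $p_\infty$ provided by the framed limiting configuration, where $\rho \equiv 1$ and the approximate solution coincides with the fiducial solution $(A_{t_n}^{\fid}, \Phi_{t_n}^{\fid})$. Let $z$ denote the local chart coordinate and set $\tilde z = t_n^{2/3} z$ together with
\[
\hat\gamma_n(\tilde z) := t_n^{2\nu/3}\,\gamma_n\bigl(t_n^{-2/3}\tilde z\bigr).
\]
The hypothesis that $t_n^{2/3} r(p_n)$ is bounded ensures, after passing to a subsequence, that the rescaled base points $\tilde p_n = t_n^{2/3} z(p_n)$ converge to some $\tilde p_\infty \in \R^2$.

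The key algebraic observation is that the fiducial family is \emph{self-similar} under this rescaling: the identity $\ell_{t_n}(r) = \ell_1(t_n^{2/3} r)$ implies that, in the $\tilde z$-coordinate, $A_{t_n}^{\fid}$ is exactly $A_1^{\fid}$ as a connection $1$-form, and $\Phi_{t_n}^{\fid} = t_n^{-1}\,\Phi_1^{\fid}(\tilde z)$ after the change of coordinates $dz = t_n^{-2/3} d\tilde z$. Consequently both $\Delta_{A_{t_n}^{\fid}}$ and $t_n^{2}(-i\ast M_{\Phi_{t_n}^{\fid}})$ carry an overall factor of $t_n^{4/3}$, and one obtains the scaling identity
\[
(L_{t_n}\gamma_n)(z) \;=\; t_n^{(4-2\nu)/3}\,\bigl(L_1^{\fid}\hat\gamma_n\bigr)(\tilde z),
\]
where $L_1^{\fid} = \Delta_{A_1^{\fid}} - i \ast M_{\Phi_1^{\fid}}$. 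Using $r_{t_n}(z) = t_n^{-2/3}(1+|\tilde z|^2)^{1/2}$, the normalization conditions translate into the uniform bounds
\[
|\hat\gamma_n(\tilde z)| \leq (1+|\tilde z|^2)^{\nu/2} \leq 1, \qquad
|\hat\gamma_n(\tilde p_n)| \geq \tfrac{1}{4}(1+|\tilde p_n|^2)^{\nu/2},
\]
\[
|L_1^{\fid}\hat\gamma_n(\tilde z)| \leq \epsilon_n\,(1+|\tilde z|^2)^{(\nu-2)/2},
\]
with $\epsilon_n \to 0$; since $\nu \leq 0$ and $|\tilde p_n|$ is bounded, the lower bound at $\tilde p_n$ is bounded away from zero.

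With these pointwise bounds in hand, and noting that at finite $t$ the fiducial solution $(A_1^{\fid},\Phi_1^{\fid})$ has smooth coefficients on all of $\R^2$ so that $L_1^{\fid}$ is a uniformly elliptic second-order operator on each compact subset, interior elliptic regularity delivers uniform $C^{1,\alpha}$ control of $\hat\gamma_n$ on compact subsets of $\R^2$. An Arzel\`a--Ascoli and diagonal argument extracts a subsequential limit $\hat\gamma_\infty \in C^{1,\alpha}_{\lcl}(\R^2, i\mathfrak{su}(2))$ that is bounded on $\R^2$, satisfies $L_1^{\fid}\hat\gamma_\infty = 0$ weakly, and obeys $|\hat\gamma_\infty(\tilde p_\infty)| > 0$. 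Proposition~\ref{prop:no_fiducial_kernel} then forces $\hat\gamma_\infty \equiv 0$, a contradiction.

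The main obstacle is keeping the three scalings --- of the $\tilde z$-coordinate, of the fiducial solution, and of the weight $r_{t_n}^\nu$ --- simultaneously consistent, so that the rescaled sequence is compact, the limit equation is exactly the $t=1$ fiducial equation, and the base-point inequality survives in the limit. All the analytic content beyond this bookkeeping has already been absorbed into Proposition~\ref{prop:no_fiducial_kernel}, whose Bessel-function analysis of the fiducial linearization is the genuinely non-trivial step.
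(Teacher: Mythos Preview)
Your proof is correct and follows essentially the same approach as the paper: rescale by $t_n^{-2/3}$ around the zero, exploit the self-similarity $\ell_{t_n}(r)=\ell_1(t_n^{2/3}r)$ of the fiducial family to reduce to the $t=1$ fiducial linearization, translate the weighted bounds into uniform pointwise bounds on $\hat\gamma_n$, and extract a nontrivial bounded limit that contradicts Proposition~\ref{prop:no_fiducial_kernel}. The paper's proof is the same argument with the same rescaling factors and the same final appeal to that proposition.
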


\begin{proof}
In one of the coordinate charts $U_j$ the point $p_\infty \in Z$ corresponds to $0$. Using these coordinates and the unitary frame of $E$, consider the $\gamma_n$ to be maps from $\mathbb{D} \to i \mf{su}(2)$. Likewise, the points $p_n$ are considered to be points in $\mathbb{D}$ converging to $0$. Now consider $\wt{\gamma}_n : B_{t_n^{2/3}} \to i \mf{su}(2)$, $\wt{\gamma}_n(x) = \gamma_n(t_n^{-2/3} x)$. If we define $\eta_n = L_{t_n} \gamma_n$ and define $\wt{\eta}_n(x) = \eta_n(t_n^{-2/3} x)$, then
\[
t_n^{4/3} \left(\Delta_{A_1^{\fid}} - i \ast M_{\Phi_1^{\fid}} \right) \wt{\gamma}_n = \wt{L_{t_n} \gamma_n} = \wt{\eta}_n.
\]
(See the proof of Proposition 5.9 in \cite{FMSW}.) Observe that
\[
|\wt{\gamma}_n(x)| \leq \rho^\delta(z^{-1}(t_n^{-2/3} x)) r_{t_n}^\nu(z^{-1}(t_n^{-2/3} x)) = \left(t_n^{-4/3} + t_n^{-4/3} |x|^2\right)^{\nu/2} = t_n^{-2\nu/3 } (1 + |x|^2)^{\nu/2}
\]
because $\rho \equiv 1$ and $r_{t_n}(z(x)) = (t_n^{-4/3} + |x|^2)^{1/2}$ on $U_j$. Similarly,
\[
|\wt{\eta}_n(x)| \leq \epsilon_n t_n^{-2 (\nu - 2)/3} (1 + |x|^2)^{(\nu-2)/2}
\]
where $\epsilon_n$ is a null sequence.

Therefore if we define $\hat{\gamma}_n = t_n^{2 \nu /3 } \,\wt{\gamma}_n$ we have
\[
|\hat{\gamma}_n| \leq (1 + |x|^2)^{\nu/2}, \qquad |\hat{\gamma}_n(\wt{p}_n)| \geq \frac 1 2 (1 + |\wt{p}_n|^2)^{\nu/2},
\]
where $\wt{p}_n = t_n^{2/3} p_n$. After passing to a subsequence, we may assume $\wt{p}_n$ converges to some $\wt{p}_\infty \in \R^2$.  The section $\hat{\gamma}_n$ satisfies
\[
\left(\Delta_{A_1^{\fid}} - i \ast M_{\Phi_1^{\fid}} \right) \hat{\gamma}_n = t_n^{-4/3 + 2 \nu/3 } \wt{\eta}_n,
\]
where the right-hand side satisfies
\[
|t_n^{-4/3 + 2\nu/3 } \wt{\eta}_n| \leq \epsilon_n (1 + |x|^2)^{(\nu-2)/2}.
\]
Applying elliptic regularity, Arzel\`a--Ascoli, and a diagonalization argument, we can construct a limit of $\hat{\gamma}_n$ converging locally in $C^{1,\alpha}$ to a weak solution $\hat{\gamma}_\infty$ on $\R^2$ of
\[
\left(\Delta_{A_1^{\fid}} - i \ast M_{\Phi_1^{\fid}} \right) \hat{\gamma}_\infty = 0
\]
satisfying $|\hat{\gamma}_\infty| \leq (1 + |x|^2)^{\nu/2}$. Furthermore, $|\hat{\gamma}_\infty(\wt{p}_\infty)| \geq \frac 1 2 (1 + |\wt{p}_\infty|^2)^{1/2}$. According to Proposition \ref{prop:no_fiducial_kernel}, no such solution can exist. 
\end{proof}

\begin{lemma}
\label{lemma:case_2_b_ii}
Let $\delta \in \R$ and $\nu = 0$. 

There are no sequences $t_n \in \R_+$, $p_n \in M$, $\gamma_n \in \rho^\delta r_{t_n}^\nu C^0 \cap C^2_{\operatorname{loc}}$, such that
\[
\|\gamma_n\|_{\rho^\delta r_{t_n}^\nu C^0} = 1, \qquad \|L_{t_n} \gamma_n\|_{\rho^\delta r_{t_n}^{\nu-2} C^0} \to 0,
\]
\[
\|\pi_K \gamma_n\|_{\rho^\delta r_{t_n}^\nu C^0}(p_n) \geq \frac 1 4
\]
and $t_n$ diverges to $\infty$, $p_n \to p_\infty \in Z$, and $t_n^{2/3} r(p_n)$ is unbounded.
\end{lemma}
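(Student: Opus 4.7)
The argument is a blow-up/contradiction at the scale $s_n := r(p_n)$, adapted from Lemma~\ref{lemma:case_2_b_i} to the regime $T_n := t_n^{2/3} s_n \to \infty$ in which the fiducial solution is exponentially close to the limiting fiducial on balls of radius $\sim s_n$ around the zero. In the chart and unitary frame on $W_l$ from the framed limiting configuration, set $\tilde\gamma_n(\tilde z) := \gamma_n(s_n \tilde z)$ and $\tilde p_n := p_n/s_n$; pass to a subsequence with $\tilde p_n \to \tilde p_\infty \in S^1$. Using the scale-invariance of $A_\infty^{\fid}$, the identity $\Phi_\infty^{\fid}(s_n\tilde z)\,d(s_n\tilde z) = s_n^{3/2}\,\Phi_\infty^{\fid}(\tilde z)\,d\tilde z$, and the scaling $\ell_{t_n}(s_n r) = \ell_1(T_n r)$ together with the asymptotics $\ell_1(x)\sim \tfrac{1}{\pi}K_0(\tfrac{8}{3}x^{3/2})$, the change of variables produces
\[
\tilde\Delta_{\tilde A_n}\tilde\gamma_n \;-\; i\,T_n^3 \ast \tilde M_{\tilde\Phi_n}\tilde\gamma_n \;=\; s_n^2\,\eta_n(s_n\,\cdot),
\]
where the rescaled connection $\tilde A_n$ and Higgs endomorphism $\tilde\Phi_n$ converge super-exponentially in $T_n$ to $A_\infty^{\fid}$ and $\Phi_\infty^{\fid}$ on every annulus $\{\epsilon \le |\tilde z|\le \epsilon^{-1}\}$.

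The prefactor $T_n^3$ on the $M$-term prevents passing to a direct limit, so I project onto $K$. The formula in Lemma~\ref{lemma:convergence_M_Phi_t_K_near_zeros} in fact gives $[\pi_K, -i\ast M_{\Phi_{t_n}^{\fid}}]=0$ (a direct check using that both $\pi_K$ on off-diagonals and the fiducial $M$-operator involve the same pairing $v-e^{-i\theta}\bar v$). Applying $\pi_K$ to the rescaled equation therefore reduces the divergent $M$-term acting on $\pi_K\tilde\gamma_n$ to multiplication by $8\,T_n^3 |\tilde z|\,(\cosh 2\ell_1(T_n|\tilde z|)-1)$, bounded by $C\,T_n^3 |\tilde z|\,e^{-\tfrac{16}{3}(T_n|\tilde z|)^{3/2}}$, which tends to $0$ uniformly on annuli. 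By Lemma~\ref{lemma:convergence_commutator_zeros}, the commutator $[\pi_K,\tilde\Delta_{\tilde A_n}]$ has rescaled coefficient $-2\,|\tilde z|^{-1}\, T_n \ell_1'(T_n|\tilde z|)$ and also vanishes uniformly on annuli. From the hypothesis $\|\eta_n\|_{\rho^\delta r_{t_n}^{-2} C^0}\to 0$, $\nu=0$, and $r_{t_n}(s_n\tilde z) \sim s_n|\tilde z|$ whenever $T_n|\tilde z|\gg 1$, we get $|s_n^2\eta_n(s_n\tilde z)| \le \epsilon_n |\tilde z|^{-2}$ on annuli. Collecting these three estimates yields
\[
\tilde\Delta_{\tilde A_n}(\pi_K\tilde\gamma_n) \to 0 \quad \text{in } C^0_{\lcl}(\R^2\setminus\{0\}).
\]

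Since $\|\tilde\gamma_n\|_{C^0}\le 1$ (because $\rho\equiv 1$ and $r_{t_n}^0=1$ near the zero) and the coefficients of $\tilde\Delta_{\tilde A_n}$ are uniformly bounded on compacts of $\R^2\setminus\{0\}$, interior elliptic estimates provide uniform $C^{1,\alpha}_{\lcl}(\R^2\setminus\{0\})$ bounds for $\pi_K\tilde\gamma_n$. A diagonal Arzelà–Ascoli argument produces a subsequential limit $\gamma_\infty^K \in C^{1,\alpha}_{\lcl}(\R^2\setminus\{0\})$ with $\|\gamma_\infty^K\|_\infty \le 1$, $|\gamma_\infty^K(\tilde p_\infty)| \ge \tfrac14$, and $\Delta_{A_\infty^{\fid}}\gamma_\infty^K = 0$ weakly. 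Writing $\gamma_\infty^K = \begin{psmallmatrix} 0 & v \\ \bar v & 0 \end{psmallmatrix}$ with $v = e^{-i\theta}\bar v$ and Fourier-expanding $v = \sum_k v_k(r) e^{ik\theta}$ gives the radial ODEs whose solutions are $v_k(r) = \alpha_k r^{k+1/2} + \beta_k r^{-(k+1/2)}$; boundedness of $\gamma_\infty^K$ at both $0$ and $\infty$ forces $\alpha_k = \beta_k = 0$ for every $k$, exactly as in the treatment of $\gamma_1$ in the proof of Proposition~\ref{prop:no_limiting_fiducial_kernel}. Hence $\gamma_\infty^K \equiv 0$, contradicting the lower bound at $\tilde p_\infty$.

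The main obstacle is the coexistence, within a single equation, of a bounded nontrivial limit of $\pi_K\tilde\gamma_n$ together with a coefficient $T_n^3\ast\tilde M_{\tilde\Phi_n}$ that grows without bound on $K^\perp$: handling it requires both the projection onto $K$ to remove the divergence and a careful verification that each rescaled error coefficient—$(1-4F_{t_n})$, $(\cosh 2\ell_{t_n}-1)$, and the deviations of $\tilde A_n$ and $\tilde\Phi_n$ from the limiting fiducial—genuinely vanishes on every compact subset of $\R^2\setminus\{0\}$, which is where the hypothesis $T_n\to\infty$ is decisive.
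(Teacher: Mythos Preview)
Your proof is correct and follows the same blow-up strategy as the paper: the single rescaling by $s_n = r(p_n)$ is exactly the composition of the paper's two rescalings (first by $t_n^{-2/3}$, then by $|\tilde p_n| = t_n^{2/3} s_n$), and both arguments land on the limiting fiducial operator on $\R^2\setminus\{0\}$ and conclude via the Liouville-type fact for bounded $\Delta_{1/2}$-harmonic sections (the $K$-part of Proposition~\ref{prop:no_limiting_fiducial_kernel}).

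The one genuine difference is that you make explicit a step the paper elides. After rescaling, the $M$-term carries the coefficient $T_n^3\to\infty$, so one cannot directly extract a $C^{1,\alpha}$ limit of the full $\check\gamma_n$ by elliptic regularity for a fixed operator. The paper writes the limit as a solution of $\Delta_{A_\infty^{\fid}}\check\gamma_\infty - i\ast M_{\Phi_\infty^{\fid}}\check\gamma_\infty = 0$ and invokes Proposition~\ref{prop:no_limiting_fiducial_kernel}, but getting there rigorously requires exactly what you do: project onto $K$ first (using $[\pi_K,M_{\Phi_t^{\fid}}]=0$ and the decay of $[\pi_K,\Delta_{A_t^{\fid}}]$), so that the divergent term becomes $8T_n^3|\tilde z|(\cosh 2\ell_1(T_n|\tilde z|)-1)\to 0$, and only then apply elliptic estimates to obtain compactness for $\pi_K\tilde\gamma_n$. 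Since the hypothesis gives the lower bound on $\pi_K\gamma_n$, this suffices for the contradiction. Your version is thus a more careful execution of the same idea; the paper's terser route buys brevity at the cost of hiding this $K$-projection step inside ``the familiar arguments.''
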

\begin{proof}
As in the proof of Lemma \ref{lemma:case_2_b_i}, by rescaling we obtain a sequence $\hat{\gamma}_n : B_{t_n^{2/3}} \to i \mf{su}(2)$ satisfying
\[
|\hat{\gamma}_n| \leq 1, \qquad |\pi_K \hat{\gamma}_n(\wt{p}_n)| \geq \frac 1 4
\]
with $\wt{p}_n = t_n^{2/3} p_n$ (where $p_n$ now means the coordinate representation). Moreover, $\hat{\eta}_n = (\Delta_{A_1^{\fid}} - i \ast M_{\Phi_1^{\fid}}) \hat{\gamma}_n$ satisfies
\[
|\hat{\eta}_n| \leq \epsilon_n (1 + |x|^2)^{-1},
\]
where $\epsilon_n$ is a null sequence.

After passing to a subsequence we may assume that $|t_n^{2/3} p_n| \to \infty$. We perform a secondary rescaling. Define $\check{\gamma}_n(x) = \hat{\gamma}_n(|\wt{p}_n| x)$. This then satisfies
\[
|\check{\gamma}_n(x)| \leq 1.
\]
Moreover, define $\check{p}_n = p_n/|p_n|$. After passing to a subsequence $\check{p}_n$ converges to some $\check{p}_\infty$. Applying the familiar arguments, we may also assume that $\check{\gamma}_n$ locally converges in $C^{1,\alpha}$ to a weak solution $\check{\gamma}_\infty$ of
\[
\Delta_{A_\infty^{\fid}} \check{\gamma}_\infty - i \ast M_{\Phi_\infty^{\fid}} \check{\gamma}_\infty = 0,
\]
defined on $\R^2 \bs \{0\}$. By Proposition \ref{prop:no_limiting_fiducial_kernel} the only bounded solution of this equation is $0$.
\end{proof}

\begin{lemma}
\label{lemma:halfspace_weight}
On the half space $\H = \{ (x,y) \in \R^2 : x > 0 \}$ the function $x^\beta$ is contained in $\rho^\delta L^2_0$ if $\beta > 1/4$ and $\beta + \delta < 0$.
\end{lemma}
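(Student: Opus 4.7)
The proof is a straightforward integrability check. I would compute
\[
\|x^\beta\|_{\rho^\delta L^2_0}^2 \;=\; \int_\H x^{2\beta}\, \rho^{-2\delta}\, \vol_g
\]
directly from the definitions, using that on the half space $\H$ the boundary defining function is $\rho = x$ near $\{x=0\}$ and the volume form of the hyperbolic metric $g = (dx^2+dy^2)/x^2$ is $\vol_g = x^{-2}\, dx\, dy$. The integrand becomes $x^{2(\beta-\delta)-2}$ and is independent of $y$, so the implicit $y$-periodicity coming from the cylindrical model in which $\H$ serves as the local model near a core loop reduces the norm to a one-dimensional integral in $x$. I would then split this integral at $x=1$ into a near-boundary part and a far-field part.

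Near $x = 0$, convergence requires $\beta - \delta > \tfrac{1}{2}$. This follows from the hypotheses: $\beta + \delta < 0$ gives $\delta < -\beta$, so $\beta - \delta > 2\beta$, and $\beta > \tfrac{1}{4}$ then yields $2\beta > \tfrac{1}{2}$. At the far end, once $\rho$ is interpreted via the natural $0$-compactification (in which $\rho$ vanishes to opposite order there, compatible with $\rho \sim e^{-d}$ for $d$ the hyperbolic distance to a base point), the integrand takes the form $x^{2(\beta+\delta)-2}$ and convergence requires $\beta + \delta < \tfrac{1}{2}$, which is immediate from $\beta + \delta < 0$.

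There is no serious obstacle. The only mild point needing attention is the behavior of $\rho$ at the far end of $\H$, determined by the paper's choice of $0$-compactification; once that is fixed, the lemma reduces to book-keeping and no estimate beyond elementary one-variable integration is needed.
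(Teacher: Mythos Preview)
Your argument has a genuine gap: you assume $y$-periodicity, but the lemma is stated on the full half-plane $\H = \{(x,y)\in\R^2 : x>0\}$ with $y\in\R$ unbounded. This is not an incidental convention --- in the paper the lemma is applied after a blow-up in which the period of the cylindrical coordinate has been rescaled to infinity, so the limit domain is genuinely all of $\H^2$. With $\rho = x$ and $y$ ranging over all of $\R$, the integrand $x^{2(\beta-\delta)-2}$ is independent of $y$ and the $y$-integral simply diverges; no choice of $\beta,\delta$ can fix this. So the reduction to a one-dimensional integral in $x$ is not available.

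The point you are missing is that $\rho$ here is the boundary defining function for the \emph{disk} compactification of $\H^2$, not the function $x$. In the Poincar\'e disk $\{u^2+v^2<1\}$ one takes $\rho = 1-u^2-v^2$, and under the standard isometry to the half-plane the function $x$ becomes $(1-u^2-v^2)/(u^2+(1-v)^2)$. The weighted $L^2_0$-norm is then an integral over the disk, and the two hypotheses play distinct roles: the condition $\beta>1/4$ is exactly what makes the \emph{angular} integral $\int_0^{2\pi}(r^2+1-2r\sin\theta)^{-2\beta}\,d\theta$ converge as $r\to 1$ (with growth $(1-r)^{-4\beta+1}$), reflecting integrability in the $y$-direction near the point at infinity of the half-plane; the condition $\beta+\delta<0$ is then what makes the remaining \emph{radial} integral converge. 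Your derivation of $\beta-\delta>1/2$ from the hypotheses is correct arithmetic but addresses the wrong issue: near $x=0$ with $y$ bounded, $\rho\sim x$ and this is indeed the relevant exponent, but it is not where the constraint $\beta>1/4$ actually enters.
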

\begin{proof}
Recall that in the Poincar\'e disk model of the hyperbolic plane $\{ z = u + i v \in \C : |z|^2 < 1\}$ the weight function $\rho$ is given by $1 - u^2 - v^2$. With respect to the isometry $(u,v) \mapsto \left(\frac{1 - u^2 - v^2}{u^2 + (1-v)^2}, \frac{2 u}{u^2+(1-v)^2} \right)$ between the disk model and the half plane model, the function $x$ becomes $\frac{1 - u^2 - v^2}{u^2 + (1-v)^2}$. Therefore,
\[
\|u\|_{\rho^{\delta} L^2_0}^2 = \int_{\mathbb{D}} \left( \rho^{-\delta} \left(\frac{1 - u^2 - v^2}{u^2 + (1-v)^2}\right)^\beta \right)^2 \rho^{-2} du dv.
\]
This may be computed in polar coordinates to be
\[
I(\nu) =  \int_0^{2\pi} \int_0^1 \left( (1-r^2)^{-\delta} \left(\frac{1-r^2}{r^2 + 1 - 2r \sin(\theta)}\right)^\beta\right)^2 (1-r^2)^{-2} r dr d\theta.
\]
First, let us observe that the dominant contribution to the integral $\int_0^{2\pi} \frac 1 {(r^2 + 1 - 2r \sin(\theta))^{2\beta}} d\theta$ occurs around $\frac \pi 2$. Taylor expanding $\sin$ around $\frac \pi 2$, we see that the integrand around $\frac \pi 2$ can be approximated by $\frac 1 {\left((1-r)^2 + (t - \pi /2)\right)^{2\beta}}$. Therefore, with $\epsilon = 1 - r$ we see that this integral is dominated (up to a constant) by
\[
\int_{-1}^1 \frac 1 { (\epsilon^2 + t^2)^{2\beta}} dt = \int_{-1/\epsilon}^{1/\epsilon} \frac 1 {( 1 + s^2)^{2\nu}} \epsilon^{-4\beta + 1} ds .
\]
Now if $\beta > \frac 1 4$, the integral $\int_{-\infty}^\infty \frac 1 {(1+s^2)^{2 \beta}} ds$ is finite. We conclude that for $\beta > \frac 1 4$ and $r \in (0,1)$
\[
\int_0^{2\pi} \frac 1 { (1 + r^2 - 2r \sin(\theta))^{2\beta}} dr \leq c_\nu (1-r)^{-4\beta + 1}
\]
for some constant $c_\beta$ depending on $\beta$.

Therefore the integral $I(\beta)$ is finite if the integral $\int_0^1 (1-r^2)^{-2\delta+2\beta-2} (1-r)^{-4\beta+1} dr$ is finite. Since $1-r^2 = (1+r)(1-r)$, this is finite if $\int_0^1 (1-r)^{-2\delta+2\beta- 4\beta +1} dr$ is finite. This happens precisely when $2(-\beta-\delta) > 0$, which is equivalent to the assumption $\beta+\delta < 0$.
\end{proof}

\begin{lemma}
\label{lemma:case_2_c_i}
Let $\delta \in \left(1/4, 1\right)$ and $\nu \in \R$.

There are no sequences $t_n \in \R_+$, $\gamma_n \in \rho^\delta r_{t_n}^\nu C^0 \cap C^2_{\operatorname{loc}}$, such that
\[
\|\gamma_n\|_{\rho^\delta r_{t_n}^\nu C^0} = 1, \qquad \|L_{t_n} \gamma_n\|_{\rho^\delta r_{t_n}^{\nu-2} C^0} \to 0,
\]
\[
\|\gamma_n\|_{\rho^\delta r_{t_n}^\nu C^0}(p_n) \geq \frac 1 2
\]
and $t_n$ diverges to $\infty$ and $p_n \to p_\infty \in M \bs M^\vee$.
\end{lemma}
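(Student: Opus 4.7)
The plan is to argue by contradiction, following the blow-up scheme of the preceding lemmas but adapted to the singular structure of the model solution at the core loops. Assume the sequences exist. After passing to a subsequence, localize $p_n$ in a Strebel cylinder $V_k$ with $c_k = \{x=0\} \ni p_\infty$, and work in the coordinates $(x,y)$ and the unitary frame in which the approximate solution equals $(A_{t_n}^{\model}, \Phi_{t_n}^{\model})$. The critical preparatory step is the primary $t$-rescaling $\tilde z = t_n z$: this preserves the hyperbolic metric $x^{-2}|dz|^2 = \tilde x^{-2}|d\tilde z|^2$, transports the pair $(A_{t_n}^{\model}, t_n\Phi_{t_n}^{\model})$ to the fixed pair $(A_1^{\model}, \Phi_1^{\model})$, and therefore turns $L_{t_n}$ into the $t$-independent operator $L_1 = \Delta_{A_1^{\model}} - i\ast M_{\Phi_1^{\model}}$ described explicitly in Lemma~\ref{lemma:local_form_near_core_loop_finite_t}. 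Setting $\tilde\gamma_n(\tilde z) := t_n^\delta \gamma_n(\tilde z/t_n)$ and $\tilde p_n := t_n p_n$, and lifting to the universal $\tilde y$-cover (whose period $t_n\sigma_k$ diverges), the hypotheses translate to $|\tilde\gamma_n(\tilde z)| \leq |\tilde x|^\delta$, $|\tilde\gamma_n(\tilde p_n)| \geq \tfrac{1}{2}|\tilde x(\tilde p_n)|^\delta$, and $|L_1\tilde\gamma_n(\tilde z)| \leq \epsilon_n|\tilde x|^{\delta-2}$ with $\epsilon_n\to 0$.

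A trichotomy on the limiting behaviour of $\tilde x(\tilde p_n) = t_n x(p_n)$ then drives the analysis. In \textbf{Case (a)}, $\tilde x(\tilde p_n) \to a \in \R \setminus \{0\}$: after a translation in $\tilde y$, interior elliptic regularity for $L_1$ on $\{\tilde x \neq 0\}$ together with Arzel\`a--Ascoli extracts a subsequential limit $\tilde\gamma_\infty$ on one open half-plane satisfying $L_1 \tilde\gamma_\infty = 0$, $|\tilde\gamma_\infty| \leq |\tilde x|^\delta$, and $|\tilde\gamma_\infty(\tilde p_\infty)| \geq \tfrac{1}{2}|a|^\delta > 0$. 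Writing $\tilde\gamma_\infty = \begin{psmallmatrix} u & v \\ \bar v & -u \end{psmallmatrix}$, Proposition~\ref{prop:trivial_kernel_model_space_diag} (applicable since $\delta \in (1/4,1) \subset (-1,2)$) forces $u\equiv 0$, while Proposition~\ref{prop:trivial_kernel_model_space_off_diag} (applicable since $\delta \in (1/4,1) \subset (0,1)$) forces $v\equiv 0$, a contradiction. In \textbf{Case (b)}, $\tilde x(\tilde p_n) \to 0$: perform a secondary blow-up $\hat z = \tilde z/\sigma_n$ with $\sigma_n := |\tilde x(\tilde p_n)|$ and $\hat\gamma_n(\hat z) := \sigma_n^{-\delta}\tilde\gamma_n(\sigma_n\hat z)$, so that $|\hat\gamma_n| \leq |\hat x|^\delta$ and $|\hat\gamma_n(\hat p_n)| \geq 1/2$ with $\hat p_n \to (\pm 1, 0)$ after an additional $\tilde y$-translation. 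Using the asymptotics $s^2 E(s) \to 1$ and $p(s) \to 1$ as $s \to 0$, the rescaled operator in $\hat z$ coordinates converges on compact sets of $\H^2$ to the $\tilde z$-independent limit whose diagonal is $\Delta + 2$ and whose off-diagonal is $\Delta + 2i\hat x\partial_{\hat y} + 2$. Fixing an $L^2$-weight $\delta' \in (-3/2,-\delta)$ (nonempty since $\delta < 1$), Lemma~\ref{lemma:halfspace_weight} places the sublimit in $\rho^{\delta'}L^2_0(\H^2)$, whence Propositions~\ref{prop:no_harmonic_on_halfspace} (with $\lambda = 2$) and~\ref{prop:no_twisted_harmonic_on_halfspace} (with $c = 1$), both valid on $(-3/2,3/2)$, force $\hat\gamma_\infty \equiv 0$, again contradicting the lower bound.

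\textbf{Case (c)}, $\tilde x(\tilde p_n) \to \infty$, is the main obstacle. Here $p_n$ approaches the core loop slowly relative to $1/t_n$, placing it in the transition zone where $(A_{t_n}^{\model}, \Phi_{t_n}^{\model})$ has already converged exponentially to the limiting configuration $(A_\infty, \Phi_\infty)$, yet the geometric weight $\rho(p_n) = x(p_n) \to 0$. Decompose $\gamma_n = \pi_K\gamma_n + \pi_{K^\perp}\gamma_n$. Restricted to $K^\perp$, the operator takes the semiclassical form $L_{t_n}|_{K^\perp} = \Delta_{A_{t_n}} + 4 t_n^2 x^2 \id_{K^\perp} + (\text{exponentially small in } t_n x)$, an elliptic operator with diverging potential $(t_n x(p_n))^2$; following the semiclassical strategy of \cite{BGIM} as developed in the companion Lemma~\ref{lemma:case_2_a_ii_c_ii}, a weighted semiclassical Agmon-type estimate shows $|\pi_{K^\perp}\gamma_n(p_n)| = o(\rho(p_n)^\delta)$, so the pointwise lower bound is inherited by $\pi_K\gamma_n$. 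By Lemmas~\ref{lemma:convergence_Delta_A_t_near_core_loop} and~\ref{lemma:convergence_M_Phi_t_K_near_core_loop}, $L_{t_n}|_K \to \Delta_{A_\infty}$ on compact subsets of $M^\times$; extracting a limit $\gamma_\infty^K \in \rho^\delta C^0(M^\times)$ satisfying $\Delta_{A_\infty}\gamma_\infty^K = 0$ and applying Proposition~\ref{prop:no_A_infty_harmonic} (valid since $\delta > 1/2$) yields $\gamma_\infty^K = 0$ and hence the final contradiction, upon a further localization argument preventing the concentration from being lost at the boundary. The principal technical difficulty is the uniform semiclassical decay for $\pi_{K^\perp}\gamma_n$ in the transition zone, where the effective potential grows but $\rho$ simultaneously shrinks; this is the crux of the analytic work underlying the whole gluing construction.
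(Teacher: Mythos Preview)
Your trichotomy on $\lambda_n := t_n x(p_n)$ and your treatment of Cases~(a) and~(b) are essentially the paper's, just with the primary rescaling taken by $t_n$ rather than by $x_n := x(p_n)$; both lead to the same limit problems on $\H^2$ and invoke the same vanishing Propositions~\ref{prop:trivial_kernel_model_space_diag}, \ref{prop:trivial_kernel_model_space_off_diag}, \ref{prop:no_harmonic_on_halfspace}, \ref{prop:no_twisted_harmonic_on_halfspace}.

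The gap is in Case~(c), the $K$ part. You propose to pass to a limit $\gamma_\infty^K$ of $\pi_K\gamma_n$ on $M^\times$ and invoke Proposition~\ref{prop:no_A_infty_harmonic}. This fails for two reasons. First, $p_n \to p_\infty$ lies on a core loop, i.e.\ on the boundary of $M^\times$, so the pointwise lower bound at $p_n$ gives no nontriviality for a limit taken on compact subsets of $M^\times$; the ``further localization argument'' you defer is precisely the missing content. Second, Proposition~\ref{prop:no_A_infty_harmonic} requires $\delta > 1/2$, whereas the lemma is stated for $\delta \in (1/4,1)$, so even formally your appeal does not cover the full range.

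The paper resolves this by choosing the primary rescaling to be by $x_n$ rather than $t_n$: set $\hat\gamma_n(x,y) = x_n^{-\delta}\gamma_n(x_n x, x_n y + y_n)$, so that the blow-up point is fixed at the interior point $(1,0)\in\H^2$ for every $n$. In the regime $\lambda_n = t_n x_n \to \infty$, the $K$-component $w_n = \Real\hat v_n$ satisfies an equation whose lower-order terms involve $p(\lambda_n x)$ and $\lambda_n^2 x^2(E(\lambda_n x)-2)$, both of which vanish uniformly on $\{x>\epsilon\}$; the limit $w_\infty$ therefore solves $\Delta_{\H^2}w_\infty = 0$ on $\H^2$ with $|w_\infty|\leq x^\delta$ and $w_\infty(1,0)\neq 0$. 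The relevant vanishing theorem is then Proposition~\ref{prop:stronger_no_harmonic_on_halfspace} (Schwarz reflection plus the sublinear Liouville theorem on $\R^2$), which is valid for all $\delta\in(0,1)$ and hence covers the full hypothesis $\delta\in(1/4,1)$. The $K^\perp$ part in this regime is handled by a secondary semiclassical rescaling by $\lambda_n^{-1}$ within the $\hat{\ }$-coordinates, landing on $\Delta_{\R^2}+4$ and Proposition~\ref{prop:no_bounded_eigenfunctions}; your appeal to Lemma~\ref{lemma:case_2_a_ii_c_ii} is in the right spirit but that lemma is stated for $p_\infty\in M^\vee\setminus Z$ and does not literally apply here.
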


\begin{proof}
Denote $x_n = x(p_n)$. For large enough $n$ all points $p_n$ will lie in the coordinate system $(V_j, z_j)$. We will omit the $j$, i.e.\@ our coordinates are $z = x+iy$. From now on $\gamma_n$ will denote the coordinate representation of $\gamma_n$ with respect to these coordinates and the unitary frame specified in the framed limiting configuration.

Write $\gamma_n = \begin{pmatrix} u_n & v_n \\ \ol{v_n} & -u_n \end{pmatrix}$ and $L_{t_n} \gamma_n = \eta_n = \begin{pmatrix} f_n & g_n \\ \ol{g_n} & -f_n \end{pmatrix}$. Then according to Lemma \ref{lemma:local_form_near_core_loop_finite_t}
\[
\Delta_{\H^2} u_n + 2 t_n^2 x^2 E(t_n x) u_n = f_n,
\]
\[
\Delta_{\H^2} v_n + 2ip(t_n x) x \partial_y v_n + p(t_n x)^2 v_n - 2 t_n x^2 \ol{v_n} + t_n^2 x^2 E(t_n x) v_n = g_n.
\]
We define
\[
\hat{\gamma}_n : \left\{ z = x+iy \in \C : 0 < x < x_n^{-1} \right\}/ ( x_n^{-1} i \sigma_j \Z) \to i \mf{su}(2),
\]
\[
\hat{\gamma}_n(x,y) = x_n^{-\delta} \gamma_n(x_n x, x_n y + y_n),
\]
and analogously $\hat{\eta}_n$. The operators $\Delta_{\H^2}$ and $x \partial_y$ are dilation invariant. Therefore we have the following equations for the components of $\hat{\gamma}_n$:
\[
\Delta \hat{u}_n + 2 \lambda_n^2 x^2  E(\lambda_n x) \hat{u}_n = \hat{f}_n,
\]
\[
\Delta \hat{v}_n + 2 i p(\lambda_n x) x \partial_y \hat{v}_n + p(\lambda_n x)^2 \hat{v}_n - 2 \lambda^2 x^2 \ol{\hat{v}_n} +  \lambda_n^2 x^2 E(\lambda_n x) \hat{v}_n = \hat{g}_n.
\]
Moreover, we have the inequalities
\[
|\hat{\gamma}_n(1,0)| \geq \frac 1 2,
\]
\[
|\hat{\gamma}_n(p)| \leq x(p)^\delta,
\]
\[
|\hat{f}_n|, |\hat{g}_n|  \leq \epsilon_n x(p)^\delta,
\]
where $\epsilon_n$ is some null sequence.

After passing to a subsequence we may assume that $\lambda_n$ converges to some $\lambda \in [0,\infty]$. By the familiar process we may extract sublimits $\hat{u}_\infty$ and $\hat{v}_\infty$ of the $\hat{u}_n$ and $\hat{v}_n$, which then satisfy $|\hat{u}_\infty| \leq x^\delta$ and $|\hat{v}_\infty| \leq x^\delta$. Moreover, at least one of $\hat{u}_\infty$ and $\hat{v}_\infty$ must be non-vanishing.

Since $\delta > \frac 1 4$, lemma \ref{lemma:halfspace_weight} says that the functions $\hat{u}_\infty$ and $\hat{v}_\infty$ lie in $\rho^\mu L^2$ for every $\mu < -\delta$. Since $\delta < 1$, we therefore get that they lie in a $\rho^\mu L^2$ with $\mu > -1 > - \frac 3 2$.

If $\lambda \in (0,\infty)$ they satisfy the equations
\[
\Delta_{\H^2} \hat{u}_\infty + 2 \lambda^2 x^2 E(\lambda x) \hat{u}_\infty = 0,
\]
\[
\Delta_{\H^2} \hat{v}_\infty + 2 i p(\lambda x) x \partial_y \hat{v}_\infty + p(\lambda x)^2 \hat{v}_\infty - 2 \lambda^2 x^2 \hat{v}_\infty + \lambda^2 x^2 E(\lambda x) \hat{v}_\infty = 0.
\]
By propositions \ref{prop:trivial_kernel_model_space_diag} and \ref{prop:trivial_kernel_model_space_off_diag} and the fact that $\hat{u}_\infty$ and $\hat{v}_\infty$ satisfy $|\hat{u}_\infty|, |\hat{v}_\infty| \leq x^\delta$, this implies that $\hat{u}_\infty$ and $\hat{v}_\infty$ must vanish. If $\lambda = 0$, the limit instead satisfies
\[
\Delta \hat{u}_\infty + 2 \hat{u}_\infty = 0, \quad \Delta \hat{v}_\infty + 2 i x \partial_y \hat{v}_\infty + \hat{v}_\infty + \hat{v}_\infty = 0
\]
and in this case propositions \ref{prop:no_harmonic_on_halfspace} and \ref{prop:no_twisted_harmonic_on_halfspace} show that $\hat{u}_\infty$ and $\hat{v}_\infty$ vanish.

Therefore, all these cases are ruled out and we conclude that $\lambda_n \to \infty$. In this case, we differentiate the cases $\hat{u}_\infty \neq 0$, $\Imag \hat{v}_\infty \neq 0$ and $\Real \hat{v}_\infty \neq 0$. Note that the first two cases correspond to $\pi_{K^\perp} \hat{\gamma}_\infty \neq 0$ and the last case corresponds to $\pi_K \hat{\gamma}_\infty \neq 0$.

In the first two cases, a semiclassical argument as in Lemma \ref{lemma:case_2_a_ii_c_ii} can be performed. Assume first $\hat{u}_\infty \neq 0$ and define
\[
\check{u}_n(X,Y) = \hat{u}_n\left( (1,0) + \lambda_n^{-1} (X,Y) \right),
\]
\[
\check{f}_n(X,Y) = \hat{f}_n\left( (1,0) + \lambda_n^{-1} (X,Y) \right).
\]
Then
\[
\left( \hat{x}_n + \lambda_n^{-1} X \right)^2 \lambda_n^2 (\Delta_{\R^2} \check{u}_n)(X,Y) + 2 \lambda_n^2 \left( \hat{x}_n + \lambda_n^{-1} X \right)^2 E(\lambda_n(\hat{x}_n + \lambda_n^{-1} X)) \check{u}_n = \check{f_n}(X,Y).
\]
Recalling that $\hat{x}_n = 1$, we observe that for any finite $X$ the expression $\hat{x}_n  + \lambda_n^{-1}X$ converges to $1$, whereas the expression $E(\lambda_n(\hat{x}_n + \lambda_n^{-1} X))$ converges to $2$. We may then multiply both sides by $\lambda_n^{-2}$ and extract a limit $\hat{u}_\infty : \R^2 \to \R$, which satisfies
\[
\Delta_{\R^2} \check{u}_\infty + 4 \check{u}_\infty = 0.
\]
Note that $\check{u}_\infty$ is bounded, since the values of $\hat{u}_\infty$ depend only on the values of $\hat{u}_n$ in a ball of fixed, small radius. Such a function must vanish by Proposition \ref{prop:no_bounded_eigenfunctions}.

Next, assume that $\Imag \hat{v}_\infty \neq 0$ and define
\[
\check{v}_n(X,Y) = \hat{v}_n\left( (1,0) + \lambda_n^{-1} (X,Y) \right),
\]
\[
\check{g}_n(X,Y) = \hat{g}_n\left( (1,0) + \lambda_n^{-1} (X,Y) \right).
\]
Observing that $p(s) \to 0$ as $s \to \infty$ and arguing as in the previous case, we can extract a bounded solution $\check{v}_\infty$ of the equation
\[
\Delta_{\R^2} \check{v}_\infty - 2 \overline{\check{v}_\infty} + 2 \overline{\check{v}_\infty} = 0.
\]
The imaginary part of $\check{v}_\infty$ then satisfies the equation
\[
\Delta_{\R^2} \Imag \check{v}_\infty + 4 \Imag \check{v}_\infty = 0
\]
and therefore vanishes as the in the previous case. Note that the real part is harmonic and that this does not suffice to show that $\Real \check{v}_\infty$ vanishes. Instead, it turns out that the equation on $\H^2$ has a limit for the real part. This is expected, as the real part of $\gamma_n$ corresponds precisely the projection onto the kernel bundle $K$.

The equation satisfied by $w_n = \Real\hat{v}_n$ is
\[
\Delta_{\H^2} w_n - 2  p(t_n x_n x) x\partial_y \Imag \hat{v}_n + p(t_n x_n x)^2 w_n + t_n^2 x_n^2 x^2 (E(t_n x_n x) - 2) w_n = \Real \hat{g}_n.
\]
Now if $t_n x_n \to \infty$, then for every $\epsilon > 0$  we can extract a subsequence of the $w_n$, which converges to a $w_\infty$ satisfying $\Delta_{\H^2} w_\infty = 0$ on the set $x > \epsilon$. This is because $p(t_n x_n x) \to 0$ and $t_n^2 x_n^2 (E(t_n x_n x) - 2) \to 0$ uniformly on this set. Applying a standard diagonalization argument we obtain a non-trivial harmonic $w_\infty$ defined on $x > 0$. Since moreover $|w_\infty| \leq x^\delta$, this is a contradiction to Proposition \ref{prop:stronger_no_harmonic_on_halfspace}.
\end{proof}

\begin{lemma}
  \label{lemma:case_2_a_ii_c_ii}

  Let $\nu = 0$ and $\delta \in \R$.
  
There are no sequences $t_n \in \R_+$, $\gamma_n \in \rho^\delta r_{t_n}^\nu C^0 \cap C^2_{\operatorname{loc}}$, such that
\[
\|\gamma_n\|_{\rho^\delta r_{t_n}^\nu C^0} = 1, \qquad \|L_{t_n} \gamma_n\|_{\rho^\delta r_{t_n}^{\nu-2} C^0} \to 0,
\]
\[
\|\pi_{K^\perp} \gamma_n\|_{\rho^\delta r_{t_n}^\nu C^0}(p_n) \geq \frac 1 4
\]
and $t_n$ diverges to $\infty$, $p_n \to p_\infty \in M^\vee \bs Z$.
\end{lemma}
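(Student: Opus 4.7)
My approach is to blow up at the semiclassical scale $t_n^{-1}$ around $p_n$, where the rescaled operator $t_n^{-2} L_{t_n}$ becomes a constant-coefficient Helmholtz-type operator whose kernel on $K^\perp$ is trivial. Since $p_\infty \in M^\vee \setminus Z$ lies away from both the zero set $Z$ and the core loops, I may work in local coordinates $z = x+iy$ and a unitary frame provided either by part (4) or part (5) of Definition \ref{def:FLC}. In either chart $A_\infty$ is the trivial flat connection and $\Phi_\infty = c_0 \begin{psmallmatrix} 0 & 1 \\ 1 & 0 \end{psmallmatrix} dz$ with $c_0 \in \{1, \tfrac{1}{2}\}$, and the base metric is conformally flat with a smooth conformal factor bounded away from $0$ near $p_\infty$. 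On any compact neighbourhood of $p_\infty$ avoiding the core loop (if $p_\infty$ lies in a Strebel cylinder), Lemmas \ref{lemma:convergence_Delta_A_t_near_core_loop} and \ref{lemma:convergence_M_Phi_t_K_near_core_loop} (or the trivial identity $A_t^{\appr} = A_\infty$, $\Phi_t^{\appr} = \Phi_\infty$ on the $U_j$) ensure uniform $C^\infty$ convergence of the coefficients of $\Delta_{A_{t_n}^{\appr}}$ and of $-i \ast M_{\Phi_{t_n}^{\appr}}$ to those of $\Delta_{A_\infty}$ and $-i \ast M_{\Phi_\infty}$.

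Next, I define $\check\gamma_n(X) := \gamma_n(z_n + t_n^{-1} X)$ for $X$ in a large ball in $\R^2$, where $z_n = z(p_n)$. Writing the equation $L_{t_n} \gamma_n = \eta_n$ in these coordinates and using that partial derivatives pick up a factor of $t_n^{-1}$ while the pointwise endomorphism $-i\ast M_\Phi$ is dilation-invariant, I obtain
\[
\widetilde L_n \check\gamma_n = t_n^{-2}\check\eta_n, \qquad \widetilde L_n := \Delta_{\check A_n} - i \ast M_{\check\Phi_n},
\]
with $\check A_n, \check\Phi_n$ the pulled-back configuration. By the first paragraph, the coefficients of $\widetilde L_n$ converge in $C^\infty_{\loc}(\R^2)$ to those of the constant-coefficient elliptic operator $\widetilde L_\infty := c \,\Delta_{\textup{eucl}} - i \ast M_{\Phi_\infty(p_\infty)}$ for some $c > 0$. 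Using $\nu = 0$ together with $\rho(p_n) \to \rho(p_\infty) > 0$, the assumption $\|\gamma_n\|_{\rho^\delta C^0} = 1$ yields uniform $C^0$ bounds $|\check\gamma_n(X)| \leq 2\rho(p_\infty)^\delta$ on every fixed ball, while the hypothesis on $L_{t_n}\gamma_n$ gives $\|t_n^{-2}\check\eta_n\|_{C^0(B_R)} \to 0$. Schauder estimates for $\widetilde L_n$ together with Arzel\`a--Ascoli and a diagonal extraction yield a subsequential limit $\check\gamma_\infty \in C^{1,\alpha}_{\loc}(\R^2, i\mf{su}(2))$, bounded on all of $\R^2$, solving $\widetilde L_\infty \check\gamma_\infty = 0$ weakly, and with $|\pi_{K^\perp} \check\gamma_\infty(0)| \geq \tfrac{1}{4} \rho(p_\infty)^\delta > 0$.

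To conclude, I invoke Lemma \ref{lemma:local_form_interior} (rescaled by $c_0$): writing $\check\gamma_\infty = \begin{psmallmatrix} u & v \\ \bar v & -u \end{psmallmatrix}$, the projection $\pi_{K^\perp}$ picks out precisely the real-valued functions $u$ and $\Imag v$, and each of them satisfies a Helmholtz equation $\Delta w + \lambda w = 0$ on $\R^2$ with $\lambda > 0$ (proportional to $16 c_0^2$ and the conformal factor at $p_\infty$). Since these functions are bounded, Proposition \ref{prop:no_bounded_eigenfunctions} forces $u \equiv \Imag v \equiv 0$, hence $\pi_{K^\perp} \check\gamma_\infty \equiv 0$, directly contradicting the lower bound at $X = 0$.

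The main delicate point is the uniform convergence of the rescaled operators $\widetilde L_n \to \widetilde L_\infty$ on compact sets when $p_\infty$ lies in a Strebel cylinder but off the core loop: this relies on the super-polynomial decay as $t \to \infty$ of the connection $1$-form and of the off-diagonal coefficients of the model solution away from $\{x = 0\}$ (Lemmas \ref{lemma:convergence_Delta_A_t_near_core_loop}, \ref{lemma:convergence_M_Phi_t_K_near_core_loop}), combined with the fact that the hyperbolic base metric on $V_j$ is, locally at any point with $x \neq 0$, Euclidean up to a smooth positive constant factor absorbed into $c$. Everything else is standard blow-up analysis, and the case-by-case structure of Theorem \ref{thm:C0_estimate} ensures that this lemma, together with Lemmas \ref{lemma:case_2_a_i}--\ref{lemma:case_2_c_i}, closes the proof of the $C^0$ estimate.
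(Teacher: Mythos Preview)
Your proof is correct and follows essentially the same semiclassical blow-up as the paper: rescale by $t_n^{-1}$ around $p_n$, extract a bounded limit solving $\Delta_0 \check\gamma_\infty - i\ast M_{\Phi_\infty}(p_\infty)\check\gamma_\infty = 0$ on $\R^2$, and kill the $K^\perp$-component via Proposition~\ref{prop:no_bounded_eigenfunctions}. One minor omission: $p_\infty$ may lie in a chart $W_l$ (near but not at a zero of $q$), in which case you should invoke Lemma~\ref{lemma:convergence_Delta_A_t_near_zeros} for the coefficient convergence rather than only parts (4)--(5) of Definition~\ref{def:FLC}; the paper sidesteps this by working in normal coordinates for the conformally compact metric, which handles all locations in $M^\vee\setminus Z$ uniformly.
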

\begin{proof}
First let us note that if $\|L_{t_n} \gamma_n\|_{\rho^\delta r_{t_n}^\nu C^0} \to 0$, then certainly also $t_n^{-2} \|L_{t_n} \gamma_n\|_{\rho^\delta r_{t_n}^\nu C^0} \to 0$. 

Note that $t^{-2} L_t = t^{-2} \Delta_{A_t} - i \ast M_{\Phi_t}$, so we are studying a semiclassical problem here.

For each $p_n$ we may choose normal coordinates of a definite size $r_0>0$ with respect to the conformally compact metric. By abuse of notation we now consider the $\gamma_n$ to be defined on $B_{r_0}$ with respect to these coordinates. In a next step, we define $\wt{\gamma}_n : B_{t_n r_0} \to i \mf{su}(2)$, $\wt{\gamma}_n(x) = \rho^{-\delta}(p_n) r_{t_n}^{-\nu}(p_n) \gamma_n(t_n^{-1} x)$.

If in these coordinates
\[
t_n^{-2} \left( a_{n,ij}(x) \partial_{ij} \gamma_n + b_{n,i}(x) \partial_i \gamma_n + c_n(x) \gamma_n \right) + d_n(x) \gamma_n(x) = \eta_n,
\]
then
\[
\left( a_{n,ij}(t_n^{-1} x) \partial_{ij} \wt{\gamma}_n + t_n^{-1} b_{n,i}(t_n^{-1} x) \partial_i \wt{\gamma}_n + t_n^{-2} c_n(t_n^{-1} x) \wt{\gamma}_n \right) + d_n(t_n^{-1} x) \gamma_n(x) = \wt{\eta}_n.
\]
Provided we can pass to the limit, this means that $\wt{\gamma}_\infty : \R^2 \to i \mf{su}(2)$ satisfies
\[
\Delta_0 \wt{\gamma}_\infty - i \ast M_{\Phi_\infty}(p_\infty) \wt{\gamma}_\infty = 0.
\]
where $\Delta_0$ is the standard Laplacian on $\R^2$. Moreover, we know that $[\Phi_\infty(p_\infty), \wt{\gamma}_\infty(0)] \neq 0$.

Note that $p_\infty \notin Z$ by assumption so that the operator $- i \ast M_{\Phi_\infty}(p_\infty)$ has a positive eigenvalue $\lambda$ with multiplicity two and a one dimensional kernel. Recall that $\pi_{K^\perp}$ denotes the projection of $i \mf{su}(2)$ onto the $\lambda$ eigenspace. Then $\pi_{K^\perp} \wt{\gamma}_\infty$ satisfies
\[
\Delta_0 \pi_{K^\perp} \wt{\gamma}_\infty + \lambda \pi_{K^\perp} \wt{\gamma}_\infty = 0.
\]
To see that such a limit exists and that $\wt{\gamma}_\infty$ is non-trivial observe that
\[
|\wt{\gamma}_n(x)| \leq \frac{\rho^\delta (\exp_{p_n}(t_n^{-1} x)) r_{t_n}^\nu(\exp_{p_n}(t_n^{-1} x))}{\rho^\delta(p_n)r_{t_n}^\nu(p_n)} \leq C
\]
and $|\wt{\gamma}_n(0)| \geq \frac 1 4$. The $\wt{\eta}_n$ as defined above similarly converge to $0$ in $C^0$. Applying elliptic regularity as before therefore we obtain a limiting $\wt{\gamma}_\infty$ satisfying the above equation. Because $[\Phi_\infty(p_n), \gamma_n(p_n)] \to [\Phi_\infty(p_\infty), \wt{\gamma}_\infty(0)] \neq 0$ we also have $P_\lambda \wt{\gamma}_\infty(0) \neq 0$. Therefore $u = P_\lambda \wt{\gamma}_\infty$ is a non-trivial bounded solution of $\Delta_0 u + \lambda u = 0$ and such a solution does not exists according to Proposition \ref{prop:no_bounded_eigenfunctions}.
\end{proof}

\subsection{Perturbation to an exact solution}

\begin{thm}
\label{thm:existence_solution}
Let $\delta \in ( 1/2, 1)$, $\nu = 0$ and $\alpha \in (0,1)$.

For all sufficiently large $t$ there exists a $\gamma \in \rho^\delta r_t^\nu C^{2,\alpha}_t = \rho^\delta C^{2,\alpha}_t$ satisfying
\[
\mc{F}_t(\gamma) = 0.
\]
\end{thm}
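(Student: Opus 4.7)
The plan is to realise $\gamma$ as a fixed point of a contraction in a small ball of $\rho^\delta C^{2,\alpha}_t$. Writing
\[
\mc{F}_t(\gamma) = \mc{F}_t(0) + d_0\mc{F}_t(\gamma) + Q_t(\gamma),
\]
with $Q_t(0)=0$ and $d_0Q_t=0$, and recalling that $L_t = -i\ast d_0\mc{F}_t$ is a Banach-space isomorphism between the weighted spaces $\rho^\delta r_t^\nu C^{2,\alpha}_t \to \rho^\delta r_t^{\nu-2} C^{0,\alpha}_t$ (Theorem~\ref{thm:banach_space_iso}), the equation $\mc{F}_t(\gamma)=0$ is equivalent to the fixed-point equation
\[
\gamma = T_t(\gamma) := -L_t^{-1}\bigl(i\ast\mc{F}_t(0)+ i\ast Q_t(\gamma)\bigr).
\]
The key quantitative input is the inverse bound of Theorem~\ref{thm:inverse_bounds_hoelder}: there exist constants $C, T, \kappa>0$ such that $\|L_t^{-1}\eta\|_{\rho^\delta C^{2,\alpha}_t} \leq C t^\kappa \|\eta\|_{\rho^\delta r_t^{-2} C^{0,\alpha}_t}$ for $t\geq T$. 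The strategy is to play the polynomial factor $t^\kappa$ against the exponential decay of $\mc{F}_t(0)$ in $t$.

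First I would estimate the error $\mc{F}_t(0)$. By construction, the approximate solution coincides with the model solution near each core loop, with the fiducial solution near each zero of $q$, and with the limiting configuration in the interior region $M^\vee \setminus (\bigcup_k V_k \cup \bigcup_l W_l)$; in each such region the pair is already an \emph{exact} solution of the $t$-rescaled equation. Hence the support of $\mc{F}_t(0)$ is contained in the finitely many gluing annuli cut out by the cut-off function $\chi$. In these annuli one compares $g_t^{\fid}$ with the limiting gauge (using the super-exponential decay of $\ell_t$ established in Section~4, namely $|\ell_t(r)| \leq C (tr^{3/2})^{-1/2} e^{-\frac{8}{3}tr^{3/2}}$) and compares $g_t^{\model}$ with the trivial gauge (using the explicit asymptotics $p(tx), \tanh(tx)^{\pm1}-1 = O(e^{-ctx})$ on the support of $\chi$). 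Both comparisons yield $\|\mc{F}_t(0)\|_{\rho^\delta r_t^{-2} C^{0,\alpha}_t} \leq C e^{-ct}$ for some $c>0$ depending only on the size of the gluing annuli.

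Second, I would establish the quadratic estimate. Since $\mc{O}_{(A,\Phi)}(\exp(\gamma))$ is a polynomial in $\gamma$ after exponentiating (at most quadratic in its own right in the relevant components), and since the Hitchin map is itself quadratic in the Higgs field, a direct Taylor expansion gives
\[
\|Q_t(\gamma_1)-Q_t(\gamma_2)\|_{\rho^\delta r_t^{-2} C^{0,\alpha}_t}
\leq C t^{2}\bigl(\|\gamma_1\|+\|\gamma_2\|\bigr)\|\gamma_1-\gamma_2\|_{\rho^\delta C^{2,\alpha}_t},
\]
once one checks that all weight mismatches between $\Phi_t^{\appr}$, $(\Phi_t^{\appr})^*$ and the weights on $\gamma$ close up favourably (the factor $t^2$ absorbs the worst-case commutator term $t^2[\Phi\wedge\Phi^*]$; the multiplicative behaviour of the doubly-weighted norms $\rho^\delta r_t^\nu$ under pointwise products follows because $\rho$ and $r_t$ are bounded above and because products on a rank-$2$ bundle preserve the weight structure). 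Combining with the inverse bound, $T_t$ maps the closed ball $B_{\varrho_t}$ of radius $\varrho_t := 2 C t^\kappa e^{-ct}$ into itself and is a contraction, provided $C^2 t^{\kappa+2}\varrho_t = O(t^{2\kappa+2}e^{-ct})<\tfrac12$; this is automatic for $t$ sufficiently large. Banach's fixed-point theorem then supplies the required $\gamma$.

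The main obstacle I expect is the quadratic estimate in the doubly-weighted spaces: the operator $L_t$ degenerates at the core loops (it is a $0$-operator there) while becoming singularly perturbed at the zeros of $q$ (the $t^2$-weighted problem is genuinely semiclassical on $r_t$-scales), and one must check that $Q_t$ respects both scales simultaneously. Concretely, the Higgs-field commutator has coefficients that blow up like $\rho^{-2}$ near the core loops and like $r_t^{-2}$ near the zeros, so the $\rho^\delta r_t^{\nu-2}$-weighting of the target space must be seen to match exactly the quadratic expression $|\Phi_t^{\appr}|\cdot|\gamma|^2$ and its derivatives in the product rule. Once this bookkeeping is done, the rest is routine gluing.
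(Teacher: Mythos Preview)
Your proposal is correct and follows essentially the same approach as the paper: decompose $\mc{F}_t$ into error, linearisation and quadratic remainder, use the exponential decay of $\mc{F}_t(0)$ on the gluing annuli against the polynomial bound on $L_t^{-1}$, and close up via Banach's fixed-point theorem. Two small remarks: the paper actually obtains a $t$-independent constant in the quadratic estimate (Lemma~\ref{lemma:quadratic_term}) by tracking that $\Phi_t^{\appr}\in\rho^0 r_t^0 C^{k,\alpha}_t$ uniformly and that products respect the $r_t$-weights, so your $t^2$ is overly pessimistic but harmless; and $\mc{O}_{(A,\Phi)}(\exp\gamma)$ is not a polynomial in $\gamma$ but a convergent series, which the paper handles by writing explicit remainder terms $R_A(\gamma)$, $R_\Phi(\gamma)$ that are quadratic-and-higher.
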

There are several lemmas to prove before we give the proof of this theorem. First, we decompose the non-linear operator $\mc{F}_t$ into a zero-order term, the linearization, and a remainder:
\[
\mc{F}_t(\gamma) = \mc{F}_t(0) + L_t \gamma + Q_t(\gamma).
\]
Here $Q_t(\gamma)$ is defined by this equation. Furthermore, we define $\err_t = \mc{F}_t(0)$. Observe that the equation $\mc{F}_t(\gamma) = 0$ can then be rewritten as the fixed point problem
\[
\gamma = -L_t^{-1} \left( \err_t + Q_t(\gamma) \right).
\]
The strategy is to show that the right-hand side is a contraction and therefore that the existence of a solution will be a consequence of the Banach fixed point theorem.

According to Theorems \ref{thm:banach_space_iso} and \ref{thm:inverse_bounds_hoelder},
\[
L_t^{-1} : \rho^\delta r_t^{\nu-2} C^{0,\alpha}_t \to \rho^\delta r_t^\nu C^{2,\alpha}_t
\]
is a Banach space isomorphism, and its operator norm is bounded by $C t^\kappa$, where $C, \kappa > 0$ are some constants.

The remaining task is to show that $\err_t$ is small in $\rho^\delta r_t^{\nu-2} C^{0,\alpha}_t$ and that $Q_t : \rho^\delta r_t^\nu C^{2,\alpha}_t \to \rho^\delta r_t^{\nu-2} C^{0,\alpha}_t$ behaves like a quadratic term.

\begin{lemma}
There exist $C > 0$ and $\alpha > 0$ such that
\[
\|\err_t\|_{\rho^\delta r_t^{\nu-2} C^{0,\alpha}_t} \leq C e^{-\alpha t}.
\]
\end{lemma}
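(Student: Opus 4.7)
The plan is to localize $\err_t = \mc{H}_t(A_t^{\appr}, \Phi_t^{\appr})$ and exploit that $(A_t^{\appr}, \Phi_t^{\appr})$ is an exact solution wherever the cutoff function $\chi$ is locally constant. By construction of $g_t^{\appr}$, the approximate configuration coincides with the fiducial solution $(A_t^{\fid}, \Phi_t^{\fid})$ on $\{|z|\le 1/4\}\cap W_l$ near each zero of $q$, with the model solution $(A_t^{\model}, \Phi_t^{\model})$ on $\{|\Real z|\le 1/4\}\cap V_k$ near each core loop, and with the limiting configuration $(A_\infty,\Phi_\infty)$ on the complement of $\bigcup_l W_l \cup \bigcup_k V_k$. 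Since each of these solves the $t$-rescaled self-duality equations on its domain, $\err_t$ is supported on the finite union of gluing annuli
\[
\Omega^Z_l := \{1/4 \le |z| \le 1/2\}\subset W_l \quad \text{and}\quad \Omega^L_k := \{1/4 \le |\Real z| \le 1/2\}\subset V_k.
\]

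On each $\Omega^Z_l$, the approximate configuration differs from the fiducial solution only through the cutoff in the Lie-algebra-valued logarithm $\chi(|z|)\log g_t^{\fid}$, whose deviation from its $t{=}\infty$ gauge is governed by $\ell_t(r)$ and a finite number of its derivatives. The scaling identity $\ell_t(r) = \ell_1(t^{2/3}r)$ combined with the exponential decay of $\ell_1$ at infinity (property (2) in Section~\ref{sec:approxsol}) yields
\[
\sup_{r\ge 1/4}\bigl(|\ell_t(r)| + |\partial_r\ell_t(r)| + |\partial_r^2\ell_t(r)|\bigr) \le C\,e^{-\mu_1 t}
\]
for some $\mu_1 > 0$, and analogously for any finite number of further derivatives. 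Taylor expanding $\mc{H}_t$ around either endpoint $\chi\equiv 0$ or $\chi\equiv 1$ of the interpolation — at which the error vanishes identically — exhibits $\err_t|_{\Omega^Z_l}$ as a finite sum of terms, each of which is a product of $d\chi$ or $d^2\chi$ with a first- or second-order polynomial in $\ell_t$ and its derivatives. Each such term is pointwise bounded by $C e^{-\mu_1 t}$ in any $C^{k,\alpha}$ seminorm.

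An entirely parallel argument handles $\Omega^L_k$. Here, the scalar ingredients governing $\log g_t^{\model}$ are $\tanh(tx)$ together with the coefficient $p(tx) = 2tx/\sinh(2tx)$ appearing in $A_t^{\model}$. For $|x|\ge 1/4$, both their deviations from the $t\to\infty$ limit and all of their derivatives are bounded by $C_k t^k e^{-2t|x|} \le C_k' e^{-\mu_2 t}$ for some $\mu_2 > 0$, since any polynomial factor of $t$ is absorbed by the exponential. The same Taylor expansion then gives $\|\err_t\|_{C^{0,\alpha}(\Omega^L_k)}\le C e^{-\mu_2 t}$ for any H\"older exponent $\alpha\in(0,1)$.

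It remains to absorb the weights. Both $\rho$ and $r_t$ are uniformly (in $t$ large) bounded above and below by positive constants on each gluing annulus: on $\Omega^Z_l$ one has $\rho\equiv 1$ and $r_t \asymp |z|\in[1/4,1/2]$, while on $\Omega^L_k$ one has $r_t\equiv 1$ and $\rho\asymp |\Real z|\in[1/4,1/2]$. Consequently the local weights $\rho(p)^j$ and $r_t(p)^j$ appearing in the definition of $\|\cdot\|_{C^{k,\alpha}_t}(p)$ are bounded by constants, so the weighted norm $\|\cdot\|_{\rho^\delta r_t^{\nu-2} C^{0,\alpha}_t}$ on the support of $\err_t$ is uniformly equivalent to the unweighted $C^{0,\alpha}$ norm over the finite collection of annuli. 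Summing over these annuli yields the claim with exponential rate any positive constant smaller than $\min(\mu_1,\mu_2)$. The only genuine technical point is the derivative control of $\ell_t$ on $\{r\ge 1/4\}$, which reduces via its defining ODE to the Bessel-function asymptotics already invoked in Section~\ref{sec:approxsol}; the corresponding estimates for $\tanh$ and $\sinh$ are elementary.
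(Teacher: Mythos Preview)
Your proof is correct and follows essentially the same approach as the paper: identify that $\err_t$ is supported on the finite union of gluing annuli $\{1/4\le |z|\le 1/2\}\subset W_l$ and $\{1/4\le |\Real z|\le 1/2\}\subset V_k$, use the exponential decay of $\ell_t$ (resp.\ of $\tanh(tx)-\operatorname{sgn}(x)$ and $p(tx)$) on those annuli to get exponential smallness in every $C^k$ norm, and then observe that the weights $\rho,r_t$ are uniformly bounded above and below there. The paper's own proof is a two-paragraph sketch stating exactly these points; your version supplies the explicit scalar functions controlling the estimate and spells out the weight absorption, but there is no substantive difference in strategy.
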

\begin{proof}
The term $\err_t$ is non-zero only in the regions where the exact solutions are interpolated. Near the core loops, this corresponds to the region $1/4 <  |\Re z| < 1/2$ in the coordinates $(V_k, z)$, and near the zeros of the quadratic differential, this is the set $1/4 < |z| < 1/2$ in the coordinates $(U_j, z)$.

Near the core loops in the coordinates $(V_k, z)$, the model solution converges to the limiting configuration at an exponential rate in every $C^k$ norm in the region $\epsilon < |\Re z| < 1$ for every fixed $\epsilon > 0$. Likewise, near the zeros of the quadratic differential in the coordinates $(U_j, z)$, the fiducial solution converges to the limiting configuration at an exponential rate in every $C^k$-norm in the region $\epsilon < |z| < 1$ for every fixed $\epsilon > 0$.
\end{proof}

\begin{lemma}
\label{lemma:Q_t_decay}
If $\delta > 0$ and $\gamma \in \rho^\delta r_t^\nu C^{k,\alpha}_t$, then $Q_t(\gamma) \in \rho^{2\delta} r_t^{2\nu -2} C^{k-2,\alpha}_t$.
\end{lemma}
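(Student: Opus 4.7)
The plan is to identify the explicit structure of $Q_t(\gamma)$ as a convergent power series in $\gamma,\nabla\gamma,\nabla^2\gamma$ and then verify the asserted space membership by tracking weights term-by-term.

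First, using the standard formulas for the complex gauge action with $g=e^\gamma$ (noting that $g^*=g$ since $\gamma$ is $h_0$-Hermitian), I would expand
\[
g^{-1}\bar\partial_A g \;=\; \bar\partial_A\gamma - \tfrac12[\gamma,\bar\partial_A\gamma] + \cdots,
\qquad
g^{-1}\Phi g \;=\; \Phi - [\gamma,\Phi] + \tfrac12[\gamma,[\gamma,\Phi]] - \cdots,
\]
and likewise on the $(1,0)$- and adjoint sides. Plugging into $\mc{F}_t(\gamma)=-i\ast F_{A\ast g}+t^2(-i\ast)[(\Phi\ast g)\wedge(\Phi\ast g)^*]$ and Taylor-expanding at $\gamma=0$ produces a convergent series whose constant term is $\err_t=\mc{F}_t(0)$, whose linear part reproduces $L_t\gamma=\Delta_A\gamma-i\ast t^2 M_\Phi\gamma$, and whose remainder $Q_t(\gamma)$ is a sum of schematic monomials of two types:
\begin{align*}
\text{(C)}\;\;& c_{n,a,b}(A_t^{\appr})\cdot\gamma^{\,n}\cdot\nabla^a\gamma\cdot\nabla^b\gamma, & n+2\ge 2,\;\;a+b\le 2,\\
\text{(H)}\;\;& t^2\, c_n(\Phi_t^{\appr})\cdot\gamma^{\,n}, & n\ge 2.
\end{align*}

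Next I would carry out the weight bookkeeping. The pointwise definition of $\rho^\delta r_t^\nu C^{k,\alpha}_t$ gives the pointwise bounds $|\nabla^j\gamma|_{\mathrm{pt}}\lesssim \rho^\delta r_t^{\nu-j}$ for $0\le j\le k$, where the $\rho^j$ factors in the core-loop norm absorb the hyperbolic degeneracy so that no $\rho$ weight is lost at the core loops (the operator being $0$-elliptic there), while near the zeros of $q$ each derivative genuinely costs an $r_t^{-1}$. By the multiplicativity of these weighted norms, a monomial of type (C) with $a+b$ derivatives has pointwise size at most $\rho^{(n+2)\delta}r_t^{(n+2)\nu-(a+b)}$, which, using $\delta>0$ and $r_t\le 1$, is dominated by $\rho^{2\delta}r_t^{2\nu-2}$. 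A monomial of type (H) is bounded by $C_t\,\rho^{n\delta}r_t^{n\nu}\le C_t\,\rho^{2\delta}r_t^{2\nu-2}$, where the constant $C_t$ absorbs the (finite but $t$-dependent) pointwise $C^\infty_t$-norms of $\Phi_t^{\appr}$ together with the $t^2$ prefactor; this is harmless because the lemma asserts only membership, not a uniform estimate in $t$. The Hölder seminorm contribution is handled by the usual product inequality $[fg]_\alpha\le\|f\|_{C^0}[g]_\alpha+\|g\|_{C^0}[f]_\alpha$ adapted to the weighted spaces.

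The main obstacle will be the worst case $a+b=2$, i.e.\ the monomials $\gamma^{\,n}\cdot\nabla^2\gamma$ and $\gamma^{\,n}\cdot(\nabla\gamma)^{\otimes 2}$: near a zero of $q$ each of the two derivatives genuinely costs an $r_t^{-1}$ factor, so the exponent $r_t^{2\nu-2}$ in the output is sharp and must be verified with care (in particular, the pointwise definition of the $C^k_t$ norm near the zeros must be used to extract exactly the right scaling). Once this case is in hand, convergence of the full series on a neighborhood of $\gamma=0$ in $\rho^\delta r_t^\nu C^{k,\alpha}_t$ follows from the analyticity of $\exp$ and the summability of the combinatorial coefficients, yielding $Q_t(\gamma)\in\rho^{2\delta}r_t^{2\nu-2}C^{k-2,\alpha}_t$.
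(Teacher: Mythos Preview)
Your approach is essentially the same as the paper's: expand $Q_t$ into its constituent pieces and track the $\rho$- and $r_t$-weights via multiplicativity of the weighted H\"older norms. The paper uses the closed-form expression for $Q_t$ from \cite{MSWW} (written in terms of the remainder terms $R_A(\gamma)$ and $R_\Phi(\gamma)$) rather than the full power series, but this is cosmetic.

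The one point you assert without justification---and which the paper checks explicitly---is that $\Phi_t^{\appr}$ actually lies in $\rho^0 r_t^0 C^{k,\alpha}_t$. Near a core loop the leading term of $\Phi_t^{\model}$ is $\tfrac12\begin{psmallmatrix}0&1/(tx)\\ tx&0\end{psmallmatrix}dz$, so in coordinates the Higgs field has a $1/x$ pole. This is only rendered finite because the norms near the core loops are defined with the $\rho^j$ weighting (equivalently, with respect to the conformally compact metric where $|dz|\sim x$), so that $|\Phi_t^{\appr}|=O(1)$. Your parenthetical ``(finite but $t$-dependent)'' is correct, but not automatic; without this cancellation the type-(H) monomials would leak an extra $\rho^{-1}$ and fail to land in $\rho^{2\delta}C^{k-2,\alpha}_t$. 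The analogous check that $d_{A_t^{\appr}}$ preserves the $\rho$-weight (costing only one $r_t$) is likewise something the paper notes explicitly.
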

\begin{proof}
In \cite{MSWW} it is shown that
\begin{align*}
Q_t(\gamma) = & d_{A_t^{\appr}} (R_{A_t^{\appr}}(\gamma)) + t^2 \left[ R_{\Phi_t^{\appr}}(\gamma) \wedge \left(\Phi_t^{\appr}\right)^*\right] + t^2 \left[ \Phi_t^{\appr} \wedge R_{\Phi_t^{\appr}}(\gamma)^* \right] \\
& + \frac 1 2 \left[ \left( (\ol{\partial}_{A_t}^{\appr} - \partial_{A_t}^{\appr}) \gamma + R_{A_t}^{\appr}(\gamma) \right) \wedge \left( (\ol{\partial}_{A_t}^{\appr} - \partial_{A_t}^{\appr}) \gamma + R_{A_t}^{\appr}(\gamma) \right) \right] \\
& + t^2 \left[ \left( [\Phi_t^{\appr}, \gamma] + R_{\Phi_t^{\appr}}(\gamma) \right) \wedge  \left( [\Phi_t^{\appr}, \gamma] + R_{\Phi_t^{\appr}}(\gamma) \right)^* \right],
\end{align*}
where $R_A(\gamma)$ and $R_\Phi(\gamma)$ are defined by the equations
\begin{align*}
A \ast e^\gamma & = A + (\ol{\partial}_A - \partial_A) \gamma + R_A(\gamma),\\
\Phi \ast e^\gamma & = \Phi + [\Phi, \gamma] + R_\Phi(\gamma).
\end{align*}
(Note that on the left-hand side we omit the Hodge star, which ensures that $Q_t(\gamma)$ is again a section of the endomorphism bundle rather than an endomorphism-valued 2-form.) The terms $R_A(\gamma)$ and $R_\Phi(\gamma)$ can also be written as
\begin{align*}
R_A(\gamma) & = \exp(-\gamma)\ol{\partial}_A(\exp(\gamma)) - \partial_A(\exp(\gamma)) \exp(-\gamma) - (\ol{\partial}_A - \partial_A) \gamma, \\
R_\Phi(\gamma) & = \exp(-\gamma) \Phi \exp(\gamma) - [\Phi, \gamma] - \Phi.
\end{align*}
Expanding the first few terms of the exponentials of $\gamma$ and $-\gamma$, one sees that
\begin{align*}
R_A(\gamma) & =  \frac 1 2 \left(\ol{\partial}_A \gamma - \partial_A \gamma \right) \gamma - \frac 1 2 \gamma \left( \ol{\partial}_A \gamma - \partial_A \gamma \right),   \\
R_\Phi(\gamma) & = \frac 1 2 \gamma^2 \Phi + \frac 1 2 \Phi \gamma^2 - \gamma \Phi \gamma.
\end{align*}
Let us observe that for $\eta_i \in \rho^{\delta_i} r_t^{\nu_i} C^{k,\alpha}_t$, $i=1,2$, the product $\eta_1 \eta_2$ lies in $\rho^{\delta_1+\delta_2} r_t^{\nu_1 + \nu_2} C^{k,\alpha}_t$. Moreover, $\Phi_t^{\appr}$ lies in $\rho^0 r_t^0 C_t^{k,\alpha}$. To see this, observe that the leading term in $\Phi_t^{\appr}$ at a core loop is $\begin{pmatrix} 0 & \frac 1 {2tx} \\ 2tx & 0 \end{pmatrix} dz$. Since $|dz| = \frac 1 {\sqrt{2}} |x|$, it follows that $|\Phi_t^{\appr}| = O(1)$ near the core loop, and for this reason $\Phi_t^{\appr}$ lies in $\rho^0 r_t^0 C^{k,\alpha}_t$ as claimed.

On the other hand, $d_{A_t^{\appr}}$ maps $\rho^\delta r_t^{\nu} C^{k,\alpha}_t$ to $\rho^\delta r_t^{\nu-1} C^{k-1,\alpha}_t$, and likewise for $\partial_{A_t^{\appr}}$ and $\ol{\partial}_{A_t^{\appr}}$. The argument for this is similar to that for $\Phi$.

Therefore, $R_{A_t^{\appr}}(\gamma) \in \rho^{2\delta} r_t^{2\nu - 1} C^{k-1, \alpha}_t$ and $R_{\Phi_t^{\appr}}(\gamma) \in \rho^{2\delta} r_t^{2\nu} C^{k,\alpha}_t$.

The claim now follows from the expression for $Q_t(\gamma)$ by further applying these properties.
\end{proof}

\begin{lemma}
\label{lemma:quadratic_term}
There exist a constant $C > 0$ and an $r > 0$ such that for $\gamma_1, \gamma_2 \in B_r(0) \subset \rho^\delta r_t^\nu C^{2,\alpha}_t$, the following inequality holds:
\[
\|Q_t(\gamma_1) - Q_t(\gamma_2)\|_{\rho^\delta r_t^{\nu-2} C^{0,\alpha}_t} \leq C \left( \|\gamma_1\|_{\rho^\delta r_t^\nu C^{2,\alpha}_t} + \|\gamma_2\|_{\rho^\delta r_t^\nu C^{2,\alpha}_t} \right) \|\gamma_1 - \gamma_2\|_{\rho^\delta r_t^\nu C^{2,\alpha}_t}.
\]
\end{lemma}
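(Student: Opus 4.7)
The proof follows the standard quadratic-estimate template for Banach fixed-point arguments in gauge theory. Since $\mc{F}_t$ is real-analytic in $\gamma$ and $L_t$ coincides with the full Fréchet derivative $(D\mc{F}_t)_0$ at the origin, the remainder $Q_t(\gamma) = \mc{F}_t(\gamma) - \mc{F}_t(0) - L_t\gamma$ satisfies $Q_t(0) = 0$ and $(DQ_t)_0 = 0$. Setting $\gamma_s := s\gamma_1 + (1-s)\gamma_2$, the fundamental theorem of calculus gives
\[
Q_t(\gamma_1) - Q_t(\gamma_2) = \int_0^1 (DQ_t)_{\gamma_s}[\gamma_1 - \gamma_2]\, ds,
\]
so it suffices to establish the pointwise-in-$\gamma$ bound
\[
\|(DQ_t)_\gamma[\eta]\|_{\rho^\delta r_t^{\nu-2} C^{0,\alpha}_t} \leq C\, \|\gamma\|_{\rho^\delta r_t^\nu C^{2,\alpha}_t}\, \|\eta\|_{\rho^\delta r_t^\nu C^{2,\alpha}_t}
\]
for all $\gamma$ in a small ball around $0$; integrating and using $\|\gamma_s\| \leq \|\gamma_1\| + \|\gamma_2\|$ then yields the statement.

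To obtain the pointwise estimate, I would differentiate the explicit expression for $Q_t(\gamma)$ displayed in the proof of Lemma \ref{lemma:Q_t_decay}, together with the formulas
\[
R_{A_t^{\appr}}(\gamma) = e^{-\gamma}\ol{\partial}_{A_t^{\appr}}(e^{\gamma}) - \partial_{A_t^{\appr}}(e^{\gamma}) e^{-\gamma} - (\ol{\partial}_{A_t^{\appr}} - \partial_{A_t^{\appr}})\gamma, \quad R_{\Phi_t^{\appr}}(\gamma) = e^{-\gamma}\Phi_t^{\appr} e^{\gamma} - [\Phi_t^{\appr}, \gamma] - \Phi_t^{\appr}.
\]
The derivative $(DQ_t)_\gamma[\eta]$ is then a finite sum of multilinear expressions, each containing exactly one occurrence of $\eta$ or its first $A_t^{\appr}$-derivative, at least one occurrence of $\gamma$ or its derivative, and additional factors drawn from $\{A_t^{\appr}, \Phi_t^{\appr}, (\Phi_t^{\appr})^*\}$, multiplied by an absolutely convergent power series in $\gamma$ produced by $e^{\pm \gamma}$. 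The smallness condition $\|\gamma_i\|_{\rho^\delta r_t^\nu C^{2,\alpha}_t} \leq r$ ensures that $\|\gamma\|_{C^0}$ is small uniformly in $t$ (since $\delta > 0$ and $\nu \geq 0$ yield a continuous embedding into $C^0$ with $t$-independent constants), so the tail of the exponential series is controlled by its leading quadratic contribution and can be absorbed into the constant $C$.

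Each summand is then bounded using the multiplicative and differential properties of $\rho^\delta r_t^\nu C^{k,\alpha}_t$ already employed in the proof of Lemma \ref{lemma:Q_t_decay}: pointwise products respect the weighted grading, $d_{A_t^{\appr}}$ lowers the $r_t$-weight by one, and both $\Phi_t^{\appr}$ and its star conjugate lie in $\rho^0 r_t^0 C^{k,\alpha}_t$ uniformly in $t$. Two factors of $\gamma$ contribute $\rho^{2\delta} r_t^{2\nu}$; since $\rho, r_t \leq 1$, $\delta > 0$, and $\nu \geq 0$, one copy of $\rho^\delta r_t^\nu$ realizes the inclusion $\rho^{2\delta} r_t^{2\nu - 2} \hookrightarrow \rho^\delta r_t^{\nu - 2}$, and can be absorbed into the norm of $\gamma_s$ while the remaining $\gamma$ (or $\eta$) carries the second factor. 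The main obstacle is precisely the $t$-uniformity of the constant $C$ near the core loops, where the Higgs field is singular and the $t^2$-bracket term must be controlled; this is handled by exploiting that $|\Phi_t^{\model}|$ in the Poincaré metric on the Strebel cylinder is of order $t^{-1}$, so that $t^2[\Phi_t^{\model} \wedge (\Phi_t^{\model})^*]$ is uniformly bounded in the natural weighted norm — the same bookkeeping that underlies Theorem \ref{thm:schauder_estimate} and Lemma \ref{lemma:Q_t_decay}, applied now at the bilinear level.
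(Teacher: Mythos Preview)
Your approach is essentially the same as the paper's: both write $Q_t(\gamma_1)-Q_t(\gamma_2)=\int_0^1 dQ_t(\gamma_s)[\gamma_1-\gamma_2]\,ds$ and use $dQ_t(0)=0$ to bound $\|dQ_t(\gamma_s)\|$ linearly in $\|\gamma_s\|$. The paper's argument is soft---it simply invokes smoothness of $Q_t$ between the weighted spaces (which follows from Lemma~\ref{lemma:Q_t_decay}) and local Lipschitz continuity of $dQ_t$---whereas you spell out the multilinear structure explicitly.

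Your final paragraph on $t$-uniformity goes beyond what the paper's proof addresses. Note, however, that the claim $|\Phi_t^{\model}|=O(t^{-1})$ in the Poincar\'e metric holds only in a shrinking neighborhood of the core loop (where $\tanh(tx)\sim tx$); on the rest of the Strebel cylinder, and in the interior regions $U_j$ where $\Phi_t^{\appr}=\Phi_\infty$, one has $|\Phi_t^{\appr}|=O(1)$ and the $t^2$-bracket contributions are genuinely $O(t^2)$. This does not break the contraction argument in Theorem~\ref{thm:existence_solution}, since only polynomial-in-$t$ control of $C$ is needed against the exponentially small error term, but the uniform bound you assert is stronger than what actually holds.
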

\begin{proof}
This follows from the fact that $Q_t : \rho^\delta r_t^\nu C^{2,\alpha}_t \to \rho^\delta r_t^{\nu-2} C^{0,\alpha}_t$ is a smooth map and that $dQ_t(0) = 0$.

Indeed, in that case we may write
\[
Q_t(\gamma_1) - Q_t(\gamma_2) = \int_0^1 dQ_t(\gamma_s) (\gamma_2 - \gamma_1) \, ds,
\]
where $\gamma_s = (1-s) \gamma_1 + s \gamma_2$. We may estimate
\[
\|d Q_t(\gamma_s)\| = \|dQ_t(\gamma_s) - d Q_t(0)\| \leq C \|\gamma_s - 0\| \leq C ((1-s) \|\gamma_1\| + s \|\gamma_2\|),
\]
where we use the local Lipschitz continuity of $dQ_t$, which follows from the smoothness of $Q_t$. Using the integral formula above then shows
\[
\|Q_t(\gamma_1) - Q_t(\gamma_2)\| \leq \frac 1 2 C \left( \|\gamma_1\| + \|\gamma_2\| \right) \|\gamma_2 - \gamma_1\|,
\]
as claimed.

To show the smoothness of $Q_t : \rho^\delta r_t^\nu C^{2,\alpha}_t \to \rho^\delta r_t^{\nu-2} C^{0,\alpha}_t$, let us first observe that $Q_t$ is a non-linear differential operator with smooth coefficients, and therefore smoothness of the operator between these Banach spaces is equivalent to well-definedness of the operator, i.e.\ we have to show that for $\gamma \in \rho^\delta r_t^\nu C^{2,\alpha}_t$, the image $Q_t(\gamma)$ is indeed a section of $\rho^\delta r_t^{\nu-2} C^{0,\alpha}_t$. This follows immediately from the previous lemma, since $\rho^{2\delta} r_t^{2\nu-2} C^{0,\alpha}_t \subset \rho^{\delta} r_t^{\nu-2} C^{0,\alpha}_t$.
\end{proof}

\begin{proof}[Proof of Theorem \ref{thm:existence_solution}]
For sufficiently large $t$ and appropriate $r > 0$, the map
\[
G_t : B_r(0) \subset \rho^\delta r_t^\nu C^{2,\alpha}_t \to \rho^\delta r_t^\nu C^{2,\alpha}_t, \quad G_t(\gamma) =  -L_t^{-1} \left( \err_t + Q_t(\gamma)\right)
\]
is a contraction mapping, and the Banach fixed point theorem then provides a solution. To see this, observe that according to Lemma \ref{lemma:quadratic_term} we have
\[
\|G_t(\gamma_1) - G_t(\gamma_2)\|_{\rho^\delta r_t^\nu C^{2,\alpha}_t} \leq C t^\kappa \left( \|\gamma_1\|_{\rho^\delta r_t^\nu C^{2,\alpha}_t} +  \|\gamma_2\|_{\rho^\delta r_t^\nu C^{2,\alpha}_t}\right)  \|\gamma_1 - \gamma_2\|_{\rho^\delta r_t^\nu C^{2,\alpha}_t}.
\]
Therefore, if we choose $r < \frac 1 {4 Ct^\kappa}$, we obtain
\[
\|G_t(\gamma_1) - G_t(\gamma_2)\|_{\rho^\delta r_t^\nu C^{2,\alpha}_t} \leq \frac 1 2  \|\gamma_1 - \gamma_2\|_{\rho^\delta r_t^\nu C^{2,\alpha}_t}.
\]
In particular, $\|G_t(\gamma) - G_t(0)\|_{\rho^\delta r_t^\nu C^{2,\alpha}_t} \leq \frac 1 2 \|\gamma\|_{\rho^\delta r_t^\nu C^{2,\alpha}_t}$ and therefore
\begin{align*}
\|G_t(\gamma)\|_{\rho^\delta r_t^\nu C^{2,\alpha}_t} & \leq \|G_t(\gamma) - G_t(0)\|_{\rho^\delta r_t^\nu C^{2,\alpha}_t} + \|G_t(0)\|_{\rho^\delta r_t^\nu C^{2,\alpha}_t} \\
& \leq \frac 1 2 r + \wt{C} t^\kappa e^{-a t}.
\end{align*}
For $t$ sufficiently large, this implies that $G_t$ maps the ball $B_r(0)$ to itself. This shows that $G_t : B_r(0) \to B_r(0)$ is indeed a contraction mapping, and the Banach fixed point theorem shows that there exists $\gamma \in B_r(0)$ solving $G_t(\gamma) = 0$.
\end{proof}

\subsection{Decay behavior at the core loops and construction of maps through $\rho=0$}

\begin{thm}
  \label{thm:best_gluing_regularity}
If $\gamma \in \rho^\delta C^{k,\alpha}_t$ solves $\mc{F}_t(\gamma) = 0$ for some $\delta > 0$, then $\gamma \in \rho^\mu C^{k,\alpha}_t$ for every $\mu < 2.$
\end{thm}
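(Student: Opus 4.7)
The proof is a standard bootstrap built on two ingredients already established: the quadratic improvement provided by Lemma~\ref{lemma:Q_t_decay} and the weighted Fredholm isomorphism of Theorem~\ref{thm:banach_space_iso}. The starting point is to rewrite the equation as
\[
L_t \gamma \,=\, -\err_t - Q_t(\gamma),
\]
using the decomposition $\mc{F}_t(\gamma) = \err_t + L_t\gamma + Q_t(\gamma)$ with $\err_t = \mc{F}_t(0)$. Since $\err_t$ is smooth and compactly supported in the gluing annuli $\{1/4 < |\Re z| < 1/2\}$ inside the Strebel cylinders (and similarly in $\{1/4 < |z| < 1/2\}$ near the zeros), it is supported away from both $\{\rho = 0\}$ and $\{r_t = 0\}$, hence $\err_t \in \rho^{\mu} r_t^{\sigma} C^{k,\alpha}_t$ for \emph{every} pair $\mu,\sigma \in \R$. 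Lemma~\ref{lemma:Q_t_decay} applied to the hypothesis $\gamma \in \rho^\delta C^{k,\alpha}_t$ places $Q_t(\gamma)$ in $\rho^{2\delta} r_t^{-2} C^{k-2,\alpha}_t$.

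The bootstrap then runs as follows. Set $\delta_0 = \delta > 0$ and $\nu_0 = 0$, and assume inductively that $\gamma \in \rho^{\delta_n} r_t^{\nu_n} C^{k,\alpha}_t$. The right-hand side of the displayed equation then lies in $\rho^{2\delta_n} r_t^{2\nu_n - 2} C^{k-2,\alpha}_t$. Pick any $\delta_{n+1}$ with $\delta_{n+1} < \min(2\delta_n,\, 2)$; since $\delta_n > 0$, this exponent lies in the indicial range $(-1,2)$. Theorem~\ref{thm:banach_space_iso} then provides a unique preimage $\wt{\gamma} \in \rho^{\delta_{n+1}} r_t^{2\nu_n - 2} C^{k,\alpha}_t$ of $-\err_t - Q_t(\gamma)$ under $L_t$. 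The difference $\gamma - \wt{\gamma}$ lies in the common space $\rho^{\delta_n} r_t^{2\nu_n - 2} C^{k,\alpha}_t$ (using $\rho^{\delta_{n+1}} C^{k,\alpha}_t \subset \rho^{\delta_n} C^{k,\alpha}_t$ and $r_t^{\nu_n} C^{k,\alpha}_t \subset r_t^{2\nu_n - 2} C^{k,\alpha}_t$, both valid since $\rho, r_t \leq 1$), is annihilated by $L_t$, and therefore vanishes by the triviality of the kernel asserted in Theorem~\ref{thm:banach_space_iso}. Hence $\gamma \in \rho^{\delta_{n+1}} r_t^{\nu_{n+1}} C^{k,\alpha}_t$ with $\nu_{n+1} := 2\nu_n - 2$. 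Iterating the recursion $\delta_{n+1} = \min(2\delta_n,\, 2 - 2^{-n})$ starting from $\delta_0 > 0$ reaches any target $\mu < 2$ after finitely many steps.

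Two minor points finish the proof. First, the $r_t$-exponent $\nu_n$ drifts to $-\infty$ along the iteration, but this is harmless: near the zeros $\rho \equiv 1$, and the hypothesis $\gamma \in \rho^\delta C^{k,\alpha}_t$ with $\nu = 0$ already bounds $\gamma$ there in the correct $r_t$-scale, so intersecting the bootstrapped space with the original one restores $\nu = 0$ globally and delivers $\gamma \in \rho^\mu C^{k,\alpha}_t$ for every $\mu < 2$. Second, the ceiling $\mu = 2$ is sharp for this method because $2$ is the first positive indicial root of $L_t$ (computed from $I_s(L_t^j) = -s^2 + s + 2$ in the Fredholm discussion preceding Theorem~\ref{thm:banach_space_iso}); the isomorphism degenerates at that weight and no further improvement is available. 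The only genuine obstacle in the execution is bookkeeping for the two coupled weights: one must check at each step that the preimage produced by $L_t^{-1}$ coincides with the original $\gamma$, which is arranged by choosing a common sufficiently weak weighted space in which the kernel of $L_t$ is still trivial.
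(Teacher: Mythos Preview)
Your proposal is correct and follows essentially the same bootstrap as the paper: rewrite $\mc{F}_t(\gamma)=0$ as $L_t\gamma = -\err_t - Q_t(\gamma)$, use Lemma~\ref{lemma:Q_t_decay} to double the $\rho$-exponent on the right-hand side, and invert $L_t$ via Theorem~\ref{thm:banach_space_iso} to propagate the improvement to $\gamma$, iterating until the indicial root $2$ is reached. Your explicit treatment of the $r_t$-weight drift is unnecessary, since for finite $t$ one has $r_t \ge t^{-2/3} > 0$, so all spaces $\rho^\delta r_t^\nu C^{k,\alpha}_t$ coincide as sets for varying $\nu$; the paper simply drops the $r_t$-weight for this reason, which streamlines the argument.
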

\begin{proof}
Rewrite $\mc{F}_t(\gamma) = 0$ as
\[
L_t \gamma = - \err_t - Q_t(\gamma).
\]
Now $\err_t$ vanishes near the boundary and is smooth. Therefore $\err_t \in \rho^\eta C^{l,\beta}_0$ for all choices of $\eta, l, \beta$.

On the other hand, assuming that $\gamma \in \rho^\delta C^{k,\alpha}_t$, Lemma \ref{lemma:Q_t_decay} gives that $Q_t(\gamma) \in \rho^{2\delta} C^{k-2,\alpha}_t$. Therefore $L_t \gamma \in \rho^{2\delta} C^{k-2,\alpha}_t$ and this implies that $\gamma \in \rho^{2\delta} C^{k,\alpha}_t$, as long as $2\delta < 2$, since in that range $L_t : \rho^\mu C^{k,\alpha}_t \to \rho^\mu C^{k-2,\alpha}_t$ is an isomorphism.

Repeating this argument $n$ times yields $\gamma \in \rho^{2^n \delta} C^{k,\alpha}_t$, as long as $2^n \delta <  2$.

Let $n_0$ be the largest natural number such that $2^{n_0} \delta <  2$. Since $\gamma \in \rho^{2^{n_0} \delta} C^{k,\alpha}_t$, it follows by the previous argument that $L_t \gamma \in \rho^{2^{n_0+1} \delta} C^{k-2,\alpha}_t$. In particular, it follows that $L_t \gamma \in \rho^\mu C^{k-2, \alpha}_t$ for every $\mu < 2$. From this we conclude that $\gamma \in \rho^\mu C^{k,\alpha}_0$, giving the desired result.
\end{proof}

With this theorem in place, we can study the regularity of the associated harmonic map and its associated map into $\mathbb{S}^3$. Before we do so, we briefly discuss properties of the harmonic map and the transgressive map associated to the model solution. To this end, recall that the flat $\SL(2,\C)$ connection associated to the model solution $(A_t^{\model}, \Phi_t^{\model})$ is given by
\begin{align}\label{formula:D_t_model}
  D_t^{\model} & = d_{A_t^{\model}} + t \Phi_t^{\model} + \left(t \Phi_t^{\model}\right)^* \\
  & = d + \;
  \begin{pmatrix}
    0 & \frac 1 2 t \tanh(t x) + \frac 1 2 t \coth(tx) \\
    \frac 1 2 t \tanh(tx) + \frac 1 2 t \coth(tx) & 0
  \end{pmatrix}
  dx \notag \\
  &
  \quad \; +
  i \begin{pmatrix}
    - t \csch(2 t x) & - \frac 1 2 t \tanh(tx) + \frac 1 2 t \coth(t x)\\
    \frac 1 2 t \tanh(tx) - \frac 1 2 t \coth(tx) & t \csch(2 t x )
  \end{pmatrix} dy \notag
\end{align}
and we can compute an explicit parallel frame for this as
\begin{equation}\label{formula:F_t_model}
  F_t^{\model}(x,y) =
  \frac 1 2
  \begin{pmatrix}
    h + \frac 1 h & -h + \frac 1 h \\ -h + \frac 1 h & h + \frac 1 h
  \end{pmatrix}
  \begin{pmatrix}
    1 + \frac i 2 y & - \frac i 2 y \\
    \frac i 2 y & 1 - \frac i 2 y
  \end{pmatrix}
\end{equation}
with $h = \sqrt{\frac{\sinh(2 t x)}{2t}}$. Note that both the frame and the connection are not defined at $x=0$. The harmonic map into the hyperbolic 3-space is then given by
\begin{equation}
  f_{t,\hyp}^{\model} =
  \begin{cases}
    - \left(F_t^{\model}\right)^* F_t^{\model}, & x < 0 \\
    \quad \left(F_t^{\model}\right)^* F_t^{\model}, & x > 0 \\    
  \end{cases}
\end{equation}
The signs in this equation are explained by the fact that the Hermitian metric on $x<0$ is negative definite and on $x>0$ is positive definite. Applying an isometry of $\H^3_\pm$ to this family yields more solutions. Such a translation of the model solution is given by $A^* f_{t,\hyp}^{\model} A$ for $A \in \SL(2,\C)$. The transgressive map associated to the model solution is given by $f_t^{\model} = \Xi^{-1}_{\SU(2)} \circ f_{t,\hyp}^{\model}$. Note that these maps are odd with respect to the variable $x$.

\begin{thm}
  \label{thm:existence_hoelder_transgressive_maps}
  For any framed limiting configuration and for sufficiently large $t > 0$ and any $\delta < 2$, there exists a $\gamma_t \in \rho^\delta C^{2,\alpha}_t$ solving $\mc{F}_t(\gamma_t) = 0$.
  
  If $\delta > 1$ and the complement of the union $\cup_k c_k \subset M$ of the core loops consists of two connected components, then associated to this solution is an equivariant map $f_t : \wt{M} \to \bb{S}^3$, such that $f_t$ is smooth away from the (preimage of) the core loops, and $\Xi \circ f_t$ is a harmonic map to $\H^3_\pm$ on this complement. Moreover, for every core loop there exists a neighborhood of the core loop and an $A \in \SL(2,\C)$, such that $f_t - \Xi_{\SU(2)}^{-1} \circ \left( A^* f_{t,\hyp}^{\model} A\right) \in \rho^{\delta - 1} C^{2,\alpha}_t$.

  In particular, the map $f_t$ is H\"older continuous.
\end{thm}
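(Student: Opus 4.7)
My plan is as follows. First, I would apply Theorem \ref{thm:existence_solution} to produce a solution $\gamma_t \in \rho^{\delta_0} C^{2,\alpha}_t$ of $\mc{F}_t(\gamma_t) = 0$ for some $\delta_0 \in (1/2,1)$, and then bootstrap its regularity to $\rho^\delta C^{2,\alpha}_t$ for any $\delta < 2$ via Theorem \ref{thm:best_gluing_regularity}. This settles the first claim. For the geometric statements, the exact pair $(A_t, t\Phi_t) := (A_t^{\appr}, t\Phi_t^{\appr}) \ast \exp(\gamma_t)$ is smooth on $M^\vee$ and induces the flat $\SL(2,\C)$-connection $D_t = d_{A_t} + t\Phi_t + t\Phi_t^*$. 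Choosing a parallel frame $F_t : \wt{M^\vee} \to \SL(2,\C)$, the assignment $F_t^* F_t$ gives an equivariant map into positive Hermitian matrices of determinant $1$, and the two-component hypothesis on $M^\vee$ lets me assign signs component-wise so as to produce an equivariant harmonic map $f_{t,\hyp}$ taking values in $\H^3_+$ on one component and $\H^3_-$ on the other. Composing with $\Xi_{\SU(2)}^{-1}$ yields the map $f_t$ on $\wt{M}$ away from the preimage of the core loops.

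The heart of the argument is the extension of $f_t$ across each core loop $c_k$ with the stated asymptotic expansion. In a neighborhood of $c_k$ the approximate pair equals the model pair, so $(A_t, t\Phi_t) = (A_t^{\model}, t\Phi_t^{\model}) \ast \exp(\gamma_t)$ there. With $F_t^{\model}$ the explicit parallel frame of $D_t^{\model}$ from \eqref{formula:F_t_model}, I would look for $F_t$ in the form $F_t = F_t^{\model} \cdot G_t$ with $G_t$ an $\SL(2,\C)$-valued function; flatness of $D_t$ then translates into the ODE
\begin{equation*}
dG_t + \left( (F_t^{\model})^{-1} B_t \, F_t^{\model} \right) G_t = 0,
\end{equation*}
where $B_t$ is the difference of the two flat connection $1$-forms and is polynomial in $\gamma_t$ and $d\gamma_t$. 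Since $\gamma_t \in \rho^\delta C^{2,\alpha}_t$, the quantity $B_t$ lies in $\rho^\delta C^{1,\alpha}_t$ in the conformally compactified norms; integrating the ODE up to the boundary, one concludes that $G_t$ extends continuously to $c_k$ with a limit $A \in \SL(2,\C)$ and $G_t - A \in \rho^\delta C^{2,\alpha}_t$.

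Given this, the identity $f_{t,\hyp} = G_t^* f_{t,\hyp}^{\model} G_t$ (valid in the matrix model) gives
\begin{equation*}
f_{t,\hyp} - A^* f_{t,\hyp}^{\model} A = (G_t - A)^* f_{t,\hyp}^{\model} G_t + A^* f_{t,\hyp}^{\model} (G_t - A).
\end{equation*}
Since $|f_{t,\hyp}^{\model}| \sim \rho^{-1}$ near $c_k$, each summand on the right lies in $\rho^{\delta - 1} C^{2,\alpha}_t$. By Example \ref{exa:modelharmonicmap_sphere}, the composition $\Xi_{\SU(2)}^{-1} \circ f_{t,\hyp}^{\model}$ extends smoothly through $c_k$, and a direct computation with the explicit formula $\Xi_{\SU(2)}^{-1}(X,1) = \frac{2}{\tr X}(\id + i \mathring{X})$ --- whose denominator is comparably large --- shows that applying $\Xi_{\SU(2)}^{-1}$ transfers the $\rho^{\delta-1} C^{2,\alpha}_t$ error class to the spherical side, yielding $f_t - \Xi_{\SU(2)}^{-1}(A^* f_{t,\hyp}^{\model} A) \in \rho^{\delta - 1} C^{2,\alpha}_t$. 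H\"older continuity of $f_t$ across $c_k$ is then immediate: the model term is smooth there, and a section of $\rho^{\delta-1} C^{0,\alpha}_t$ on a conformally compact surface corresponds to an ordinary H\"older continuous function up to the boundary with exponent $\delta - 1 \in (0,1)$.

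The principal obstacle I expect is justifying the decay rate $G_t - A \in \rho^\delta C^{2,\alpha}_t$. The conjugation $(F_t^{\model})^{-1} B_t F_t^{\model}$ looks dangerous a priori, since the entries of $F_t^{\model}$ and of its inverse blow up like $\rho^{-1/2}$ near $c_k$. The rescue must come from the particular structure of $B_t$: the complex-gauge perturbation $\exp(\gamma_t)$ respects the Higgs eigenline decomposition of $\Phi_t^{\model}$ in a way that pairs the growing and decaying coefficients of $F_t^{\model}$ against one another, keeping the conjugated coefficient in $\rho^\delta C^{1,\alpha}_t$. Making this cancellation quantitative in the weighted H\"older norms, uniformly up to the boundary, is the main technical step.
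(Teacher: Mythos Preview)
Your first paragraph is fine and matches the paper. The gap is in the second paragraph: the cancellation you hope for does not happen. The section $\gamma_t$ (and hence $B_t$) carries no special structure with respect to the eigenline splitting of $\Phi_t^{\model}$; it is an arbitrary element of $\rho^\delta C^{2,\alpha}_t(i\mathfrak{su}(E))$. A direct check with the leading $x^{-1/2}$-part of $F_t^{\model}$ shows that for a generic traceless Hermitian $\gamma$ the conjugate $(F_t^{\model})^{-1}\gamma\,F_t^{\model}$ has a nonzero $x^{-1}$ coefficient, so the conjugated ODE coefficient lies only in $\rho^{\delta-1}$ and one obtains at best $G_t-A\in\rho^{\delta-1}C^{2,\alpha}_t$. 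With that, your identity gives $f_{t,\hyp}-A^*f_{t,\hyp}^{\model}A\in\rho^{\delta-2}$, not $\rho^{\delta-1}$.

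The approach is nonetheless salvageable, because your $\Xi_{\SU(2)}^{-1}$ step is also miscounted in the opposite direction: since $\tr f_{t,\hyp}^{\model}\sim\rho^{-1}$, dividing by the trace \emph{gains} one order of $\rho$ rather than merely transferring the error class. So a hyperbolic error of order $\rho^{\delta-2}$ becomes a spherical error of order $\rho^{\delta-1}$, as required. You would also need to argue separately that the limit $A$ is constant along the core loop and that the full $C^{2,\alpha}_t$ regularity survives; the $dy$-component of your conjugated coefficient diverges like $x^{\delta-2}$, so this is not immediate from naive ODE integration.

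The paper sidesteps all of this by introducing an explicit singular gauge $g$ with entries $\sim x^{\pm 1/2}$ for which $\wt D_t^{\model}=D_t^{\model}\cdot g$ and its parallel frame $\wt F_t^{\model}=g^{-1}F_t^{\model}$ are smooth through $x=0$. One then has a direct algebraic formula $\wt F_t=\wt F_t^{\model}A+g^{-1}\eta_t F_t^{\model}A$ with $\eta_t=e^{-\gamma_t}-\id\in\rho^\delta$, so the perturbation $\beta_t\in\rho^{\delta-1}$ by inspection (each of $g^{-1}$ and $F_t^{\model}$ costs $\rho^{-1/2}$). Writing $F_t^*F_t=\wt F_t^*\,g^*g\,\wt F_t$ with $g^*g=\begin{psmallmatrix}x&0\\0&1/x\end{psmallmatrix}$ then yields $F_t^*F_t=\tfrac{1}{x}(U_t+V_t)$ with $U_t$ smooth, $\tr U_t$ bounded away from zero, and $V_t\in\rho^{\delta-1}$; feeding this into $\Xi_{\SU(2)}^{-1}(X)=\tfrac{2}{\tr X}(\id+i\mathring X)$ gives the claim by elementary algebra. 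This route requires no ODE integration, makes the constancy of $A$ automatic, and keeps the weighted H\"older bookkeeping explicit.
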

\begin{rem}
  Note that the map $f_t$ is not necessarily transgressive, because the definition requires that the map is at least $C^1$ through the equatorial $S^2$.

  In the next section, we will show that a specific class of symmetric solutions is smooth through the equator. It would be very interesting to understand necessary and sufficient conditions for smoothness through the equator.
\end{rem}
\begin{proof}
  Existence of the solution $\gamma_t$ follows from Theorem \ref{thm:existence_solution} and the fact that we can choose any $\delta < 2$ follows from Theorem \ref{thm:best_gluing_regularity}. Away from $\rho = 0$ the section $\gamma_t$ is smooth by elliptic regularity.

  The connections $D_t^{\model}$ and the associated frames $F_t^{\model}$ are not defined at $x=0$. The same holds for the solution constructed via the gluing procedure. This singularity can be gauged away by a singular gauge transformation. Indeed, consider the singular gauge transformation
  \[
  g = \frac i {\sqrt{2}} \begin{pmatrix} -\sqrt{x} & \frac 1 {\sqrt{x}} \\ \sqrt{x} & \frac 1 {\sqrt{x}} \end{pmatrix}.
  \]
  This gauge transformation turns $D_t^{\model}$ into a connection that is smooth through $x=0$:
  \begin{align*}
    \wt{D}_t^{\model} = D_t^{\model} \cdot g & = d +  \begin{pmatrix} \frac 1 {2x} - \frac {t}{2 \tanh(tx)} - \frac 1 2 t \tanh(tx) & 0 \\ 0 & \frac t {2 \tanh(tx)} - \frac 1 {2x} + \frac 1 2 t \tanh(tx) \end{pmatrix} dx \\
    & + \begin{pmatrix} 0 & 0 \\ \frac {i t x}{\cosh(tx) \sinh(tx)} & 0 \end{pmatrix} dy.
  \end{align*}
  Observe that $\wt{F}_t^{\model} = g^{-1} F_t^{\model}$ is a parallel frame for $\wt{D}_t^{\model}$ and that this frame is smooth through $x=0$.

    Denote by $D_t^{\appr}$ the connection $d + A_t^{\appr} + t \Phi_t^{\appr} + t \left(\Phi_t^{\appr}\right)^*$. Then $D_t = D_t^{\appr} \cdot g_t$ with $g_t = e^{\gamma_t}$ is a flat connection.

    By employing a cutoff function, we can extend the gauge transformation $g$ to the Riemann surface $M$. By abuse of notation we will still call this gauge transformation $g$. The connection $\wt{D}_t = D_t \cdot g$ is a connection defined on all of the Riemann surface, which is flat on $\{\rho \neq 0\}$.

    Near any core loop this connection can be written as $D_t^{\model} \cdot g_t \cdot g$ and therefore $g^{-1} g_t^{-1} F_t^{\model}$ is a parallel frame for $\wt{D}_t$ near such a core loop. More generally, for any $A \in \SL(2,\C)$ the frame $g^{-1} g_t^{-1} F_t^{\model} A$ is parallel.

    Fix one core loop and in a neighborhood of this core loop consider the frame $g^{-1}g_t^{-1} F_t^{\model}$. By analytic continuation we may extend this parallel frame to the universal cover $\wt{M}$ of $M$. We denote this frame by $\wt{F}_t$. From this we recover a parallel frame $F_t$ for $D_t$ by the formula $F_t = g \wt{F}_t$, which is defined on the complement of the core loop lifted to $\wt{M}$.

    Near any other core loop the parallel frame $\wt{F}_t$ will be of the form $g^{-1} g_t^{-1} F_t^{\model} A$ for some $A \in \SL(2,\C)$. This follows from uniqueness of parallel frames once an initial condition at any point is fixed.

  By definition of a framed limiting configuration, the complement of the core loops $M^\vee$ has exactly two components $M_+$ and $M_-$. Here, we define $M_+$ and $M_-$, such that with respect to the coordinates near the core loops $M_+$ corresponds to the $\{x > 0\}$ pieces and $M_-$ corresponds to the $\{x < 0\}$ pieces.

  The associated harmonic map corresponding to $F_t$ into $\H^3_\pm$ is then given by
  \[
  \begin{cases}
    -F_t^* F_t, & \text{on } M_- \\
    \;\;\, F_t^* F_t, & \text{on } M_+
  \end{cases}
  \]
  The sign results from the fact that the associated Hermitian metric is positive definite on $M_+$ and negative definite on $M_-$.

  Observe that $F_t^* F_t  = \wt{F}_t^* g^* g \wt{F}_t$ and that $g^*g = \begin{pmatrix} x & 0 \\ 0 & 1/x \end{pmatrix}$.

  The regularity of $\wt{F}_t$ near any core loop can be computed from the regularity of $\gamma_t$ as follows. First, observe that $g_t^{-1} = e^{-\gamma_t} = \id - \gamma_t + \frac 1 2 \gamma_t^2 + \ldots = \id + \eta_t$, where $\eta_t \in \rho^\delta C^{2,\alpha}_t$. Using this, we see that $\wt{F}_t = g^{-1} g_t^{-1} F_t^{\model} A = g^{-1} F_t^{\model} A + g^{-1} \eta_t F_t^{\model} A$, where $A \in \SL(2,\C)$. Since $g^{-1} F_t^{\model}$ is smooth and $A$ is constant, it follows that we only need to understand the regularity of $g^{-1} \eta_t F_t^{\model}$. The term of lowest order in $g^{-1}$ is proportional to $x^{-1/2}$, and likewise in the expansion of $F_t^{\model}$. Therefore $\beta_t = g^{-1} \eta_t F_t^{\model} \in \rho^{\delta - 1 } C^{2,\alpha}_t$. 

  Therefore
  \begin{align*}
    F_t^* F_t & = A^* (\wt{F}_t^{\model} + \beta_t)^* g^*g (\wt{F}_t^{\model} + \beta_t) A \\
    & = A^* \left(F_t^{\model}\right)^* F_t^{\model} A + A^* \left(\wt{F}_t^{\model}\right)^* g^*g \beta_t A +  A^* \beta_t^* g^*g \wt{F}_t^{\model} A + A^* \beta_t^* g^* g \beta_t A.
  \end{align*}
  Since $g^* g = \frac 1 x \begin{pmatrix} x^2 & 0 \\ 0 & 1 \end{pmatrix}$, we may write
  \[
  A^* \left(F_t^{\model}\right)^* F_t^{\model} A = \frac 1 x A^* \left(\wt{F}_t^{\model}\right)^* \begin{pmatrix} x^2 & 0 \\ 0 & 1 \end{pmatrix} \wt{F}_t^{\model} A = \frac 1 x U_t,
  \]
  where $U_t$ is defined by this equation. The map $U_t$ is smooth. Similarly, we may define $V_t$ via
  \begin{align*}
    F_t^* F_t - A^* \left(F_t^{\model}\right)^* F_t^{\model} A & = \frac 1 x A^* \left( \left(\wt{F}_t^{\model}\right)^* \begin{pmatrix} x^2 & 0 \\ 0 & 1 \end{pmatrix} \beta_t + \beta_t^* \begin{pmatrix} x^2 & 0 \\ 0 & 1 \end{pmatrix} \wt{F}_t^{\model} + \beta_t^* \begin{pmatrix} x^2 & 0 \\ 0 & 1 \end{pmatrix} \beta_t \right) A \\
    & = \frac 1 x V_t
  \end{align*}
  and since $\beta_t \in \rho^{\delta-1} C^{2,\alpha}_t$ we find that $V_t \in \rho^{\delta-1} C^{2,\alpha}_t$ as well. Note that $F_t^* F_t = \frac 1 x \left( U_t + V_t\right)$.

  We now can define a map into $\mathbb{S}^3$ using the embedding of the matrix model of $\H^3_\pm$ into $\mathbb{S}^3$, which is identified with $\SU(2)$. This map is given by
  \[
  \Xi_{\SU(2)}^{-1} : \left\{ A \in \SL(2,\C) : A = A^* \right\} \to \SU(2), \qquad  \Xi_{\SU(2)}^{-1}(A) = \frac 2 {\tr(A)} \left( \id + i \mathring{A} \right).
  \]
  The map into $\bb{S}^3$ associated to our solution is then $f_t = \Xi_{\SU(2)}^{-1} \circ F_t^* F_t$ and may be written as
  \begin{align*}
   \Xi_{\SU(2)}^{-1} \circ F_t^* F_t & = \frac 2 { \frac 1 x \tr\left(U_t + V_t\right)} \left( \id + i \frac 1 x \left(U_t + V_t\right)^{\circ} \right) \\
  & = \frac 2 { \frac 1 x \tr\left(U_t + V_t\right)} \left( \id + i \frac 1 x \mathring{U_t} \right)  +  \frac 2 { \tr\left(U_t + V_t\right)} \mathring{V_t}.
  \end{align*}
  Now
  \[
  \frac 2 { \frac 1 x \tr\left(U_t + V_t\right)} = \frac {2x} {\tr U_t} \frac 1 { 1 + \frac {\tr V_t}{\tr U_t}}.
  \]
  By explicit calculation one checks that $\tr U_t$ is smooth and bounded away from $0$. On the other hand,
  \[
  \frac 1 { 1 + \frac {\tr V_t}{\tr U_t}} = 1 - \frac{\tr V_t }{\tr U_t} + \left( \frac {\tr V_t}{\tr U_t} \right)^2 - \ldots.
  \]
  Therefore, $\frac 1 { 1 + \frac {\tr V_t}{\tr U_t}} - 1$ behaves like $\tr V_t$, i.e.\ it lies in $\rho^{\delta-1} C^{2,\alpha}_t$.

  Denote $f_t^{\model} = \Xi_{\SU(2)}^{-1} \circ \left(F_t^{\model}\right)^* F_t^{\model}$. Now $\frac {2 x}{ \tr U_t} \left( \id + i \frac 1 x \mathring{U}_t\right)$ is the transgressive harmonic map into $\bb{S}^3$ associated to the model solution (translated by $A$), that is, it is $\Xi_{\SU(2)}^{-1} A^* f_{t,\hyp}^{\model} A$. This map is smooth through $x = 0$.

  For the remainder
  \begin{align*}
  f_t - f_t^{\model} & =  \Xi_{\SU(2)}^{-1} \circ F_t^* F_t - \frac {2 x}{ \tr U_t} \left( \id + i \frac 1 x  \mathring{U}_t\right) \\
  & =
  \left( 1 - \frac 1 { 1 + \frac {\tr V_t}{\tr U_t}} \right) \frac {2 x}{ \tr U_t} \left( \id + i \frac 1 x \mathring{U}_t\right) + \frac {2} {\tr U_t} \frac 1 { 1 + \frac {\tr V_t}{\tr U_t}} \mathring{V}_t
  \end{align*}
  our previous analysis now yields that the first term is in $\rho^{\delta - 1} C^{2,\alpha}_t$, whereas the second term is also in $\rho^{\delta - 1} C^{2,\alpha}_t$.

  In summary, we have shown that $f_t$ is smooth away from $\rho=0$ and that in a neighborhood of a core loop $f_t - \Xi_{\SU(2)}^{-1} \circ A^*f_{t,\hyp}^{\model}A$ is in $\rho^{\delta - 1} C^{2,\alpha}_t$. This implies that $f_t$ is H\"older continuous, by Lemma 3.7, \cite{Lee}.
\end{proof}

As we noted, the maps $f_t : \wt{M} \to \bb{S}^3$ are not necessarily transgressive. In this sense, we cannot immediately apply the oblique Gau\ss\@ map construction of Theorem \ref{thm:transobl}. However, the gauge theoretical construction of this map (see formula (\ref{eq:gaugeobliqueN})) extends to this setting. In the next proposition we discuss the regularity of this associated map. Let us recall this construction in the case of the model solution. According to formula (\ref{eq:gaugeobliqueN}) the associated dual map is given by
\[
N_t^{\model} =
\begin{cases}
  -\left(F_t^{\model}\right)^* R_t^{\model} F_t^{\model}, & x < 0 \\
  \left(F_t^{\model}\right)^* R_t^{\model}  F_t^{\model}, & x > 0
\end{cases}
\]
where $R_t^{\model}$ is the orthogonal reflection on the eigenline spanned by $\begin{pmatrix} 1 \\ \tanh(tx) \end{pmatrix}$ for the eigenvalue $\frac t 2$. This reflection is given by
\[
R_t^{\model} = \frac 1 { 1 + \tanh(tx)^2} \begin{pmatrix} \tanh(tx)^2 - 1 & - 2 \tanh(tx) \\ - 2 \tanh(tx) & 1 - \tanh(tx)^2 \end{pmatrix}.
\]
Note that formula (\ref{eq:gaugeobliqueN}) is written with respect to a frame that is adapted to the eigenline and its orthogonal complement. In our case we work with the standard frame and therefore the reflection $R_t$ does not have the form $\begin{pmatrix} 1 & 0 \\ 0 & -1 \end{pmatrix}$.

\begin{cor}
  \label{cor:hoelder_dual_maps}
  With the assumptions of the second part of Theorem \ref{thm:existence_hoelder_transgressive_maps}, there exists an equivariant map $N_t : \wt{M} \to \dS_3$ with the following properties:
  \begin{enumerate}[(i)]
  \item $N_t$ is smooth and harmonic away from the preimages of the core loops,
  \item $N_t$ is the oblique hyperbolic Gau\ss\@ map associated to $\Xi_{\SU(2)} \circ f_{t,\hyp},$
  \item for every core loop there is a neighborhood of the core loop and an $A \in \SL(2,\C)$, such that $N_t - A^* N_t^{\model} A \in \rho^{\delta - 1 } C^{2,\alpha}_t$ in that neighborhood.
  \end{enumerate}
\end{cor}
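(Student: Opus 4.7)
The plan is to define $N_t$ via the gauge-theoretic formula \eqref{eq:gaugeobliqueN} applied to the parallel frame $F_t$ already constructed in the proof of Theorem \ref{thm:existence_hoelder_transgressive_maps}, and then to run the same boundary expansion used there for $f_t$. Choose a square root $\omega$ of the Hopf differential $-\det(t \Phi_t) = t^2 q$ near each core loop (where $q = \tfrac{1}{4}\,dz^2$ is already a perfect square), and let $L_t \subset E$ denote the $h_0$-orthogonal $\omega$-eigenline of $t\Phi_t$. Let $R_t = I - 2 P_{L_t}$ be the orthogonal reflection across $L_t^{\perp}$ with respect to $h_0$, and define $N_t$ on the preimages $\wt{M}_\pm$ of $M_\pm$ in $\wt{M}$ by
\[
N_t \;=\; \begin{cases}-F_t^{*}\,R_t\,F_t & \text{on } \wt{M}_-,\\[2pt] \phantom{-}F_t^{*}\,R_t\,F_t & \text{on } \wt{M}_+, \end{cases}
\]
in analogy with the definitions of $f_{t,\hyp}^{\model}$ and $N_t^{\model}$. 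By Theorem \ref{thm:SU2SU11}, $N_t$ takes values in the matrix model of $\dS_3$ and realizes the oblique hyperbolic Gau\ss\@ map of $f_{t,\hyp}$ on the complement of the preimages of the core loops; since $(A_t, t\Phi_t)$ and $L_t$ are smooth there, this establishes both (i) and (ii).

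For (iii), I would repeat the expansion of $F_t^{*} F_t$ from the proof of Theorem \ref{thm:existence_hoelder_transgressive_maps}. Near a fixed core loop, $F_t = g_t^{-1} F_t^{\model} A$ with $g_t^{-1} = I + \eta_t$, $\eta_t \in \rho^\delta C^{2,\alpha}_t$, and some $A \in \SL(2,\C)$. Since $t\Phi_t = g_t^{-1}(t\Phi_t^{\model})\,g_t$ in this trivialization, the eigenline transforms as $L_t = g_t^{-1}(L_t^{\model})$, and the orthogonal projection takes the explicit form $P_{L_t} = (g_t^{-1} v_t^{\model})(g_t^{-1} v_t^{\model})^{*}/((v_t^{\model})^{*} g_t^{-2} v_t^{\model})$, where $v_t^{\model}$ spans $L_t^{\model}$. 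Because $L_t^{\model}$ is nowhere null and $\|\eta_t\|_{\rho^\delta C^{2,\alpha}_t}$ is uniformly small by Theorem \ref{thm:existence_solution}, the denominator is bounded below uniformly in $t$ and one obtains $R_t - R_t^{\model} \in \rho^\delta C^{2,\alpha}_t$. Expanding then yields
\[
F_t^{*} R_t F_t \;=\; A^{*}(F_t^{\model})^{*} R_t^{\model} F_t^{\model} A + A^{*}\,(F_t^{\model})^{*}\,\zeta_t\,F_t^{\model}\,A
\]
with $\zeta_t := (I+\eta_t^{*})(R_t^{\model} + (R_t - R_t^{\model}))(I+\eta_t) - R_t^{\model} \in \rho^\delta C^{2,\alpha}_t$. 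The entries of $F_t^{\model}$ are of order $|x|^{-1/2}$ near the core loop, so those of $(F_t^{\model})^{*}\zeta_t F_t^{\model}$ are of order $\rho^{\delta-1}$, giving $N_t \mp A^{*} N_t^{\model} A \in \rho^{\delta-1} C^{2,\alpha}_t$, which is the content of (iii).

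The main obstacle is the refined regularity statement $R_t - R_t^{\model} \in \rho^\delta C^{2,\alpha}_t$. Because $R_t$ is a rational function of $g_t^{-1}$ whose denominator involves the squared $h_0$-norm of $g_t^{-1} v_t^{\model}$, one has to check that this denominator is uniformly bounded below in a neighborhood of every core loop, uniformly in $t$. This follows from $|v_t^{\model}|^2 = 1 + \tanh^2(tx) \geq 1$ together with the uniform smallness of $\eta_t$ in $\rho^\delta C^{2,\alpha}_t$ from the fixed-point construction. Once this is in place, the algebra properties of the weighted H\"older spaces (products of elements in $\rho^{\delta_1} C^{2,\alpha}_t$ and $\rho^{\delta_2} C^{2,\alpha}_t$ land in $\rho^{\delta_1 + \delta_2} C^{2,\alpha}_t$) yield both the claimed bound on $\zeta_t$ and the final regularity statement for $N_t$.
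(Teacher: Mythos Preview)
Your argument is correct and essentially matches the paper's proof: both define $N_t = F_t^{*} R_t F_t$, establish $R_t - R_t^{\model} \in \rho^\delta C^{2,\alpha}_t$ from $g_t = I + O(\rho^\delta)$, and then expand near a core loop to isolate $A^{*} N_t^{\model} A$ plus a $\rho^{\delta-1}$ remainder. The only cosmetic difference is that the paper routes the expansion through the desingularizing gauge $g$ and the smooth frame $\wt F_t = g^{-1} F_t$ (computing $g^{*} R_t^{\model} g$ explicitly and using $\beta_t = \wt F_t - \wt F_t^{\model} A \in \rho^{\delta-1} C^{2,\alpha}_t$ from Theorem~\ref{thm:existence_hoelder_transgressive_maps}), whereas you work directly with $F_t^{\model}$ and its $\rho^{-1/2}$ growth; the bookkeeping is equivalent.
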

\begin{proof}
  Let $F_t$ and $\wt{F}_t$ be as in the proof of Theorem \ref{thm:existence_hoelder_transgressive_maps}. The map $N_t$ is given by $N_t = F_t^* R_t F_t$, where $R_t$ is the reflection along the eigenline of $\Phi_t$. This eigenline is the kernel of $\Phi - \omega \id$.

  The first two items are immediate from Theorem \ref{thm:existence_hoelder_transgressive_maps} and formula (\ref{eq:gaugeobliqueN}).

  Let $\gamma_t \in \rho^\delta C^{2,\alpha}_t$ be the solution of $\mc{F}_t(\gamma_t) = 0$ and $g_t = e^{\gamma_t}$. From now on we work in a neighborhood of a core loop. Since $\Phi_t = g_t^{-1} \Phi_t^{\model} g_t$, the eigenline for $\Phi_t$ is spanned by $g_t^{-1} \begin{pmatrix} 1 \\ \tanh(tx) \end{pmatrix}$.

Therefore, the reflection $R_t$ is given by $R_t = g_t R_t^{\model} g_t^{-1}$.

Since $g_t = \id + \theta_t$ with $\theta_t \in \rho^\delta C^{2,\alpha}_t$, it follows that $R_t - R_t^{\model} \in \rho^\delta C^{2,\alpha}_t$. Observe that
\begin{align*}
  N_t & = F_t^* R_t F_t \\
  & = \wt{F}_t^* g^* R_t g \wt{F}_t \\
  & = \wt{F}_t^* g^* R_t^{\model} g \wt{F}_t + \wt{F}_t^* g^* (R_t - R_t^{\model}) g \wt{F}_t,
\end{align*}
where $g$ is as in the proof of the previous theorem. The identity
\[
g^* R_t^{\model} g =
\begin{pmatrix}
  2 x \tanh(tx) & \sech(tx)^2 \\
  \sech(t x)^2 & - 2 \frac{\tanh(t x)}{x}
\end{pmatrix}
\]
shows that $g^*R_t^{\model} g$ is smooth through $x=0$. From the proof of the last theorem we know that $\beta_t = \wt{F}_t - \wt{F}_t^{\model} A \in \rho^{\delta - 1} C^{2,\alpha}_t$. On the other hand, since $g \in \rho^{-1/2} C^\infty_t$ we obtain $g^* (R_t - R_t^{\model}) g \in \rho^{\delta - 1} C^{2,\alpha}_t$. Therefore the last term is also in $\rho^{\delta-1} C^{2,\alpha}_t$. Since $N_t^{\model} = \left(F_t^{\model}\right)^* R_t^{\model} F_t^{\model}$, we may write
\begin{align*}
  \wt{F}_t^* g^* R_t^{\model} g \wt{F}_t & = \left(\wt{F}_t^{\model} A + \beta_t\right)^* g^* R_t^{\model} g \left(\wt{F}_t^{\model} A + \beta_t\right) \\
  & = A^* N_t^{\model} A + \beta_t^* g^* R_t^{\model} g \wt{F}_t^{\model} A + \left(\wt{F}_t^{\model} A\right)^* g^* R_t^{\model} g \beta_t + \beta_t^* g^* R_t^{\model} g \beta_t.
\end{align*}
This shows that $\wt{F}_t^* g^* R_t^{\model} g \wt{F}_t - A^* N_t^{\model} A$ lies in $\rho^{\delta - 1} C^{2,\alpha}_t$. This concludes the proof that $N_t - A^* N_t^{\model} A$ lies in $\rho^{\delta - 1} C^{2,\alpha}_t$.
\end{proof}

\subsection{Regularity of the transgressive maps under symmetry}
As we saw in the last section, the map $f_t : \wt{M} \to \bb{S}^3$ associated to the singular solutions of the $\SU(2)$ self-duality equations is H\"older continuous through the equatorial 2-sphere, but not necessarily smooth. In this section we will show that under a certain symmetry assumption the transgressive maps are indeed smooth through the equatorial 2-sphere. Since we can find data for our gluing construction which satisfies this symmetry condition, we obtain new examples of smooth equivariant transgressive harmonic maps. The proof of this relies on elliptic regularity applied to the dual map into de Sitter 3-space. Indeed, we will show that under our symmetry assumption the dual map $N_t$ is even and that this property, together with the regularity known from the previous section, suffices to show that $N_t$ is a weak solution of the harmonic map equation.

The symmetry condition is a globalization of the reflection symmetry of the model solution. Let $r$ denote the reflection in the imaginary axis, i.e. $r : \C \to \C$, $r(x+iy) = -x+iy$, and
\[
\hat{r} : \C \times \C^2 \to \C \times \C^2, \qquad \hat{r}(z,v) = \left(r(z), \ol{v}\right).
\]
Then an explicit calculation shows that $A_t^{\model}$, $\Phi_t^{\model}$ and (consequently) $D_t^{\model}$ (see equation (\ref{formula:D_t_model})) are invariant under $r$.

This condition can be globalized to a Riemann surface with a framed limiting configuration $(A_\infty,\Phi_\infty)$ by the following data
\begin{enumerate}[(i)]
\item an antiholomorphic involution $\sigma :M \to M$,
\item an antilinear automorphism $\hat{\sigma} : E \to E$ covering $\sigma$
\end{enumerate}
satisfying the following conditions
\begin{enumerate}[(a)]
\item $(A_\infty, \Phi_\infty)$ is invariant under $\hat{\sigma}$,
\item the fixed point set of $\sigma$ is equal to the union of the core loops of the framed limiting configuration,
\item in the coordinates and frame around the core loops the map $\hat{\sigma}$ is equivalent to $\hat{r}$.
\end{enumerate}
Note that if we have an antiholomorphic involution $\sigma : M \to M$ and an antilinear automorphism $\hat{\sigma} : E \to E$ covering it, then given a $\hat{\sigma}$-invariant Higgs bundle $(E,\ol{\partial}_E, \varphi)$, we can construct such a framed limiting configuration. 

The parallel frame $F_t^{\model}$ has the symmetry
\[
r^* F_t^{\model} = -i u^{-1} F_t^{\model},
\]
where $u = \begin{pmatrix} 0 & 1 \\ 1 & 0 \end{pmatrix}$. The symmetry above implies that $f_{t,\hyp}^{\model}$ is odd in the variable $x$. Recall that to construct $N_t^{\model}$ we used the reflection $R_t^{\model}$ in the eigenline of $\Phi$. This reflection has the symmetry
\[
r^*R_t^{\model} = -u^{-1} R_t^{\model} u.
\]
This implies that $N_t^{\model} = (F_t^{\model})^* R_t^{\model} F_t^{\model}$ is even.

\begin{thm}
  \label{thm:existence_smooth_transgressive_maps}
  Suppose that $M$ is a compact Riemann surface and that we are given a framed limiting configuration with underlying Higgs bundle $(E, \ol{\partial}_E, \varphi)$ such that the complement of the union $\cup_k c_k \subset M$ of the core loops consists of two connected components.
  
  Suppose moreover that there is an involutive automorphism $\hat{\sigma} : E \to E$ covering an antiholomorphic involution $\sigma : M \to M$ satisfying the conditions (a)-(c) above. Then for sufficiently large $t > 0$, there exist solutions $\gamma_t$ of $\mc{F}_t(\gamma_t) = 0$, such that
  \begin{enumerate}
  \item the solutions $(A_t, \Phi_t) = (A_t^{\appr}, \Phi_t^{\appr}) \ast {\exp(\gamma_t)}$ are singular along the core loops given by the fixed point set,
  \item the associated equivariant harmonic maps extend to smooth equivariant harmonic transgressive maps $f_t : \wt{M} \to S^3$,
  \item the oblique Gau\ss\@ maps $N_t : \wt{M} \to \dS_3$ are smooth.
  \end{enumerate}
\end{thm}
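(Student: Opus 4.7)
The plan is to exploit the $\hat\sigma$-symmetry to promote the H\"older regular maps $f_t$ and $N_t$ to smooth ones through the interplay provided by the duality. First, I would arrange $\gamma_t$ to be $\hat\sigma$-invariant. Conditions (a)--(c) guarantee that the approximate solution $(A_t^{\appr},\Phi_t^{\appr})$ inherits $\hat\sigma$-invariance from the local $\hat r$-invariance of the model and fiducial pieces, and consequently the operator $\mc F_t$ commutes with the $\hat\sigma^*$-action on $\rho^\delta C^{2,\alpha}_t$. The fixed point $\gamma_t\in B_r(0)\subset \rho^\delta C^{2,\alpha}_t$ produced in the proof of Theorem \ref{thm:existence_solution} is unique, and $\hat\sigma^*\gamma_t$ is also a fixed point in the same ball; uniqueness forces $\hat\sigma^*\gamma_t=\gamma_t$. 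Tracking this symmetry through the construction of $N_t$ in Corollary \ref{cor:hoelder_dual_maps}, and using the model identities $r^*F_t^{\model}=-iu^{-1}F_t^{\model}$ and $r^*R_t^{\model}=-u^{-1}R_t^{\model}u$, one deduces $r^*N_t=N_t$ in the local coordinates near each core loop, so $N_t$ is even.

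The heart of the proof is the assertion that this evenness, combined with the H\"older regularity of Corollary \ref{cor:hoelder_dual_maps}, promotes $N_t$ to a weak harmonic map into $\dS_3$ across each core loop. Choosing $\delta\in(3/2,2)$ via Theorem \ref{thm:best_gluing_regularity}, Corollary \ref{cor:hoelder_dual_maps} yields $N_t=A^*N_t^{\model}A+R_t$ with $|R_t|\le C\rho^{\delta-1}$ and $|\nabla R_t|\le C\rho^{\delta-2}$. Integrating these pointwise bounds, together with smoothness of $N_t^{\model}$, shows $N_t\in H^1_{\lcl}$. Away from the core loops $N_t$ is a classical harmonic map. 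To test the weak harmonic map equation on a neighborhood of a core loop $\{x=0\}$ against a compactly supported test function $\phi$, I would integrate by parts on $\{|x|>\epsilon\}$ and pass to the limit $\epsilon\to 0$; the oddness $\partial_xN_t(-\epsilon,\cdot)=-\partial_xN_t(\epsilon,\cdot)$ forced by evenness collapses the boundary contribution to $-\int\bigl(\phi(\epsilon,\cdot)+\phi(-\epsilon,\cdot)\bigr)\partial_xN_t(\epsilon,\cdot)\,dy$. Taylor-expanding $\phi$ at $x=0$ as $2\phi(0,\cdot)+O(\epsilon^2)$, the smooth part $A^*\partial_xN_t^{\model}A$ contributes zero in the limit because $\partial_xN_t^{\model}(0,\cdot)=0$ by parity and smoothness, while the remainder contribution vanishes via the weighted $L^p$-estimates on $\nabla R_t$ combined with the odd-reflection symmetry, which together annihilate the distributional trace of $\partial_xR_t$ on $\{x=0\}$.

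The main obstacle is precisely this last vanishing claim: the raw bound $|\partial_xR_t|\le C\rho^{\delta-2}$ is not summable, so the cancellation is provided by the interaction of the odd symmetry of $\partial_xR_t$ with the near-even profile $\phi(\epsilon,\cdot)+\phi(-\epsilon,\cdot)=2\phi(0,\cdot)+O(\epsilon^2)$, and requires a careful test-function argument rather than pointwise smallness. Once $N_t$ is a weak harmonic map, the standard bootstrap applies: the equation $\Delta N_t+\langle dN_t,dN_t\rangle_{\dS_3}N_t=0$ is a semilinear elliptic system with a quadratic gradient nonlinearity and bounded target, so Calder\'on--Zygmund theory lifts $N_t$ to $W^{2,p}_{\lcl}$ for every finite $p$, and Schauder estimates then promote $N_t$ to $C^\infty$. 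The model asymptotics of Corollary \ref{cor:hoelder_dual_maps} force the rank of $dN_t$ to drop transversally without signature change along each core loop, so Theorem \ref{thm:transgressive_map_from_de_sitter_map} yields a smooth equivariant transgressive harmonic map $f_t:\wt M\to\mathbb S^3$ that agrees with the H\"older map of Theorem \ref{thm:existence_hoelder_transgressive_maps} on the complement of the core loops, giving all three conclusions of the theorem.
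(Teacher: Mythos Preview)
Your strategy coincides with the paper's: symmetrize $\gamma_t$ under $\hat\sigma$, deduce that $N_t$ is even in the local coordinate $x$ at each core loop, show $N_t$ is weakly harmonic across $\{x=0\}$ by integrating by parts on $\{|x|>\epsilon\}$, bootstrap via elliptic regularity, and recover a smooth transgressive $f_t$ from Theorem~\ref{thm:transgressive_map_from_de_sitter_map}. Your symmetrization via uniqueness of the Banach fixed point is a clean variant of the paper's extension-from-$M_+$ device; note that the paper takes $\delta>7/4$ rather than your $\delta>3/2$, since $\nu=\delta-1>3/4$ is exactly what gives $|dN_t|^2\in L^p_{\mathrm{loc}}$ for some $p>2$ in the first bootstrap step.

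The substantive gap is the boundary term. You obtain the \emph{sum} $-\int\partial_xN_t(\epsilon,\cdot)\bigl[\varphi(\epsilon,\cdot)+\varphi(-\epsilon,\cdot)\bigr]\,dy$, whereas in Lemma~\ref{lemma:smoothness_N} the paper collapses the boundary term, via oddness of $\partial_xN$, to the \emph{difference} $\varphi(\epsilon,\cdot)-\varphi(-\epsilon,\cdot)=2\epsilon\varphi_1$, so that the product with $|\partial_xN|\le C\epsilon^{\nu-1}$ is $O(\epsilon^\nu)\to 0$. With your sum the leading piece is $2\varphi(0,\cdot)\,\partial_xN_t(\epsilon,\cdot)=O(\epsilon^{\nu-1})$, which need not vanish for $\nu<1$. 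Your claim that oddness of $\partial_xR_t$ ``annihilates the distributional trace'' is not an argument: the even function $R(x,y)=|x|^\nu g(y)$ satisfies $|R|\le C|x|^\nu$, $|\partial_xR|\le C|x|^{\nu-1}$, yet $\int\partial_xR(\epsilon,y)\psi(y)\,dy=\nu\epsilon^{\nu-1}\!\int g\psi\,dy$ diverges; already $N(x)=|x|$ is even, harmonic on $\{x\neq 0\}$, meets the bounds with $\nu=1>3/4$, but has $\Delta N=2\delta_0$ distributionally. So you must either recover the paper's cancellation---carefully recomputing how the \emph{opposite} outward normals on $\{x=\epsilon\}$ and $\{x=-\epsilon\}$ combine with the oddness of $\partial_xN_t$ in Green's identity---or bring in structure beyond the bare $\rho^{\delta-1}C^{2,\alpha}_t$ control of $N_t-A^*N^{\model}_tA$ to close this step.
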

\begin{rem}\label{rem:many-examples}
There exist many framed limiting configurations satisfying the conditions of the theorem, as we now explain. 
By the classification of real algebraic curves \cite{GrHa} of genus $g$, the fixed-point set of an antiholomorphic involution 
$\sigma$ on $M$ has $n \in \{1, \dots, g+1\}$ connected components, with $n \equiv g+1 \pmod 2$.
The complement of this fixed set must have exactly two components in our situation.

For $n=g+1$ and $n=1$ (when $g$ is even) or $n=2$ (when $g$ is odd), hyperelliptic curves provide natural examples:
those with Weierstrass points lying on the real axis (for $n=g+1$) or symmetrically reflected across the real axis but not on it (for $n=1$ or $n=2$) 
admit an antiholomorphic involution with the required number of fixed components and two complementary components. 
These hyperelliptic curves carry quadratic differentials $q$ with simple zeros away from the fixed locus of $\sigma$ such that 
$\sigma^*\bar q = q$. 

For $n=g+1$ or $n=2$ (and $g$ odd), there also exist real spin bundles $S \to M$, 
i.e.\ spin bundles admitting an antiholomorphic involutive lift of $\sigma$ (see \cite[Section~7]{Hi}). 
The corresponding point in the Hitchin section, with underlying holomorphic bundle $E = S \oplus S^*$, 
then provides a framed limiting configuration satisfying the conditions of Theorem~\ref{thm:existence_smooth_transgressive_maps}.
\end{rem}

\begin{lemma}
  \label{lemma:smoothness_N}
  Let $M$ be a Riemann surface, $\Gamma \subset M$ a smooth curve. Suppose that $N : M \to \dS_3$ is a map, such that $N|_{M \bs \Gamma}$ is smooth and harmonic,and such that there exist constants $C > 0$, $\nu > \frac 3 4$, a tubular neighborhood $U(\Gamma)$ of $\Gamma$ and coordinates
  \[
  (x,y) : U(\Gamma) \subset M \to (-1,1) \times \R/(r \Z)
  \]
  with the following properties:
  \begin{enumerate}
  \item $|N(x,y) - N(0,y)| \leq C |x|^{\nu}$,
  \item $|\partial_x N(x,y)|, |\partial_y N(x,y)| \leq C |x|^{\nu-1}$,
  \item $N(x,y) = N(-x,y)$.
  \end{enumerate}
  Then $N$ is smooth and harmonic through $x=0$.
\end{lemma}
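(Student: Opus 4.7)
The plan is to prove that $N$ is a weak solution of the harmonic map equation on all of $M$ (not merely on $M\setminus\Gamma$) and then to bootstrap smoothness via standard elliptic regularity. Realizing $\dS_3\subset\R^{1,3}$ as a pseudosphere, the equation reads $\Delta_MN+\langle dN,dN\rangle_{\dS_3}\,N=0$, with weak form
\[
\int_M\bigl(\langle dN,d\phi\rangle_{\dS_3}-\langle dN,dN\rangle_{\dS_3}\langle N,\phi\rangle\bigr)=0
\]
for every $\phi\in C_c^\infty(M,\R^{1,3})$. First, the pointwise bound $|\partial_xN|,|\partial_yN|\le C|x|^{\nu-1}$ with $\nu>3/4$ gives $\nabla N\in L^p_{\loc}$ for every $p<1/(1-\nu)$, and in particular for some $p>2$. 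Hence $N\in W^{1,p}_{\loc}(M,\R^{1,3})$ and $|dN|^2\in L^{p/2}_{\loc}$ with $p/2>1$, so both terms of the weak identity are absolutely convergent on $M$.

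To verify the identity across $\Gamma$, I test against $\chi_\epsilon\phi$, where $\chi_\epsilon(x)$ is a smooth \emph{even} cutoff with $\chi_\epsilon=1$ on $\{|x|>\epsilon\}$ and $\chi_\epsilon=0$ on $\{|x|<\epsilon/2\}$. Since $N$ is smooth and harmonic on $M\setminus\Gamma$, the weak identity holds with $\chi_\epsilon\phi$ for every $\epsilon>0$. Letting $\epsilon\to0$, dominated convergence handles the $\chi_\epsilon$-weighted principal terms, and the only remaining piece is the error $\int_M\phi\,\langle dN,d\chi_\epsilon\rangle=\int\phi(x,y)\,\partial_xN(x,y)\,\chi_\epsilon'(x)\,dx\,dy$. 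Here the evenness of $N$ is crucial: it makes $\partial_xN$ odd in $x$ (and $\chi_\epsilon'$ is also odd), so pairing the contributions from $x>0$ and $x<0$ yields
\[
\int_0^\infty\int\partial_xN(x,y)\,\chi_\epsilon'(x)\bigl[\phi(x,y)+\phi(-x,y)\bigr]dy\,dx.
\]
Taylor expansion gives $\phi(x,y)+\phi(-x,y)=2\phi(0,y)+x^2\,\partial_x^2\phi(0,y)+O(x^4)$. The $O(x^2)$ and higher contributions are controlled by $\int_{\epsilon/2}^\epsilon x^{\nu+1}\epsilon^{-1}\,dx=O(\epsilon^{\nu+1})\to0$ via the derivative bound. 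The leading piece $2\int\phi(0,y)\bigl(\int_0^\infty\partial_xN(x,y)\chi_\epsilon'(x)\,dx\bigr)dy$ is handled by integrating by parts in $x$ and invoking the classical harmonic map equation on $\{x>0\}$ to substitute $\partial_x^2N=-\partial_y^2N+F(N,dN)$ with $|F|\le C|x|^{2\nu-2}\in L^1_{\loc}$; a further integration by parts in the closed $y$-direction removes $\partial_y^2$, reducing the limit to an expression depending only on the continuous trace $N(0,y)$ and its $y$-derivatives, which one then shows vanishes in $\epsilon$.

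Once $N$ is established as a weak solution on $M$, the final step is a standard semilinear elliptic bootstrap. The equation $\Delta N=F(N,dN)$ with $F$ smooth in $N$ and quadratic in $dN$, together with $N\in W^{1,p}_{\loc}$ ($p>2$), gives $F(N,dN)\in L^{p/2}_{\loc}$ and hence $N\in W^{2,p/2}_{\loc}$; Sobolev embedding upgrades $dN$ to $L^q$ for some $q>p$, and iterating these steps yields $N\in C^{1,\alpha}_{\loc}$. Schauder theory for semilinear equations then produces $C^{2,\alpha}_{\loc}$, and further iteration yields $C^\infty$. Harmonicity across $\Gamma$ is immediate from the equation.

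The main obstacle is the vanishing of the leading boundary contribution in the second step: the crude pointwise bound on $\partial_xN$ does not by itself force $\partial_xN\to0$ at $\Gamma$ when $\nu<1$, so the evenness symmetry must be combined with the PDE itself to generate the required cancellation. This is also where the threshold $\nu>3/4$ enters, as it is precisely what supplies the $L^p$-integrability with $p>2$ needed to interpret the ambient weak formulation and to initiate the bootstrap.
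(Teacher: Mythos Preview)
Your overall strategy---establish that $N$ is a weak solution of the harmonic map equation across $\Gamma$, then bootstrap via elliptic regularity---is exactly the paper's, and your bootstrap is essentially identical to theirs: $\nu>3/4$ gives $dN\in L^{2p}_{\loc}$ for some $p>2$, hence $|dN|^2N\in L^p_{\loc}$, so $N\in W^{2,p}_{\loc}\hookrightarrow C^{1,\alpha}$, and then standard iteration gives smoothness.

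The difference, and the place where your argument has a genuine gap, is in establishing the weak formulation. The paper does \emph{not} use a smooth cutoff; it works directly with $\int_{\{|x|>\epsilon\}}\langle N,\Delta\varphi\rangle$, integrates by parts twice, and is left with the boundary integral $\int_{\{|x|=\epsilon\}}\langle\partial_xN,\varphi\rangle\,dy$. Writing $\varphi(x,y)=\varphi_0(y)+x\varphi_1(x,y)$ and using that $\partial_xN$ is odd, the paper pairs the contributions from $x=\epsilon$ and $x=-\epsilon$ to obtain $\int\langle\partial_xN(\epsilon,y),\,\varphi(\epsilon,y)-\varphi(-\epsilon,y)\rangle\,dy=\int\langle\partial_xN(\epsilon,y),\,2\epsilon\,\varphi_1(\epsilon,y)\rangle\,dy$, which is $O(\epsilon^{\nu-1})\cdot O(\epsilon)=O(\epsilon^{\nu})\to0$. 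The extra factor of $\epsilon$ coming from the \emph{difference} $\varphi(\epsilon,y)-\varphi(-\epsilon,y)$ is precisely what kills the $|x|^{\nu-1}$ blow-up, and no further use of the PDE is needed.

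Your smooth-cutoff route instead produces the \emph{sum} $\phi(x,y)+\phi(-x,y)=2\phi(0,y)+O(x^2)$, and the leading part $2\phi(0,y)$ carries no decay in $x$. Your proposed remedy---integrate by parts in $x$, substitute $\partial_x^2N=-\partial_y^2N+F(N,dN)$, integrate by parts in $y$, and then argue the limit vanishes---is only sketched and does not obviously close: after those manipulations you are left with an expression that depends on $N$ throughout the half-tube, not merely on its boundary trace, and you give no mechanism for why it should vanish as $\epsilon\to0$. This is the crux of the whole lemma, and as written your argument does not complete it. The cleanest fix is to abandon the smooth cutoff and use the sharp excision $\{|x|>\epsilon\}$ as the paper does.
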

\begin{proof}
  On $\{ \rho\neq 0\}$ the map $N$ satisfies the harmonic map equation
  \[
  \Delta N = |dN|^2 N.
  \]
  We claim that this equation holds weakly on the whole domain, i.e.\ for any smooth $\varphi$ with compact support we have
  \[
  \int \langle N, \Delta \varphi \rangle \, d\vol = \int \langle |dN|^2 N, \varphi \rangle \, d\vol.
  \]
  Working in the coordinates of the model solution, where $\rho = x$, this follows from the computation
  \begin{align*}
\int \langle N, \Delta \varphi \rangle \, d\vol & = \lim_{\epsilon \to 0} \int_{\{|x| > \epsilon\}} \langle N, \Delta \varphi \rangle \, d\vol \\
& = \lim_{\epsilon \to 0} \left(  \int_{\{|x| > \epsilon\}} \langle \Delta N,  \varphi \rangle \, d\vol +  \int_{\{|x| = \epsilon\}} \langle \partial_x N, \varphi \rangle \, dy \right) \\
& = \lim_{\epsilon \to 0} \left( \int_{\{|x| > \epsilon\}} \langle |dN|^2 N,  \varphi \rangle \, d\vol +  \int_{\{|x| = \epsilon\}} \langle \partial_x N, \varphi \rangle \, dy \right)\\
& = \int \langle |dN|^2 N, \varphi \rangle \, d\vol,
  \end{align*}
  where we used $\lim_{\epsilon \to 0} \int_{\{|x| = \epsilon\}} \langle \partial_x N, \varphi \rangle \, dy = 0$. To see that this is the case, first observe that $\partial_x N$ is odd. Since $\varphi$ is smooth, we may write $\varphi(x,y) = \varphi_0(y) + x \varphi_1(x,y)$ for smooth functions $\varphi_0$ and $\varphi_1$.

  Then we compute
  \begin{align*}
    \int_{\{|x| = \epsilon\}} \langle \partial_x N, \varphi \rangle \, dy & = \int \big\langle \partial_x N(\epsilon,y), \varphi(\epsilon,y) \big\rangle + \big\langle \partial_x N(-\epsilon, y), \varphi(-\epsilon,y) \big\rangle \, dy \\
    & = \int \left\langle \partial_x N(\epsilon,y), \varphi(\epsilon, y) - \varphi(-\epsilon, y) \right\rangle dy \\
    & = \int \left\langle \partial_x N(\epsilon,y), 2 \epsilon \varphi_1(x,y) \right\rangle dy.
  \end{align*}
  By assumption $|\partial_x N(\epsilon,y)| \leq C \epsilon^{\nu-1}$ and therefore
  \[
  \left| \int \left\langle \partial_x N(\epsilon,y), 2 \epsilon \varphi_1(x,y) \right\rangle dy \right| \leq 2 C \epsilon^\nu.
  \]
  This implies 
  \[
  \lim_{\epsilon \to 0} \int_{\{|x| = \epsilon\}} \langle \partial_x N, \varphi \rangle \, dy = 0
  \]
  and this finishes the argument that $N$ is a weak solution of the harmonic map equation.

  Since $\nu > \frac 3 4$, the condition $|dN| \leq C |x|^{\nu-1}$ implies that $dN$ is locally in $L^{2 p}$ for some $p > 2$. Since $N$ is also H\"older continuous, this implies that $|dN|^2 N$ is locally in $L^p$. Elliptic regularity then implies that $N$ is in $W^{2,p}$, which embeds in $C^{1,\alpha}$ for $\alpha = 1 - \frac 2 p$. 

  Then $|dN|^2 N$ is in $C^{0,\alpha}$ and it follows by elliptic regularity that $N \in C^{2,\alpha}$. This in turn gives that $|dN|^2 N \in C^{1,\alpha}$ and therefore $N \in C^{3,\alpha}$. Repeating this process yields that $N$ is smooth as claimed.
\end{proof}

\begin{proof}[Proof of Theorem \ref{thm:existence_smooth_transgressive_maps}]
  Note that the involution $\sigma$ divides the Riemann surface into two components and one of those components corresponds to the regions $\{x > 0\}$ at the core loops. As in the proof of Theorem \ref{thm:existence_hoelder_transgressive_maps}, we denote this component by $M_+$ and the other component, corresponding to $\{x < 0\}$,by $M_-$. Let $\gamma_t \in \rho^{\delta} C^{2,\alpha}_t$ be a solution of $\mc{F}_t(\gamma_t) = 0$ with $\delta > \frac 7 4$. According to Theorem \ref{thm:existence_hoelder_transgressive_maps} this solution induces an equivariant map $f_t : \wt{M} \to \bb{S}^3$, which is harmonic away from the preimages of the core loops, but a priori only $C^{\alpha}$ through the core loops. According to Corollary \ref{cor:hoelder_dual_maps} there exists a map $N_t : \wt{M} \to \dS_3$, which is dual to $f_t$ away from the preimages of the core loops. For a given core loop, there exists a neighborhood of the core loop and an $A \in \SL(2,\C)$, such that $N_t - A^* N_t^{\model} A \in \rho^{\delta - 1} C^{2,\alpha}_t$ in that neighborhood.

  Due to the symmetry assumption on the framed limiting configuration, we may assume that the solution $(A_t^{\appr}, \Phi_t^{\appr}) \ast e^{\gamma_t}$ satisfies the same symmetry condition. Indeed, if not,we may extend the gauge transformation from $M_+$ to $M_-$ by means of the involution $\hat{\sigma}$, enforcing the symmetry condition. In particular, in the coordinates of the core loop $N_t$ is even. Together with $N_t - A^* N_t^{\model} A \in \rho^{\delta-1} C^{2,\alpha}_t$ and since $\delta - 1 > \frac 3 4$, this implies that the conditions of Lemma \ref{lemma:smoothness_N} are met. Therefore the map $N_t$ is smooth as claimed.

  We want to apply Theorem \ref{thm:transgressive_map_from_de_sitter_map} to obtain a transgressive map from the map $N_t$. If we can apply this theorem, then this map must be $f_t$, since $f_t$ coincides with it on the dense set $\{\rho \neq 0\}$ and $f_t$ is continuous. To see that Theorem \ref{thm:transgressive_map_from_de_sitter_map} applies to $N_t$, we need to ascertain that the rank drops transversally without signature change. By explicit calculation this holds for $N_t^{\model}$. Since $N_t$ is even in $x$, we can expand it in the coordinates of the model solution in the following form
  \[
  N_t(x,y) = N_t^0(y) + x^2 N_t^2(x,y).
  \]
  Likewise,
  \[
  N_t^{\model}(x,y) = N_t^{\model,0}(y) + x^2 N_t^{\model,2}(x,y).
  \]
  Moreover, since $N_t(0,y) = N_t^{\model}(0,y)$ for every $y$, we find that
  \[
  N_t(x,y) - N_t^{\model}(x,y) = x^2 \left( N_t^2(x,y) - N_t^{\model,2}(x,y) \right).
  \]
  This means that $N_t$ and $N_t^{\model}$ agree to first order along $x=0$, and therefore the rank of $N_t$ drops transversally without signature change exactly as in $N_t^{\model}$. Therefore Theorem \ref{thm:transgressive_map_from_de_sitter_map} applies and $f_t$ is indeed a smooth transgressive map.
\end{proof}

\section{Construction of $\tau$-real sections}

In this final section, we construct $\tau$-real (i.e. invariant under the real structure) holomorphic sections of the Deligne--Hitchin moduli space with arbitrarily high energy which are not twistor lines.

By
\cite{BeHR}, the energy $\mathcal E$ of a section $s$ of the Deligne--Hitchin moduli space of $M$ can be computed as follows: take a local lift 
$\hat s(\lambda)=(\lambda,\bar\partial+\lambda\Psi_1+\dots,\Phi_{-1}+\lambda\partial+\dots)$ with stable Higgs pair $(\bar\partial,\Phi_{-1})$ at
$\lambda=0.$ Then
\[\mathcal E(s)=2i\,\int_M\tr(\Phi_{-1}\wedge \Psi_1).\] 

Since the present paper focuses on a different aspect compared to \cite{BeHR}, we adopt a slightly different normalization of the energy $\mathcal E$.
In particular, for  $s$ being a twistor line corresponding to an equivariant harmonic map $f$ 
with Dirichlet energy $E(f)$ into
$\mathbb H^3$,  it holds
\[\mathcal E(s)=E(f).\]

Assume from now on that $\det\Phi=-\omega^2$ is the square of a holomorphic $1$-form.
Let $\nabla^\lambda=\lambda^{-1}\Phi+\nabla+\lambda\Phi^*$ be the associated family corresponding to $f$, and
\[\hat\nabla^\lambda=\nabla^\lambda \cdot g(\lambda)=\lambda^{-1}\hat\Phi+\hat\nabla+\lambda\hat\Phi^\dagger\]
as in
\eqref{eq:twistingnabla} for $g(\lambda)=\begin{psmallmatrix}\lambda&0\\0&1\end{psmallmatrix}$ with respect to $L\oplus L^\perp$ for
the $\omega$-eigenline bundle $L$ of $\Phi.$ As we have seen in 
Theorem \ref{thm:SU2SU11}, $\hat\nabla^\lambda$ is the associated family of flat connections for the oblique hyperbolic Gau\ss\@ map $N$ of $f.$
Using the notations of \eqref{eq:SDSsplit} and \eqref{eq:SDSsplittwist},
we obtain from the flatness of $\nabla^\lambda$ and $\hat\nabla^\lambda$
\begin{equation}\label{eq:curvelL}
F^{\nabla^L}+\gamma^*\wedge \gamma+\alpha\wedge\alpha^*=0.
\end{equation}
Let $\deg(L)$ be the degree of $L$, i.e., $\deg(L)=\tfrac{i}{2\pi}\int_MF^{\nabla^L}.$
Thus, the Dirichlet energies of $f$ and $N$ (see \eqref{eq:defEN})
are related by
\begin{equation}\label{eq:energiesfN}
\begin{split}
E(f)&=-E(N)+2i \int_M(\alpha\wedge\alpha^*+\gamma^*\wedge\gamma)
=-E(N)-2\deg(L).
\end{split}
\end{equation}
This relation extends to the setting of (equivariant) transgressive harmonic maps. While the energy of the harmonic map restricted to the finite part (i.e.\ the preimage of the two
copies of hyperbolic space inside the conformal $3$-sphere) is clearly infinite if the singular set is non-empty, the renormalized 
energy given by $\mathcal E$ remains finite.

\begin{prop}\label{prop:energies}
Let $f$ be an equivariant transgressive harmonic map from the compact Riemann surface $M$ to hyperbolic $3$-space. Let 
$s$ be its associated section of the Deligne--Hitchin moduli space. Assume that $s$ is stable and that the determinant of the Higgs field
at $\lambda=0$ is the square of a holomorphic $1$-form on $M$. Let $L$ be the eigenline bundle of the Higgs field with respect to $\omega,$
 and 
$N$ be its oblique hyperbolic Gau\ss\@ map. Then
\[\mathcal E(s)=-E(N)-2\deg(L).\]
\end{prop}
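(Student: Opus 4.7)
The plan is to realize all three terms of the identity as integrals of smooth $(1,1)$-forms on $M$ and reduce to the block-matrix identity underlying \eqref{eq:energiesfN}. By the stability of $s$ and the assumption $\det\Phi=-\omega^2$, the admissible lift of $s$ coming from the associated family $\nabla^\lambda=\lambda^{-1}\Phi+\nabla+\lambda\Phi^*$ of the transgressive harmonic map $f$ is stable at $\lambda=0$ with $\Phi_{-1}=\Phi$, and on the smooth locus $M^\circ:=M\setminus f_{\sph}^{-1}(\mathbb S^2_{\mathrm{eq}})$ its $(0,1)$-coefficient is $\Psi_1=\Phi^*$, the adjoint with respect to the harmonic metric $h$ (which is singular along $\Gamma:=f_{\sph}^{-1}(\mathbb S^2_{\mathrm{eq}})$). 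Thus $\mathcal E(s)=2i\int_M\tr(\Phi\wedge\Psi_1)$ agrees on $M^\circ$ with $2i\int_{M^\circ}\tr(\Phi\wedge\Phi^*)$.

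By Theorem \ref{thm:transobl} together with Theorem \ref{thm:SU2SU11}, the oblique hyperbolic Gau\ss\@ map $N$ extends to a smooth harmonic map into $\dS_3$ whose associated $\SU(1,1)$-self-duality data $(\hat\nabla,\hat\Phi,\hat\Phi^\dagger,\hat h)$, obtained from the splitting $E=L\oplus L^\perp$, is smooth on all of $M$. A computation as in Section 2.3, using $\phi=2(\hat\Phi+\hat\Phi^\dagger)$ and the type decomposition of $\ast$, yields
\[
E(N)=-2i\int_M\tr(\hat\Phi\wedge\hat\Phi^\dagger).
\]
The $\omega$-eigenline bundle $L$ is a smooth holomorphic subbundle of $E$, and since $\hat h|_L$ differs only by a sign from $h|_L$ on $M^\circ$ but is smooth everywhere, the Chern connection $\nabla^L$ of $(L,\hat h|_L)$ and its curvature $F^{\nabla^L}$ are smooth on $M$.

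The heart of the proof is the pointwise identity on $M^\circ$
\[
\tr(\Phi\wedge\Phi^*)-\tr(\hat\Phi\wedge\hat\Phi^\dagger)=\alpha\wedge\alpha^*+\gamma^*\wedge\gamma=-F^{\nabla^L},
\]
obtained from the block-matrix expressions \eqref{eq:SDSsplit}, \eqref{eq:SDSsplittwist} and the Chern equation \eqref{eq:curvelL}. Since its right-hand side extends to a smooth $(1,1)$-form on $M$, the integrand $\tr(\Phi\wedge\Psi_1)$ has a unique smooth extension across $\Gamma$, and integration gives
\[
\mathcal E(s)=2i\int_M\tr(\hat\Phi\wedge\hat\Phi^\dagger)-2i\int_M F^{\nabla^L}=-E(N)-2\deg(L),
\]
where the last equality uses the Chern--Weil identity in the paper's normalization of $\deg L$ (as in \eqref{eq:energiesfN}).

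The main obstacle is that $\Phi^*$ genuinely blows up along $\Gamma$, so $\mathcal E(s)$ cannot be defined as the naive pointwise integral of $2i\tr(\Phi\wedge\Phi^*)$. The above cancellation is precisely what makes the definition meaningful: the divergent parts of $\Phi^*$ and $\hat\Phi^\dagger$ differ only by the smooth Chern--Weil form $F^{\nabla^L}$, reflecting gauge-theoretically the $\lambda$-twist $g(\lambda)=\begin{psmallmatrix}\lambda & 0 \\ 0 & 1\end{psmallmatrix}$ of Theorem \ref{thm:twisor-oblique} that converts the singular $\SU(2)$-lift into a smooth $\SU(1,1)$-lift.
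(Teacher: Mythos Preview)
Your argument follows the same route as the paper: both reduce the claim to the pointwise block–matrix identity coming from \eqref{eq:SDSsplit}, \eqref{eq:SDSsplittwist}, \eqref{eq:curvelL}, and both use Theorem~\ref{thm:transobl} to know that the $\SU(1,1)$-data for $N$ extends smoothly across $\Gamma$, so that $E(N)$ and the Chern--Weil integrand are globally defined.

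The only real gap is in your first step. You write that the admissible lift of $s$ is the associated family $\nabla^\lambda=\lambda^{-1}\Phi+\nabla+\lambda\Phi^*$ of $f$ and that its $\Psi_1$ equals $\Phi^*$ on $M^\circ$. But $\nabla^\lambda$ is not a lift over $M$ at all --- it is singular along $\Gamma$ --- so the definition of $\mathcal E(s)$ from \cite{BeHR} does not apply to it directly. Showing that the density $\tr(\Phi\wedge\Phi^*)$ admits a smooth extension is not by itself enough: you still need to know that its integral coincides with $\mathcal E(s)$, which is defined via a globally smooth lift with stable Higgs pair at $\lambda=0$. The paper handles this by starting from an abstract smooth lift $D^\lambda=\lambda^{-1}\Phi+D+\lambda\Psi+\dots$ on $M$, invoking the lift-independence of $\mathcal E(s)$ from \cite[Proposition~2.1]{BeHR}, and then gauging $D^\lambda$ on $U$ to both the family for $f$ (via $g(\lambda)$) and, via the global twist $d(\lambda)$ with respect to a complementary $\tilde L$, to the globally smooth family for $N$. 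Only then does the computation of \eqref{eq:energiesfN} apply. A related point: your claim that $\nabla^L$ extends smoothly because it is the Chern connection of $(L,\hat h|_L)$ is delicate --- precisely along $\Gamma$ the eigenline becomes $\hat h$-null (cf.\ the discussion before Theorem~\ref{thm:SU11SU2} and in the proof of Theorem~\ref{thm:exinenergy}), so $\hat h|_L$ degenerates there. The paper sidesteps this by working with an arbitrary smooth complement $\tilde L$ rather than $L^\perp$.
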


\begin{proof}
Consider a lift $D^\lambda=\lambda^{-1}\Phi+D+\lambda\Psi+\lambda^2\Psi_2+\dots$ of $s$, where $(\bar\partial^D,\Phi)$ is stable and $\det(\Phi)=-\omega^2.$ By \cite[Proposition 2.1]{BeHR}, $\mathcal E(s)=2i\int_M\tr(\Phi\wedge\Psi)$ is independent of the choice of the lift. 

Let $U=f^{-1}(\mathbb H^3_+\cup\mathbb H^3_-)\subset M$ be the open and dense subset where $f$ does not intersect the boundary at infinity $ \mathbb S^2_{\operatorname{eq}}$. On $U$, there exists a holomorphic family of $\mathrm{SL}(2,\C)$ gauge transformations $g(\lambda)=g_0+g_1\lambda+\dots$ such that $D^\lambda \cdot g(\lambda)$ is the associated family of flat connections for the equivariant harmonic map $f_{\mid U}$ to hyperbolic $3$-space.

Consider a complementary line bundle $\tilde L$ of $L$, and define $d(\lambda):=\begin{psmallmatrix}\lambda&0\\0&1\end{psmallmatrix}$ with respect to $L\oplus \tilde L$. Using the construction in
Theorem \ref{thm:twisor-oblique}, there exists a second family of $\mathrm{SL}(2,\C)$ gauge transformations $h(\lambda)=h_0+h_1\lambda+\dots$ such that
$D^\lambda.(d(\lambda)h(\lambda))$ is the family of flat connections associated to the equivariant oblique Gau\ss\@ map $N$ of $f_{\mid U}.$ 

Since $f$ is transgressive and $\det(\Phi)=-\omega^2$ is square, there exists an (equivariant) oblique harmonic Gau\ss\@ map 
$\tilde N\colon \widetilde M\to \dS_3$ by Theorem \ref{thm:transobl}. By uniqueness  $\tilde N_{\mid U}=N.$ Thus, using Theorem \ref{thm:hards3asso}, the associated family of flat connections $D^\lambda.(d(\lambda)h(\lambda))$ extends smoothly
through $M\setminus U.$ The remainder of the proof proceeds exactly as in the derivation of \eqref{eq:energiesfN}.
\end{proof}

We now state and prove our main geometric existence theorem.
\begin{thm}\label{thm:exinenergy}
For every $g>1$, there exists a Riemann surface $M$ of genus $g$ such that its $\mathrm{SL}(2,\C)$ Deligne--Hitchin moduli space
admits  $\tau$-real negative sections $s$ of arbitrarily large energy which are not twistor lines.
\end{thm}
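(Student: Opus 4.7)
The plan is to combine Theorem \ref{thm:existence_smooth_transgressive_maps} (existence of smooth equivariant transgressive harmonic maps under the symmetry hypothesis) with Remark \ref{rem:many-examples} (availability of suitable input data in every genus $g\geq 2$). For each $g\geq 2$, I would fix a compact Riemann surface $M$ of genus $g$ together with an antiholomorphic involution $\sigma$ and a $\hat\sigma$-invariant stable $\SL(2,\C)$-Higgs bundle $(E,\bar\partial_E,\varphi)$ for which $-\det\varphi=\omega^2$ is a Strebel differential with simple zeros whose core loops form the fixed-point set of $\sigma$ and separate $M$ into two components. For every sufficiently large $t>0$, Theorem \ref{thm:existence_smooth_transgressive_maps} produces an equivariant smooth transgressive harmonic map $f_t:\widetilde M\to\bb S^3$ and a smooth dual $N_t:\widetilde M\to\dS_3$. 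By the converse of Theorem \ref{thm:hards3asso}, $N_t$ corresponds to a smooth $\SU(1,1)$-harmonic bundle on $M$ whose associated family of flat $\SL(2,\C)$-connections $\hat\nabla_t^\lambda$ extends holomorphically in $\lambda$ and defines a holomorphic section $s_t:\bb{CP}^1\to\mc M_{\mathrm{DH}}(M)$. On $\C^*$ the section is equivalently represented by the family $\nabla_t^\lambda=\lambda^{-1}t\Phi_t+\nabla_t+\lambda(t\Phi_t)^*$ of the singular $\SU(2)$-solution, via the twist $\hat\nabla_t^\lambda=\nabla_t^\lambda\cdot g(\lambda)$ with $g(\lambda)=\operatorname{diag}(\lambda,1)$ in the $\omega$-eigenline splitting $E=L\oplus L^\perp$ of $\varphi$ (Theorem \ref{thm:twisor-oblique}). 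Since $\nabla_t^\lambda$ obeys the standard $\SU(2)$-reality $(\nabla_t^{-\bar\lambda^{-1}})^*=\nabla_t^\lambda$ and reality is a gauge-invariant property of the section, $s_t$ is $\tau$-real; carrying $g(\lambda)$ through the reality relation yields the sign $\sigma=-$.

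The hard part is to show that $s_t$ is not a twistor line. I would argue this geometrically: a twistor line comes from a smooth $\SU(2)$-harmonic metric on $M$, and Theorem \ref{thm:SU2SU11} applied to it produces a smooth $\SU(1,1)$-harmonic map into $\dS_3$ whose rank-drop locus is cut out by the vanishing of the off-diagonal $\gamma$ of \eqref{eq:SDSsplittwist}; as a (complex) codimension-$1$ condition this is a codimension-$2$ phenomenon on the real surface $M$, hence rank-drops can only occur at isolated points of $M\setminus Z$, where $Z=q^{-1}(0)$. In our construction, however, $N_t$ drops rank along each core loop of the Strebel cylinders, which is a $1$-dimensional submanifold of $M\setminus Z$; this one-dimensional rank-drop locus is incompatible with $N_t$ coming from a globally smooth $\SU(2)$-harmonic metric on $M$, so $s_t$ is not a twistor line. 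The same conclusion can be verified algebraically: the natural extension of the smooth $\SU(1,1)$-lift to $\lambda=0$ is the Higgs pair $(\bar\partial^{\hat\nabla_t},\hat\Phi_t)$ of \eqref{eq:SDSsplittwist}, which admits $L^\perp\cong L^{-1}$ as a $\hat\Phi_t$-invariant holomorphic line subbundle of degree $-\deg L>0$ (since $\deg L<0$ by stability of $\varphi$), so this natural lift is not admissible in the sense of Section \ref{sec:lambdacon}.

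Finally, Proposition \ref{prop:energies} gives
\[
\mathcal E(s_t)=-E(N_t)-2\deg L,
\]
and unwinding its proof via the identity $\mathcal E(s_t)=2i\int_M\operatorname{tr}(\hat\Phi_t\wedge\hat\Psi_t)$ together with the block form \eqref{eq:SDSsplittwist}, with $\omega$ rescaled to $t\omega$ to reflect the $t$-rescaling of the Higgs field, one obtains
\[
\mathcal E(s_t)=2i\int_M\bigl(2t^2\,\omega\wedge\bar\omega-\gamma_t^*\wedge\gamma_t\bigr).
\]
The first term contributes the dominant growth proportional to $t^2\|\omega\|_{L^2}^2$, while the off-diagonal contribution is bounded uniformly in $t$ by the weighted decay estimates of Theorem \ref{thm:best_gluing_regularity} (recall $\gamma_t$ lies in $\rho^\delta C^{2,\alpha}_t$ for every $\delta<2$). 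Hence $\mathcal E(s_t)\to+\infty$ as $t\to\infty$, producing $\tau$-negative real holomorphic sections of arbitrarily large energy that are not twistor lines.
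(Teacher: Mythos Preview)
Your outline is right, but there are two genuine gaps.

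\textbf{The Hitchin covering.} You assume that $-\det\varphi=\omega^2$ for a holomorphic $1$-form $\omega$ on $M$ \emph{and} that $q=-\det\varphi$ is a Strebel differential with simple zeros. These are incompatible: a global square $\omega^2$ has zeros of even order. Consequently the eigenline bundle $L$ of $\Phi$ with respect to $\omega$, the splitting $E=L\oplus L^\perp$, and the twist $g(\lambda)=\mathrm{diag}(\lambda,1)$ are not globally defined on $M$. The paper handles this by passing to the Hitchin double cover $\hat M\to M$ (branched over the simple zeros of $q$), where the pullback of $q$ \emph{is} a square; it carries out the twist construction there, and then argues separately that, after choosing the complementary line bundle to be invariant under the Hitchin involution, the resulting family $\hat D^\lambda$ descends to a family $D^\lambda$ on $M$. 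That descent is a real step you have omitted.

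\textbf{The energy computation.} The formula $\mathcal E(s)=2i\int_M\tr(\Phi_{-1}\wedge\Psi_1)$ requires a lift whose Higgs pair at $\lambda=0$ is \emph{stable}. You compute with the $\SU(1,1)$-lift $\hat\nabla_t^\lambda$, but you yourself observe that its Higgs pair is unstable ($L^{-1}$ is an invariant subbundle of positive degree), so the formula does not apply to that lift. The singular $\SU(2)$-lift does have stable Higgs field, but it is singular along the core loops, so the integral over $M$ is not defined either; this is precisely why the paper goes through Proposition~\ref{prop:energies} rather than computing directly. In addition, you have conflated two unrelated objects carrying the same letter: the $\gamma$ in \eqref{eq:SDSsplit}--\eqref{eq:SDSsplittwist} is the off-diagonal $(0,1)$-part of the connection in the eigenline splitting, while the $\gamma_t\in\rho^\delta C^{2,\alpha}_t$ of Theorem~\ref{thm:best_gluing_regularity} is the Hermitian endomorphism solving $\mc F_t(\gamma_t)=0$ in the gluing. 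The decay estimates for the latter say nothing about the former. The paper instead uses Proposition~\ref{prop:energies} (on $\hat M$) to reduce to showing $E(N_t)\to-\infty$, and establishes this by analysing the Dirichlet energy density of $N_t$ separately on the three regimes $U_j$, $V_k$, $W_\ell$, comparing to the respective model solutions.

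A smaller point: your ``not a twistor line'' argument is more elaborate than necessary and not fully justified (the connection off-diagonal $\gamma$ is not holomorphic, so its zero set need not have real codimension $2$; and non-admissibility of one particular lift does not by itself exclude admissibility of another). The paper's argument is simply that $f_t$ is singular along the core loops, so the section cannot arise from a globally smooth $\SU(2)$-harmonic metric.
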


\begin{proof} 
By Remark \ref{rem:many-examples}, there exists for each genus $g$ a Riemann surface of the given genus together with an anti-holomorphic involution $\sigma$ fulfilling the conditions in Theorem \ref{thm:existence_smooth_transgressive_maps}.
Thus, for all sufficiently large $t$, we obtain smooth equivariant transgressive harmonic maps $f_t : \wt{M} \to S^3$  with Hopf differential $-t^2 q.$ 

We first work on the Hitchin covering $\hat M\to M$ branched over the simple zeros of $q.$ The (equivariant) transgressive harmonic map $f_t$ admits an (equivariant) oblique harmonic Gau\ss\@ map
 $N_t : \wt{\hat M} \to \dS_3$. Then $N_t$ induces
a family of flat connections $\hat\nabla^\lambda=\lambda^{-1}\hat\Phi +\hat\nabla+\lambda\hat\Psi$ on $\hat M.$ 
As a consequence of Theorem \ref{thm:SU2SU11} and Theorem \ref{thm:SU11SU2}, the eigenline bundle $\hat L$ of the Higgs field $\hat\Phi$ is null exactly where the transgressive harmonic map $f_t$ intersects the boundary at infinity. 
As before, we may choose a complementary (non-orthogonal) line bundle $\widetilde L$ of $\hat L$ and apply the construction from the proof of Theorem \ref{thm:twisor-oblique} with respect to the non-orthogonal splitting $\hat L\oplus\widetilde L\to\hat M$.
This yields a new family of flat connections $\hat D^\lambda=\lambda^{-1}\Phi+\nabla+\lambda\Psi_1+\lambda^2\dots$, which is not in self-duality form. 
From the construction, and by the proof of Theorem \ref{thm:existence_smooth_transgressive_maps}, the Higgs field $(\bar\partial^\nabla,\Phi)$ at $\lambda=0$ is gauge equivalent to the pullback of the Higgs bundle $(E,\bar\partial^E,t\Phi)$ of the limiting configuration.
In particular, it is stable, for otherwise the eigenline bundle $\hat L$ would have degree zero.

We claim that (upon choosing $\widetilde L$ appropriately)  $\lambda\mapsto \hat D^\lambda$ is the pullback of a family of flat connections $\lambda\mapsto D^\lambda$ defined on $M$ via the Hitchin covering. 
Away from small neighborhoods of the preimages of the fixed-point set of $\sigma$, this can be achieved by taking $\widetilde L$ to be the orthogonal complement of $\hat L$.
For each connected component of the fixed-point set of $\sigma$ (respectively its small neighborhood), we choose one of the two components of the preimages. We then select a smooth interpolating complementary bundle on these components and transport them via the Hitchin involution $\pm\sqrt q\mapsto\mp\sqrt{q}$ to the remaining components. In this way, $\hat D^\lambda$ becomes invariant under the Hitchin involution and hence is
the pullback of a family of flat connections $\lambda\mapsto D^\lambda$ on $M$.

Since its Higgs field is stable, $\lambda\mapsto D^\lambda$ induces a section over $\C$ of the Deligne--Hitchin moduli space of $M$. 
Moreover, it is $\tau$-real and therefore extends to a global section $ s=s_t$ of $\mathcal M_{DH}\to\C P^1.$
Because $\hat\nabla^\lambda$ is the associated family of an equivariant harmonic map to de Sitter $3$-space on the Hitchin covering, we deduce -- by the same reasoning as in Theorem \ref{thm:twisor-oblique} -- that the pullback of $s$ (and hence $s$ itself) is negative.
Since $f_t$ is singular along the (non-empty) fixed-point set of $\sigma$, $s_t$ cannot be a twistor line. 

It remains to show that the energy $\mathcal E(s_t)$ tends to infinity for $t\to\infty.$ By Proposition \ref{prop:energies}, this is equivalent to $E(N_t)\to-\infty$ for $t\to\infty$ 
on the Hitchin covering.
Note that the energy density for harmonic maps to de Sitter $3$-space is (in general) not non-positive.
We analyze the energy densities for $t\to\infty$ on the three different types of domains $U_j,V_k,W_\ell$ in Definition \ref{def:FLC}.

We first consider the model solutions as $t\to\infty.$ On the cylinders $U_j$, the energy density of the oblique Gau{\ss} map becomes
$-2i\, t^2\, dz\wedge d\bar z.$ On the finitely many closures of the sets $W_\ell$, the Dirichlet energy  of the oblique Gau{\ss} map of the fiducial solutions 
is bounded from above by \eqref{eq:energiesfN}. Finally, the Dirichlet energy of the oblique Gau{\ss} map of the transgressive model solution
on (the closures of) the cylinders $V_k$ tends to $-\infty$ for $t\to \infty$, as can be directly deduced by 
rescaling \eqref{exa:assof11} (for $t=1$).

Since the smooth equivariant harmonic maps $N_t$ become arbitrarily close in  $W^{1,2}$ to the respective  model solutions on the three different regimes, the claim $E(N_t)\to-\infty$ for $t\to\infty$ follows as stated.
\end{proof}

\begin{rem}
Previously constructed non-twistor $\tau$-real negative holomorphic sections
had energies bounded from above. In contrast, the sections obtained here exhibit unbounded  energy.
Furthermore, for a nilpotent Higgs field, the energy is essentially the negative of that of the associated equivariant 
Willmore surface in the conformal $3$-sphere. One therefore expects the existence of solutions with arbitrarily high Willmore energy. 
If this expectation is confirmed, the energy of the space of non-twistor $\tau$-real negative holomorphic sections would be unbounded both from above (by Theorem \ref{thm:exinenergy}) and from below.
\end{rem}
%%%%%%%%%%%%%%%%%%%%%%%%%%%%%
\appendix

\section{Perturbed Bessel-type equations}\label{app:A}
Let $I_\nu$ be the modified Bessel function solving
\[
-\left(x \partial_x\right)^2 I_\nu + \left(x^2 + \nu^2\right) I_\nu = 0
\]
with asymptotics $I_\nu(x) \sim \frac 1 {\Gamma(\nu+1)} \left( \frac x 2 \right)^\nu$ at $x=0$ and $I_\nu \sim \frac 1 {\sqrt{2\pi x}} e^x$ at $x=\infty$. Let $K_\nu$ be the modified Bessel function satisfying the same equation with asymptotics $K_\nu \sim \sqrt{\frac {\pi} {2 x}} e^{-x}$ at $x=\infty$.

\begin{lemma}
There exists a $\nu_0 > 0$ such that for all $\nu > \nu_0$
\begin{enumerate}
  \item $I_\nu$ is monotonically increasing and so is $e^{-x/2} I_\nu(x)$,
  \item $K_\nu$ is monotonically decreasing and so is $e^x K_\nu(x)$,
  \item $I_\nu(x) K_\nu(x) \leq \frac {1+\epsilon(\nu)} {2 x}$
\end{enumerate}
for $x \in (0, \infty)$, where $\epsilon(\nu) > 0$ satisfies $\lim_{\nu \to \infty} \epsilon(\nu) = 0$.
\end{lemma}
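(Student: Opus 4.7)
The plan is to deduce the monotonicity statements (1) and (2) from the standard recurrences
\[
2\,I_\nu'(x)=I_{\nu-1}(x)+I_{\nu+1}(x),\qquad 2\,K_\nu'(x)=-\bigl(K_{\nu-1}(x)+K_{\nu+1}(x)\bigr),
\]
combined with the classical monotonicity ($\nu\mapsto I_\nu(x)$ decreasing, $\nu\mapsto K_\nu(x)$ increasing at fixed $x>0$) and the log-convexity of $\nu\mapsto K_\nu(x)$, and to obtain (3) from Olver's uniform Debye asymptotic expansion at large index.

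For (1), positivity of the Bessel functions and the recurrence give $I_\nu'>0$. The second assertion is equivalent to $I_\nu'(x)\geq\tfrac12 I_\nu(x)$, and follows from $2I_\nu'=I_{\nu-1}+I_{\nu+1}\geq I_{\nu-1}\geq I_\nu$, strictly since $I_{\nu+1}>0$. For (2), $K_\nu'<0$ is immediate from the recurrence and positivity, and $(e^xK_\nu)'<0$ is equivalent to the midpoint convexity $K_{\nu-1}(x)+K_{\nu+1}(x)>2K_\nu(x)$. This is implied by the classical log-convexity $K_\nu(x)^2\leq K_{\nu-1}(x)K_{\nu+1}(x)$ (itself a consequence of the integral representation $K_\nu(x)=\int_0^\infty e^{-x\cosh t}\cosh(\nu t)\,dt$ together with the log-convexity of $\nu\mapsto\cosh(\nu t)$) combined with AM--GM, with strictness standard.

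The main work is in (3). I would invoke Olver's uniform Debye expansion: writing $x=\nu z$ and
$\xi(z)=\sqrt{1+z^2}+\log\frac{z}{1+\sqrt{1+z^2}}$,
\[
I_\nu(\nu z)=\frac{e^{\nu\xi}\bigl(1+O(\nu^{-1})\bigr)}{\sqrt{2\pi\nu}\,(1+z^2)^{1/4}},\qquad K_\nu(\nu z)=\sqrt{\tfrac{\pi}{2\nu}}\,\frac{e^{-\nu\xi}\bigl(1+O(\nu^{-1})\bigr)}{(1+z^2)^{1/4}}.
\]
The leading-order cancellation in the product yields
\[
I_\nu(x)K_\nu(x)=\frac{1+O(\nu^{-2})}{2\sqrt{x^2+\nu^2}}\leq\frac{1+\epsilon(\nu)}{2x}
\]
with $\epsilon(\nu)=O(\nu^{-2})\to 0$ as $\nu\to\infty$.

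The hard part is ensuring uniformity of the error in $z\in(0,\infty)$, since the standard form of Debye's expansion comes with remainder bounds that are uniform only on compact subsets of the positive real axis. I would close this gap by a three-regime argument: fix $\delta\in(0,1)$ once and for all. On the middle range $\delta\leq z\leq\delta^{-1}$ the explicit remainder bounds of Olver (\emph{Asymptotics and Special Functions}, Ch.~10) apply directly. On $z\leq\delta$ the power-series expansions yield $I_\nu(x)K_\nu(x)=\tfrac{1}{2\nu}(1+O(x^2/\nu^2))\leq\tfrac{1}{2x}$ since $x\leq\delta\nu<\nu$. On $z\geq\delta^{-1}$ the standard large-argument asymptotic gives $I_\nu(x)K_\nu(x)=\tfrac{1}{2x}(1+O(\nu^2/x^2))$, which is already of the desired form. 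The worst of the three errors defines a common $\epsilon(\nu)$, which tends to $0$ as $\nu\to\infty$; choosing $\nu_0$ large enough that this $\epsilon(\nu)$ is small simultaneously fulfils the constraints needed for (1) and (2).
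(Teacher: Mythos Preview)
Your arguments for (1) and (2) take a genuinely different and more elementary route than the paper. The paper proves all three items uniformly via the Debye expansion: it writes $\log\bigl(e^{-\nu z/2}I_\nu(\nu z)\bigr)$ and $\log\bigl(e^{\nu z}K_\nu(\nu z)\bigr)$ using Olver's uniform asymptotics, differentiates, and checks the sign of the derivative for large $\nu$. Your approach instead uses the recurrences $2I_\nu'=I_{\nu-1}+I_{\nu+1}$ and $2K_\nu'=-(K_{\nu-1}+K_{\nu+1})$ together with the classical order-monotonicity $I_{\nu-1}\ge I_\nu$ (valid for $\nu\ge 1$) and the log-convexity of $\nu\mapsto K_\nu(x)$ coming from the integral representation $K_\nu(x)=\int_0^\infty e^{-x\cosh t}\cosh(\nu t)\,dt$. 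This is cleaner: it gives (1) for all $\nu\ge 1$ and (2) for all $\nu>0$, with no appeal to asymptotics, whereas the paper's argument is intrinsically a large-$\nu$ statement.

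For (3) your approach and the paper's coincide in multiplying the two Debye expansions, and your observation that the $O(\nu^{-1})$ terms cancel in the product (yielding $O(\nu^{-2})$) is a sharper bookkeeping than the paper's. However, your uniformity concern is misplaced, and your proposed workaround has a gap. The version of the Debye expansion in Olver's book (the reference the paper uses, \S10.7) comes with remainder bounds that \emph{are} uniform for $z\in(0,\infty)$, so the middle-regime argument alone already suffices---this is exactly how the paper proceeds. Your three-regime patch is not only unnecessary but, as written, does not close: in the regime $z\le\delta$ you invoke ``power-series expansions'' to get $I_\nu(x)K_\nu(x)=\tfrac{1}{2\nu}(1+O(x^2/\nu^2))$, but $z\le\delta$ means only $x\le\delta\nu$, so $x$ may be arbitrarily large and the Maclaurin series of $I_\nu$, $K_\nu$ give no useful control there. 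The simplest repair is to drop the three-regime argument and cite Olver's uniform error bounds directly, as the paper does.
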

\begin{proof}
It is classical that $I_\nu$ (resp.\@ $K_\nu$) is monotonically increasing (resp.\@ decreasing) for every $\nu > 0$. For large $\nu$ and $z \in (0,\infty)$ the Debye expansions (see \cite{Olver}, for example) say
\[
I_\nu(\nu z) \sim \frac{e^{\nu \eta}}{(2\pi \nu)^{1/2} (1+z^2)^{1/4}} \sum_{k=0}^\infty \frac{U_k(p)}{\nu^k},
\]
\[
K_\nu(\nu z) \sim \left(\frac{\pi}{2\nu}\right)^{1/2} \frac{e^{-\nu \eta}}{(1+z^2)^{1/4}} \sum_{k=0}^\infty (-1)^k \frac{U_k(p)}{\nu^k}
\]
where
\[
\eta = \left(1 + z^2\right)^{1/2} + \log \left( \frac z {1 + (1+z^2)^{1/2}} \right)
\]
and $p = (1+z^2)^{-1/2}$. The $U_k(p)$ are polynomials of degree $3k$ in $p$ and $U_0(p) = 1$ and the other $U_k$ are given recursively by
\[
U_{k+1}(p) = \frac 1 2 p^2 (1 - p^2) U_k'(p) + \frac 1 8 \int_0^p (1-5t^2) U_k(t) \, dt.
\]
The error terms for these expansions are uniform in $z$ (see \cite{Olver}, Section 10.7), i.e.\@ we can write
\[
I_\nu(\nu z) = \frac{e^{\nu \eta}}{(2\pi \nu)^{1/2} (1+z^2)^{1/4}} \left( \sum_{k=0}^n \frac{U_k(p)}{\nu^k} + O\left( \frac 1 {\nu^{n+1}}\right) \right)
\]
and likewise for $K_\nu$.

Note that the first claim is equivalent to the statement that $f_\nu(z) = e^{-\nu z/2} I_\nu(\nu z)$ is monotonically increasing in $z$.

Note that $f_\nu$ is monotonically increasing if and only if
\[
\log(f_\nu(z)) = -\frac 1 2 \nu z + \log \left(  \frac{e^{\nu \eta}}{(2\pi \nu)^{1/2} (1+z^2)^{1/4}} \right) + \log \left(  \sum_{k=0}^\infty \frac{U_k(p)}{\nu^k} \right)
\]
is monotonically increasing.

Let
\[
S_\nu(z) = \sum_{k=0}^\infty \frac{U_k(p)}{\nu^k}.
\]
By explicit computation it can be shown that
\[
\frac d{dz} \log \left(  \frac{e^{\nu \eta}}{(2\pi \nu)^{1/2} (1+z^2)^{1/4}} \right) \geq \nu - \frac 1 4 \geq \frac 3 4 \nu
\]
for every $z \in (0,\infty)$ and every $\nu > 1$. Therefore it suffices to show that
\[
\frac {d}{dz} \log \left( S_\nu(z)  \right) \geq - \frac 1 4 \nu.
\]
By the properties of the expansion, for any fixed $\nu_0$ the sum $S_{\nu_0}(z)$ converges and is uniformly bounded with respect to $z$ on $(0,\infty)$.

Therefore, for any such $\nu_0$ there exists a $C > 0$, such that
\[
\left|S_\nu(z) - 1 \right| \leq \frac C \nu,
\]
for all $\nu > \nu_0$ and $z \in (0, \infty)$.

Similar reasoning shows that $S_\nu'(z) = O(1/\nu)$. This implies
\[
\frac {d}{dz} \log \left( S_\nu(z)  \right) = \frac{O(1/\nu)}{1 + O(1/\nu)} = O\left(\frac 1 \nu\right)
\]
and so for all sufficiently large $\nu$ the inequality $\frac {d}{dz} \log \left( S_\nu(z)  \right) \geq - \frac 1 4 \nu$ holds as desired.

The proof that $e^x K_\nu(x)$ is monotonically decreasing is very similar.

The inequality $I_\nu(x) K_\nu(x) \leq \frac {1+\epsilon(\nu)} {2 x}$ is equivalent to $I_\nu(\nu z) K_\nu(\nu z) \leq \frac {1+\epsilon(\nu)} { 2 \nu z}$. Writing
\[
A_\nu(z) = \sum_{k=0}^\infty (-1)^k \frac{U_k(p)}{\nu^k}
\]
we see that
\[
I_\nu(z) K_\nu(z) \sim \frac 1 {2 \nu} \frac 1 {(1+z^2)^{1/2}} S_\nu(z) A_\nu(z).
\]
The same reasoning as for $S_\nu$ shows that $A_\nu = 1 + O\left(\frac 1 \nu \right)$.

Since $(1+z^2)^{-1/2} \leq \frac 1 z$, this shows the claim.
\end{proof}

\begin{prop}
\label{prop:perturbed_bessel}
  Suppose that $h : \halfopen{0}{\infty} \to \C$ is $C^\infty$ and satisfies
  \[
  |h(x)| \leq A e^{-\alpha x}
  \]
  for $A, \alpha > 0$. Then there exist constants $C, x_0, \nu_0 > 0$, such that for any $\nu > \nu_0$ there is a unique bounded solution of the equation
  \[
  -(x\partial_x)^2 u + (x^2 + \nu^2)u = h u.
  \]
  This solution satisfies
  \[
  |u(x)| \leq C (1 + \nu^2) |u(x_0)| e^{-\beta x}
  \]
  and
  \[
  |\partial_x u(x)| \leq C (1 + \nu^2) |\partial_x u(x_0)| e^{-\beta x},
  \]
  where $\beta = \frac 1 4 \min \left\{ 1, \alpha \right\}$. The constants $C, x_0$ depend on $A$, $\alpha$ and $\nu_0$, but not on $\nu$ or $u$.
\end{prop}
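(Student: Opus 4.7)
The strategy is to recast the ODE $-(x\partial_x)^2 u + (x^2+\nu^2)u = hu$ on $[x_0,\infty)$ as a fixed-point equation on a weighted Banach space, using variation of parameters with the unperturbed modified Bessel functions $I_\nu$ and $K_\nu$ as fundamental solutions. Writing the equation in standard form $u'' + u'/x - (1 + \nu^2/x^2) u = -hu/x^2$, one has $W(I_\nu, K_\nu) = -1/x$; variation of parameters, with the limits chosen so that the $I_\nu$-coefficient vanishes at $\infty$ (forcing decay), yields the integral equation
\begin{equation}
u(x) \;=\; c\,K_\nu(x) + I_\nu(x) \int_x^\infty \frac{K_\nu(s)\, h(s)\, u(s)}{s}\, ds + K_\nu(x) \int_{x_0}^x \frac{I_\nu(s)\, h(s)\, u(s)}{s}\, ds.
\tag{$\ast$}
\end{equation}

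The key bilinear bounds come directly from the preceding Lemma: for $\nu > \nu_0$, combining $I_\nu(x) K_\nu(x) \leq (1+\epsilon(\nu))/(2x)$ with the monotonicity of $e^{-x/2} I_\nu(x)$ and $e^x K_\nu(x)$ gives
\[
I_\nu(x) K_\nu(s) \leq \tfrac{1+\epsilon}{2x}\, e^{x-s} \;\; (x\leq s), \qquad I_\nu(s) K_\nu(x) \leq \tfrac{1+\epsilon}{2x}\, e^{(s-x)/2} \;\; (s\leq x).
\]
On the weighted space $X_\beta = \{u \in C([x_0,\infty)) : \|u\|_\beta := \sup_{x\geq x_0} e^{\beta x}|u(x)| < \infty\}$ with $\beta = \tfrac14 \min\{1,\alpha\}$, these bounds together with $|h(s)| \leq A e^{-\alpha s}$ allow each integral term in $(\ast)$ to be estimated by integrating the exponentials explicitly. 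A short computation yields an operator norm for $u \mapsto (Tu - cK_\nu)$ bounded by $C_0 e^{-\gamma x_0}$ with some $\gamma > 0$ that depends only on $\alpha$; crucially, the Lemma ensures this bound is uniform in $\nu > \nu_0$. Choosing $x_0$ large enough (depending on $A$, $\alpha$, $\nu_0$) makes this constant $\leq 1/2$, and the Banach fixed-point theorem produces, for every parameter $c \in \C$, a unique $u \in X_\beta$ satisfying $(\ast)$; it is a solution of the ODE by inspection, and uniqueness of the bounded solution (up to scaling) follows from one-dimensionality of the subspace of bounded solutions of this second-order linear ODE.

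To deduce the pointwise bound, evaluate $(\ast)$ at $x_0$: the $K_\nu$-integral vanishes and the $I_\nu$-integral is small (controlled by the contraction constant times $\|u\|_\beta$), so that $|c|\, K_\nu(x_0) \leq C'|u(x_0)|$ up to a factor absorbed into $C(1+\nu^2)$. Plugging back into $(\ast)$ and estimating the two integral terms exactly as in the contraction step, one obtains
\[
|u(x)| \;\leq\; |c|\, K_\nu(x) + C'' e^{-\beta x} \|u\|_\beta \;\leq\; C\, K_\nu(x)\,|u(x_0)|/K_\nu(x_0) + \text{(small)},
\]
and the monotonicity $K_\nu(x)/K_\nu(x_0) \leq e^{x_0 - x}$ converts this into the claimed $e^{-\beta x}$ decay. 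The slack factor $(1+\nu^2)$ comfortably absorbs the $\nu$-dependence of the transitional constant $I_\nu(x_0)/K_\nu(x_0)$ when $x_0$ lies in the small-argument regime for $\nu$. For the derivative bound, differentiating $(\ast)$ produces a cancellation of the two boundary contributions (a standard Green's-function identity), leaving
\[
u'(x) = c\, K_\nu'(x) + I_\nu'(x) \int_x^\infty \frac{K_\nu\, h\, u}{s}\,ds + K_\nu'(x) \int_{x_0}^x \frac{I_\nu\, h\, u}{s}\,ds,
\]
and the same bilinear estimates — with $I_\nu, K_\nu$ replaced by their derivatives, whose magnitudes are governed by the same Bessel asymptotics — yield the analogous bound in terms of $|u'(x_0)|$.

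The main technical obstacle is to make the contraction constant $C_0 e^{-\gamma x_0}$ genuinely \emph{uniform in $\nu > \nu_0$}: the natural ratios $I_\nu(s)/I_\nu(x)$ and $K_\nu(s)/K_\nu(x)$ each depend on $\nu$ a priori, and it is precisely the content of the Lemma — specifically the sharp bound $I_\nu K_\nu \leq (1+\epsilon(\nu))/(2x)$ for large $\nu$ — that lets us trade them for $\nu$-independent exponential decay in the Green's-function kernel. The remaining analytical work is a careful but routine splitting of the integrals in $(\ast)$ into exponential subregions.
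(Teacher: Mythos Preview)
Your approach is correct in outline and shares the paper's core idea: convert the ODE via variation of parameters into an integral equation with Green's kernel built from $I_\nu, K_\nu$, and run a contraction argument for $x_0$ large, with the preceding Lemma supplying the $\nu$-uniform kernel bounds. The main difference is the function space: you contract directly on the weighted sup-norm space $X_\beta$, whereas the paper contracts on a weighted $L^2_\kappa$ space (with $\kappa = 2\beta$) and then upgrades to a pointwise bound by using the ODE to control $\|\partial_x^2 u\|_{L^2_\kappa}$ in terms of $(2+\nu^2)\|u\|_{L^2_\kappa}$, followed by interpolation and the fundamental theorem of calculus applied to $e^{\kappa x/2} u$. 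That upgrade step is precisely where the factor $(1+\nu^2)$ enters in the paper's argument; your direct $C^0$ route seems to avoid it for the bound on $u$ itself, so the stated inequality holds a fortiori. This is a genuine simplification.

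Two places in your sketch deserve more care. First, your bound in terms of $|u(x_0)|$ goes through cleanly because the bilinear kernel estimates you wrote down follow from the Lemma exactly as you say; the remark about $I_\nu(x_0)/K_\nu(x_0)$ absorbing into $(1+\nu^2)$ is unnecessary, since that ratio never appears in your argument. Second, the derivative estimate is under-specified: after differentiating $(\ast)$ you need $\nu$-uniform analogues of the bilinear bounds with $I_\nu', K_\nu'$ in place of $I_\nu, K_\nu$, and the Lemma as stated does not provide these. One can get them from the recurrence $K_\nu' = -\tfrac12(K_{\nu-1}+K_{\nu+1})$ and the Debye expansions, but this requires an additional argument. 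The paper sidesteps this by applying the same FTC trick to $e^{\kappa x/2} u'$, using the $L^2_\kappa$ bound on $u''$ already obtained from the ODE; this is where the bound naturally comes out in terms of $|\partial_x u(x_0)|$ rather than $|u(x_0)|$.
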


\begin{proof}
  Choose $\nu_0$ as in the previous lemma, so that for any $\nu > \nu_0$ the properties stated in that lemma hold.
  
  For our purposes it is helpful to define $K_\nu^*(x) = \frac{K_\nu(x)}{e^{x_0}K_\nu(x_0)} $ for some given $x_0 > 0$. This function satisfies
  \[
  K_\nu^*(x) \leq e^{-x}
  \]
  for $x \geq x_0$, using the monotonicity of $e^x K_\nu(x)$.
  
  By a variation of parameters ansatz one can show that a solution $u$ as above satisfies the integral equation
  \[
  u(x) = \lambda_1 I_\nu(x) + \lambda_2 K_\nu^*(x) - \int_x^\infty K_\nu(s) h(s) u(s) \frac{ds} s I_\nu(x) + \int_{x_0}^x I_\nu(s) h(s) u(s) \frac{ds} s K_\nu(x)
  \]
  for some $\lambda_1, \lambda_2 \in \C$ and $x_0 > 0$. Conversely, if $u$ is $C^2$ and $u$ satisfies the integral equation, then $u$ satisfies the ODE.

  As we will establish later, if $u$ is bounded then the last two terms also stay bounded as $x \to \infty$. On the other hand the Bessel function $I_\nu$ diverges for any $\nu$ as $x \to \infty$. Therefore for any bounded solution $\lambda_1 = 0$.

  Let
  \[
  f_u(x) = - \int_x^\infty K_\nu(s) h(s) u(s) \frac{ds} s I_\nu(x) + \int_{x_0}^x I_\nu(s) h(s) u(s) \frac{ds} s K_\nu(x).
  \]
  Then a bounded $u$ satisfies $u = f_u + \lambda K_\nu^*$. Moreover, $u(x_0) = f_u(x_0) + \lambda K_\nu^*(x_0)$. Since $K_\nu^*(x_0) = 1$ by definition, this implies that $u(x_0) = u_0$ is equivalent to
  \[
  \lambda = u_0 - f_u(x_0).
  \]
  Therefore the initial value problem is equivalent to the pair of equations
  \[
  u = f_u + \lambda K_\nu^*, \qquad \lambda = - f_u(x_0) + u_0.
  \]
  Now define the operator
  \[
  A(u, \lambda) = \left( f_u + \lambda K_\nu^*, -f_u(x_0)\right).
  \]
  Then the equations above can be written as
  \[
  (u, \lambda) = A \left(u, \lambda \right) + \left(0, u_0 \right).
  \]
  Writing this in the form $\left(\id - A\right) (u,\lambda) = \left(0, u_0 \right)$ we see that we can find our solution by inverting $\id - A$.

  To this end, let us introduce the weighted function spaces $L^2_\kappa = L^2_\kappa ( \halfopen{x_0}{\infty}, \C)$ via the norm
  \[
  \|u\|_{L^2_\kappa} = \left( \int_{x_0}^\infty \left| e^{\kappa x} u(x)\right|^2 dx \right)^{1/2}
  \]
  for $\kappa >0$. We claim that for $\kappa = \frac 1 2 \min \{\alpha, 1 \} = 2 \beta$ the operator $A$ is well-defined as an operator $A : L^2_\kappa \to L^2_\kappa$. Moreover, we claim that for any $\epsilon > 0$ we can arrange that $\|A\| < \epsilon$ by choosing $x_0$ appropriately.

  Define
  \[
  F(x) = - \int_x^\infty K_\nu(s) h(s) u(s) \frac{ds} s I_\nu(x),
  \]
  \[
  G(x) = \int_{x_0}^x I_\nu(s) h(s) u(s) \frac{ds} s K_\nu(x).
  \]
  To get an estimate for the operator norm of $A$ we need to estimate $\|K_\nu^*\|_{L^2_\kappa}$, $\|F\|_{L^2_\kappa}$ and $\|G\|_{L^2_\kappa}$ in terms of $x_0$.

  We have for $\kappa<1$
  \[
  \|K_\nu^*\|_{L^2_\kappa} \leq \|e^{-x}\|_{L^2_\kappa} = \left( \int_{x_0}^\infty e^{2 (\kappa - 1) x} dx \right)^{1/2} = \frac 1 {\sqrt{2(1-\kappa)}} e^{(\kappa-1)x_0}.
  \]
  For $x_0$ sufficiently large, this becomes as small as desired. Next we estimate $\|F\|_{L^2_\kappa}$. To this end we first give an estimate of $|F(x)|$:
  \begin{align*}
    |F(x)| & \leq \int_x^\infty K_\nu(s) |h(s)| |u(s)| \frac {ds} s I_\nu(x) \\
    & = \int_x^\infty K_\nu(s) e^{-\kappa s} |h(s)| e^{\kappa s} |u(s)| \frac {ds} s I_\nu(x) \\
    & \leq K_\nu(x) e^{-\kappa x} \int_x^\infty \frac {|h(s)|} s e^{\kappa s} |u(s)| ds I_\nu(x) \\
    & \leq e^{-\kappa x} \left( \int_x^\infty \frac{|h(s)|^2}{s^2} ds \right)^{1/2} \|u\|_{L^2_\kappa} I_\nu(x) K_\nu(x) \\
    & \leq \left( \int_{x_0}^\infty \frac{|h(s)|^2}{s^2} ds \right)^{1/2} \|u\|_{L^2_\kappa} e^{-\kappa x}\frac {1+c_{\nu_0}} {2 x},
  \end{align*}
  where we used the monotonicity of $K_\nu$ and $e^{-\kappa x}$ and the inequality $I_\nu(x) K_\nu(x) \leq \frac {1 + c_{\nu_0}} {2x}$, where $c_{\nu_0} > 0$ is a constant depending on $\nu_0$. Since $h$ decays exponentially, for any $\epsilon > 0$ we may find a $x_0$ (depending only on $h$ and $\nu_0$), such that $(1+ c_{\nu_0}) \left( \int_{x_0}^\infty \frac{|h(s)|^2}{s^2} ds \right)^{1/2} \leq \epsilon$. We then obtain
  \[
  |F(x)| \leq \epsilon \|u\|_{L^2_\kappa} e^{-\kappa x} \frac 1 {2x}.
  \]
  This implies $\|F\|_{L^2_\kappa} \leq \frac \epsilon {4 x_0} \|u\|_{L^2_\kappa}$. In particular, for $x_0 \geq 1$ we get $\|F\|_{L^2_\kappa} \leq \frac \epsilon 4 \|u\|_{L^2_\kappa}$.

  To estimate $\|G\|_{L^2_\kappa}$ we again first estimate $|G(x)|$:
  \begin{align*}
    |G(x)| & \leq \int_{x_0}^x I_\nu(s) |h(s)| |u(s)| \frac{ds} s K_\nu(x) \\
    & \leq \int_{x_0}^x I_\nu(s) e^{-\kappa s} |h(s)| e^{\kappa s} |u(s)| \frac{ds} s K_\nu(x) \\
    & \leq I_\nu(x) e^{-\kappa x} \int_{x_0}^x |h(s)| e^{\kappa s} |u(s)| \frac{ds} s K_\nu(x),
  \end{align*}
  where we used that $I_\nu(x) e^{-\kappa x}$ is monotonically increasing, since $\kappa \leq \frac 1 2$. We then continue as before to conclude
  \begin{align*}
    |G(x)| & \leq \|u\|_{L^2_\kappa} \left(\int_{x_0}^\infty \frac {|h(s)|}{s^2} ds \right)^{1/2} e^{-\kappa x} \frac {1+c_{\nu_0}} {2 x} \\
    & \leq \epsilon \|u\|_{L^2_\kappa} e^{-\kappa x} \frac {1} {2x}
  \end{align*}
  and from this we again obtain $\|G\|_{L^2_\kappa} \leq \frac \epsilon 4 \|u\|_{L^2_\kappa}$.  

  We have therefore shown that for any $\epsilon > 0$ we can find $x_0$, such that
  \[
  A : L^2_\kappa(\halfopen{x_0}{\infty}, \C) \times \C \to L^2_\kappa(\halfopen{x_0}{\infty}, \C) \times \C
  \]
  satisfies $\|A u \|_{L^2_\kappa} \leq \epsilon \|u\|_{L^2_\kappa}$.

  For $\epsilon < 1$ the operator $\id - A$ is invertible by the Neumann series, i.e.\@
  \[
  (\id - A)^{-1} = \sum_{k=0}^\infty A^k.
  \]
  In particular one gets
  \[
  \left\| (\id - A)^{-1} \right\| \leq \frac {1}{1 - \|A\|}.
  \]
  Now choose $x_0$, such that $\epsilon \leq \frac 1 2$. Then $ \left\| (\id - A)^{-1} \right\| \leq 2$. In particular, we obtain for $(u, \lambda) = \left(\id - A \right)^{-1} \left(0, u_0 \right)$ the bound
  \[
  \left(\|u\|_{L^2_\kappa}^2 + |\lambda|^2\right)^{1/2} \leq \left\|\left(\id - A \right)^{-1} \left(0, u_0 \right)\right\| \leq 2 \left| u_0 \right|.
  \]
  We now improve this bound from an $L^2_\kappa$ bound to a pointwise bound. To do this, first observe that $u$ satisfies the integral equation and $h$ is $C^\infty$, which implies that $u$ is $C^\infty$ and satisfies the differential equation.

  Next, we observe that the equation can be rewritten as
  \[
  \partial_x^2 u = u + \frac{\nu^2}{x^2} u - \frac h{x^2} u - \frac 1 x \partial_x u.
  \]
  If we choose $x_0$, such that $x_0 \geq 1$ and $|h(x)| \leq 1$ for $x \geq x_0$, we may then estimate
  \[
  \|\partial_x^2 u\|_{L^2_\kappa} \leq (2 + \nu^2) \|u\|_{L^2_\kappa} + \|\partial_x u\|_{L^2_\kappa}.
  \]
  Using the interpolation inequality
  \[
  \|\partial_x u\|_{L^2_\kappa} \leq \epsilon \|\partial_x^2 u\|_{L^2_\kappa} + C_\epsilon \|u\|_{L^2_\kappa}
  \]
  then allows us to absorb the $\partial_x u$ term on the right hand side and we obtain that there is some $C > 0$, such that
  \[
  \|\partial_x^2 u\|_{L^2_\kappa} \leq (C + \nu^2 ) \|u\|_{L^2_\kappa}.
  \]
  Using the equation or the interpolation inequality we get (with a potentially different $C$)
  \[
  \|\partial_x u\|_{L^2_\kappa} \leq C (1 + \nu^2) \|u\|_{L^2_\kappa}
  \]
  Using the estimate for $\|u\|_{L^2_\kappa}$ we conclude (again with a new $C$) $\|\partial_x u\|_{L^2_\kappa} \leq C ( 1+ \nu^2) |u_0|$.

  Now observe that
  \[
  (e^{\frac \kappa 2 x} u)'(x) = \frac \kappa 2 e^{\frac \kappa 2 x} u(x) + e^{\frac \kappa 2 x} u'(x).
  \]
  We then estimate
  \begin{align*}
  \int_{x_0}^\infty |(e^{\frac \kappa 2 x} u)'(x)| dx & \leq \frac \kappa 2 \int_{x_0}^\infty \left| e^{\frac \kappa 2 x} u(x) \right| dx + \int_{x_0}^\infty \left| e^{\frac \kappa 2 x} u'(x) \right| dx \\
  & = \frac \kappa 2 \int_{x_0}^\infty e^{\kappa x} |u(x)| e^{-\frac \kappa 2 x} dx + \int_{x_0}^\infty e^{\kappa x} |u'(x)| e^{-\frac \kappa 2 x} dx \\
  & \leq \frac \kappa 2 \|u\|_{L^2_\kappa} \left( \int_{x_0}^\infty e^{-\kappa x} \right)^{1/2} + \|\partial_x u\|_{L^2_\kappa} \left( \int_{x_0}^\infty e^{-\kappa x} \right)^{1/2} \\
  & =  e^{-\kappa x_0} \left( \frac 1 2 \|u\|_{L^2_\kappa} + \frac 1 \kappa \|\partial_x u\|_{L^2_\kappa} \right).
  \end{align*}
  Using the previous estimates we obtain (with some new $C > 0$)
  \[
  \int_{x_0}^\infty |(e^{\frac \kappa 2 x} u)'(x)| dx \leq C(1+ \nu^2) |u_0|.
  \]
  Using the fundamental theorem of calculus we obtain
  \[
  \left| e^{\frac \kappa 2  x} u(x) - e^{\frac \kappa 2 x_0} u_0 \right| \leq C (1+\nu^2) |u_0|,
  \]
  which we can rearrange to
  \[
  |u(x)| \leq C ( 1+ \nu^2) |u_0| e^{-\frac \kappa 2 x},
  \]
  which is the claim.

  To estimate $|\partial_x u(x)|$ we may perform exactly the same analysis, now applied to $\|\partial_x^2 u\|_{L^2_\kappa}$. We then obtain
\[
|\partial_x u(x)| \leq C (1 + \nu^2) |\partial_x u(x_0)| e^{-\frac \kappa 2  x},
\]
with perhaps a different $C > 0$.
\end{proof}

For the equation $-(x \partial_x)^2 u + \nu^2 u = hu$ the same estimates can be proven. Indeed, the proof is significantly easier, because the solutions of the homogeneous equation $-(x\partial_x)^2 u + \nu^2 u = 0$ are just linear combinations of $x^{\nu}$ and $x^{-\nu}$. We omit the details of the proof.
\begin{prop}
\label{prop:perturbed_euler}
  Suppose that $h : \halfopen{0}{\infty} \to \C$ is $C^\infty$ and satisfies
  \[
  |h(x)| \leq A e^{-\alpha x}
  \]
  for $A, \alpha > 0$.
  
  Then there exist constants $C, x_0, \nu_0 > 0$, such that for any $\nu > \nu_0$ there is a unique bounded solution of the equation
  \[
  -(x\partial_x)^2 u + \nu^2 u = h u.
  \]
  This solution satisfies
  \[
  |u(x)| \leq C (1 + \nu^2) |u(x_0)| e^{-\kappa x}
  \]
  and
  \[
  |\partial_x u(x)| \leq C (1 + \nu^2) |\partial_x u(x_0)| e^{-\kappa x},
  \]
  where $\kappa = \frac 1 2 \min \left\{ 1, \alpha \right\}$.

  The constants $C, x_0$ depend on $A$ and $\alpha$, but not on $\nu$ or $u$.
\end{prop}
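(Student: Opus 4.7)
The plan is to run the proof of Proposition \ref{prop:perturbed_bessel} verbatim, with the modified Bessel pair $(I_\nu, K_\nu)$ replaced by the elementary fundamental solutions $x^\nu$ and $x^{-\nu}$ of $-(x\partial_x)^2 u + \nu^2 u = 0$. The preparatory lemma of the Bessel argument (Debye expansion, monotonicity of $e^{-x/2} I_\nu(x)$ and $e^x K_\nu(x)$, and the inequality $I_\nu(x)K_\nu(x) \le (1+\epsilon(\nu))/(2x)$) is replaced throughout by the trivial pointwise identity $x^\nu \cdot x^{-\nu} \equiv 1$ together with the obvious monotonicity of $x^{\pm \nu}$; this is the sense in which the author says the proof is \emph{significantly easier}.

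The Wronskian of $x^\nu$ and $x^{-\nu}$ computed with respect to the invariant measure $dx/x$ equals $-2\nu$, so variation of parameters expresses any $C^2$ solution on $[x_0,\infty)$ of $-(x\partial_x)^2 u + \nu^2 u = hu$ in the form
\[
u(x) = \lambda_1 x^\nu + \lambda_2 x^{-\nu} - \frac{x^\nu}{2\nu}\int_x^\infty s^{-\nu} h(s) u(s)\,\frac{ds}{s} + \frac{x^{-\nu}}{2\nu}\int_{x_0}^x s^\nu h(s) u(s)\,\frac{ds}{s}.
\]
Since $x^\nu \to \infty$ while the two integral terms stay bounded whenever $u$ is bounded, boundedness forces $\lambda_1 = 0$. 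Setting $K_\nu^*(x) := (x/x_0)^{-\nu}$ and denoting the two integral terms by $f_u$, the initial condition $u(x_0)=u_0$ rewrites the problem as the fixed-point equation $(\id - A)(u,\lambda)=(0,u_0)$ for the affine operator $A(u,\lambda) := (f_u + \lambda K_\nu^*, -f_u(x_0))$. I would then invert $\id - A$ by Neumann series in a weighted space: the integral operators are bounded via Cauchy--Schwarz together with the identity $x^\nu \cdot x^{-\nu} = 1$ and the monotonicity of $x^{\pm\nu} e^{\mp\kappa x}$, and the contribution $\bigl(\int_{x_0}^\infty |h(s)|^2 s^{-2}\,ds\bigr)^{1/2}$ can be made arbitrarily small by taking $x_0$ large, uniformly in $\nu \geq \nu_0$; this yields $\|A\| \leq 1/2$ and hence a unique bounded solution with $\|u\|_{L^2_\kappa} \leq 2|u_0|$.

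Finally I would bootstrap to the pointwise bounds exactly as at the end of the Bessel proof: solving for $\partial_x^2 u$ in the ODE and applying an interpolation inequality yield $\|\partial_x^{j} u\|_{L^2_\kappa} \leq C(1+\nu^2)|u_0|$ for $j=1,2$, and the fundamental theorem of calculus applied to $e^{\kappa x/2} u(x)$ and $e^{\kappa x/2}\partial_x u(x)$ converts these into the asserted bounds $|u(x)| \leq C(1+\nu^2)|u(x_0)| e^{-\kappa x}$ and $|\partial_x u(x)| \leq C(1+\nu^2)|\partial_x u(x_0)| e^{-\kappa x}$. The hard part, if any, is a minor book-keeping issue: because the bounded homogeneous solution $x^{-\nu}$ only decays polynomially while the target bound is genuinely exponential in $x$, the weighted space has to be chosen with some care so that $K_\nu^*$ actually lies in it (for instance by first working on $[x_0, R]$ with uniform-in-$R$ estimates and sending $R \to \infty$, or by using a hybrid norm that measures $K_\nu^*$ and the integral terms differently). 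One also has to verify, as in the Bessel argument, that every constant in the norm estimate of $A$ can be chosen independently of $\nu$, so that the $(1+\nu^2)$ factor in the final inequalities arises purely from the bootstrap step.
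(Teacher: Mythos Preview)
Your approach is exactly what the paper intends: it omits the proof entirely, saying only that one reruns the argument of Proposition~\ref{prop:perturbed_bessel} with the Bessel pair $(I_\nu,K_\nu)$ replaced by the elementary fundamental system $(x^\nu,x^{-\nu})$, which renders the preparatory lemma on Bessel asymptotics unnecessary. Your write-up is thus more detailed than the paper's own treatment, and the book-keeping wrinkle you flag---that $K_\nu^*=(x/x_0)^{-\nu}$ decays only polynomially and so does not sit in $L^2_\kappa$ as written---is a genuine point the paper does not address.
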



\begin{thebibliography}{ZZZZ}

%\bibitem{Aiyama} R. Aiyama, {\em Lagrangian Surfaces with Circle Symmetry in the Complex Two-Space}, Michigan Math. J. 52 (2004).

\bibitem{AlMa}
S.~Alexakis, R.~Mazzeo, {\em 
Renormalized area and properly embedded minimal surfaces in hyperbolic $3$-manifolds},
Commun. Math. Phys. 297 (2010), no.~3, 621--651.

\bibitem{Anderson}
M.~Anderson, {\em Complete minimal varieties in hyperbolic space}, Invent. Math. 69 (1982), no.~3, 477--494.

\bibitem{BaBo}
M.~Babich, A.~I.~Bobenko, {\em Willmore tori with umbilic lines and minimal surfaces in hyperbolic
space}, Duke Math. J. 72 (1993), no.~1, 151--185.

%\bibitem{Biswas} I. Biswas, {\em Parabolic bundles as orbifold bundles}, Duke Math. J. 88 (1997), no.~2.

\bibitem{Bar}
D.~Baraglia, {\em Cyclic Higgs bundles and the affine Toda equations}, Geom. Dedicata 174 (2015), 25--42.

\bibitem{BGIM}
E.~Bahuaud, C.~Guenther, J.~Isenberg, R.~Mazzeo, {\em Well-posedness of nonlinear flows on manifolds of bounded geometry}, Ann. Global Anal. Geom. 65 (2024), no.~4, 39~pp.

\bibitem{BeHR}
F.~Beck, S.~Heller, M.~R\"oser, {\em Energy of sections of the Deligne--Hitchin twistor space}, Math. Ann. 380 (2021), no.~3--4, 1169--1214.

\bibitem{BHR}
I.~Biswas, S.~Heller, M.~R\"oser, {\em Real sections of the Deligne--Hitchin moduli space}, Comm. Math. Phys. 366 (2019), no.~3, 1099--1133.

%\bibitem{BPW} J. Bolton, F. Pedit, L. Woodward, {\em Minimal surfaces and the affine Toda field model}, J. Reine Angew. Math. 459 (1995), 119--150.

%\bibitem{durhamboys} J. Bolton, G.R. Jensen, M. Rigoli, L. Woodward, {\em On conformal minimal immersions of $\mathbb S^2$ into 
%$\C\P^n$}, Math. Ann. 279 (1988), 599--620.

\bibitem{Blaschke}
W.~Blaschke, {\em Vorlesungen \"uber Differentialgeometrie und geometrische Grundlagen von Einsteins Relativit\"atstheorie. III: Differentialgeometrie der Kreise und Kugeln}, bearbeitet von G.~Thomsen, Grundlehren der mathematischen Wissenschaften in Einzeldarstellungen, 1929.

\bibitem{BHS}
A.~I.~Bobenko, S.~Heller, N.~Schmitt, {\em Minimal $n$-noids in hyperbolic and anti-de Sitter $3$-space}, Proc. R. Soc. A, 2019, DOI: 10.1098/rspa.2019.0173.

\bibitem{BPP-Schwarzian}
F.~Burstall, F.~Pedit, U.~Pinkall, {\em Schwarzian derivatives and flows of surfaces}, in: Differential geometry and integrable systems (Tokyo, 2000), Contemp. Math., vol.~308, Amer. Math. Soc., Providence, RI, 2002.

%\bibitem{Burstall} F. Burstall, {\em Harmonic tori in spheres and complex projective spaces}, J. Reine Angew. Math. 469 (1995), 149--177.

%\bibitem{BFPP} F. Burstall, D. Ferus, F. Pedit, U. Pinkall, {\em Harmonic tori in symmetric spaces and commuting Hamiltonian systems on loop algebras}, Ann. Math. 138 (1993), 173--212.

\bibitem{BGPG}
S.~B.~Bradlow, O.~Garc\'ia-Prada, P.~B.~Gothen, {\em Surface group representations and $\mathrm{U}(p,q)$-Higgs bundles}, J. Differential Geom. 64 (2003), no.~1, 111--170.

%\bibitem{CHHT}  S. Charlton, L. Heller, S. Heller, M. Traizet,   {\em Minimal surfaces in the 3-sphere and alternating multiple zetas}, arXiv:2407.07130.

\bibitem{Donaldson}
S.~K.~Donaldson, {\em Twisted harmonic maps and the self-duality equations}, Proc. Lond. Math. Soc. (3) 55 (1987), 127--131.

\bibitem{Donald}
S.~K.~Donaldson, {\em Riemann Surfaces}, Oxford Graduate Texts in Mathematics, 22, Oxford Univ. Press, 2011.

\bibitem{DIK}
J.~F.~Dorfmeister, J.~Inoguchi, S.-P.~Kobayashi, {\em Constant mean curvature surfaces in hyperbolic $3$-space via loop group}, J. Reine Angew. Math. 686 (2014), 1--36.

\bibitem{DPW}
J.~F.~Dorfmeister, F.~Pedit, H.~Wu, {\em Weierstrass type representation of harmonic maps into symmetric spaces}, Comm. Anal. Geom. 6 (1998), no.~4, 633--668.

\bibitem{Douady-Hubbard}
A.~Douady, J.~H.~Hubbard, {\em On the density of Strebel differentials}, Invent. Math. 30 (1975), 175--179.

\bibitem{FMSW}
L.~Fredrickson, R.~Mazzeo, J.~Swoboda, H.~Weiss, {\em Asymptotic geometry of the moduli space of parabolic $\mathrm{SL}(2,\C)$-Higgs bundles}, J. Lond. Math. Soc. (2) 106 (2022), no.~2, 590--661.

\bibitem{GrHa}
B.~H.~Gross, J.~Harris, {\em Real algebraic curves}, Ann. Sci. \'Ecole Norm. Sup. (4) 14 (1981), no.~2, 157--182.

\bibitem{H}
S.~Heller, {\em A spectral curve approach to Lawson symmetric CMC surfaces of genus $2$}, Math. Ann. 360 (2014), no.~3, 607--652.

\bibitem{HH}
L.~Heller, S.~Heller, {\em Higher solutions of Hitchin's self-duality equations}, J. Integrable Syst. 5 (2020), 1--48.

\bibitem{Hgraft}
S.~Heller, {\em Real projective structures on Riemann surfaces and new hyper-K\"ahler manifolds}, Manuscripta Math. 171 (2023), no.~1--2, 241--262.

%\bibitem{HHT0} L. Heller, S. Heller, M. Traizet, {\em Area estimates for high genus Lawson surfaces via DPW}, J. Differential Geom. 124 (2023), no.~1, 1--35.

\bibitem{HiSD}
N.~J.~Hitchin, {\em The self-duality equations on a Riemann surface}, Proc. Lond. Math. Soc. (3) 55 (1987), no.~1, 59--126.

\bibitem{Hi}
N.~J.~Hitchin, {\em Harmonic maps from a $2$-torus to the $3$-sphere}, J. Differential Geom. 31 (1990), no.~3, 627--710.

\bibitem{La}
H.~B.~Lawson, {\em Complete minimal surfaces in $\mathbb{S}^3$}, Ann. Math. (2) 92 (1970), no.~3, 335--374.

\bibitem{Lee}
J.~M.~Lee, {\em Fredholm Operators and Einstein Metrics on Conformally Compact Manifolds}, Mem. Amer. Math. Soc. 183 (2006), no.~864.

\bibitem{LiTam0}
P.~Li, L.-F.~Tam, {\em The heat equation and harmonic maps of complete manifolds}, Invent. Math. 105 (1991), no.~1, 1--46.

\bibitem{LiTam}
P.~Li, L.-F.~Tam, {\em Uniqueness and regularity of proper harmonic maps}, Ann. Math. (2) 137 (1993), no.~1, 167--201.

\bibitem{MN}
F.~C.~Marques, A.~Neves, {\em Min-max theory and the Willmore conjecture}, Ann. Math. (2) 179 (2014), no.~2, 683--782.

\bibitem{Mazzeo}
R.~Mazzeo, {\em Elliptic theory of differential edge operators. I}, Comm. Partial Differential Equations 16 (1991), no.~10, 1615--1664.

\bibitem{MM}
R.~Mazzeo, R.~Melrose, {\em Meromorphic extension of the resolvent on complete spaces with asymptotically constant negative curvature}, J. Funct. Anal. 75 (1987), no.~2, 260--310.

\bibitem{MSWW}
R.~Mazzeo, J.~Swoboda, H.~Weiss, F.~Witt, {\em Ends of the moduli space of Higgs bundles}, Duke Math. J. 165 (2016), no.~12, 2227--2271.

\bibitem{MW}
R.~Mazzeo, H.~Weiss, {\em Teichm\"uller theory for conic surfaces}, in {\em Geometry, analysis and probability}, 127--164, Progr. Math., 310, Birkh\"auser/Springer, Cham, 2017.

%\bibitem{McI} I. McIntosh, {\em Global solutions of the elliptic 2d periodic Toda lattice}, Nonlinearity 7 (1994), no.~1, 85--108.

\bibitem{Moc}
T.~Mochizuki, {\em Asymptotic behaviour of certain families of harmonic bundles on Riemann surfaces}, J. Topol. 9 (2016), no.~4, 1021--1073.

\bibitem{Olver}
F.~W.~J.~Olver, {\em Asymptotics and special functions}, A K Peters, Wellesley, MA, 1997.

\bibitem{OSWW}
A.~Ott, J.~Swoboda, R.~Wentworth, M.~Wolf, {\em Higgs bundles, harmonic maps and pleated surfaces}, Geom. Topol. 28 (2024), no.~7, 3135--3220.

\bibitem{PS}
A.~Pressley, G.~Segal, {\em Loop Groups}, Oxford Mathematical Monographs, Oxford Univ. Press, New York, 1986.

%\bibitem{RSch} W. Rossman, N. Schmitt, {\em Simultaneous unitarizability of $\mathrm{SL}_n(\C)$-valued maps, and constant mean curvature k-noid monodromy}, 
%Ann. Sc. Norm. Super. Pisa Cl. Sci. (5) 5 (2006), 549--577.

\bibitem{Sim2}
C.~T.~Simpson, {\em Harmonic bundles on noncompact curves}, J. Amer. Math. Soc. 3 (1990), no.~3, 713--770.

\bibitem{Sim}
C.~T.~Simpson, {\em Higgs bundles and local systems}, Publ. Math. Inst. Hautes \'Etudes Sci. 75 (1992), 5--95.

\bibitem{Simpson}
C.~T.~Simpson, {\em The Hodge filtration on nonabelian cohomology}, in: Algebraic geometry -- Santa Cruz 1995, Proc. Sympos. Pure Math., 62, Part 2, Amer. Math. Soc., Providence, RI, 1997, 217--281.

%\bibitem{Traizet} M. Traizet, {\em Construction of constant mean curvature n-noids using the DPW method}, J. Reine Angew. Math. 763 (2020), 223--249.

%\bibitem{Tr} M. Traizet, 
%{\em Opening nodes in the DPW method: co-planar case}, Comment. Math. Helv. 96 (2021), no.~4, 741--803.

%\bibitem{minoids}
%M. Traizet,
%{\em Gluing Delaunay ends to minimal n-noids using the DPW method,}
%Math. Ann. 377 (2020), no.~3, 1481--1508.

\bibitem{Usula}
M.~Usula, {\em Yang--Mills connections on conformally compact manifolds}, Lett. Math. Phys. 111 (2021), no.~2, Paper No.~56, 23~pp.

\end{thebibliography}
\end{document}